\documentclass[11pt]{article}

\usepackage{graphicx}
\usepackage{marvosym}

\usepackage{amsmath}
\usepackage{amssymb}
\usepackage{amsthm}
\usepackage{tikz}
\usepackage{tikz-cd}
\usepackage{geometry}
\usepackage[utf8]{inputenc}
\usepackage[english]{babel}
\usepackage[backend = biber, style = alphabetic, maxnames = 99, maxbibnames = 99]{biblatex}
\usepackage{hyperref}
\usepackage[dvipsnames]{xcolor}
\usepackage{mathabx}
\usepackage{pdflscape}

\newcommand{\reals}{\mathbb{R}}
\newcommand{\intgr}{\mathbb{Z}}
\newcommand{\ntrl}{\mathbb{N}}

\renewcommand{\subset}{\subseteq}
\renewcommand{\phi}{\varphi}
\renewcommand{\epsilon}{\varepsilon}

\newtheorem{lem}{Lemma}

\renewcommand{\hat}{\widehat}
\renewcommand{\bar}{\overline}

\newcommand{\supp}{\text{supp}}

\newcommand{\M}{\mathcal{M}}

\newcommand{\prob}{\mathbb{P}}

\newcommand{\N}{\mathcal{N}}
\newcommand{\Z}{\mathbb{Z}}

\newcommand{\diam}{\textnormal{diam}}

\renewcommand{\S}{\mathcal{S}}

\newcommand{\llangle}{\langle\langle}
\newcommand{\rrangle}{\rangle\rangle}

\newcommand{\Ax}{\textrm{Ax}}
\newcommand{\EE}{\mathcal{E}}
\newcommand{\A}{\mathcal{A}}

\newtheorem{theorem}{Theorem}[section]
\newtheorem{lemma}[theorem]{Lemma}
\newtheorem{corollary}[theorem]{Corollary}

\theoremstyle{definition}
\newtheorem{definition}[theorem]{Definition}
\theoremstyle{remark}
\newtheorem{remark}[theorem]{Remark}

\theoremstyle{hypothesis}
\newtheorem{hypothesis}[theorem]{Hypothesis}

\title{Random Quotients of Hyperbolic Spaces with Respect to WPD Actions and Linear Connectivity of the Boundary}

\author{Inhyeok Choi and Henry Talbott}

\begin{document}
\maketitle
\begin{abstract}
We study random quotients of acylindrically hyperbolic groups and hierchically hyperbolic groups, whose (partial) hyperbolicity is captured by their actions on Gromov hyperbolic spaces. We show that if an acylindrically hyperbolic group $G$ acts on a Gromov hyperbolic space $X$ coarsely transitively with a WPD element, then the quotient of $X$ by the normal closure of a random element of $G$ has linearly connected boundary. If $G$ is a group with a (relative) hierarchically hyperbolic group structure, with a quasi-tree top domain $X$, then in fact the quotient of $X$ by the normal closure of a random element of $G$ has boundary homeomorphic to the N{\"o}beling curve.

Our method relies on a careful study of the configuration of translates of typical periodic geodesics in $X$. 
\end{abstract}

\section{Introduction}

We consider \emph{few-relators random quotients} of groups acting on hyperbolic spaces by isometries. We are especially interested in the action $G\curvearrowright X$ of a \emph{hierarchically hyperbolic group} (HHG) on the maximal domain of the corresponding hierarchically hyperbolic space (HHS) structure, such as a mapping class group acting on its curve graph, or a right-angled Artin group (RAAG) acting on its coned-off contact graph. Our model of random quotients of HHGs is largely equivalent to the model defined by Abbott-Berlyne-Mangioni-Ng-Rasmussen, for which the authors obtained the result that random quotients of HHGs are hierarchically hyperbolic \cite{ABMNR26}.

Specifically, let $G\curvearrowright X$ be a cobounded, non-elementary, WPD, and \emph{action path-connected} (see Definition \ref{def:actiopathconnected}) group action by isometries on a Gromov hyperbolic space; WPD and action-path-connectedness are immediate in the HHG setting. Given admissible random walks $(w^{(1)}_n,...,w_n^{(k)})_{n>0}$ on $G$, we draw the $n$-step elements, define
$$N:=\langle\langle w^{(1)}_n,...,w_n^{(k)}\rangle\rangle,$$
and consider the induced action $G/N\curvearrowright X/N$. By the aforementioned work, $X/N$ is Gromov hyperbolic, so we may consider the geometry of its boundary equipped with the visual metric.

We prove the following results. All asymptotic probabilities are taken with respect to the limit $n\to\infty$. Recall that a space is linearly connected if points may be joined by paths with diameters coarsely bounded above by the distances between those points.

\begin{theorem}
\label{thm:maintheoremintro}

Let $G/N\curvearrowright X/N$ be as above. Then a.a.s. $\partial (X/N)$ with its intrinsic visual metric is linearly connected.

\end{theorem}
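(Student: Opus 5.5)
We will deduce linear connectivity of $\partial(X/N)$ from a coarse geometric criterion on the space itself, following the strategy of Bonk--Kleiner and Mackay: for a visual hyperbolic space $Y$ with cobounded isometry group, $\partial Y$ is linearly connected provided $Y$ has \emph{no local cut points at infinity} in a uniform sense. Concretely, it suffices to find constants $L,C$ such that for every $R$ large, every basepoint $o$, and any two points $x,y\in S(o,R)$ at distance $\le R/2$ from the sphere's shadow structure (i.e.\ with Gromov product $(x|y)_o\ge R'$), there is a path from $x$ to $y$ staying outside the ball $B(o,(x|y)_o - C)$ and of length at most $L\cdot e^{\epsilon\,\cdot}$-controlled. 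Pushing such paths to infinity via geodesic rays gives chains in $\partial Y$ whose diameters are comparable to $e^{-\epsilon (x|y)_o}$, which is linear connectivity. Since $G/N$ acts coboundedly on $Y=X/N$, we may fix $o$ and only treat finitely many local configurations up to the group action.

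The first step is therefore to establish the path criterion for $X$ itself and then transfer it to $X/N$. In $X$, action path-connectedness (Definition \ref{def:actiopathconnected}) together with coboundedness lets us join $x$ to $y$ by concatenating translates of a fixed path between $o$ and $go$ for generators $g$; the problem is that such paths may dip toward $o$. We circumvent this with WPD loxodromics: choose a loxodromic $h$ with quasi-axis $\gamma$, and for any $x$ far from $o$ choose a translate of $\gamma$ passing near $x$ roughly orthogonally to $[o,x]$; travelling along such axes lets us move ``sideways'' at height roughly $d(o,x)$. Iterating with nested translates of axes (a detour argument at each scale) yields a path between $x,y$ that stays at distance $\gtrsim (x|y)_o - C$ from $o$.

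The second step is the transfer to $X/N$. By the random-quotient results quoted from \cite{ABMNR26}, a.a.s.\ $N$ is a free product of conjugates of the relators, $X/N$ is hyperbolic with uniform constants, and the projection $X\to X/N$ is an isometry on balls of radius much smaller than the translation length $\sim \ell n$ of the relators, while large-scale geometry is governed by a small-cancellation/rotating-family structure around the translates of the periodic geodesics of $w_n^{(i)}$. So the path criterion for scales below $\epsilon n$ descends directly. For large scales, we must show paths in $X$ can be chosen to avoid the ``cones'' around relator axes in a way that survives quotienting: we will analyze the configuration of translates of the typical periodic geodesics, proving that a.a.s.\ they have small mutual fellow-travelling (bounded by $o(n)$) and that the detour paths from step one can be rerouted around each relator axis through the apex/quotient of its neighbourhood without approaching $o$.

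The main obstacle is this large-scale step: controlling how the images of detour paths behave near the coned-off relator axes, i.e.\ showing that in $X/N$ the neighbourhood of an image axis (a quotient of a periodic geodesic by a cyclic group) does not create a local cut point at infinity. I would first try to show that the complement in $X$ of a neighbourhood of a typical axis is uniformly ``coarsely connected at every scale'' using WPD elements whose axes cross it transversally, then use the rotating-family structure to glue these complements equivariantly, obtaining uniform $L,C$ independent of $n$ with probability tending to one.
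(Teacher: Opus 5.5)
You have the outer framework right: a push-out or refinement criterion iterated out to the boundary, with action path-connectedness used to walk between lifts. But the core of the proposal has a genuine gap, because your first step is false.

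You plan to build ``sideways'' paths in $X$ itself, yet the hypotheses put no connectivity assumption on $\partial X$. The action of $F_2$ on its Cayley tree is cobounded, non-elementary, WPD and action path-connected, and its boundary is a Cantor set. Theorem~\ref{thm:nobeling} is likewise entirely about quasi-trees. In a tree, an axis through $x$ meeting $[o,x]$ orthogonally leads away from $o$ in both directions, and the complement of a neighbourhood of an axis is disconnected. So neither your detours at each scale nor the coarse connectivity of complements of typical axes exist.

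Your criterion as written does not detect this. Since $d(o,[x,y])\geq (x\cdot y)_o$, the geodesic $[x,y]$ already avoids $B(o,(x\cdot y)_o-C)$, so the criterion holds in every hyperbolic space. What is needed is net outward progress, as in Lemma~\ref{lem:mainconstruction}: consecutive points must have Gromov product at least $D+F/2$, while all points keep Gromov product at least $D-100L$ with the endpoints.

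Your second step cannot repair this. It only uses small cancellation (small mutual fellow-travelling of relator axes), and small cancellation alone does not create connectivity. For example, in $F_3=\langle a,b,c\rangle$ a $C'(1/6)$ relator $w(a,b)$ yields $\langle a,b\mid w\rangle*\mathbb{Z}$, whose boundary is disconnected.

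The missing idea is that the connectivity of $\partial(X/N)$ is \emph{produced} by the relators, through the genericity property (item~2 of Definition~\ref{def:walkpermissible}). For every $u$ of bounded word length, some translate of a relator axis passes, aligned, near $gx_0$ and $gux_0$. In $X/N$ the relator axes are not coned off; they become closed loops of length about $L$. These loops are the detours, not obstacles to route around.

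The paper follows Champetier's scheme:
\begin{enumerate}
\item Along the action-path-connected quasi-geodesic $w_i=a_ix_0$ between lifts of points on rays to $\xi_1,\xi_2$, it chooses outward prongs $u_i=a_is_ix_0$ from a Schottky set. These avoid the enemy prongs pointing back toward lifts of $\bar{x}_0$ (Lemma~\ref{lem:enemyprongs}).
\item It uses genericity to find relator axes $A_i$ passing near both $u_i$ and $u_{i+1}$.
\item It launches new boundary points from two points of $A_i$ on opposite sides of $w_i$, each about $0.4L$ away. In $X/N$ these are only about $0.2L$ apart, around the far side of the loop.
\end{enumerate}
Showing that these points satisfy the Gromov product bounds above requires controlling geodesics in $X/N$ via near-minimal lifts, strut sequences and Lemma~\ref{lem:quotientrouting}. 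The standard iteration (Theorem~\ref{thm:maintheorem}) then gives linear connectivity. None of these ingredients appear in your outline.
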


While many known examples of HHGs have quasi-tree maximal domains (e.g. cubical HHGs such as RAAGs) \cite{BHS17b}, the canonical maximal domains for mapping class groups -- the curve graphs -- are un-tree-like in the sense of possessing highly connected boundaries \cite{DG09}. We find this dichotomy striking, especially in light of the role of mapping class groups as prototypical HHGs. For general acylindrically hyperbolic groups (a class including all HHGs), the foundational work of Bestvina-Bromberg-Fujiwara on projection complexes gives a general recipe for constructing cobounded WPD actions on quasi-trees for any such group \cite{BBF15}. In contrast, to the best of the authors' knowledge, we lack a general recipe to construct cobounded WPD actions on hyperbolic spaces that are not quasi-trees. 

Theorem \ref{thm:maintheoremintro} shows that acylindrically hyperbolic groups acting on hyperbolic spaces that are not quasi-trees are highly abundant. In fact, such actions may be obtained by applying a quotient to \emph{any} action of an HHG on its maximal domain, so long as the quotient is sufficiently `generic' or `random'. As a consequence, we obtain a recipe for constructing many HHGs with maximal domains that are not quasi-trees.

In some cases, we can pin down the topology of the quotient boundary more precisely. We first obtain a general result controlling the dimension of the boundary.

\begin{theorem}
\label{thm:boundaryincrement}

Let $G/N\curvearrowright X/N$ be as above.  Then a.a.s.
$$1\leq \dim(\partial (X/N))\leq \dim(\partial X)+1.$$

\end{theorem}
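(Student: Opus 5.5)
The proof splits into the two inequalities, and I will take $\dim$ to be topological (covering) dimension of the compact metrizable space $\partial(X/N)$ throughout.

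\emph{Lower bound.} By Theorem \ref{thm:maintheoremintro}, a.a.s.\ $\partial(X/N)$ with its visual metric is linearly connected, so in particular connected. It remains to check that it has more than one point. The induced action $G/N\curvearrowright X/N$ is still non-elementary: a.a.s.\ the images of independent WPD loxodromics of $G$ remain loxodromic in $X/N$, with distinct fixed pairs. This uses the small-cancellation input that makes $X/N$ hyperbolic, namely that geodesics of length much smaller than the relator's translation length embed isometrically. Hence $\partial(X/N)$ is infinite. A connected compact metric space with more than one point has dimension at least $1$, which gives $\dim\partial(X/N)\geq 1$.

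\emph{Upper bound.} I would describe $\partial(X/N)$ via a cone-off model. Let $\dot X$ be $X$ with a hyperbolic cone of radius $\rho$ attached along each $N$-translate of the axis $A_w$ of the relator $w=w_n$; the subspaces $gA_w$ are a.a.s.\ well separated. Then $X/N=\dot X/N$, and the rotating-family machinery used to show hyperbolicity of $X/N$ applies. Boundary points of $X/N$ then fall into two types:
\begin{itemize}
\item[(i)] limits of rays that eventually stay away from all cone points, and
\item[(ii)] limits passing through infinitely many apices.
\end{itemize}
Equivalently, there is a decomposition of $\partial(X/N)$ into a countable union of closed pieces. Each piece is either a quotient of a closed subset of $\partial X$ (type (i), via a map that is injective on a suitable fundamental domain), or a ``tree-like'' piece coming from the branching at apices. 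The tree-like pieces are homeomorphic to subsets of an inverse limit of spaces of the form $\partial X$ times a circle or interval, since the link of an apex is $A_w/\langle w\rangle$, a circle.

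By the countable sum theorem, $\dim\partial(X/N)$ is at most the supremum over pieces. By the product theorem, this is at most $\dim\partial X+1$.

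A cleaner route to the same bound uses Nagata or Assouad--Nagata style coverings. Take a covering of $\partial X$ at scale $r$ with multiplicity $\leq \dim\partial X+1$. Lift and refine it near each coned axis, where the extra circle direction adds one to the multiplicity. Because of the small cancellation, distinct axis neighborhoods interact only boundedly, so the coverings can be combined with multiplicity $\leq \dim\partial X+2$.

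\emph{Main obstacle.} The hardest step is making this covering or decomposition uniform across scales while controlling how boundary points of $X$ are identified by $N$. This requires that the quotient map $X\to X/N$ be a local isometry on balls of radius comparable to $\tfrac12\,\ell(w_n)$, which holds a.a.s.\ by the translation-length and fellow-travelling estimates for random walks. That control is what allows the covering construction to be carried out on $N$-equivariant fundamental regions. I would first try to establish this via the finite-dimensional covering characterization of the visual boundary (a visual-metric-adapted Nagata dimension). If the details do not close, I would fall back to bounding the asymptotic dimension of $X/N$ by $\operatorname{asdim} X+1$ and then passing to the boundary.
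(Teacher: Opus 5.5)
\textbf{Lower bound.} This half is the paper's argument: linear connectivity from Theorem~\ref{thm:maintheoremintro}. Your check that $\partial(X/N)$ has more than one point is a reasonable addition. Drop compactness, though. $X$ need not be locally finite, and $\partial(X/N)$ is generally not compact; under the hypotheses of Theorem~\ref{thm:nobeling} it is the N\"obeling curve. What you need is only that a connected separable metric space with more than one point has dimension at least $1$.

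\textbf{Upper bound: where it fails.} Here there is a genuine gap, and the mechanism you propose for the ``$+1$'' is not the right one.
\begin{enumerate}
\item The countable sum theorem needs a cover by countably many \emph{closed} sets. Your type~(ii) points cannot be covered by closed pieces consisting of type~(ii) points. Their complement is the $F_\sigma$ union of type~(i) pieces, so type~(ii) is a $G_\delta$. Both types are $G$-invariant, hence dense whenever nonempty, since the action on $\partial(X/N)$ is minimal. Baire category in the Polish space $\partial(X/N)$ then forbids type~(ii) from being $F_\sigma$ whenever both types occur.
\item If your ``tree-like pieces'' are instead closed sets, such as shadows of an axis image, they contain points of both types and further loops recursively. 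Bounding their dimension is then the original problem. The claimed embedding into an inverse limit of copies of $\partial X\times S^1$ is never constructed, and the standard inverse-limit dimension bound is for compacta.
\item For type~(i), a continuous image of a closed subset of $\partial X$ can have larger dimension. You need a topological embedding, not a quotient map that is injective on a fundamental domain.
\item The covering alternatives do not close the gap. The topological dimension of $\partial X$ gives no scale-uniform covers; that would require capacity or Assouad--Nagata dimension. The asymptotic-dimension fallback would bound $\dim\partial(X/N)$ by a quantity involving $\mathrm{asdim}\,X$, not $\dim\partial X$.
\end{enumerate}

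\textbf{The missing idea.} The type~(ii) set is \emph{zero-dimensional}, and the $+1$ comes from the Urysohn--Menger formula $\dim(Y_1\cup Y_2)\le\dim Y_1+\dim Y_2+1$ (Lemma~\ref{lem:urymengformula}). That formula holds for arbitrary, non-closed subspaces. The paper proceeds as follows.
\begin{enumerate}
\item For each $\bar z$, it defines the non-turning locus $S_{\bar z}$: the star from $\bar z$ of the points whose minimal lifts project at most $0.39L(v)$ onto every axis.
\item This set is quasi-convex. By Lemma~\ref{lem:CM2.1} it has a well-defined bi-Lipschitz lift to $X$, so $\Lambda(S_{\bar z})$ is closed and embeds in $\partial X$ (Corollary~\ref{cor:quotientquasitree}).
\item Lemma~\ref{lem:countableclosure} then bounds $\dim\partial S\le\dim\partial X$ for $\partial S=\bigcup_{\bar z}\Lambda(S_{\bar z})$.
\item By Lemma~\ref{lem:finitestablestruts}, every point of $\partial T=(\partial S)^c$ has stable struts arbitrarily far from $\bar x_0$.
\item The sets $\partial T_{\overline{h\Ax(v)}}$ of points sharing a given stable strut are clopen in $\partial T$ and form a neighborhood basis (Lemma~\ref{lem:lockin}), so $\dim\partial T=0$.
\item Urysohn--Menger applied to $\partial(X/N)=\partial S\cup\partial T$ then gives $\dim\partial(X/N)\le\dim\partial X+1$.
\end{enumerate}
The circle around each axis image is what produces connectivity in the main construction. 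It does not contribute a product factor to the dimension count.
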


Note that the lower bound follows immediately from Theorem \ref{thm:maintheoremintro}.

For our next result, note that the \emph{dense unbounded domains} property required below (see Definition \ref{def:largedense}) is expected to hold for most relatively hierarchically hyperbolic groups (RHHGs). In particular, it holds for mapping class groups, for any right-angled Artin group with a defining graph that is connected and not a join, and many right-angled Coxeter groups. (Although note that Theorem \ref{thm:nobeling} does not itself apply to mapping class groups due to the quasi-tree assumption.)

\begin{theorem}
\label{thm:nobeling}

Let $G/N$ and $X/N$ be as above. Additionally, assume that $G\curvearrowright X$ is the action of a relatively hierarchically hyperbolic group on its maximal domain, that the RHHG structure for $G$ satisfies the \emph{dense unbounded domains} property, and that $X$ is a quasi-tree. Then a.a.s. $\partial (X/N)$ is homeomorphic to the Nöbeling curve.

\end{theorem}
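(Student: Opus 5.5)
We will prove Theorem \ref{thm:nobeling} using the topological characterization of the Nöbeling curve due to Kawamura--Levin--Tymchatyn (refined by Dymara--Osajda / Hensel--Przytycki in the boundary setting). That characterization says a space is homeomorphic to the Nöbeling curve exactly when it is:
\begin{itemize}
\item[(i)] Polish,
\item[(ii)] connected and locally connected,
\item[(iii)] $1$-dimensional,
\item[(iv)] has no local cut points, and
\item[(v)] satisfies the \emph{locally finite $n$-disjoint arcs} / discrete path approximation property, for $n=1$.
\end{itemize}

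Property (i) holds for the boundary of any separable hyperbolic space. Theorem \ref{thm:maintheoremintro} gives linear connectivity of $\partial(X/N)$, and linear connectivity implies (ii). For (iii), Theorem \ref{thm:boundaryincrement} gives $1\le \dim \partial(X/N)\le \dim\partial X+1$. Since $X$ is a quasi-tree, $\partial X$ is totally disconnected, so $\dim \partial X = 0$ and therefore $\dim\partial(X/N)=1$.

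The remaining work concerns (iv) and (v). Both are local statements, and we will prove them quantitatively at all scales, using the cocompact (coarsely transitive) action of $G/N$ on $X/N$ to reduce every scale to a bounded one. The plan is to strengthen the path construction behind Theorem \ref{thm:maintheoremintro}.

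In that proof, paths in $\partial(X/N)$ are built from quasi-geodesic detours through images of the translates of the typical periodic geodesic (the axis of the relator $w_n$), which become closed loops in $X/N$. The dense unbounded domains property should give the extra flexibility needed. For any point $\xi$ and any small ball $B$ around it, we want to find, inside $B$, two "independent" families of such loop-detours. These correspond to domains whose projections are dense and unbounded, so their translates appear near every point of the quasi-tree at every scale.

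From two such families we can route around any given point. This shows $B\setminus\{\xi\}$ remains connected within a slightly larger ball, which is exactly the absence of local cut points (iv). For (v), we use the same supply of detours. Given a path $\gamma:[0,1]\to\partial(X/N)$, we perturb it (after a Baire-category or inductive-scale argument) to a nearby path that avoids a prescribed locally finite family of other paths. Each crossing is pushed onto a distinct, disjointly chosen loop-detour at the appropriate scale.

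The main obstacle is (v), the disjoint-arcs approximation. This requires choosing detours at each scale that are pairwise far apart, and controlling how they project to the boundary, using the uniform local control that the a.a.s. random-geodesic configuration lemmas provide. The first thing we will try is to show the following quantitative statement. Every ball of radius $r$ (in the visual metric) centered on $\partial(X/N)$ contains at least $k$ pairwise $cr$-separated "loop boundary circles", each of diameter comparable to $r$, for every $k$, with constants uniform in scale. This follows from coarse transitivity together with the abundance of translates guaranteed by dense unbounded domains. The approximation property is then obtained by a standard inductive refinement argument over scales $r=2^{-j}$.
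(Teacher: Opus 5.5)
The first three conditions are handled as in the paper: Polishness, connectedness and local (path-)connectedness from linear connectivity, and $\dim\partial(X/N)=1$ from Theorem \ref{thm:boundaryincrement} together with $\dim\partial X=0$. One small repair there: ``totally disconnected'' does not imply dimension $0$ for separable metric spaces (Erd\H{o}s space is a counterexample). Argue instead that $\partial X\cong\partial T$ for a simplicial tree $T$, whose boundary has a basis of clopen shadows.

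The genuine gap is step (v). The characterization that actually applies is Theorem \ref{thm:nobchar}: Polish, connected, locally path-connected, $1$-dimensional, and the locally finite $1$-disks property of Definition \ref{def:locfinite}. ``No local cut points'' belongs to Anderson's characterization of the Menger curve and plays no role here. More importantly, the property you set out to prove is a disjointness condition: perturbing a single path so that it avoids a prescribed locally finite family of other paths. The characterization needs a discreteness condition instead. In Definition \ref{def:locfinite} the $f_m$ may all be one and the same path, and you must find $\mathcal{U}$-close $g_m$ such that every $\xi\in\partial(X/N)$ has a neighbourhood missed by $g_m$ for all large $m$. This is impossible in any compact space: choose $p_m\in g_m([0,1])$ and extract a convergent subsequence. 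So what must be shown is that entire paths can be pushed off to infinity inside each element of $\mathcal{U}$, uniformly in $t$, and avoiding other paths never produces that.

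Your mechanism cannot deliver this. The relator loops in $X/N$ have diameter at most about $L$. They drive the connectivity argument of Theorem \ref{thm:maintheorem}, but they give no direction in which a path can be pushed arbitrarily far. Also, ``loop boundary circles'' are not subsets of $\partial(X/N)$ at all. Your claim that every visual ball contains $k$ pairwise $cr$-separated such sets for every $k$ is asserted without argument; coarse transitivity gives no compactness or self-similarity in this non-proper setting. Even granted, it would only give discrete targets, with no control over the pieces of $g_m$ that join them.

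The missing idea is to use the unbounded domains as coordinates, which the paper does as follows.
\begin{enumerate}
\item It defines quotient projections $\rho^{\bar{X}}_{\bar{U}}$ and $\rho^{\partial(X/N)}_{\bar{U}}$ via near-minimal lifts and proves a quotient bounded geodesic image property (Lemmas \ref{lem:subdomquotBGI} and \ref{lem:boundaryBGI}).
\item Along $f_m$ it takes chains of domains in $\bar{\mathcal{D}}_0$ with consecutive members orthogonal, of slack at most $200\mathcal{B}$, placed at the height dictated by $\mathcal{U}$. This is where item 4 of Definition \ref{def:largedense} enters.
\item It chooses boundary points $\lambda_{i,j}$ whose projections lie at distance at least $100\mathcal{T}m$ from $\rho^{\bar{X}}_{\bar{U}}(\bar{x}_0)$ in two consecutive orthogonal domains simultaneously. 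This uses the quasi-lines in the domains plus partial realization.
\item It joins consecutive $\lambda_{i,j}$ by paths whose projection to the shared domain drifts only boundedly (Lemma \ref{lem:narrowpaths}, a relative rerun of the main construction).
\end{enumerate}
Consequently every point of $g_m$ projects, in some bounded-slack domain, at distance of order $m$ from the projection of $\bar{x}_0$. Bounded geodesic image then forces rays to $g_m(t)$ through $\rho^{\bar{U}}_{\bar{X}}$, which keeps $g_m$ inside the right element of $\mathcal{U}$. Meanwhile Lemma \ref{lem:slack} bounds the projections of any fixed $\xi$ to all bounded-slack domains uniformly, so $g_m$ misses a fixed neighbourhood of $\xi$ for all large $m$. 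Orthogonality, the unboundedness of the domains, and the slack bookkeeping are all absent from your sketch, so (v) remains unproved.
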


Recall that the \emph{Nöbeling curve} is a topological space of dimension $1$ that may be constructed by deleting from $\reals^3$ all points with at least two rational coordinates. To give a sense for this space, note that despite having topological dimension $1$, it remains connected after deleting any countable subset.

We believe Theorem \ref{thm:nobeling} is particularly interesting in light of analogous results on curve graph boundaries, discussed below.

\subsection{Applications}

We identify two classes of (relatively) hierarchically hyperbolic groups satisfying the hypotheses of Theorem \ref{thm:nobeling}. 
\begin{enumerate}
\item Graphs of groups with edges representing commutation, as defined in \cite{BR22}, satisfying certain simple conditions. This class includes all right-angled Artin groups $G$ such that the defining graph of $G$ is connected and is not a join, and many right-angled Coxeter groups.
\item Fundamental groups of non-geometric $3$-dimensional graph manifolds, using the hierarchical hyperbolicity structure developed in \cite{HRSS25}.

\end{enumerate}

Two examples are illustrated in Figure \ref{fig:raag}. In both examples, we assign a group to each vertex, take a free product of these groups, and then declare the relations that generators of groups corresponding to adjacent vertices commute. For the left pentagon graph, we assign $\intgr$ to each vertex; this is a right-angled Artin group. For the right graph, we arrange five pentagon graphs in a super-pentagon, and add edges between all pairs of vertices in adjacent pentagons. We then assign the group $\intgr/2\intgr$ to each vertex; this is a right-angled Coxeter group.

\begin{figure}
\centering
    \includegraphics[width=0.8\textwidth]{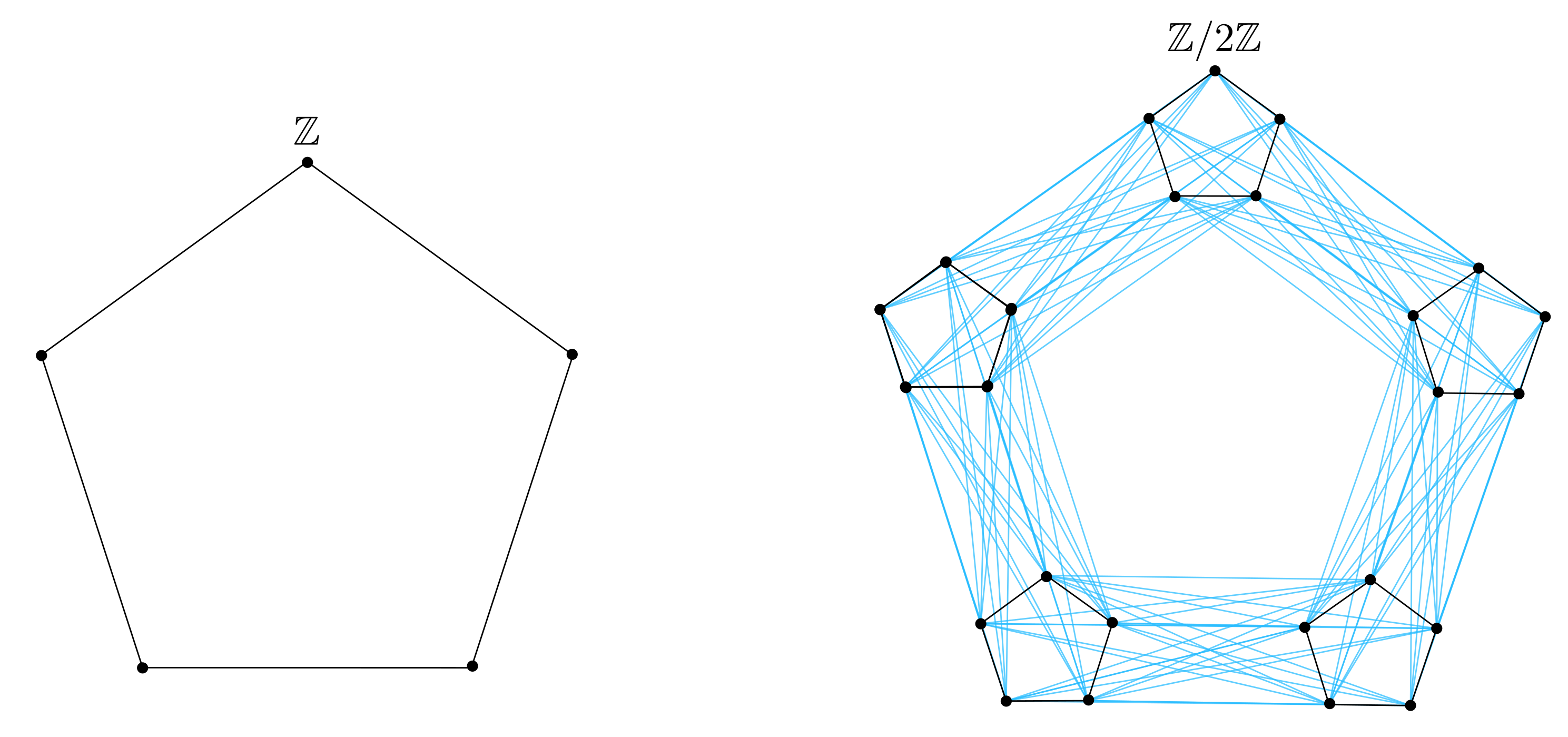} 
 \caption{Two defining graphs that provide examples of groups satisfying the hypotheses of Theorem \ref{thm:nobeling}. The left graph is taken as the defining graph of a right-angled Artin group. The right graph is constructed by taking five pentagon graphs, placing these graphs in a pentagonal configuration, and adding edges between all pairs of vertices in adjacent graphs. This graph is taken as the defining graph of a right-angled Coxeter group.}
 \label{fig:raag}
\end{figure}

\subsection{Background}

Many groups in geometric group theory exhibit hyperbolic behavior through actions on metric spaces; \emph{hierarchical hyperbolicity} is a way of axiomitizing a specific strong form of hyperbolicity observed across many important groups \cite{BHS17b, BHS19}, while acylindrical hyperbolicity likewise axiomitizes a somewhat weaker form observed in groups such as $\text{Out}(F_n)$ \cite{DO16}. The prototypical example of hierarchical hyperbolicity is the Masur-Minsky machine for mapping class groups, in which mapping classes may be described via coordinates on Gromov hyperbolic spaces, the \emph{curve graphs}, obtained from subsurfaces \cite{MM99, MM00}. Curve graphs have proved to be extremely useful tools in their own right \cite{MM99, YM06, KR07, BM08, BKMM12, BCM12, BBF15}. The HHG machinery has since seen application to  many other notable groups, including extra-large Artin groups \cite{HMS22} and (very recently) the fundamental group of the moduli space of smooth complex cubic surfaces \cite{AW26}. 

The Gromov boundary and its generalizations are central tools in the study of Gromov hyperbolic spaces, and are key to applications such as rigidity \cite{CT07, BM91, BB98, KK00, BK07, CH17, DMS25}. The boundary of a curve graph is the corresponding space of \emph{ending laminations} \cite{EK22}; boundary path connectivity of curve graphs was proven for special cases by Leininger-Schleimer and Leininger-Mj-Schleimer \cite{LS09, LMS11}, and in full generality by Gabai \cite{DG09}. Gabai further proved that curve graphs of punctured spheres have Nöbeling space boundaries \cite{DG09}, generalizing prior work of Hensel-Przytycki \cite{HP11}. In turn, Wright showed that all curve graphs have linearly connected boundaries \cite{AW24}.

Gromov proposed random groups (specifically, random quotients of free groups) as models for the expected properties of `typical' hyperbolic groups \cite{MG93}. Our results may be viewed as extensions of previous results on random hyperbolic groups to the HHG setting; Champetier showed few-relator random quotients of free groups have boundaries homeomorphic to the Menger curve \cite{CC95}, while Dahmani-Guirardel-Przytycki showed the same for random quotients of free groups in the density model \cite{DGP11}. We also note recent work of Ioannou on hyperbolic groups with Menger curve boundaries \cite{RI26}. One may likewise study the statistical properties of random elements of groups; in our setting of acylindrically hyperbolic group actions $G\curvearrowright X$, we rely substantially on deep results on random walks due to Maher--Tiozzo \cite{MT18, MT21}.

\subsection{Methods}

Our methods have their earliest antecedents in the work of Champetier \cite{CC95}, who studied random quotients of free groups and showed that these quotients have boundaries homeomorphic to the Menger curve (also called the Menger sponge). In order to adapt this approach to general acylindrically hyperbolic groups and hierarchically hyperbolic groups, one has to understand the structure of the normal closures of WPD elements. This structure was first systematically understood by Dahmani-Guirardel-Osin \cite{DGO17}, and further developed by Clay--Mangahas--Margalit \cite{CMM21} and Clay--Mangahas \cite{CM22}. These works introduce and develop the \emph{rotating/spinning families} machinery, and while this machinery does not appear directly in our work, the core ideas underlie many of our arguments. Another important input is the projection complex machinery due to Bestvina--Bromberg--Fujiwara \cite{BBF15}, which was further refined by Bestvina--Bromberg--Fujiwara--Sisto \cite{BBFS19}. 

To illustrate our methods for Theorem \ref{thm:maintheoremintro}, we sketch the case of the free group. Let $2\le k \le n$, let $G= \langle a_{1}, a_{2}, \ldots, a_{n}\rangle$ be a free group of rank $n$, and let $X$ be its standard Cayley graph.

Let us now consider a probability measure $\mu$, the support of which is $\{a_{1}^{\pm}, \ldots, a_{n}^{\pm}\}$. Then the random walk generated by $\mu$ is admissible with bounded steps on $X$. By the work of Maher-Tiozzo, there exists $\lambda>0$ such that the following property holds for asymptotic probability 1: the element $w$ obtained at step $n$ of the random walk satisfies \[
0.9999\lambda n < d_{X}(e, w) <1.0001\lambda n,\quad (w^{-1} \cdot w)_{e} \le \log n.
\]
Moreover, $[e, w]$ and $g[e, w]$ will overlap for length $<0.0001 n$ for each $g \in G \setminus \{e\}$. 

Now consider the quotient $X / \llangle w \rrangle$. What is the geometry of this space? It is no longer a tree, as each left coset of $\langle w \rangle$ gives rise to loops of length $\approx n$. However, $X / \llangle w \rrangle$ retains Gromov hyperbolicity almost surely \cite{MG93}; this result in the general non-proper setting is studied in \cite{ABMNR26}.

We now describe our proof of linear connectivity. The basic idea, as in \cite{CC95} and also \cite{AW24}, is that given a path lying in the $n$-sphere $S_n$ centered at some basepoint $x_0$, we may `push it out' to a finite-length path lying in $S_{n+1}\cup S_{n+2}\cup...\cup S_{n+K}$. Furthermore, we may do this pushing in a well-controlled way. Specifically, given a path $\gamma_n= (v_{0}, \ldots, v_{k})$ on the $n$-sphere $S_n$, for each $0 \le i \le k$ there exists a ``prong outward" at $v_{i}$. Namely, there exists a vertex $w_{i}$ at distance  from $v_{i}$ such that the directed paths $\overrightarrow{v_{i} w_{i}}$'s are all directed away from the center $x_{0}$ and are repelling each other; in numeric terms, both $(w_i\cdot x_0)_{v_i}$ and $(w_i\cdot w_{i+1})_{v_i}$ are at most, say, $4$. As $w$ is a generic word, it contains all $w_{i}^{-1}(v_{i}^{-1}v_{i+1}) w_{i+1}$ (of length $\le 21$) as subwords. Hence, in $X/\llangle w \rrangle$, we have loops attached at each of $\{v_{0}, \ldots, v_{n}\}$, where adjacent loops overlap for length $10$. We assign the label $u_i$ to the point at the apex of the $i$th loop, and join $v_0$ and $v_k$ through a path $\gamma_{n+1}$ that `skips' along the loops, passing through each $u_i$. This path is illustrated in Figure \ref{fig:skip}. As the prongs are directed away from $x_0$, this path lies in $S_{n+1}\cup S_{n+2}\cup...\cup S_{n+K}$. We note that the overlapping construction of the loops is essential to keep the path from re-entering $S_n$; as a particular consequence, the Gromov products between consecutive $w_{1}, \ldots, w_{m}$ are strictly larger than the products between consecutive $v_{1}, \ldots, v_{k}$. 

We think of $\gamma_{n+1}$ as a refined version of $\gamma_n$, and by inductively pushing paths $\gamma_{n}$ deeper and deeper, we can draw limiting paths on $\partial X / \llangle w \rrangle$ connecting arbitrary boundary points. While the pushing out procedure increases the length of the path at each step, we retain sufficient geometric control that the visual-metric diameter of the resultant boundary path is linearly bounded.

Our other results follow from further analysis of this basic scheme.

\begin{figure}[htbp]
    \centering 
    \includegraphics[width=0.8\textwidth]{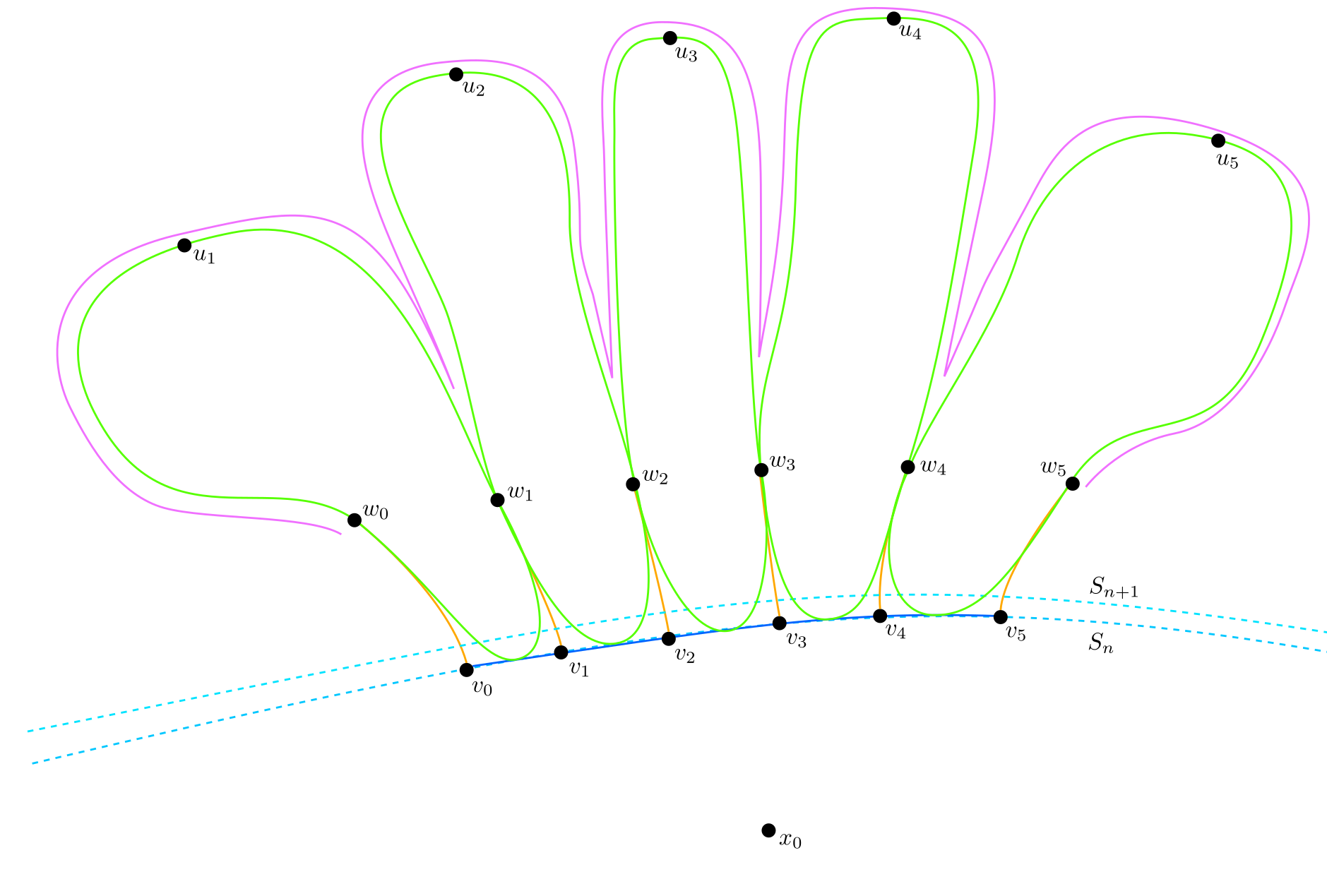} 
    \caption{The heuristic idea for the inductive pushing out procedure. The pink curve represents the pushed-out path, which skips over the attached loops.}
    \label{fig:skip}
\end{figure}

\subsection{Acknowledgments}

The authors would like to thank Alex Wright for suggesting the problem and Carolyn Abbott for further helpful discussions at the onset of the project. Furthermore, the authors would like to thank Giorgio Mangioni for his help with identifying applications of Theorem \ref{thm:nobeling}, in particular the $3$-manifold group example. The authors would also like to thank George Domat and Alessandro Sisto for helpful conversations.

Talbott was supported by the National Science Foundation Graduate Research Fellowship under Grant. No. DGE 2241144. Choi was supported by the KIAS Individual Grant (MG091901) at KIAS. Choi was also supported by the National Science Foundation under Grant No. DMS-2424139, while the authors were in residence
at the Simons Laufer Mathematical Sciences Institute in Berkeley, California, during the Spring 2026 semester. 

\subsection{Artificial Intelligence Declaration}

Talbott briefly experimented with using LLMs. The sole outcome of this usage was that the authors became aware of the Urysohn-Menger Sum Formula as a potential tool. Aside from this instance, no AI was used in the preparation of this paper. The text and images of the paper are entirely human-authored.

\section{Preliminaries}

\subsection{Notation}

Throughout the text, we will use the notation $A=_C B$ for real numbers $A,B,C$ with $C\geq 0$ to denote $|A-B|\leq C$.

\subsection{Hyperbolic Spaces and Projections}

We review some hyperbolic geometry and introduce the key hypotheses we will need. All metric spaces we will consider in this paper are geodesic metric spaces, and in fact are simplicial graphs. As a consequence, the distance function is integer-valued. This assumption gives essentially no loss of generality, as whenever a group $G$ acts on a metric space $X$ by isometries, $X$ is $G$-equivariantly quasi-isometric to a simplicial graph $X_S$, such that $G$ also acts on $X_S$ by isometries. We will further assume that $G$ is finitely generated, so that $G$ may be equipped with a word metric.

Given two points $x,y\in X$, we will use the notation $[x,y]_X$ to represent a geodesic from $x$ to $y$. Recall that a geodesic metric space $X$ is $\delta$\emph{-hyperbolic} (or \emph{Gromov hyperbolic} if for all geodesic triangles with sides $[x,y]_X$, $[y,z]_X$, and $[z,x]_X$, each side is contained in the $\delta$-neighborhood of the union of the other two sides. It is often useful to consider \emph{quasi-geodesics} instead of geodesics.

Throughout this section, we will assume a fixed $\delta$-hyperbolic simplicial graph $X$.

\begin{definition}

A $(\lambda,c)$-quasi-geodesic is a map $\gamma$ from a (possibly unbounded) subinterval $I$ of $\intgr$, such that for $s,t\in I$,
$$\frac{1}{\lambda}d_\intgr(s,t)-c\leq d_X(\gamma(s),\gamma(t))\leq \lambda d_\intgr(s,t)+c$$
A $c$\emph{-coarse geodesic} is a $(1,c)$-quasi-geodesic.

A $c$\emph{-coarse geodesic at speed }$L$ is map $\gamma$ from a subinterval $I\subset\intgr$ to $X$ such that for $s,t\in I$,
$$Ld_{\ntrl}(s,t)-c\leq d_X(\gamma(s),\gamma(t))\leq Ld_\ntrl(s,t)+c$$

\end{definition}

\begin{remark}

We will often use $\gamma$ to refer interchangeably to a quasi-geodesic map and its image.

\end{remark}

\begin{definition}

A subset $A\subset X$ is $K$-quasi-convex if for all $x,y\in A$ and all $[x,y]_X$, $[x,y]_X\subset \N_K(A)$.

\end{definition}

We need a few standard lemmas and definitions.

\begin{lemma}[Morse Lemma]\label{lem:morse}

For all $\lambda\geq 0$ and $c\geq 0$, there is a constant $\Phi\geq 0$ dependent only on $\delta$, $\lambda$, and $c$ satisfying the following property: for any two $(\lambda,c)$-quasigeodesics $\gamma_1$ and $\gamma_2$ with the same endpoints in $X$, the Hausdorff distance between $\gamma_1$ and $\gamma_2$ is at most $\Phi$.

\end{lemma}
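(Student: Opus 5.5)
The plan is the classical proof of the Morse lemma in a $\delta$-hyperbolic simplicial graph. We first reduce to the case where $\gamma_2$ is a geodesic $[x,y]_X$ between the common endpoints $x,y$. If each $(\lambda,c)$-quasi-geodesic with endpoints $x,y$ lies within Hausdorff distance $\Phi_0(\delta,\lambda,c)$ of $[x,y]_X$, then the triangle inequality for Hausdorff distance gives the statement with $\Phi=2\Phi_0$. Since $\gamma$ is defined on a subinterval of $\intgr$ and consecutive points are at distance at most $\lambda+c$, we may also replace $\gamma$ by the path obtained by joining consecutive points with geodesic segments. This changes the Hausdorff distance by at most $\lambda+c$. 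The result is a continuous $(\lambda,c')$-quasi-geodesic with $c'$ depending only on $\lambda$ and $c$, and it has a length bound: a subpath between parameters $s<t$ has length at most $(\lambda+c)(t-s+1)$.

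\textbf{Step 1: $[x,y]_X$ lies near $\gamma$.} We use the standard logarithmic lemma: in a $\delta$-hyperbolic space, if $p$ is a rectifiable path of length $\ell\ge1$ from $x$ to $y$, then every point of $[x,y]_X$ lies within $\delta\log_2\ell+1$ of $p$. This follows by repeatedly bisecting $p$ and applying thinness of triangles $\lceil\log_2\ell\rceil$ times.

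Let $z\in[x,y]_X$ maximize $D=d(z,\gamma)$, and suppose $D$ is large. Choose $y_1,y_2\in[x,y]_X$ at distance $2D$ on either side of $z$, or at the endpoints if these are closer. Let $y_i'\in\gamma$ be points with $d(y_i,y_i')\le D$. Let $p$ be the concatenation of $[y_1,y_1']$, the subpath of $\gamma$ from $y_1'$ to $y_2'$, and $[y_2',y_1... ]$ replaced properly by $[y_2',y_2]$. Then $d(y_1',y_2')\le 6D$, so the quasi-geodesic bound gives a parameter gap of at most $\lambda(6D+c)$. Hence $p$ has length at most $2D+(\lambda+c)(6\lambda D+\lambda c+1)$, which is linear in $D$.

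Every point of $p$ is at distance at least $D$ from $z$. For points of $\gamma$ this holds by the definition of $D$. For points of the two connecting segments it follows from $d(z,y_i)=2D$ and the triangle inequality. So the log lemma gives $D\le\delta\log_2(AD+B)+1$ with $A,B$ depending only on $\lambda$ and $c$. This bounds $D$ by a constant $D_0(\delta,\lambda,c)$.

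\textbf{Step 2: $\gamma$ lies near $[x,y]_X$.} Suppose a maximal subpath $\gamma|_{[a,b]}$ lies outside $\N_{D_0}([x,y]_X)$. By Step 1, every point of $[x,y]_X$ is $D_0$-close to either $\gamma|_{(-\infty,a]}$ or $\gamma|_{[b,\infty)}$, and both sets are nonempty and closed. By connectedness of $[x,y]_X$, some point $w\in[x,y]_X$ is $D_0$-close to a point $u=\gamma(s)$ with $s\le a$ and to a point $v=\gamma(t)$ with $t\ge b$. Then $d(u,v)\le 2D_0$, so the quasi-geodesic inequality gives $t-s\le\lambda(2D_0+c)$. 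Each point of $\gamma|_{[a,b]}$ is therefore within $(\lambda+c)\lambda(2D_0+c)$ of $u$, and hence within $D_0+(\lambda+c)\lambda(2D_0+c)$ of $[x,y]_X$. Together with Step 1, this bounds the Hausdorff distance by $\Phi_0$.

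\textbf{Main obstacle.} The delicate step is Step 1: checking that the detour path $p$ stays at distance at least $D$ from $z$ while its length grows only linearly in $D$. The linear length bound is what lets the logarithmic inequality force $D$ to be bounded. If the endpoints are in the boundary, that is, $I$ is unbounded, the same argument applies to finite truncations with uniform constants. Alternatively, the statement can be restricted to finite endpoints, which is all that is stated.
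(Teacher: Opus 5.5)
The paper gives no proof of this lemma. It is listed among the ``standard lemmas'' of the preliminaries and used as background; it is the classical stability theorem, e.g.\ Bridson--Haefliger III.H.1.7.

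Your proposal is that textbook argument, and it is correct. You tame the discrete quasi-geodesic into a continuous path with a linear length bound. You then bound $D=\max_{z\in[x,y]_X}d(z,\gamma)$ by applying the logarithmic path lemma to a detour of length linear in $D$ that avoids the $D$-ball about $z$, which gives $D\le\delta\log_2(AD+B)+1$. Finally, a connectedness argument bounds the excursions of $\gamma$ away from $[x,y]_X$.

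Only cosmetic points remain:
\begin{itemize}
\item The definition of the detour path is garbled; it should read $p=[y_1,y_1']\cup\gamma|_{y_1'\to y_2'}\cup[y_2',y_2]$.
\item The points $y_i'$, and likewise $u,v$ in Step 2, lie on the tamed path and need not sit at integer parameters. You should either use the tamed constant $c'$ or round to the nearest integer parameter at cost $\lambda+c$. Either way the bounds stay linear in $D$.
\item In Step 2 the excursion bound should carry the $+1$ from your own tameness estimate, i.e.\ $(\lambda+c)(\lambda(2D_0+c)+1)$.
\item The statement's ``$\lambda\ge 0$'' must be read as $\lambda>0$, since the definition involves $1/\lambda$. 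Your argument never needs $\lambda\ge 1$.
\end{itemize}

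Your closing aside about endpoints at infinity is not needed for the statement as written, and as phrased it is not quite right. Finite truncations of two quasi-geodesic rays with a common boundary point do not share endpoints. An additional step, such as joining the truncation endpoints and using thin quadrilaterals, would be required.
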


\begin{definition}[Closest Point Projections]

Assume $A\subset X$ is $K$-quasi-convex for some $K$. Given $x\in X$, we define
$$\pi_A(x)=\{y\in A:d(x,A)=d(x,y)\}.$$
For $B\subset X$, we define $\pi_A(B)=\bigcup_{x\in B}\pi_A(x)$. Lastly, for $A,B\subset X$, we define
$$d_A(B,C)=\diam(\pi_A(B)\cup\pi_A(B))$$

\end{definition}

\begin{remark}

Note that there are two quite different forms of projection that will be denoted with $\pi$: we will use $\pi_A$ to denote closest-point projection \emph{in} $X$ to a quasi-convex subset of $X$, as well as $\pi_{X/N}$ to denote the quotient projection from $X$ to $X/N$.

\end{remark}

\begin{lemma}

Assume $A\subset X$ is $K$-quasi-convex. Then there is a constant $D$ dependent only on $\delta$ and $K$ so that for any $x\in X$, $\diam(\pi_A(x))\leq D$.

\end{lemma}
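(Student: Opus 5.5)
Fix $x\in X$ and let $y,y'\in\pi_A(x)$, so that $d(x,y)=d(x,y')=d(x,A)=:r$. The plan is to bound $d(y,y')$ by looking at the geodesic triangle with vertices $x,y,y'$ and sides $[x,y]_X$, $[x,y']_X$, $[y,y']_X$. Two facts drive the argument. First, quasi-convexity puts every point of $[y,y']_X$ near $A$. Second, minimality of $y$ and $y'$ forbids a point near $A$ from being much closer to $x$ than $r$.

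\textbf{Step 1 (choose a midpoint).} Let $m$ be a point on $[y,y']_X$ with $d(m,y)\ge d(y,y')/2$ and $d(m,y')\ge d(y,y')/2$. Such a point exists at the midpoint, up to an additive $1$ coming from the integer-valued metric.

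\textbf{Step 2 (use thinness).} By $\delta$-hyperbolicity, $m$ is within $\delta$ of some point $p$ on $[x,y]_X$ or on $[x,y']_X$. By symmetry we may assume $p\in[x,y]_X$. Then
$$d(x,p)=r-d(p,y)\le r-\bigl(d(m,y)-\delta\bigr)\le r-\tfrac{d(y,y')}{2}+\delta.$$

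\textbf{Step 3 (use quasi-convexity).} Since $A$ is $K$-quasi-convex and $y,y'\in A$, there is a point $a\in A$ with $d(m,a)\le K$. Hence $d(p,a)\le K+\delta$, and so
$$d(x,a)\le d(x,p)+K+\delta\le r-\tfrac{d(y,y')}{2}+K+2\delta.$$

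\textbf{Step 4 (use minimality).} Because $a\in A$ and $r=d(x,A)$, we have $d(x,a)\ge r$. Combining this with Step 3 gives $d(y,y')\le 2K+4\delta$, up to the additive $1$ from Step 1. Therefore $\diam(\pi_A(x))\le 2K+4\delta+2=:D$, which depends only on $\delta$ and $K$.

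No step here poses a real obstacle. The only point needing care is the existence of closest points, because $X$ need not be proper. The statement defines $\pi_A(x)$ as the set of points realizing $d(x,A)$, and since the metric is integer-valued, that infimum is attained whenever $A$ is nonempty. So $\pi_A(x)$ is nonempty and the argument above applies to any two of its points. Were the metric not integer-valued, one would instead work with coarse closest points that realize $d(x,A)$ up to $1$, and the same computation would add only a bounded constant.
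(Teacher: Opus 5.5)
Your proof is correct: the thin-triangle estimate at the midpoint of $[y,y']_X$, quasi-convexity, and the minimality $d(x,a)\ge d(x,A)$ together give $d(y,y')\le 2K+4\delta$ up to the rounding from the integer metric, so $D=2K+4\delta+2$ depends only on $\delta$ and $K$. The paper states this lemma without proof as a standard fact, and your argument is the standard one.
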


For the next lemma, the reader should note that the name `Bounded Geodesic Image' is also frequently applied to a conceptually related but somewhat different axiom for hierarchically hyperbolic spaces.

\begin{lemma}[Bounded Geodesic Image]
\label{lem:BGI}

Let $A\subset X$ be $K$-quasi-convex, and fix $\lambda,c\geq 0$. Then there exists $B,B'>0$ dependent only on $\delta$, $K$, $\lambda$, and $c$ so that if $\gamma$ is a $(\lambda,c)$-quasi-geodesic disjoint from $\N_{B'}(A)$, then $\diam(\pi_A(\gamma))\leq B$.

\end{lemma}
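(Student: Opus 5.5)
We prove the Bounded Geodesic Image lemma (Lemma \ref{lem:BGI}) by first reducing to geodesics via the Morse Lemma (Lemma \ref{lem:morse}), then proving the geodesic case with a thin-quadrilateral argument. Let $D$ be the projection-diameter bound from the preceding lemma. Let $\Phi=\Phi(\delta,\lambda,c)$ be the Morse constant. Since $\gamma$ takes values at integer points, consecutive points are at distance at most $\lambda+c$. So for any $s<t$ in the domain, the subpath $\gamma|_{[s,t]}$ lies in the $\Phi$-neighborhood of any geodesic $[\gamma(s),\gamma(t)]_X$. Conversely, that geodesic lies in the $\Phi$-neighborhood of $\gamma$, hence outside $\N_{B'-\Phi}(A)$. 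It therefore suffices to show two things. First, a geodesic $\sigma$ avoiding $\N_{R}(A)$, with $R$ a suitable function of $\delta,K$, has endpoints with projections at distance bounded in terms of $\delta,K$. Second, $\pi_A$ is coarsely Lipschitz, so the nearby subpath of $\gamma$ has projection diameter controlled as well.

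For the geodesic case, let $x,y$ be the endpoints of $\sigma$, and pick $p\in\pi_A(x)$ and $q\in\pi_A(y)$. Suppose $d(p,q)$ is large. Consider the geodesic quadrilateral with vertices $x,p,q,y$. By $2\delta$-thinness, every point of $[p,q]_X$ is within $2\delta$ of $[x,p]\cup[p,y']\ldots$; more precisely, of $[x,p]_X\cup[x,y]_X\cup[y,q]_X$. The key claim is the following standard fact: for $z\in[x,p]_X$, the point $p$ is (coarsely) a closest point of $A$ to $z$, so $d(z,A)\ge d(z,p)-C(\delta,K)$. Indeed, if some $a\in A$ were much closer to $z$, then $d(x,a)\le d(x,z)+d(z,a)<d(x,p)$, contradicting minimality.

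Now take $m\in[p,q]_X$ far from both $p$ and $q$, say with $d(m,p),d(m,q)>2\delta+C+K+\ldots$. Quasi-convexity puts $m$ within $K$ of $A$. Suppose $m$ is $2\delta$-close to a point $z\in[x,p]_X$. Then $d(z,A)\le 2\delta+K$, while $d(z,p)\ge d(m,p)-2\delta$, which is large. This contradicts the claim above, and the case of $[y,q]_X$ is symmetric. Hence $m$ is $2\delta$-close to $\sigma=[x,y]_X$, so $d(\sigma,A)\le 2\delta+K$. Choosing $R>2\delta+K$, this shows $d(p,q)$ is bounded by a constant depending only on $\delta$ and $K$.

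Finally, we bound $\diam(\pi_A(\gamma))$. For any two points $\gamma(s),\gamma(t)$, apply the geodesic case to $[\gamma(s),\gamma(t)]_X$, which avoids $\N_{B'-\Phi}(A)$. Take $B'=R+\Phi$ and add $2D$ to account for the diameter of the projection sets. This yields $B$ depending only on $\delta,K,\lambda,c$.

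The main obstacle is the coarse-closest-point claim for points on $[x,p]_X$. It needs care with the constants, using quasi-convexity of $A$ to replace $a$ by a point on a geodesic in $A$'s neighborhood. Everything else is routine thin-polygon bookkeeping.
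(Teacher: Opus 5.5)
The paper gives no proof of this lemma; it lists it among the ``standard lemmas'' in the preliminaries and states it without argument, so there is nothing to compare against. Your proposal is the standard argument and it is correct. The Morse-lemma reduction to $[\gamma(s),\gamma(t)]_X$, which avoids $\N_{B'-\Phi}(A)$, followed by the thin-quadrilateral argument for the geodesic case, gives $B,B'$ depending only on $\delta,K,\lambda,c$.

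A few places where you make it harder than it is:

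\begin{enumerate}
\item \emph{The ``main obstacle'' is exact.} For $z\in[x,p]_X$ and any $a\in A$ you have $d(x,z)+d(z,p)=d(x,p)\le d(x,a)\le d(x,z)+d(z,a)$. Hence $d(z,A)=d(z,p)$ exactly, with no additive constant and no use of quasi-convexity.
\item \emph{Where quasi-convexity is used.} It is needed only to place the midpoint-type point $m\in[p,q]_X$ within $K$ of $A$.
\item \emph{The explicit count.} If $m$ is $2\delta$-close to $[x,p]_X$, then $d(m,p)\le 4\delta+K$, and symmetrically for $q$. If $R>2\delta+K$, no $m$ can be $2\delta$-close to $\sigma$. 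So every point of $[p,q]_X$ lies within $4\delta+K$ of $p$ or of $q$, which gives $d(p,q)\le 8\delta+2K+1$, the $+1$ accounting for discreteness.
\item \emph{Unused steps.} The coarse-Lipschitz property announced in your first paragraph is never needed, because your last paragraph applies the geodesic case directly to each pair $\gamma(s),\gamma(t)$. The extra $2D$ is harmless but unnecessary, since your bound holds for every choice of $p\in\pi_A(\gamma(s))$ and $q\in\pi_A(\gamma(t))$.
\end{enumerate}
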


\begin{lemma}[Large Projection Fellow-Traveling]
\label{lem:BGIrouting}

Let $A\subset X$ be $K$-quasi-convex. Then there exists $\Omega$ dependent only on $\delta$ and $K$ so that for any points $x,y\in X$ and any $x'\in\pi_A(x)$, $y'\in\pi_A(y)$, if $d_A(x',y')\geq\Omega$, then any concatenation $[x,x']_X\cup[x',y']_X\cup[y',y]_X$ is a $(1,\Omega)$-quasi-geodesic.

\end{lemma}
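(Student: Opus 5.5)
The approach is to verify the quasi-geodesic inequalities for the concatenation $\gamma=[x,x']_X\cup[x',y']_X\cup[y',y]_X$ directly, using the standard \emph{thin quadrilateral} picture for projections to quasi-convex sets. The key geometric input is a basic estimate. If $x'\in\pi_A(x)$ and $p\in A$, then every geodesic $[x,p]_X$ passes within a constant $E=E(\delta,K)$ of $x'$. Equivalently, $(x\cdot p)_{x'}\le E$.

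To prove this estimate, take a geodesic triangle on $x,x',p$. The side $[x',p]_X$ lies in $\N_K(A)$ by quasi-convexity. The point of $[x',p]_X$ at distance $(x\cdot p)_{x'}$ from $x'$ is $\delta$-close to a point $q$ of $[x,x']_X$ with $d(q,x')\approx(x\cdot p)_{x'}$. Since $q$ is then within $\delta+K$ of $A$ and $x'$ is a closest point to $x$, we get $d(x,q)+(\delta+K)\ge d(x,x')=d(x,q)+d(q,x')$. Hence $(x\cdot p)_{x'}\lesssim\delta+K$.

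\textbf{Step 1: near-additivity of $d(x,y)$.} Apply the estimate to both $x$ and $y$. Consider the quadrilateral $x,x',y',y$. Its side $[x',y']_X$ has length $\ge\Omega$, which we take large compared to $E$ and $\delta$. Using the Gromov products $(x\cdot y')_{x'}\le E$ and $(y\cdot x')_{y'}\le E$, standard four-point hyperbolicity computations give
\[
d(x,y)\ \ge\ d(x,x')+d(x',y')+d(y',y)-C(\delta,K).
\]
Concretely, a geodesic $[x,y]_X$ must pass close to $x'$ and then to $y'$. If it avoided $\N_{E+2\delta}(x')$, thinness would force a point of $[x,y]_X$ to shadow $[x',y']_X$ far from both endpoints. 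But $[x,x']$ and $[y',y]$ cannot fellow-travel each other, because their projections to $A$ are $\Omega$ apart.

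\textbf{Step 2: the quasi-geodesic inequality for arbitrary subsegments.} The upper bound $d(\gamma(s),\gamma(t))\le|s-t|$ is automatic for a concatenation of geodesics parametrized by arc length. For the lower bound, take points $u,v$ on $\gamma$. If both lie on the same piece, there is nothing to prove. If $u\in[x,x']_X$ and $v\in[x',y']_X$, note that $x'\in\pi_A(u)$, since subsegments of projection geodesics still project to $x'$ (up to the diameter bound $D$ on projections). The estimate then gives $d(u,v)\ge d(u,x')+d(x',v)-2E$. If $u\in[x,x']_X$ and $v\in[y',y]_X$, apply Step 1 to the points $u,v$, which have projections $x',y'$. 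Together these give a $(1,C)$-quasi-geodesic, and we set $\Omega\ge C$.

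\textbf{Main obstacle.} The delicate point is that $\pi_A$ is only coarsely well defined. The quadrilateral argument in Step 1 needs $\Omega$ chosen large relative to $E$, $D$ and $\delta$, while simultaneously $\Omega$ must serve as the additive error. Since all constants depend only on $\delta$ and $K$, we can take $\Omega=\max(C,\,10(E+D+\delta))$, and this resolves it.
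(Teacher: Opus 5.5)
Your proof is correct, and it is the standard argument. The paper gives no proof of this lemma; it lists it among standard preliminaries, so there is no competing route to compare against.

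Your Step 1 is exactly what the paper's quoted alignment lemma of Gou\"ezel delivers. By your key estimate, the quadruple $(x,x',y',y)$ is aligned with constant about $E$. Once $\Omega$ exceeds roughly $2E+2\delta+1$, that lemma gives $(x\cdot y)_{x'}$ and $(x\cdot y)_{y'}$ bounded by about $E+\delta$, and additivity of $d(x,y)$ follows up to an error of order $4E+2\delta$.

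A few details to tighten; none affects the outcome:
\begin{itemize}
\item In the key estimate, the two tripod points at distance $(x\cdot p)_{x'}$ from $x'$ are $4\delta$-close for $\delta$-slim triangles, not $\delta$-close. This gives $E=K+4\delta$.
\item In the case $u\in[x,x']_X$, $v\in[x',y']_X$ of Step 2, you apply the estimate with $p=v$. But $v$ lies in $\mathcal{N}_K(A)$, not necessarily in $A$. To fix this, pick $a\in A$ with $d(v,a)\le K$. Since the Gromov product is $1$-Lipschitz in each argument, $(u\cdot v)_{x'}\le (u\cdot a)_{x'}+K\le E+K$.
\item The fact $x'\in\pi_A(u)$ for $u\in[x,x']_X$ holds exactly, with no need to allow for the projection diameter bound. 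Indeed, $d(u,A)\ge d(x,A)-d(x,u)=d(u,x')$.
\end{itemize}
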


In particular, $[x,x']_X\cup[x',y']_X\cup[y',y]_X$ and any $[x,y]_X$ must fellow-travel. 

\subsection{Gromov Products and the Boundary}

\begin{definition}

Given $x,y,z\in X$, the \emph{Gromov product} $(x\cdot y)_z$ is defined as
$$(x\cdot y)_z=\frac{d_X(x,z)+d_X(y,z)-d_X(x,y)}{2}.$$

\end{definition}

Note that this definition makes sense in any metric space, although it is typically most useful in Gromov hyperbolic spaces.

The following observations are elementary.

\begin{lemma}
\label{lem:gromovprops}
Let $x, y, z, w$ be points in $X$. Then the following properties hold:
\begin{enumerate}
\item We have 
$$
d(z, [x, y]) - 2\delta \le (x \cdot y)_{z} \le d(z, [x, y]).
$$
\item We have
$$(x\cdot y)_z +(z\cdot y)_x=d_X(x,y)$$
\item (Triangle inequality for Gromov products.) We have
$$(x\cdot y)_z\geq\min((x\cdot w)_z,(w\cdot y)_z)-10\delta$$
\end{enumerate}
\end{lemma}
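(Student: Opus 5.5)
We prove the three items in turn; (2) is pure algebra, while (1) and (3) use $\delta$-thinness of geodesic triangles. For (2), expand both products from the definition:
$$(x\cdot y)_z+(z\cdot y)_x=\tfrac{1}{2}\big(d(x,z)+d(y,z)-d(x,y)+d(z,x)+d(y,x)-d(z,y)\big)=d(x,z).$$
This shows that the identity as printed should read $d_X(x,z)$ on the right-hand side, not $d_X(x,y)$. We would prove the identity in that form, since it is what the definition gives.

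For (1), set $p\in[x,y]$ with $d(z,p)=d(z,[x,y])$. The upper bound is the triangle inequality: $d(x,z)\le d(x,p)+d(p,z)$ and $d(y,z)\le d(y,p)+d(p,z)$. Adding and using $d(x,p)+d(p,y)=d(x,y)$ gives $2(x\cdot y)_z\le 2d(z,p)$.

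For the lower bound in (1), fix geodesics $[z,x]$ and $[z,y]$ and use thinness of the triangle. The function $q\mapsto d(q,[z,x])-d(q,[z,y])$ along $[x,y]$ changes sign and moves continuously. Every point of $[x,y]$ is within $\delta$ of $[z,x]\cup[z,y]$, so there is a point $q\in[x,y]$ with points $a\in[z,x]$ and $b\in[z,y]$ satisfying $d(q,a)\le\delta$ and $d(q,b)\le\delta$; in the graph setting this is where one may need to absorb a discretization error. Then
$$d(x,y)=d(x,q)+d(q,y)\ge d(x,a)+d(b,y)-2\delta=d(x,z)+d(y,z)-d(z,a)-d(z,b)-2\delta.$$
Since $d(z,a)$ and $d(z,b)$ are each at most $d(z,q)+\delta$, we get $2(x\cdot y)_z\le 2d(z,q)+4\delta$. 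This direction yields only the upper bound again, so we instead exploit the fact that $a$ and $b$ are at comparable distance from $z$. We have $d(z,q)\le d(z,a)+\delta$ and $d(z,q)\le d(z,b)+\delta$, hence
$$2d(z,[x,y])\le 2d(z,q)\le d(z,a)+d(z,b)+2\delta.$$
Moreover $d(z,a)+d(z,b)=d(x,z)+d(y,z)-d(x,a)-d(b,y)\le 2(x\cdot y)_z+2\delta$, using $d(x,a)+d(b,y)\ge d(x,q)+d(q,y)-2\delta=d(x,y)-2\delta$. Combining gives $d(z,[x,y])\le (x\cdot y)_z+2\delta$, as required.

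For (3), apply (1) three times to the geodesic triangle $x,w,y$. By (1), $d(z,[x,y])\le (x\cdot y)_z+2\delta$. Pick $p\in[x,y]$ realizing this distance. By $\delta$-thinness, $p$ lies within $\delta$ of some point $p'$ on $[x,w]$ or on $[w,y]$; say $p'\in[x,w]$. Then $d(z,[x,w])\le d(z,p)+\delta$. Using the upper bound of (1) for $(x\cdot w)_z$ and the lower bound of (1) for $(x\cdot y)_z$,
$$\min\big((x\cdot w)_z,(w\cdot y)_z\big)\le (x\cdot w)_z\le d(z,[x,w])\le d(z,[x,y])+\delta\le (x\cdot y)_z+3\delta,$$
which is well within the stated $10\delta$. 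The case $p'\in[w,y]$ is symmetric.

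The only delicate point is the lower bound in (1), specifically producing the point $q$ close to both sides; this follows from a connectedness/intermediate-value argument along $[x,y]$, with the graph setting adding at most a bounded error. That error can be absorbed into the constants, which the generous $10\delta$ in (3) accommodates.
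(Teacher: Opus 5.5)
Your proposal is correct. The paper gives no proof of these facts (it calls them elementary), and yours is the standard thin-triangles argument; it even yields the sharper constant $3\delta$ in (3).

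You are also right that (2) is misprinted. As printed it already fails for $x=y\neq z$. The identity you prove, $(x\cdot y)_z+(z\cdot y)_x=d_X(x,z)$, is exactly the form the paper uses later in Lemma~\ref{lem:quotientrouting}, with $p_1$ in the role of $z$.

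Two small fixes. First, delete the false start in (1) that only re-derives the upper bound. Second, drop the hedge about a ``discretization error'' being absorbed into constants. $X$ is a geodesic metric space, so your intermediate value argument along the continuous segment $[x,y]$ produces $q$ exactly. This matters because (1) is stated with the sharp constant $2\delta$, so no error could actually be absorbed there.
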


\begin{definition}

Let $(x_0,x_1,...,x_n)$ be a sequence of points in $X$. We say these points are $C$\emph{-aligned} for some constant $C>0$ if for all $1\leq i\leq n-1$,
$$(x_{i-1}\cdot x_{i+1})_{x_i}<C$$

\end{definition}

\begin{lemma}[{\cite[Lemma 3.7]{Gou22}}]
Let $(x_{0}, \ldots, x_{n})$ be a sequence of $C$-aligned points in $X$ such that \[
d_{X}(x_{i}, x_{i+1}) \ge 2C + 2\delta + 1 \quad (i=1, \ldots, n-2).
\]
Then any subsequence $(x_{i_1}, \ldots, x_{i_m})$ of $(x_{0}, \ldots, x_{n})$ is $(C+\delta)$-aligned. In particular, every geodesic $[x_{0}, x_{n}]_{X}$ passes through the $(C + 3\delta)$-neighborhood of $x_{i}$ for each $i=1, \ldots, n-1$.

\end{lemma}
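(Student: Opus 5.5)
We argue by induction on the number of intermediate points, proving a slightly stronger statement that is easier to propagate. Set $D_0 := 2C+2\delta+1$. The claim to establish for every $0\le i<j\le n$ is
$$(x_0\cdot x_j)_{x_i}\le C+\delta \quad\text{and}\quad (x_i\cdot x_n)_{x_j}\le C+\delta \qquad (i\ge 1,\ j\le n-1).$$
In words, each point is nearly on a geodesic from an endpoint to any later (or earlier) point. The alignment of an arbitrary subsequence then follows from one further application of the same estimate to the subsequence. A cleaner route is to prove, by induction on $j-i$, the three statements
$$(x_{i-1}\cdot x_j)_{x_i}\le C+\delta,\qquad (x_i\cdot x_{j+1})_{x_j}\le C+\delta,\qquad d(x_i,x_j)\ge d(x_i,x_{i+1}) + (j-i-1)\cdot 1,$$
the last being a crude growth bound. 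The subsequence claim for a triple $x_a,x_b,x_c$ with $a<b<c$ is then obtained by bounding $(x_a\cdot x_c)_{x_b}$.

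The key step is the standard four-point estimate. Given $(x_{i-1}\cdot x_{i+1})_{x_i}\le C$ and, inductively, $(x_i\cdot x_{j})_{x_{i+1}}\le C+\delta$, we first note that $d(x_i,x_{i+1})\ge D_0$ is much larger than both products. We then use the $0$-hyperbolicity-up-to-$\delta$ inequality
$$(x_{i-1}\cdot x_j)_{x_i}\ge \min\{(x_{i-1}\cdot x_{i+1})_{x_i},\,(x_{i+1}\cdot x_j)_{x_i}\}-\delta$$
in its contrapositive form. Combined with the identity $(x_{i+1}\cdot x_j)_{x_i} = d(x_i,x_{i+1}) - (x_i\cdot x_j)_{x_{i+1}} \ge D_0 - C-\delta > C+\delta$ (using Lemma \ref{lem:gromovprops}(2) in its general-position form), this forces the minimum in the four-point condition applied to $x_{i-1},x_{i+1},x_j$ based at $x_i$ to be attained by $(x_{i-1}\cdot x_{i+1})_{x_i}$. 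That yields $(x_{i-1}\cdot x_j)_{x_i}\le C+\delta$. Here we use the sharper $\delta$ form of the four-point condition rather than the $10\delta$ of Lemma \ref{lem:gromovprops}(3); this is exactly where the threshold $2C+2\delta+1$ enters. Symmetric reasoning handles the other side, and iterating it lets us replace $x_{i-1}$ by any $x_a$, $a<i$.

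The main obstacle is keeping the constant from accumulating to $C+k\delta$ through the iteration. To avoid this, we never feed a $C+\delta$ bound back in as a hypothesis where the original $C$ is needed. Each induction step uses one original $C$-alignment at the vertex in question, together with a lower bound on a "far" product, and that far product only needs to exceed $C+\delta$, which the spacing hypothesis guarantees with margin $1$.

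Finally, the geodesic statement follows from Lemma \ref{lem:gromovprops}(1). Since $(x_0\cdot x_n)_{x_i}\le C+\delta$, we get $d(x_i,[x_0,x_n])\le C+\delta+2\delta=C+3\delta$.
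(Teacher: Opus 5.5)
The paper does not prove this lemma; it only cites Gou\"ezel. So I am judging your argument on its own terms.

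\textbf{The one-sided induction is sound, but the displayed inequality is the wrong instance.} As written, $(x_{i-1}\cdot x_j)_{x_i}\ge\min\{(x_{i-1}\cdot x_{i+1})_{x_i},(x_{i+1}\cdot x_j)_{x_i}\}-\delta$ only bounds $(x_{i-1}\cdot x_j)_{x_i}$ from below. What you need is $(x_{i-1}\cdot x_{i+1})_{x_i}\ge\min\{(x_{i-1}\cdot x_j)_{x_i},(x_j\cdot x_{i+1})_{x_i}\}-\delta$. Here the left side is at most $C$. By induction, $(x_j\cdot x_{i+1})_{x_i}=d(x_i,x_{i+1})-(x_i\cdot x_j)_{x_{i+1}}\ge C+\delta+1$. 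Together these give $(x_{i-1}\cdot x_j)_{x_i}\le C+\delta$, and the mirror statement $(x_a\cdot x_{j+1})_{x_j}\le C+\delta$ follows the same way. Note that this sharp form of the four-point inequality is the Gromov-product definition of $\delta$, which is Gou\"ezel's convention. With the paper's slim-triangle $\delta$ you only have Lemma~\ref{lem:gromovprops}(3), with $10\delta$.

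\textbf{The genuine gap is the sentence ``iterating it lets us replace $x_{i-1}$ by any $x_a$.''} At $x_i$ there is exactly one original bound, $(x_{i-1}\cdot x_{i+1})_{x_i}\le C$. Moving \emph{both} arguments to far points requires a second application of the four-point inequality, and that application must start from $(x_{i-1}\cdot x_c)_{x_i}\le C+\delta$. It returns at best $C+2\delta$. Even that needs $(x_a\cdot x_{i-1})_{x_i}>C+2\delta$, whereas the spacing only gives $d(x_{i-1},x_i)-(x_a\cdot x_i)_{x_{i-1}}\ge C+\delta+1$. So your plan of never feeding a $C+\delta$ bound back in cannot be carried out for triples $x_a,x_b,x_c$ with $b-a\ge2$ and $c-b\ge2$.

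\textbf{The loss is real, not an artifact of the method.} Take $\delta=1$, $C=\tfrac{1}{10}$ and five points with $d_{01}=d_{34}=100$, $d_{12}=d_{23}=4$, $d_{02}=d_{24}=104$, $d_{13}=8$, $d_{03}=d_{14}=106$, $d_{04}=204$.
\begin{itemize}
\item All triangle inequalities hold.
\item The four-point condition with constant $1$ holds: in each of the five quadruples, the two largest pair-sums differ by at most $2$.
\item Every consecutive alignment product is $0$, and $d_{12}=d_{23}=4\ge 2C+2\delta+1$.
\item Yet $(x_0\cdot x_4)_{x_2}=2>C+\delta$.
\end{itemize}
Consequently, none of the following can be derived from the four-point condition: your ``stronger statement'' $(x_0\cdot x_j)_{x_i}\le C+\delta$, the $(C+\delta)$-alignment of arbitrary subsequences, or the final $C+3\delta$ neighbourhood step, which feeds $(x_0\cdot x_n)_{x_i}\le C+\delta$ for interior $i$ into Lemma~\ref{lem:gromovprops}(1).

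What your argument does legitimately prove is alignment of triples in which the middle point keeps one of its original neighbours. For interior triples you must accept a larger constant. For example, under spacing $2C+4\delta+1$ your two-step argument does give $C+2\delta$, and the neighbourhood constant then grows accordingly.
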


We now turn to the Gromov boundary. Note that this definition does not in fact depend on the choice of basepoint.

\begin{definition}

Fixing a basepoint $x_0\in X$, the \emph{Gromov boundary} $\partial X$ of $X$ is the set of all sequences of points $(x_i)$ such that $\liminf_{i,j}(x_i\cdot x_j)_{x_0}=\infty$, up to the equivalence relation that $(x_i)\sim(y_i)$ if $\lim_{i\to\infty}(x_i\cdot y_i)_{x_0}=\infty$.

\end{definition}

The Gromov boundary carries a natural topology defined via \emph{neighborhood bases} of points. Specifically, for all $\xi\in\partial X$ and $r\geq 0$, define
$U(\xi,r)$ to be the set of all $\chi\in\partial X$ such that there exist sequences $(x_i),(y_i)$ with $(x_i)$ in the equivalence class defining $\xi$, $(y_i)$ in the equivalence class defining $\chi$, and $\liminf_{i,j\to\infty}(x_i\cdot y_j)_{x_0}\ge r$. The collection of all $U(\xi,r)$ forms a neighborhood system at $\xi$.

\begin{remark}

Note that the sets $U(\xi,r)$ are not open in general under the neighborhood basis topology.

\end{remark}

\begin{lemma}

If $X$ is proper, then for any $\xi\in \partial X$, there exists a ray $\gamma:\ntrl_{\geq 0}\to X$ such that $(\gamma(i))$ lies in the equivalence class of $\xi$. For general $X$, there exists a $(1,10\delta)$-quasigeodesic ray $\gamma$ with this property.

\end{lemma}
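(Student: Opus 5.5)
The first assertion is standard: in a proper geodesic space, fix a representative sequence $(x_i)$ of $\xi$, take geodesics $[x_0,x_i]_X$, and extract a subsequence converging uniformly on compacts by Arzel\`a--Ascoli. This gives a geodesic ray $\gamma$ from $x_0$. Since $(x_i\cdot x_j)_{x_0}\to\infty$, thinness of triangles shows that $[x_0,x_i]_X$ and $[x_0,x_j]_X$ $2\delta$-fellow-travel up to distance about $(x_i\cdot x_j)_{x_0}$. Hence $\gamma$ agrees up to $O(\delta)$ with $[x_0,x_i]_X$ on longer and longer initial segments. Item (1) of Lemma~\ref{lem:gromovprops} then gives $(\gamma(t)\cdot x_i)_{x_0}\to\infty$ as $t,i\to\infty$, so $(\gamma(i))\sim(x_i)$.

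For general $X$ compactness is not available, so I will build the quasi-geodesic ray by hand. Passing to a subsequence, I may assume $(x_i\cdot x_j)_{x_0}\geq i$ for all $j>i$. Set $r_i=(x_i\cdot x_{i+1})_{x_0}$, which is roughly increasing to infinity. Let $p_i$ be the point on a fixed geodesic $[x_0,x_{i+1}]_X$ at distance $r_i$ from $x_0$. The key estimate is that $[x_0,x_j]_X$ and $[x_0,x_k]_X$ stay $4\delta$-close up to time $\min(x_j\cdot x_k)_{x_0}-O(\delta)$. Applying this repeatedly, the point $p_i$ lies within $O(\delta)$ of $[x_0,x_k]_X$ for all $k>i$, at distance $r_i+O(\delta)$ from $x_0$. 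Consequently $d(p_i,p_{i+1})=r_{i+1}-r_i+O(\delta)$ and $(x_0\cdot p_{i+1})_{p_i}=O(\delta)$.

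I then define $\gamma$ as the concatenation of geodesics $[p_i,p_{i+1}]_X$, after first discarding enough indices that consecutive gaps $r_{i+1}-r_i$ exceed a large multiple of $\delta$. The points $x_0,p_1,p_2,\dots$ are then $C\delta$-aligned with long gaps. By \cite[Lemma 3.7]{Gou22}, any subsequence is aligned. This gives $d(p_i,p_j)=r_j-r_i+O(\delta)$, and a short local-to-global computation shows the concatenation is a $(1,c\delta)$-quasi-geodesic.

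The main obstacle is bringing the constant down to exactly $10\delta$. Tracking constants naively gives something like $20\delta$. To reach $10\delta$, I would avoid long chains of triangle inequalities and instead compare every $p_i$ directly with a single far geodesic $[x_0,x_k]_X$, $k\gg i$. On a long initial segment this geodesic is genuinely geodesic, and all the $p_i$ are $4\delta$-close to it. Each finite initial piece of $\gamma$ is then within Hausdorff distance about $4\delta$ of a true geodesic, which yields additive error at most $8\delta+$ small, so at most $10\delta$. Finally, $(\gamma(t)\cdot x_k)_{x_0}\to\infty$ holds by construction, so $\gamma$ represents $\xi$.
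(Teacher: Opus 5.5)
The paper quotes this lemma as a standard fact without proof, so this review judges your argument on its own. The proper case is the usual Arzel\`a--Ascoli argument and is fine. The general case has two genuine gaps.

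\textbf{First gap: placement of $p_i$.} Placing $p_i$ at distance $r_i=(x_i\cdot x_{i+1})_{x_0}$ does not work. Your normalization $(x_i\cdot x_j)_{x_0}\ge i$ gives no lower bound on $(x_{i+1}\cdot x_k)_{x_0}$ in terms of $r_i$. So $r_i$ need not be roughly increasing, and $p_i$ need not lie near $[x_0,x_k]_X$.

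Here is a counterexample in a tree. Take a ray $\rho$ from $x_0$, attach at $\rho(2m+1)$ a branch of length $R_m$ ending at $b_m$, and let $x_{2m},x_{2m+1}$ lie just beyond $b_m$ on different sub-branches. Then $(x_a\cdot x_b)_{x_0}\ge a$ for all $b>a$. But $r_{2m}=2m+1+R_m$ while $r_{2m+1}=2m+1$, and $p_{2m}=b_m$ is at distance $R_m$ from every $[x_0,x_k]_X$ with $k\ge 2m+2$.

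Discarding indices does not repair this. If you keep $p_0,p_2,p_4,\dots$ with $R_m$ increasing, the gaps are large but $(x_0\cdot p_{2m+2})_{p_{2m}}=R_m$. Chaining the Gromov-product triangle inequality along $x_{i+1},\dots,x_k$ would in any case lose $\delta$ per step. The point has to be placed at a distance that bounds $(x_i\cdot x_k)_{x_0}$ from below for \emph{all} later $k$, e.g.\ at distance $i$ on $[x_0,x_i]_X$. Then a single thin triangle compares it with each $[x_0,x_k]_X$.

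\textbf{Second gap: the concatenation is not a $(1,c)$-quasi-geodesic.} Even with well-placed, $O(\delta)$-aligned points, the concatenation $\bigcup_i[p_i,p_{i+1}]_X$ parametrized by arc length fails to be a $(1,c)$-quasi-geodesic for any fixed $c$. Write $t_m=d(x_0,p_m)$. The arc length from $p_i$ to $p_j$ is exactly $t_j-t_i+2\sum_{m=i}^{j-1}(x_0\cdot p_{m+1})_{p_m}$, while $d(p_i,p_j)=t_j-t_i+O(\delta)$. The junction defects are nonnegative, and nothing in your argument makes them summable. For instance, in $\mathbb{H}^2$, points alternating sides of a geodesic at a fixed small height each cost a definite amount.

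Gou\"ezel's lemma controls $d(p_i,p_j)$, not sums of consecutive lengths. Local-to-global arguments only give a multiplicative constant $>1$. Your last paragraph has the same flaw: Hausdorff-closeness of the image to a geodesic says nothing about the arc-length parametrization.

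\textbf{The fix.} Parametrize by distance from $x_0$ and do not concatenate at all.
\begin{enumerate}
\item Choose $N_n$ increasing with $(x_a\cdot x_b)_{x_0}\ge n+1$ for $a,b\ge N_n$. Let $\sigma_n$ be a geodesic from $x_0$ to $x_{N_n}$, and set $\gamma(t)=\sigma_n(t)$ for $t\in[n,n+1)$.
\item The lower bound $d(\gamma(s),\gamma(t))\ge|t-s|$ is just the triangle inequality at $x_0$.
\item For the upper bound, take $s\le t$ with $s\in[m,m+1)$, $t\in[n,n+1)$, $m\le n$. The triangle $x_0,x_{N_m},x_{N_n}$ has Gromov product at $x_0$ larger than $s$. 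Since $\delta$-slim implies $4\delta$-thin, $d(\sigma_m(s),\sigma_n(s))\le 4\delta$, hence $d(\gamma(s),\gamma(t))\le|t-s|+4\delta$.
\end{enumerate}
This gives a $(1,4\delta)$-quasi-geodesic ray, and it represents $\xi$ since $(\gamma(t)\cdot x_{N_n})_{x_0}=t$. The constant-chasing down to $10\delta$ that you were worried about disappears.
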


The boundary $\partial X$ is in fact metrizable via a \emph{visual metric}.

\begin{definition}

For $\xi,\chi\in\partial X$ and $x_0\in X$, we define
\begin{equation}\label{eq:boundarygromov}(\xi\cdot \chi)_{x_0}=\inf_{\substack{(x_i)\in[\xi] \\ (y_i)\in [\chi]}}\liminf_{i,j\to\infty}(x_i\cdot y_j)_{x_0}\end{equation}

\end{definition}

\begin{lemma}
\label{lem:consistentgromov}

For $\xi,\chi\in\partial X$ and any sequences $(x_i)\in[\xi]$ and $(y_i)\in[\chi]$, we have
$$\liminf_{i,j\to\infty}(x_i\cdot y_j)_{x_0}=_{100\delta} (\xi\cdot \chi)_{x_0}.$$

\end{lemma}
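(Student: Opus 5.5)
The plan is to compare the liminf for arbitrary representative sequences with the liminf for any other pair of representatives, and then take the infimum over all representatives. Fix sequences $(x_i)\in[\xi]$, $(y_i)\in[\chi]$, and let $(x_i'),(y_i')$ be any other representatives. The key step is the two-sided bound
$$\liminf_{i,j}(x_i\cdot y_j)_{x_0}\ \ge\ \liminf_{i,j}(x'_i\cdot y'_j)_{x_0}-C\delta$$
for a universal constant $C$, with the roles of the primed and unprimed sequences symmetric.

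To obtain this, I will apply the triangle inequality for Gromov products (Lemma \ref{lem:gromovprops}(3)) twice. First,
$$(x_i\cdot y_j)_{x_0}\ \ge\ \min\big((x_i\cdot x'_k)_{x_0},\,(x'_k\cdot y_j)_{x_0}\big)-10\delta,$$
and then
$$(x'_k\cdot y_j)_{x_0}\ \ge\ \min\big((x'_k\cdot y'_l)_{x_0},\,(y'_l\cdot y_j)_{x_0}\big)-10\delta.$$
Equivalent sequences satisfy $\lim_i(x_i\cdot x'_i)_{x_0}=\infty$. Combining this with $\liminf_{i,k}(x'_i\cdot x'_k)_{x_0}=\infty$ and one more application of the triangle inequality, we get $\liminf_{i,k}(x_i\cdot x'_k)_{x_0}=\infty$, and the same argument gives $\liminf_{j,l}(y_j\cdot y'_l)_{x_0}=\infty$.

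Now choose $k=k(i)$ and $l=l(j)$ tending to infinity along the indices. For large $i,j$ the terms $(x_i\cdot x'_k)_{x_0}$ and $(y'_l\cdot y_j)_{x_0}$ exceed any prescribed bound, so the minimum is attained by $(x'_k\cdot y'_l)_{x_0}$. This yields the displayed inequality with $C=20$; accounting for the extra triangle-inequality step used to establish cross-sequence divergence, a crude count still gives $C\le 40$.

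Since every pair of representatives has liminf within $C\delta$ of every other pair, the infimum in \eqref{eq:boundarygromov} is at most the liminf for the given sequences and at least that liminf minus $C\delta$. This gives agreement up to $40\delta\le100\delta$. One degenerate case needs care: if $\xi=\chi$ the values are infinite, and the statement should be read as both sides being $\infty$, which the same argument gives.

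The main obstacle is bookkeeping in the liminf over two indices. The cross terms must blow up uniformly, so I will use the double-index form $\liminf_{i,k}$, not merely diagonal equivalence. Deriving that uniform divergence from the diagonal condition via Lemma \ref{lem:gromovprops}(3) is the one place needing an extra $10\delta$, and it is the step to check carefully.
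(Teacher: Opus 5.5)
Your proof is correct. The paper does not prove this lemma; it is listed among the standard preliminaries on Gromov boundaries. Your argument is exactly the standard one: chain the triangle inequality for Gromov products through a second pair of representatives, then take the infimum over all pairs. So there is no competing route to compare.

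Two small refinements:
\begin{enumerate}
\item The divergent cross terms never contribute to the constant, since subtracting $10\delta$ from a quantity tending to $\infty$ is harmless. You therefore get agreement up to $20\delta$ rather than $40\delta$.
\item The ``main obstacle'' you flag can be avoided altogether by taking $k=i$ and $l=j$. Then
\[
(x_i\cdot y_j)_{x_0}\ \ge\ \min\big((x_i\cdot x_i')_{x_0},\ (x_i'\cdot y_j')_{x_0}-10\delta,\ (y_j'\cdot y_j)_{x_0}-10\delta\big)-10\delta
\]
involves only the diagonal equivalences $(x_i\cdot x_i')_{x_0}\to\infty$ and $(y_j\cdot y_j')_{x_0}\to\infty$. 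Taking $\inf_{i,j\ge N}$ and letting $N\to\infty$ gives $\liminf_{i,j}(x_i\cdot y_j)_{x_0}\ge\liminf_{i,j}(x_i'\cdot y_j')_{x_0}-20\delta$ directly. You never need double-index divergence of $(x_i\cdot x_k')_{x_0}$.
\end{enumerate}
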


\begin{theorem}

There exists a metric $d_v$ on $\partial X$ such that there are constants $a>0$ and $C\geq 1$ depending only on $\delta$ so that for all $\xi,\chi\in\partial X$,
$$\frac{1}{C}a^{-(\xi\cdot\chi)_{x_0}}\leq d_v(\xi,\chi)\leq Ca^{-(\xi\cdot\chi)_{x_0}}.$$

\end{theorem}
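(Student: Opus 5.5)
The plan is the standard Gromov--Ghys--de la Harpe construction of a visual metric, written for a possibly non-proper $\delta$-hyperbolic $X$ and with Gromov products on $\partial X$ defined by \eqref{eq:boundarygromov}. Write $\rho(\xi,\chi)=a^{-(\xi\cdot\chi)_{x_0}}$ for a small parameter $a>1$ (equivalently $e^{-\epsilon(\xi\cdot\chi)_{x_0}}$ with $a=e^{\epsilon}$), using the convention $(\xi\cdot\xi)_{x_0}=\infty$, so that $\rho(\xi,\xi)=0$. I would then set
$$d_v(\xi,\chi)=\inf\Big\{\sum_{k=1}^{m}\rho(\xi_{k-1},\xi_k)\ :\ \xi=\xi_0,\xi_1,\dots,\xi_m=\chi\in\partial X\Big\}.$$
By construction $d_v$ is symmetric, satisfies the triangle inequality, and $d_v\le\rho$. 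So the upper bound holds with $C=1$. The two real tasks are the lower bound $d_v\ge\frac1C\rho$, and the observation that this lower bound already gives $d_v(\xi,\chi)>0$ for $\xi\ne\chi$.

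First I need a boundary version of the triangle inequality for Gromov products. Pick representative sequences for $\xi,\chi,\eta$. Apply Lemma \ref{lem:gromovprops}(3) at finite stages, pass to $\liminf$, and use Lemma \ref{lem:consistentgromov} to compare with the infimum in \eqref{eq:boundarygromov}. This gives
$$(\xi\cdot\chi)_{x_0}\ge\min\big((\xi\cdot\eta)_{x_0},(\eta\cdot\chi)_{x_0}\big)-\delta'$$
with $\delta'$ a fixed multiple of $\delta$ (for instance $\delta'=310\delta$). Exponentiating, $\rho(\xi,\chi)\le a^{\delta'}\max(\rho(\xi,\eta),\rho(\eta,\chi))$. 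Now choose $a>1$ close enough to $1$, depending only on $\delta$, that $a^{\delta'}\le\sqrt2$. This is the condition under which the chain argument below works.

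The main step is the chain lemma: for every chain $\xi_0,\dots,\xi_m$,
$$\rho(\xi_0,\xi_m)\le 2\sum_{k=1}^m\rho(\xi_{k-1},\xi_k).$$
I would prove this by induction on $m$, following the proof in Ghys--de la Harpe. Let $S$ be the sum. If $S=0$ there is nothing to prove. Otherwise choose the largest index $p$ with $\sum_{k\le p}\rho(\xi_{k-1},\xi_k)\le S/2$. Then both $\sum_{k\le p}$ and $\sum_{k>p+1}$ are at most $S/2$. By induction, $\rho(\xi_0,\xi_p)\le S$ and $\rho(\xi_{p+1},\xi_m)\le S$, and trivially $\rho(\xi_p,\xi_{p+1})\le S$. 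Applying the ultrametric-type inequality twice gives
$$\rho(\xi_0,\xi_m)\le a^{2\delta'}\max\big(\rho(\xi_0,\xi_p),\rho(\xi_p,\xi_{p+1}),\rho(\xi_{p+1},\xi_m)\big)\le 2S.$$
The point I expect to need the most care is that the induction closes with the same constant. The tempting choice $a^{\delta'}\le 2$ only gives a factor $4$ from the two applications, which does not close. The correct requirement is $a^{2\delta'}\le 2$, and the argument above must be written so that it uses exactly this. Taking the infimum over chains then gives $d_v\ge\frac12\rho$.

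Finally, $d_v(\xi,\chi)=0$ forces $(\xi\cdot\chi)_{x_0}=\infty$. The sequences then satisfy $\liminf_i(x_i\cdot y_i)_{x_0}=\infty$, so $\xi=\chi$. Hence $d_v$ is a genuine metric, and the statement holds with this $a$ and $C=2$. One can also check that $d_v$ induces the neighborhood-basis topology via the sets $U(\xi,r)$, using Lemma \ref{lem:consistentgromov}, though that is not part of the stated inequality.
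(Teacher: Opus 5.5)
Your argument is correct. It is the standard Ghys--de la Harpe chain construction: define $d_v$ as the infimum of $\sum a^{-(\xi_{k-1}\cdot\xi_k)_{x_0}}$ over chains, obtain the boundary quasi-ultrametric inequality from Lemma~\ref{lem:gromovprops}(3) and Lemma~\ref{lem:consistentgromov}, and close the induction with $a^{2\delta'}\le 2$, which gives $\frac12 a^{-(\xi\cdot\chi)_{x_0}}\le d_v(\xi,\chi)\le a^{-(\xi\cdot\chi)_{x_0}}$. The paper gives no proof of this statement and simply quotes it as standard background; your proof is the classical argument being invoked there, and it needs no properness because it uses only the sequential boundary and the Gromov-product inequality.
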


The visual metric topology agrees with the topology on $\partial X$ presented earlier.

\subsection{Group Actions}

We briefly go over the needed assumptions on the action $G\curvearrowright X$. Recall that $X$ is a simplicial graph and is $\delta$-hyperbolic, and that $G$ acts on $X$ by isometries. We take a fixed basepoint $x_0\in X$, although the definitions given are in fact basepoint-independent.

\begin{definition}
\label{def:loxodromic}

An element $a\in G$ is \emph{loxodromic} if $(u^ix_0)_{i\in\intgr}$ is a $(\lambda,c)$-quasi-geodesic for some $\lambda\geq 1$ and $c\geq 0$. Two loxodromics $a,b\in G$ are \emph{independent} if there exists some $D>0$ so that for all $i,j\in\intgr$,
$$(u^ix_0\cdot v^jx_0)_{x_0}<D$$

\end{definition}

We now define \emph{acylindrical} and \emph{WPD} (weak proper discontinuity) actions; the definition of acylindricity is due to Osin \cite{DO16}, while the definition of a WPD action is due to Bestvina-Fujiwara \cite{BF02}. The WPD condition is strictly weaker than acylindricity. We will only assume that the actions we study are WPD, but we include the definition of acylindricity as a useful point of comparison.

\begin{definition}
\label{def:WPD}

Assume that $G$ acts by isometries on a metric space $X$. The action $G\curvearrowright X$ is acylindrical if for all $K\geq 0$, there are constants $R(K)$ and $N(K)$ such that for all $x,y\in X$ with $d(x,y)\geq R(K)$, the coarse stabilizer
$$\text{Stab}_K(x,y):=\{g\in G:d_X(x,gx)\leq K \text{ and }d(y,gy)\leq K\}.$$
has cardinality at most $N(K)$.

A specific element $\phi\in G$ is WPD if for any $x\in X$ and any $K\geq 0$, there exists $N_0>0$ such that for $N\geq N_0$, $\text{Stab}_K(x,\phi^Nx)$ is finite.

The action $G\curvearrowright X$ is WPD if $G$ has at least one WPD element.

\end{definition}

\subsection{Probability Measures, Random Walks, and Axes}

We will consider a finite set of probability distributions $\mu_1,...,\mu_k$ on $G$. 

\begin{definition}
\label{def:permissible}

Given a probability distribution $\mu$ on $G$, $(\mu,G)$ is \emph{permissible} if $\mu$ has finite support that generates $G$, the action of $G$ is non-elementary, and the maximal finite normal subgroup of $G$ is trivial. We will usually refer to just a permissible $\mu$ on $G$, with the requirements on $G$ being implied.

\end{definition}

\begin{definition}
\label{def:randomwalk}

Given a permissible probability measure $\mu$ on $G$, let us consider the $\mu$-i.i.d. $(g_{i})_{i > 0} \in G^{\Z_{>0}}$. The \emph{random walk} $(w_{n})_{n \geq 0}$ generated by $\mu$ is the sequence of random variables defined by \[
w_{n} := g_{1} g_{2} \cdots g_{n}.
\]
\end{definition}

It is straightforward to check that, for every loxodromic element $g \in G$, the translation length \[
L(g, x) := \lim_{n \rightarrow +\infty} \frac{d(x, g^{n} x)}{n}
\]
is well-defined, is independent of the choice $x \in X$, and is strictly positive. Moreover, when $L(g) \ge 100\delta$, the set \[
\big\{ x \in X : d(x, gx) \le L(g) + 4\delta\big\}
\]
is nonempty.

\begin{definition}
\label{def:axis}
Let $g \in G$ be a loxodromic element with translation length greater than $100\delta$. Picking a point $x$ such that $d(x, gx) \le L(g) + 4\delta$, we define the \emph{axis} $\Ax(g)$ as the concatenation
$$\bigcup_{i=-\infty}^\infty [g^ix,g^{i+1}x]_X$$
\end{definition}

\begin{definition}
\label{def:walkpermissible}

Fix $D>0$ and $M>0$, and let $G\curvearrowright X$ be as above. Also fix a parameter $n$. Given $w_1,...,w_k\in G$ satisfying $d_G(e,w_i)\leq n$, we say that $(w_1,...,w_k)\in \mathcal{E}_{D,M}(X)$, or $(w_1,...,w_k)\in\mathcal{E}_{D,M}$ if the space $X$ is clear from context, if there exist respective conjugates $v_1,...,v_k$ of $w_1,...,w_k$ satisfying the following properties. In words, we say $(w_1,...,w_k)$ are \emph{admissible}.

\begin{enumerate}
\item For each $i$, $(v_{i}^{k} x_{0})_{k \in \Z}$ is a $(1, \delta)$-coarse geodesic at speed at least $D$. In particular, the $v_{i}$ are loxodromics on $X$, and each $\Ax(v_i)$ is a $8\delta$-convex $(1,4\delta)$-quasi-geodesic.

\item For each $v_ii$ and for each $u \in G$ with $d_{G}(e,u) \le M$, there exists an element $g \in G$ such that $(x_{0}, gx_{0}, gux_{0}, v_{i} x_{0})$ is $10\delta$-aligned, and such that $d_G(e,g)$ and $d_G(e,u^{-1}g^{-1}v_i)$ are bounded above by $2n$. 

\item \emph{(Non-overlapping axes.)} For any $i \neq j$, there is no $g \in G$ such that a $D/M$-long subsgement of $[x_{0}, v_{i} x_{0}]$ and a $D/M$-long subsegment of $[x_{0}, v_{j}x_{0}]$ are $50\delta$-fellow travelling.

For any $i$, there is no $g \in G \setminus \{id\}$ such that a $D/M$-long subsegment of $[x_{0}, v_{i} x_{0}]$ and a $D/M$-long subsegment of $g[x_{0}, v_{i} x_{0}]$ are $50\delta$-fellow traveling. 

\item For any $v_i$, the stabilizer of $\Ax(v_i)$ is $\langle v_i\rangle\times \mathcal{E}(G)$, where $\mathcal{E}(G)$ is the maximal finite normal subgroup of $G$.

\end{enumerate}

\end{definition}

\begin{theorem}
\label{thm:randomwalkproperties}
Let $G, X, \mu_{1}, \ldots, \mu_{k}$ be as in the setup, and let $(w_{n}^{(1)})_{n>0}, \ldots, (w_{n}^{(k)})_{n>0}$ be independent random walks generated by $\mu_{1}, \ldots, \mu_{k}$, respectively. 
Then there exists $\Delta > 0$ such that for the chosen $n$ and for each $M>0$, \[
\lim_{n \rightarrow \infty} \prob \big( (w_{n}^{(1)}, \ldots, w_{n}^{(k)}) \in \mathcal{E}_{\Delta n, M} \big) = 1.
\]
\end{theorem}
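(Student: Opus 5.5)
We check the four conditions of Definition \ref{def:walkpermissible} separately, each holding a.a.s.; a finite intersection of events of probability tending to $1$ still has probability tending to $1$. The inputs are the Maher--Tiozzo results \cite{MT18, MT21} for permissible walks with a WPD element. These give a drift $\ell_i>0$ for each $\mu_i$ with $d_X(x_0,w_n^{(i)}x_0)=(\ell_i+o(1))n$, and linear progress of the translation length, $L(w_n^{(i)})\ge(\ell_i-\epsilon)n$. They also give logarithmic Gromov products, $(w_n^{-1}x_0\cdot w_nx_0)_{x_0}=O(\log n)$, which says $w_n$ is nearly cyclically reduced. We take $\Delta$ somewhat smaller than $\min_i\ell_i$.

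\textbf{Choosing the conjugate.} We conjugate $w=w_n^{(i)}$ by the element $h=w_{m}$ with $m\approx C\log n$ steps, or more precisely by a point near the midpoint of the Gromov product. Then $v=h^{-1}wh$ has $x_0$ within $O(\delta)$ of $\Ax(v)$. After a bounded further adjustment, which we absorb using the simplicial structure and the last clause of Definition \ref{def:axis}, $(v^kx_0)_k$ becomes a $(1,\delta)$-coarse geodesic at speed $\ge\Delta n$. Here we use that large translation length relative to $\delta$ makes orbit points along a near-axis point a coarse geodesic. This gives condition (1). Convexity and quasi-geodesicity of $\Ax(v)$ are then standard.

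\textbf{Condition (2).} We fix $u$ with $|u|\le M$; there are only finitely many. We want a $g$ of word length $\le 2n$ such that $x_0, gx_0, gux_0, vx_0$ are aligned. Writing $v$ as a product of the random increments, the walk passes through all its prefixes. By the standard ``every bounded pattern appears'' argument, the increments a.a.s. contain, somewhere in the middle third, a block of the form $a\,u\,b$. Here $a$ and $b$ are fixed elements chosen once for each $u$: they are large powers of independent loxodromics in the semigroup generated by $\operatorname{supp}\mu_i$, chosen so that $ax_0$ and $b^{-1}x_0$ point away from $u$. We take $g$ to be the prefix ending just after $a$. Linear progress together with Lemma 3.7 of \cite{Gou22} gives the $10\delta$-alignment. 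The word-length bounds hold because $g$ and $u^{-1}g^{-1}v$ are subwords of length $\le n+O(\log n)$. The probability that a fixed finite pattern is missing from $n/3$ i.i.d. blocks decays exponentially.

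\textbf{Conditions (3), (4) and the main obstacle.} Condition (3) is the main obstacle. For $g\ne e$, a $D/M$-long fellow-traveling between $[x_0,vx_0]$ and $g[x_0,vx_0]$ with $D=\Delta n$ would mean the walk's trajectory contains two disjoint segments of length $\asymp n/M$ that are $G$-translates of each other up to $50\delta$. We bound this by a union bound over pairs of starting times, using the deviation estimates of Maher--Tiozzo. Conditioned on the past, the probability that the next $\asymp n/M$ steps shadow a prescribed geodesic segment decays exponentially in its length. This requires WPD to ensure there are only finitely many $g$, up to the coarse stabilizer, realizing a given overlap. For the case $i\neq j$ we use independence of the walks in the same way. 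There are $O(n^2)$ pairs and exponentially small probabilities, so the event fails with probability $\to0$. The subtle case is overlap near the endpoints of $[x_0,vx_0]$ with $g\in\langle v\rangle$; we exclude it because we require $g\ne e$ modulo the periodic structure, or handle it by the $O(\log n)$ trimming. Condition (4) then follows. The stabilizer of $\Ax(v)$ is virtually cyclic, equal to the elementary closure $E(v)$, by WPD. Condition (3) forces every element of $E(v)$ to coarsely act as a power of $v$, so $E(v)/\langle v\rangle$ fixes $\Ax(v)$ coarsely pointwise and is a finite normal subgroup. If $v$ has a proper root, this contradicts (3); this is standard from \cite{MT21}, where random elements a.a.s. have $E(w_n)=\langle w_n\rangle\times\mathcal{E}(G)$. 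Since $\mathcal{E}(G)$ is trivial under permissibility, we conclude.
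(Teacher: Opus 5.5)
Your four-way split is the paper's, and so are the ultimate sources. The paper's proof is purely bibliographic:
\begin{itemize}
\item (1) and (2) come from \cite[Lemma 5.9, Proposition 5.4]{MT18}, with \cite{RC16} for the properties of $\Ax(v_i)$;
\item (3) comes from \cite[Propositions 9.12--9.14]{MT21} (see also \cite[Proposition 5.13]{ABMNR26});
\item (4) is deduced from (3) (cf.\ \cite[Proposition 5.1]{MS19}).
\end{itemize}
Citing these item by item would make your outline complete. Where you instead reconstruct them, two steps do not go through as written.

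\textbf{Condition (2).} Finding the block $a\,u\,b$ among the middle increments is fine, granted that $\operatorname{supp}\mu_i$ generates $G$ as a \emph{semigroup}; you need this for $u$ itself to occur. But linear progress together with \cite[Lemma 3.7]{Gou22} does not give the $10\delta$-alignment of $(x_0,gx_0,gux_0,vx_0)$. Write $g=pa$ and $u^{-1}g^{-1}v=bq$. What you need is that $(p^{-1}x_0\cdot ax_0)_{x_0}$ and $(qx_0\cdot b^{-1}x_0)_{x_0}$ are small compared with $d(x_0,ax_0)$ and $d(x_0,bx_0)$. In words, the reversed past must not run back along $a$, and the future must not run back along $b$. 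At any given occurrence each of these fails with positive probability, whatever the drift. Gou\"ezel's lemma only propagates alignment you already have at consecutive triples, so it cannot supply this. You need an extra input, which is what \cite[Proposition 5.4]{MT18} provides. One option is shadow decay for the (reflected) hitting measures, applied at the first occurrence among disjoint i.i.d.\ blocks, then letting the lengths of $a,b$ grow. Another is a pivotal-time argument.

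A related issue in (1): conjugating by the prefix $w_m$ in general puts $x_0$ only within $O(\log n)$ of the axis, not $O(\delta)$. After your further adjustment, $v$ is no longer a cyclic shift of the increments. So in (2) you must control the word length of that adjustment, e.g.\ via action path-connectedness.

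\textbf{Condition (3).} This is the more serious problem. Your union bound conditions on the past and uses that the next $\asymp n/M$ steps shadow a \emph{prescribed} segment with exponentially small probability. But the event to exclude is that they shadow \emph{some} translate $g\sigma$ of an earlier segment $\sigma$. Point stabilizers may be infinite (as for curve graphs). Then the translates of $\sigma$ starting near the current point $w_tx_0$ form an infinite family, indexed by $w_t\,\mathrm{Stab}(x_0)\,w_s^{-1}$ modulo a coarse stabilizer. So ``finitely many $g$ up to the coarse stabilizer'' does not make the union finite, and the exponential bounds cannot simply be summed. What is needed is a way to make the set of relevant $g$ finite for each pair of positions. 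For example, the path contains, with positive density, translates of a long axis segment of a fixed WPD element. A long matching must align two such pieces, and WPD allows only finitely many $g$ to do that. This is the content the paper imports from \cite[Propositions 9.12--9.14]{MT21}.

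Your reduction to ``two disjoint segments'' also misses two cases:
\begin{itemize}
\item matchings whose windows overlap in time, i.e.\ shifted self-similarity of the word;
\item $g\neq e$ coarsely stabilizing a single long subsegment, which is where triviality of $\mathcal{E}(G)$ has to enter.
\end{itemize}
Conversely, the case you single out, $g\in\langle v\rangle\setminus\{e\}$, is harmless: $v^k$ moves $[x_0,vx_0]$ to $[v^kx_0,v^{k+1}x_0]$, and the overlap is $O(\delta)$.

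\textbf{Condition (4).} Once (3) is available, (4) is immediate, as in the paper, with no detour through $E(v)$. Any element of $\mathrm{Stab}(\Ax(v))$ outside $\langle v\rangle$, composed with a suitable power of $v$, is a nontrivial element. It moves $[x_0,vx_0]$ to a segment of the axis overlapping it for length at least $D/M$, contradicting (3).
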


\textbf{Proof.} 
Among the three items that should be established, the first and the second items are due to Maher and Tiozzo \cite[Lemma 5.9, Proposition 5.4]{MT18}. We note that the claim for $\Ax(v_i)$ follows from the coarse geodesic claim by \cite{RC16}, Corollary 2.7 and Lemma 3.2. We note that these items do not require the WPD property of the action, and only requires that the action is non-elementary.

Item 3 follows from combining Proposition 9.12-9.14 of \cite{MT21}. See also Proposition 5.13 of \cite{ABMNR26}. Lastly, Item 4 is a direct consequence of Item 3, and it is also recorded in Proposition 5.1 of \cite{MS19}.

\subsection{Action Path-Connectedness}

Lastly, we will need the following assumption. While this assumption follows from the Hierarchy Paths Theorem for hierarchically hyperbolic groups, we state it independently of the full HHG machinery.

\begin{definition}
\label{def:actiopathconnected}

Let $G$ be a group acting on a geodesic metric space $X$ by isometries, and fix a basepoint $x_0$. We say that the action is \emph{action path-connected} with constants $E$ and $W$ if for every pair of points $x,y\in X$, there exists a sequence of group elements $a_0,a_1,...,a_k$ such that $a_0x_0=x$, $a_kx_0=y$, such that $d_G(a_{i-1},a_i)\leq W$ for all $1\leq i\leq k$, and so that the path $\{a_0x_0,a_1x_0,...,a_kx_0\}$ is a $(E,E)$-quasi-geodesic under some reparameterization.

\end{definition}

To obtain examples of actions with this property, we apply the following theorem.

\begin{theorem}[Thm. 6.11, \cite{BHS19}]
\label{thm:hierarchypathsexist}

Let $\mathcal{X}$ be a hierarchically hyperbolic space. There exists a $\lambda\geq 1$ so that for any $x,y\in\mathcal{X}$, there exists a $(\lambda,\lambda)$-quasi-geodesic $\gamma$ joining $x$ and $y$ in $\mathcal{X}$ so that for any coordinate space $X$ of the hierarchically hyperbolic space structure for $\mathcal{X}$, the projection $\pi_X\circ\gamma$ is a unparameterized $(\lambda,\lambda)$-quasi-geodesic in $X$.

\end{theorem}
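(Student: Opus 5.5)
This statement is quoted from \cite{BHS19} and we will use it as a black box. If one wanted to reprove it, the natural route is induction on the complexity of the hierarchically hyperbolic structure, using three standard HHS tools.
\begin{enumerate}
\item The distance formula, which estimates $d_{\mathcal{X}}(x,y)$ by the sum of the $d_U(x,y)$ over all domains $U$ where this quantity exceeds a threshold $s$.
\item The realization theorem, which turns consistent tuples of coordinates back into points of $\mathcal{X}$.
\item The bounded geodesic image and Behrstock inequality axioms, which govern how coordinates in nested and transverse domains interact.
\end{enumerate}

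\textbf{Base case and setup.} When the complexity is minimal, $\mathcal{X}$ is quasi-isometric to its top coordinate space $S$, so a geodesic there suffices. In general, fix $x,y$ and a geodesic $\sigma$ in $S$ from $\pi_S(x)$ to $\pi_S(y)$. Consider the set $\mathrm{Rel}_s(x,y)$ of proper domains $U$ with $d_U(x,y)\ge s$. By bounded geodesic image, each such $U$ has $\rho^U_S$ within bounded distance of $\sigma$. Using the Behrstock inequality, one shows that the maximal elements of $\mathrm{Rel}_s(x,y)$ are coarsely linearly ordered along $\sigma$. For two transverse relevant domains $U,V$, exactly one of them ``comes first'': $x$ is already close to $\rho^V_U$ in $U$, or else $y$ is close to $\rho^U_V$ in $V$.

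\textbf{Construction of the path.} Travel along $\sigma$, realizing each vertex by a point of $\mathcal{X}$ via the realization theorem. Whenever we reach $\rho^U_S$ for a maximal relevant $U$, move inside the product region $P_U$, which is an HHS of strictly lower complexity. There, between the current point and the point whose $U$-coordinates agree with those of $y$, insert the hierarchy path given by the induction hypothesis. The resulting path is a concatenation of pieces. Its total coarse length is bounded by the sum, over relevant domains, of their $d_U(x,y)$, with each domain's contribution counted once because of the ordering. The distance formula then shows that this length is comparable to $d_{\mathcal{X}}(x,y)$, so the path is a $(\lambda,\lambda)$-quasi-geodesic.

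\textbf{Monotonicity and the main obstacle.} It remains to check that $\pi_W\circ\gamma$ is an unparameterized quasi-geodesic for every domain $W$. There are three cases.
\begin{enumerate}
\item For $W=S$ this holds by construction, since the path only moves in $S$ along $\sigma$; the detours in product regions project to bounded sets by bounded geodesic image.
\item For $W$ nested in some maximal relevant $U$, it follows from the induction hypothesis applied inside $P_U$.
\item For $W$ transverse to the relevant domains, the Behrstock inequality forces $\pi_W$ to stay coarsely at $\pi_W(x)$ before the relevant detour and at $\pi_W(y)$ after it.
\end{enumerate}
The main difficulty is the bookkeeping in the transverse case. One must verify that the linear order on relevant domains is consistent, so that no coordinate ``backtracks''. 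The first thing I would try is the partial-order lemma of \cite{BHS19} on $\mathrm{Rel}_s(x,y)$, combined with uniform constants coming from the finite complexity.
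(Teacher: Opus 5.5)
Your approach agrees with the paper's: the paper gives no argument for this statement and simply imports it from \cite{BHS19}, so citing it as a black box is exactly what is done there. Your optional reproof outline is only heuristic, and would need three repairs before it could stand on its own: realizing a vertex of $\sigma$ needs a full consistent tuple (for instance $y$'s coordinates in relevant domains already passed and $x$'s in the rest); your transverse ``dichotomy'' names the same alternative twice; and the distance formula must be used non-circularly, since \cite{BHS19} proves it together with these paths. None of this affects how the theorem is used here.
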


\begin{lemma}
\label{lem:hierarchypaths}

Let $G$ be a hierarchically hyperbolic group, and let $X$ be the top-level space of a hierarchically hyperbolic space structure realizing $G$ as a hierarchically hyperbolic group. Assume that the action of $G$ on $X$ is transitive. Then the action $G\curvearrowright X$ is action path-connected for some $E$.

\end{lemma}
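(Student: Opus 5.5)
The idea is to apply Theorem \ref{thm:hierarchypathsexist} to the HHS $\mathcal{X}=G$ with a word metric, and to push the resulting hierarchy path into the top-level space $X$ via the equivariant, coarsely Lipschitz projection $\pi_X\colon G\to X$. Since $G$ acts transitively on $X$, we may choose the basepoint $x_0$ and assume that $\pi_X(g)$ lies within uniformly bounded distance of $gx_0$; this is a routine consequence of equivariance. The bound is uniform because $\pi_X(e)$ is a bounded set and $\pi_X(g)=g\pi_X(e)$.

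Now fix $x,y\in X$. By transitivity there are $a,b\in G$ with $ax_0=x$ and $bx_0=y$. Theorem \ref{thm:hierarchypathsexist} gives a $(\lambda,\lambda)$-quasi-geodesic $\gamma$ in $G$ from $a$ to $b$ such that $\pi_X\circ\gamma$ is an unparameterized $(\lambda,\lambda)$-quasi-geodesic. We may discretize $\gamma$ to a sequence $a=g_0,g_1,\dots,g_m=b$ of group elements with $d_G(g_{i-1},g_i)\le W_0$, where $W_0$ depends only on $\lambda$. The sequence $(g_ix_0)$ is then uniformly close to $(\pi_X(g_i))$, so it is still an unparameterized quasi-geodesic, with slightly worse constants.

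The remaining issue, and the only real obstacle, is that an unparameterized quasi-geodesic may linger: long stretches of consecutive $g_ix_0$ may stay within a bounded region of $X$, since $G$ moves in other domains while the top-level coordinate is stuck. Such stretches prevent $(g_ix_0)$ from being a quasi-geodesic under any reparameterization that keeps every point. The definition lets us choose the elements $a_i$, so we subsample and shortcut. The unparameterized property provides times $0=t_0<t_1<\dots<t_r=m$ such that $j\mapsto \pi_X(g_{t_j})$ is a $(\lambda,\lambda)$-quasi-geodesic. The diameter of $\pi_X(\gamma[t_{j},t_{j+1}])$ is bounded by a constant $R=R(\lambda)$, because the reparameterization is coarsely monotone. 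Hence $d_X(g_{t_j}x_0,g_{t_{j+1}}x_0)\le R'$.

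We now replace the possibly long group path from $g_{t_j}$ to $g_{t_{j+1}}$ by a single jump of bounded word length. Set $h=g_{t_j}^{-1}g_{t_{j+1}}$; then $d_X(x_0,hx_0)\le R'$. Since $X$ is a graph and the action is transitive, each $x_0$-neighbor of $x_0$ is $sx_0$ for some element $s$. Fix finitely many such elements, one for each neighbor in a chosen finite set of orbit representatives. In the HHG setting, $X$ need not be locally finite, so instead we use the fact that $\pi_X$ is coarsely surjective and Lipschitz together with the stabilizer of $x_0$. Concretely, we replace $g_{t_{j+1}}$ by $g_{t_j}s_1\cdots s_\ell$, with $\ell\le R'$, walking along a geodesic in $X$ from $g_{t_j}x_0$ to $g_{t_{j+1}}x_0$. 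Each step moves $x_0$ to an adjacent vertex. For this, pick for each edge type an element $s$ realizing it, with word length bounded by a constant $W$; this requires finitely many edge orbits at $x_0$, which holds by cocompactness of the HHG top-level action, and I expect verifying this finiteness to be the main technical point. The endpoint of each walk is only required to equal $g_{t_{j+1}}x_0$ as a point, which we can arrange by ending at that vertex.

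The resulting sequence $a_0,\dots,a_k$ has consecutive word distances at most $W$. Its orbit points follow concatenated geodesics of length at most $R'$ between the consecutive points of a $(\lambda,\lambda)$-quasi-geodesic. The whole path is therefore an $(E,E)$-quasi-geodesic for some $E$ depending only on $\lambda$ and $R'$, which proves the lemma.
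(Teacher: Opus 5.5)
Your first two paragraphs are already the paper's proof. The paper applies Theorem~\ref{thm:hierarchypathsexist} in the Cayley graph and discretizes the hierarchy path into group elements with $d_G(g_{i-1},g_i)\le W_0$. It then notes that $(g_ix_0)$ is an unparameterized quasi-geodesic in $X$, and stops there. ``Quasi-geodesic under some reparameterization'' in Definition~\ref{def:actiopathconnected} is meant in the unparameterized sense. That is how the definition is used in the main construction, where $a_1x_0,\dots,a_kx_0$ is called an unparameterized $(E,E)$-quasi-geodesic. So lingering stretches of bounded diameter are allowed, and nothing more is needed.

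The shortcutting step you add is where the proposal breaks. For $h=g_{t_j}^{-1}g_{t_{j+1}}$ with $d_X(x_0,hx_0)\le R'$, you need an element $h'$ of bounded word length with $h'x_0=hx_0$. That means $hx_0$ must lie in the finite set $\{gx_0 : d_G(e,g)\le R'W\}$. But $d_G(e,h)$ is unbounded and $X$ is not locally finite, so $hx_0$ ranges over infinitely many points of the $R'$-ball; in a curve graph, $h$ can be a high power of a Dehn twist. Finitely many edge orbits does not rescue this. It realizes each neighbor of $x_0$ only up to $\mathrm{Stab}(x_0)$. Your current group element $c$ is already fixed, so you need the specific neighbor $c^{-1}q$. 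Correcting by an element of the infinite stabilizer costs unbounded word length.

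In fact no patch exists. Suppose the number of steps were bounded linearly in $d_X(x,y)$. Then $a_0^{-1}a_k$ would have word length bounded by some $C$. Fix $a$ with $ax_0=x$ and write $a_0=a\sigma$ with $\sigma\in\mathrm{Stab}(x_0)$. This forces $a^{-1}y=\sigma a_0^{-1}a_kx_0$ into a finite union of $\mathrm{Stab}(x_0)$-orbits. But in curve-graph examples there are infinitely many $\mathrm{Stab}(x_0)$-orbits of vertices at distance $2$ from $x_0$, distinguished by intersection number with $x_0$. So drop the shortcut and end the proof after your second paragraph.
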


\textbf{Proof.} For $G$ to be a hierarchically hyperbolic group, the hierarchically hyperbolic space for which $X$ is the top-level space is the Cayley graph $\mathcal{X}$ of $G$ with respect to some finite generating set. Furthermore, we may assume without loss of generality that there is a projection map $\pi_X:\mathcal{X}\to X$ sending each $a\in \mathcal{X}$ to $ax_0$.

Applying Theorem \ref{thm:hierarchypathsexist}, there is a constant $E$ so that for any $x,y\in X$, we may find $a_0,a_k\in \mathcal{X}$ such that $a_0x_0=x$ and $a_kx_0=y$, and a sequence $a_0,a_1,...,a_k$ forming an $(E,E)$-quasi-geodesic in $\mathcal{X}$ so that its projection $x=a_0x_0,a_1x_0,...,a_kx_0=y$ is a unparameterized $(E,E)$-quasi-geodesic. As $a_0,a_1,...,a_k$ form an $E$-quasi-geodesic in $\mathcal{X}$, we conclude by the definition of distances in $\mathcal{X}$ that $d_G(a_{i-1},a_i)\leq W$ for some $W$ dependent only on $E$. $\square$

\section{Hypotheses}

In this brief section, we establish our standing hypotheses.

\begin{hypothesis}
\label{hyp:main}

Let $G$ be a finitely generated group acting by isometries on a simplicial graph $X$ with basepoint $x_{0}$, and let $w_{1}, \ldots, w_{k} \in G$. We assume the following for some fixed constants $E$ and $M > 10^{20}$:
\begin{enumerate}
\item The space $X$ is Gromov hyperbolic with hyperbolicity constant $\delta>1000$.
\item The action $G\curvearrowright X$ is vertex-transitive, WPD, and non-elementary.
\item The action $G\curvearrowright X$ is action path-connected with constant $E$.
\item The maximal finite normal subgroup of $G$ is trivial.
\item We have 
$$(w_1,...,w_k)\in\mathcal{E}_{10^{20}M,M}$$
as in Definition \ref{def:walkpermissible}.
\end{enumerate}

Given these hypotheses, we set $L=\max_iL(w_i)$ and $\mathcal{M}:=50\min_i L(w_i)/M$. We also let $v_1,...,v_k$ be the respective conjugates of $w_1,...,w_k$ obtained from Definition \ref{def:walkpermissible}, and set the notation \[
\mathcal{A} := \big\{ h Ax(v) : h \in G, v \in \{v_{1}, \ldots, v_{k}\} \big\}.
\]
\end{hypothesis}

Note that by Theorem \ref{thm:randomwalkproperties}, there exists a constant $M$ for which the last condition of Hypothesis \ref{hyp:main} holds with probability approaching $1$ as $n$ goes to $\infty$.

\begin{remark}

We note that several of these hypotheses are more or less free. Specifically, given the other properties and that the action $G\curvearrowright X$ is cobounded, we may assume without loss of generality that $G\curvearrowright X$ is vertex-transitive, that $X$ is a simplicial graph with the simplicial metric, and that the maximal finite normal subgroup of $G$ is trivial, perhaps after adjusting constants. Crucially, these modifications do not affect the conclusions of our theorems. The first two reductions are standard, while the third is explained in Appendix A of \cite{ABMNR26}.

\end{remark}

\section{Tight conjugates and minimal representatives}

Let $w_{1}, \ldots, w_{k} \in G$ be elements satisfying Hypothesis \ref{hyp:main}. We will denote \[
L_{i} = L(w_{i}) := \lim_{n} \frac{1}{n} d(x_{0}, w_{i}^{n} x_{0}).
\]

Recall the definition
$$\mathcal{M}:=\min_i L(w_i)/M$$

In the following, we will consider the normal closure of $w_{i}$'s; in this discussion, we may replace each $w_{i}$ with any of its conjugates. Hence, without loss of generality, we will assume that the $(w_{i}^{n} x_{0})_{n \in \Z}$ are $10\delta$-aligned. What we want is the following.

\begin{lemma}\label{lem:CM2.1}
Let $h$ be a nontrivial element of $N:= \llangle w_{1}, \ldots, w_{k} \rrangle$. Then there exist $g \in G$, $i \in \{1, \ldots, k\}$ and a conjugate $v$ of $w_{i}$ such that $(x_{0}, gx_{0}, g v x_{0}, hx_{0})$ is $\mathcal{M}$-aligned. In particular, for every $x_1,x_2\in X$ with $hx_1=x_2$ for $h\in N\setminus\{e\}$, there exists an axis $\gamma \in \mathcal{A}$ such that $d_{\gamma}(x_1,x_2)\geq L(v)-3\M$.
\end{lemma}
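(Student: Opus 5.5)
We will prove this as a Greendlinger-type lemma, following the rotating-family strategy of Dahmani--Guirardel--Osin and Clay--Mangahas, argued via a minimal representative. Fix $h\in N\setminus\{e\}$. Among all expressions
$$h=\prod_{j=1}^{m}u_jv_j^{\pm1}u_j^{-1},$$
with $v_j$ ranging over conjugates of the $w_i$, choose one minimizing a complexity: first $m$, then $\sum_j d(x_0,u_j\Ax(v_j))$. Each factor $u_jv_ju_j^{-1}$ is a loxodromic whose axis $u_j\Ax(v_j)\in\A$ it translates by $L(v_j)$. By Definition \ref{def:walkpermissible}(1), each axis is $8\delta$-convex.

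The core claim is that the broken path
$$x_0\to g_1x_0\to g_1v_1x_0\to\cdots\to hx_0$$
is aligned at the axis-entry points, and that some factor contributes a long undisturbed piece. Here $g_j$ denotes the entry point of the path into the $j$-th axis. We argue by contradiction, in three steps.

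First, suppose two consecutive axes $\gamma_j,\gamma_{j+1}$ have projections onto one another of diameter at least $\M$. The large-projection fellow traveling lemma (Lemma \ref{lem:BGIrouting}) together with convexity then gives a $50\delta$-fellow traveling subsegment of length at least $\M\approx 50L/M$. By the non-overlapping axes condition (3), this forces $\gamma_j=\gamma_{j+1}$ with the same $v$, using the stabilizer condition (4). The two factors are then powers of the same element and can be merged, or they cancel, which lowers $m$.

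Second, suppose the translation along $\gamma_j$ is mostly "absorbed" into the neighbouring projections. Then $u_jv_ju_j^{-1}$ can be rewritten by conjugating past a neighbour, reducing the distance sum. This is a standard swap move: $ab=b(b^{-1}ab)$, and $b^{-1}\gamma$ is closer to $x_0$.

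Third, with these reductions excluded, the sequence of entry and exit points is $\M$-aligned with gaps roughly $L(v)-2\M$. The alignment lemma of \cite{Gou22} then lets us pass to the subsequence $(x_0,gx_0,gvx_0,hx_0)$, giving the first claim.

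The hardest step is controlling the case where many short-projection axes accumulate and destroy alignment, since WPD rather than acylindricity means we have no uniform bound on how many axes nearly overlap. For this we would rely on condition (3), which holds uniformly for all $g\in G$: it rules out any $D/M$-overlap regardless of WPD. We would also use the hyperbolicity of $X/N$ from \cite{ABMNR26} as a sanity check. We would also try first a simpler induction on $m$, peeling off the last factor. The inductive step would show that $hx_0$ lies beyond the exit point of the outermost axis, using Lemma \ref{lem:BGI} on the geodesic from $x_0$ to $h'x_0$.

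For the "in particular" statement, take $x_1$ and $x_2=hx_1$. By vertex-transitivity write $x_1=ax_0$, so $a^{-1}ha\in N\setminus\{e\}$. Applying the first part to $a^{-1}ha$ and translating by $a$ gives an axis $\gamma=ag\Ax(v)\in\A$. Alignment of $(x_1,agx_0,agvx_0,x_2)$ then shows that the projections of $x_1$ and $x_2$ to $\gamma$ lie within about $\M$ of $agx_0$ and $agvx_0$ respectively. Hence $d_\gamma(x_1,x_2)\ge L(v)-3\M$, where the three losses of $\M$ are the two alignment errors and the coarse-geodesic error.
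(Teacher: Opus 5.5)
Your proposal has a genuine gap at Step 3. Minimality for $(m,\sum_j d(x_0,u_j\Ax(v_j)))$ does not force the entry/exit points to be $\M$-aligned with gaps $\approx L(v)-2\M$, because the swap in Step 2 only pays off when more than about half of a translation is absorbed.

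Concretely, write $h=n_1\cdots n_m$ with $n_j=u_jv_j^{\pm1}u_j^{-1}$ and $y_j=n_1\cdots n_jx_0$. Let $\gamma_j=n_1\cdots n_{j-1}u_j\Ax(v_j)$, so that $d_j:=d(x_0,u_j\Ax(v_j))=d(y_{j-1},\gamma_j)=d(y_j,\gamma_j)$. Set $p_j=\pi_{\gamma_j}(y_{j-1})$ and $q_j=\pi_{\gamma_j}(y_j)$.

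Suppose $\gamma_{j+1}$ branches off $\gamma_j$ at a point $a$ between $p_j$ and $q_j$, with $d(a,q_j)=s$. Then $d_{j+1}\approx d_j+s$, and your two swap moves behave as follows.
\begin{itemize}
\item The rewriting $n_jn_{j+1}=(n_jn_{j+1}n_j^{-1})\,n_j$ replaces $d_{j+1}$ by $d(y_{j-1},\gamma_{j+1})\approx d_j+L_j-s$.
\item The rewriting $n_{j+1}\,(n_{j+1}^{-1}n_jn_{j+1})$ replaces $d_j$ by roughly $d_j+s+L_{j+1}$.
\end{itemize}
So your complexity decreases only if $s\gtrsim L_j/2$.

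For $\M\ll s<L_j/2$ neither move helps; for example, take $h=n_1n_2$ with $d_1=0$ and $s=L_1/3$. Yet $(p_j\cdot p_{j+1})_{q_j}\approx s$, and $[x_0,hx_0]$ runs along $\gamma_j$ for only about $L_j-s$. So the chain through $\gamma_j$ cannot be $\M$-aligned in your sense. Step 2 as written (``mostly absorbed'', ``$b^{-1}\gamma$ is closer to $x_0$'') hides exactly this threshold. \cite[Lemma 3.7]{Gou22} cannot rescue you either: at alignment constant $C\sim L/2$ it needs gaps larger than $L$.

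Minimization can only give a relaxed chain. The missing ingredient is an argument that (a) the chain still makes net forward progress, so $[x_0,hx_0]$ runs along each of its axes, and (b) at least one axis is absorbed at neither end.

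The paper does this with tight conjugates and an induction on their number, keeping an auxiliary subsequence of axes (Lemma \ref{lem:axisDecomp}). Entries are kept clean, $(q_{i-1}\cdot q_i)_{p_i}<10\delta$, while exits may backtrack by up to $0.7L$. The full turn comes from the last axis of the chain, whose exit satisfies $(p_T\cdot hx_0)_{q_T}<\M$.

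In your symmetric setting, the natural substitute would be a counting argument. Each of the $m-1$ junctions absorbs at most one of its two neighbouring axes, and the entry of $\gamma_1$ and the exit of $\gamma_m$ are clean, so some axis is unabsorbed. This still has to be written, as does a proof that minimality excludes absorption beyond half a translation, including absorption reaching past a neighbour.

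A minor point: in Step 1, merging two same-axis, same-orientation factors lowers $m$ only if you allow arbitrary powers $v_j^{k_j}$ in the expression.
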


This lemma follows from Lemma 2.1(1) of Clay-Mangahas, but we prove it directly for completeness. Before the proof, let us introduce some terminology.


For $w \in  \{w_{1}^{\pm 1}, \ldots, w_{k}^{\pm 1}\}$, a conjugate $n=g wg^{-1}$ of $w$ is called \emph{tight} if $d_{h \Ax (v) }(x_{0}, nx_{0}) < 0.6 L(v)$ for all $v \in \{w_{1}, \ldots, w_{k}\}$ and $h \in G$, except when $h\Ax(v) = g \Ax(w)$.

\begin{lemma}\label{lem:tightProduct}Every conjugate $n$ of $w \in \{w_{1}, \ldots, w_{k}\}$ is  a finite product of tight conjugates.
\end{lemma}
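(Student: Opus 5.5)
We will argue by induction on a complexity of the conjugate $n = gwg^{-1}$, namely the integer $d_X(x_0, gx_0)$, or more precisely $d_X(x_0, g\Ax(w))$; since $X$ is a simplicial graph and $G$ acts by isometries, this quantity is a nonnegative integer. The aim is to show that a non-tight $n$ can be rewritten as a product of conjugates of the $w_j^{\pm1}$, each strictly lower in complexity. Induction then terminates, because conjugates with the smallest complexity, whose axis passes within a bounded distance of $x_0$, are automatically tight, as we check in the base case.

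\textbf{Base case.} Suppose $g\Ax(w)$ passes near $x_0$. By Definition~\ref{def:walkpermissible}(1), the points $x_0$ and $nx_0$ then lie near $g\Ax(w)$, at distance about $L(w)$ apart along it. Let $h\Ax(v) \neq g\Ax(w)$ be another axis with $d_{h\Ax(v)}(x_0, nx_0) \ge 0.6L(v)$. Since $[x_0, nx_0]$ fellow-travels $g\Ax(w)$, Lemma~\ref{lem:BGIrouting} forces $[x_0,nx_0]$ to fellow-travel $h\Ax(v)$ along a segment of length roughly $0.6L(v) - O(\delta)$. Because $L(v) \geq 10^{20}M$ and $\M = 50\min L/M$, this overlap is far longer than $D/M$. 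So a long subsegment of $[x_0, v x_0]$ translated by $h$ would $50\delta$-fellow-travel a long subsegment of $g[x_0,wx_0]$, contradicting the non-overlapping axes condition, Definition~\ref{def:walkpermissible}(3). We need item (4) to rule out the possibility that the two translates coincide only via the stabilizer, which is consistent with the exception $h\Ax(v)=g\Ax(w)$.

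\textbf{Inductive step.} Suppose $n=gwg^{-1}$ is not tight. Then some axis $A' = h\Ax(v) \ne g\Ax(w)$ has $d_{A'}(x_0, nx_0)\ge 0.6L(v)$. Let $m = hvh^{-1}$ or its inverse, chosen so that $m$ translates along $A'$ in the direction of the projection of $nx_0$. Consider $n' = m^{-1} n m$, which is a conjugate of $w$ with axis $m^{-1}g\Ax(w)$, so that $n = m n' m^{-1}$. The plan is to show that $n'$ has strictly smaller complexity. If it does, then $n'$ is by induction a product of tight conjugates $t_1\cdots t_r$, and we must still handle the outer factors $m$ and $m^{-1}$.

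To deal with those factors, we will instead write $n = (mn'm^{-1})$ as a product such as $m\cdot (n'm^{-1}n'^{-1})\cdot n'$. Here each piece is a conjugate of some $w_j^{\pm1}$, and we will check that each piece has lower complexity than $n$. Alternatively, $m$ itself may be assumed to have lower complexity, since its axis $A'$ is closer to $x_0$ than $g\Ax(w)$ is, by the large projection. The geometric input for this is the following. Because the projection of the pair $\{x_0, nx_0\}$ to $A'$ has length at least $0.6L(v)$, which exceeds $L(v)/2$, applying $m^{-1}$, a translation by $L(v)$ along $A'$, moves the projection point of $g\Ax(w)$ back across the projection of $x_0$. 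This pulls the axis closer to $x_0$ by roughly $0.2L(v)$ minus $O(\delta)$. We will verify this using Lemma~\ref{lem:BGIrouting} and the Gromov product triangle inequality of Lemma~\ref{lem:gromovprops}.

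\textbf{Main obstacle.} We expect the hardest step to be pinning down the right complexity, so that every factor in the rewriting strictly decreases it. The difficulty is that the translate $m^{-1}g\Ax(w)$ might not be closer to $x_0$ if $g\Ax(w)$ and $A'$ are positioned so that the projection of $g\Ax(w)$ to $A'$ lies on the wrong side. We plan to address this by using the $0.6$ threshold, which exceeds $1/2$, together with the BBF-type fact that two distinct axes have a bounded mutual projection, of size less than $\M$, by the non-overlapping axes condition. With these, we will show that the projection of $g\Ax(w)$ to $A'$ lies within about $\M$ of the projection of $nx_0$. This yields a definite decrease of about $0.1L(v)$ in complexity. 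If the distance complexity proves awkward, we will fall back on an ordinal-type complexity: the multiset of large projections $d_{A}(x_0,nx_0)$ over all axes $A$, ordered lexicographically.
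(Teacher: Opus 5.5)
Your strategy matches the paper's. For a non-tight $n=gwg^{-1}$ you rotate along an offending axis $A'=h\Ax(v)$, write $n=m\cdot(m^{-1}nm)\cdot m^{-1}$ with $m=hv^{\pm1}h^{-1}$, and check that all three factors have smaller complexity. Your complexity $d_X(x_0,g\Ax(w))$ is a fine substitute for the paper's $\|n\|_X=d_X(x_0,nx_0)\approx 2d_X(x_0,g\Ax(w))+L(w)$. The separate base case is unnecessary under strong induction. Discard the variant $m\cdot(n'm^{-1}n'^{-1})\cdot n'$, however: its middle factor has axis $m^{-1}nA'$, which sits beyond $g\Ax(w)$ and is in general farther from $x_0$ than $g\Ax(w)$ is.

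The genuine gap is a missing case. Put $p=\pi_{g\Ax(w)}(x_0)$, so that $[x_0,nx_0]$ fellow-travels $[x_0,p]\cup[p,np]\cup[np,nx_0]$.

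\begin{itemize}
\item The stretch along which $[x_0,nx_0]$ fellow-travels $A'$ can overlap the middle piece for only about $\M$. So it lies either in the initial leg $[x_0,p]$ or in the terminal leg $[np,nx_0]$.
\item The second configuration cannot be excluded. Since $n$ carries the initial leg onto the terminal one, an offending axis along the former has an $n$-translate fellow-travelling the latter for essentially the same length.
\item When $A'$ lies along the terminal leg, both of your key claims fail. First, $d_X(x_0,A')\ge d_X(x_0,g\Ax(w))+L(w)-O(\M)$, so $m^{\pm1}$ has larger complexity than $n$. Second, $\pi_{A'}(g\Ax(w))$ lies near $\pi_{A'}(x_0)$, not near $\pi_{A'}(nx_0)$. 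Hence for either orientation of $m$, the axis $m^{-1}g\Ax(w)$ is farther from $x_0$ than $g\Ax(w)$ is.
\item No appeal to the threshold $0.6>1/2$ or to bounded mutual projections can rule this configuration out.
\end{itemize}

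The remedy is to use an offending axis along the initial leg, replacing $A'$ by $n^{-1}A'$ if necessary. This axis is still distinct from $g\Ax(w)$, since $n$ stabilizes $g\Ax(w)$. It fellow-travels $[x_0,p]$ along some segment $[a,b]$ with $\ell:=d_X(a,b)\ge 0.6L(v)-O(\M)$, which is all your estimates use. Orient $m$ from $a$ toward $b$ and set $c:=d_X(x_0,g\Ax(w))$. Then $d_X(x_0,A')\le c-\ell+O(\M)$, and
\[
d_X(mx_0,g\Ax(w))\le c-\bigl(\ell-|L(v)-\ell|\bigr)+O(\M)\le c-0.2L(v)+O(\M),
\]
so all three factors drop in complexity. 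This is exactly the paper's split into the cases $D_1\le D_2$ and $D_1\ge D_2$, with the latter handled by symmetry.
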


\begin{proof}
We will prove the lemma by induction on $\|\cdot\|_{X}$, where $\|g\|_{X}:=d_X(x_0,gx_0)$. Let $n$ be a conjugate of an element of $\{w_{1}, \ldots, w_{k}\}$, and let $n = gwg^{-1}$ be an effective product decomposition as defined above. Let $h \in G, v \in \{w_{1}^{\pm 1}, \ldots, w_{k}^{\pm 1}\}$ be an arbitrary pair such that  $d_{h \Ax(v)} (x_{0}, nx_{0}) > 0.6L(v)$. We can further assume that $\pi_{h\Ax(v)}(x_{0})$ appears earlier than $\pi_{h\Ax(v)}(nx_{0})$ along $h\Ax(v)$; we can invert the sign of $v$ if not. If $h\Ax(v) = g\Ax(w)$ for all such pairs $(h, v)$, then $n$ is tight. So let us assume $h \Ax(v) \neq g \Ax(w)$. Let \[
D_{1} = d_X\big(x_{0}, h\Ax(v)\big), \,\,D_{2} = d_X\big(h\Ax(v), nx_{0}\big).
\]
The projection of $[x_{0}, nx_{0}]$ onto $h\Ax(v)$ has diameter at least $0.6L(v)$, so we can apply Lemma \ref{lem:BGIrouting} and Lemma \ref{lem:morse}. We obtain a subsegment $\gamma$ of $[x_{0}, nx_{0}]$ that lies in the $10\delta$-neighborhood of $h\Ax(v)$ such that $d(x_{0}, \gamma) =_{10\delta} D_{1}$ and $d(nx_{0}, \gamma) =_{10\delta} D_{2}$. Note that either $D_{1} \le D_{2}$ or $D_{1} \ge D_{2}$ holds. Let us focus on the former case, as the latter can be handled in a symmetric way.

Again by applying Lemma \ref{lem:BGIrouting} and Lemma \ref{lem:morse}, we obtain a subsegment $\gamma'$ of $[x_{0}, nx_{0}]$ lying in the $10\delta$-neighborhood of $g \Ax(w)$. As $v, w \in \mathcal{E}_{10^{20}M, M}$ and $h\Ax(v) \neq g \Ax(w)$, we know that $\gamma$ and $\gamma'$ cannot overlap for length $>\mathcal{M}$. This fact plus $D_{1} \le D_{2}$ implies that \[
D_{1} <_{30\delta} \frac{1}{2} \big(\|n\|_{X} - L(w)\big)-L(v) + \mathcal{M}.
\]

In this case, the vast majority of $\gamma$ precedes $\gamma'$ along $[x_0,nx_0]$, and so we have \[
\diam( \gamma \cap N_{50\delta}([x_{0}, \pi_{g \Ax (w)}(x_{0})])) \ge_{100\delta} l(\gamma) - \mathcal{M} \ge 0.59 L(v).
\]
In particular, $d_{h \Ax(v)}(x_{0}, \pi_{g\Ax(w)}(x_{0})) \ge 0.58L(v)$. As $n$ stabilizes $g\Ax(w)$, this also implies that \[
d_{nh \Ax(v)}(nx_{0}, \pi_{g \Ax(w)}(nx_{0})) = d_{h \Ax(v)}(x_{0}, \pi_{g\Ax(w)}(x_{0})) \ge 0.58L(v).
\]
Now, let us consider the decomposition \[
n = g w g^{-1} =( h v h^{-1}) \cdot (h v^{-1} h^{-1}g w g^{-1} hv h^{-1}) \cdot (h v^{-1} h^{-1}).
\]
This decomposition is depicted in Figure \ref{fig:tight}. First, we have \[
\|h v h^{-1}\| \le 2 D_{1} + L(v) < \|n\|_{X}.
\]
Second, we have \[
\begin{aligned}
\|h v^{-1}h^{-1}nh v h^{-1}\|_{X} &\le_{100\delta} d_X(h v^{-1} h^{-1} x_{0}, h \Ax(v)) + d_{h\Ax(v)}(h v^{-1} h^{-1} x_{0}, \pi_{g \Ax(w)}(x_{0})) \\
&+ d_X(\pi_{g \Ax(w)}(x_{0}), \pi_{g \Ax(w)}(nx_{0})) + d_{nh\Ax(v)}(\pi_{g \Ax(w)}(nx_{0}), n hv^{-1} h^{-1} x_{0})\\&+ d_X(nh\Ax(v), n hv^{-1} h^{-1} x_{0}) \\
&\le d_X(x_{0}, h \Ax(v)) + |d_{h\Ax(v)}(x_{0}, \pi_{g \Ax(w)}(x_{0})) - L(v)| + d_X(\pi_{g \Ax(w)}(x_{0}), \pi_{g \Ax(w)}(nx_{0})) \\
&+ | d_{nh\Ax(v)}(\pi_{g \Ax(w)}(nx_{0}), n  x_{0}) - L(v)| + d_X(nh\Ax(v), n  x_{0})\\
&\le_{100\delta} d(x_{0},nx_{0}) - 0.16L(v).
\end{aligned}
\]
So the three conjugates are of smaller $\|\cdot\|_{X}$-norm than $n$. Induction finishes the proof.
\end{proof}

\begin{figure}[htbp]
    \centering 
    \includegraphics[width=0.8\textwidth]{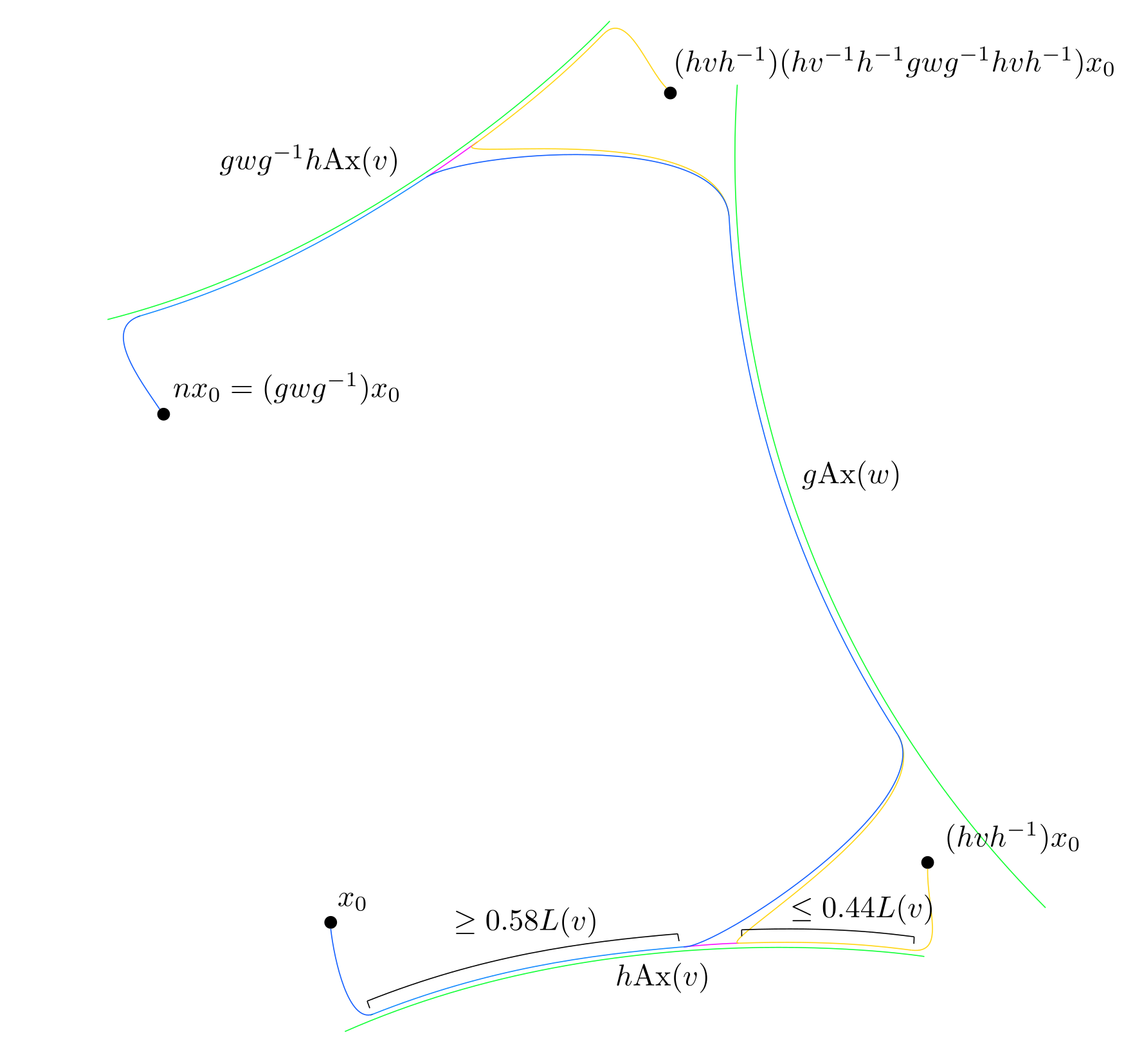} 
    \caption{The inductive tight conjugate decomposition used in the proof of Lemma \ref{lem:tightProduct}. Explicitly, $n=gwg^{-1}$ is decomposed as $(hvh^{-1})\cdot(hv^{-1}h^{-1}gwg^{-1}hvh^{-1})\cdot(hv^{-1}h^{-1})$. Due to the projection distance assumption of $d_{h\Ax(v)}(x_0,\pi_{g\Ax(w)}(x_0))\geq 0.58L(v)$, the three geodesics $[x_0,(hvh^{-1})x_0]$, $[(hvh^{-1})x_0,(hvh^{-1})(hv^{-1}h^{-1}gwg^{-1}hvh^{-1})x_0]$, and $[(hvh^{-1})(hv^{-1}h^{-1}gwg^{-1}hvh^{-1})x_0,nx_0]$ each have shorter length than $[x_0,nx_0]$.}
    \label{fig:tight}
\end{figure}

\begin{lemma}\label{lem:axisDecomp} Let $n \in N$ and let $m$ be the minimum number of tight conjugates needed to express $n$ as a product of tight conjugates.
Then there exist $m$ tight conjugates \[
h_{1} u_{1} h_{1}^{-1}, \ldots, h_{m} u_{m} h_{m}^{-1} \quad (u_{i} \in \{w_{1}^{\pm}, \ldots, w_{k}^{\pm}\}, h_{i} \in G)
\]
whose product equals $n$, and a nonempty sequence $1=l(1) < \ldots < l(T)$ such that for 
\[\begin{aligned}
n_{j} &:= (h_{1} u_{1} h_{1}^{-1}) \cdots (h_{j} u_{j} h_{j}^{-1}) \quad(j=1, \ldots, m),\\
\gamma_{i} &:= n_{l(i)-1} h_{l(i)} \Ax(u_{l(i)}) \quad(i=1, \ldots, T),\\
\end{aligned}
\]
the following holds.
\begin{enumerate}
\item For points \[
p_{i} = \pi_{\gamma_{i}} (n_{l(i)-1} x_{0}),\quad q_{i} = \pi_{\gamma_{i}} (n_{l(i)} x_{0}) \quad (i=1, \ldots, T),
\]we have that $q_{i}$ is $0.5\mathcal{M}$-close to $[n_{l(i+1) - 1} x_{0}, p_{i+1}]$ for each $i=1, \ldots, T-1$.
\item We have $d_{X}(p_{i}, q_{i}) \ge L(u_{l(i)})$, $(q_{i-1} \cdot q_{i})_{p_{i}} < 10\delta$ for $i=1, \ldots, T$ (with $q_{0} := x_{0}$) and $(p_{i} \cdot p_{i+1})_{q_{i}} \le 0.7L(u_{l(i)})$ for $i = 1, \ldots, T-1$. Moreover, $(p_{T} \cdot nx_{0})_{q_{T}} < \mathcal{M}$.
\end{enumerate}
\end{lemma}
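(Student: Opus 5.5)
Among all expressions of $n$ as a product of $m$ tight conjugates, I will choose one minimizing a secondary complexity, namely $\sum_j \|n_j\|_X$ or lexicographically the tuple $(\|n_1\|_X,\dots,\|n_m\|_X)$. I then read off the selected indices $l(1)<\dots<l(T)$ greedily along the path $x_0=n_0x_0, n_1x_0,\dots,n_mx_0=nx_0$. Write $\beta_j:=n_{j-1}h_j\Ax(u_j)$ for the axis translated to the $j$-th step. Set $l(1)=1$. Given $l(i)$, let $l(i+1)$ be the first index $j>l(i)$ whose axis $\beta_j$ is ``visible from $q_i$'', meaning $d_{\beta_j}(q_i,n_jx_0)$ is large. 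If no such index exists, stop and set $T=i$. By tightness, the projection of $[n_{j-1}x_0,n_jx_0]$ to $\beta_j$ has length $\geq L(u_j)$, while projections to every other axis in $\mathcal{A}$ are $<0.6L$. This is the basic input.

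The key steps, in order, are as follows.

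First, I prove the local estimates in item 2 for a single step. The inequality $d(p_i,q_i)\ge L(u_{l(i)})$ holds because $n_{l(i)}=n_{l(i)-1}(h u h^{-1})$, and this element translates $\beta_{l(i)}$ by $L(u)$. Hence $p_i$ and $q_i$ are projections of $x$ and of $gx$ for a translation $g$ along the axis, and they differ by about $L(u)$ up to $O(\delta)$. The bound $(q_{i-1}\cdot q_i)_{p_i}<10\delta$ comes from Lemma \ref{lem:BGIrouting}: the projection of $q_{i-1}$ to $\gamma_i$ is close to $p_i$, because $q_{i-1}$ lies near $[n_{l(i)-1}x_0,\ldots]$ by item 1 applied inductively. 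The concatenation through $p_i$ is then a $(1,\Omega)$-quasi-geodesic, and item 1 of Lemma \ref{lem:gromovprops} converts this to the Gromov product bound, after adjusting $\delta$-constants.

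Second, I prove item 1 and the bound $(p_i\cdot p_{i+1})_{q_i}\le 0.7L$. Here minimality is essential. Suppose the axes $\gamma_i$ and $\gamma_{i+1}$ overlap in projection by more than $0.7L$, or suppose an intermediate step $j$ with $l(i)<j<l(i+1)$ drifts away from $\gamma_i$, making $q_i$ far from $[n_{l(i+1)-1}x_0,p_{i+1}]$. Then one of two cases occurs. In the first case, the two axes coincide, so their conjugates can be cancelled or merged, which reduces $m$ since $v\cdot v^{-1}$ cancels. In the second case, the axes are distinct and fellow-travel for length $>\mathcal{M}$, which contradicts item 3 of Definition \ref{def:walkpermissible}. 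For the skipped indices, the idea is that their steps do not create large projections visible from $q_i$. I would then use the conjugation swap from the proof of Lemma \ref{lem:tightProduct}: replace $ab$ by $b(b^{-1}ab)$. This moves a nearby conjugate earlier and strictly decreases the secondary complexity while keeping $m$ fixed, which contradicts the choice. The final claim $(p_T\cdot nx_0)_{q_T}<\mathcal{M}$ follows from the stopping rule together with the bounded geodesic image lemma, Lemma \ref{lem:BGI}.

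The main obstacle is the second step. I must show that the intermediate non-selected steps stay within $0.5\mathcal{M}$ of the geodesic, that is, they do not accumulate and push $q_i$ off $[n_{l(i+1)-1}x_0,p_{i+1}]$. This requires an induction on $j$ showing that the points $n_jx_0$ with $l(i)\le j<l(i+1)$ all project to $\gamma_i$ near $q_i$. If some $n_jx_0$ projected far from $q_i$, then there would be an earliest axis causing it. That axis has a large projection and must either be selected or be swappable via the conjugation move. What I would try first is to make the secondary complexity strictly drop under the swap, using the norm computation in Lemma \ref{lem:tightProduct}, which gives a gain of $0.16L(v)$.
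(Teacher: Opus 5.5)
There is a genuine gap, and it is in your second step, where you yourself locate the difficulty.

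\textbf{The swap does not stay in the class you minimize over.} Under $ab\mapsto b\,(b^{-1}ab)$, the element $b^{-1}ab$ is a conjugate of some $w_i^{\pm1}$, but in general it is not a \emph{tight} one. Rewriting it via Lemma \ref{lem:tightProduct} can increase the number of factors. So you get no contradiction with minimality of $m$, nor with minimality of a secondary complexity among $m$-factor tight products. The $0.16L(v)$ gain in Lemma \ref{lem:tightProduct} comes from splitting one non-tight conjugate along an axis onto which it projects largely. It does not say that reordering two tight factors lowers $\sum_j\|n_j\|_X$.

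\textbf{The forward greedy rule does not yield the conclusions.} Your stopping rule constrains projections onto the later axes $\beta_j$, not the projection of $nx_0$ onto $\gamma_T$. Later tight factors may each shift $\pi_{\gamma_T}$ by up to $0.6L(u_{l(T)})$, and nothing keeps $[q_T,nx_0]$ away from $\gamma_T$. So $(p_T\cdot nx_0)_{q_T}<\mathcal{M}$ does not follow from Lemma \ref{lem:BGI}. Likewise, nothing forces the approach segment $[n_{l(i+1)-1}x_0,p_{i+1}]$ of the first ``visible'' step to pass near $q_i$, as item 1 requires. A visible step may also sit on a stretch of the path that later factors backtrack over.

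\textbf{Coinciding axes only cancel with opposite orientations.} In that case $c\,Q\,(Q^{-1}c^{-1}Q)=Q$. With the same orientation you get $c^2Q$, and nothing merges.

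\textbf{The missing idea: build the structure from the back, by induction on $m$, as the paper does.} Suppose $n'=huh^{-1}\cdot n$ is a minimal expression. Then $n$ is minimal with $m$ factors, so by induction its path is organized as $x_0\to p_1\to q_1\to\cdots\to p_T\to q_T\to nx_0$. Set $p_0=\pi_{h\Ax(u)}(hu^{-1}h^{-1}x_0)$ and $q_0=\pi_{h\Ax(u)}(x_0)$, and ask where the new segment $[q_0,p_0]$ fellow-travels this reduced path.
\begin{itemize}
\item Along a strut $[p_i,q_i]$: the no-overlap condition forces $hu^{-1}h^{-1}=n_{l(i)-1}h_{l(i)}u_{l(i)}(n_{l(i)-1}h_{l(i)})^{-1}$. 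Prepending then deletes the $l(i)$-th factor and leaves all other factors, and their tightness, untouched. This contradicts minimality.
\item Along the tail beyond $q_T$: the new conjugate would project by nearly $L(u_{l(T)})$ onto $\gamma_T$, contradicting its own tightness.
\item Along an interstitial segment after $\gamma_i$: the new index list is $0<l(i+1)<\cdots<l(T)$. The earlier selected indices are discarded because the new factor backtracks over them, which a forward selection rule cannot anticipate.
\item Essentially nowhere: take $T=1$, which gives $(p_0\cdot nx_0)_{q_0}<\mathcal{M}$.
\end{itemize}
Every tail of a minimal expression is minimal, so this handles every minimal expression. No secondary complexity or swapping is needed.
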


\begin{proof}
Let us now prove the lemma by induction. For $m=1$ we can take the sequence $\{1 = l(1)\}$. Now, suppose that a product $n$ of $m$ tight conjugates satisfies the condition, with conjugates \[
h_{1} u_{1} h_{1}^{-1}, \ldots, h_{m} u_{m} h_{m}^{-1}
\]
and an auxiliary sequence $1=l(1) < \ldots < l(T)$ as in the lemma statement. Now let us consider $n'=h u h^{-1} \cdot n$ for some tight conjugate $huh^{-1}$. Let $p_{0} := \pi_{ h Ax(u)} (hu^{-1} h^{-1} x_{0})$ and $q_{0} := \pi_{hAx(u)} (x_{0})$.

\textbf{Case I.} Some part of $[q_{0}, p_{0}]$ and $[p_{i}, q_{i}]$ are $30\delta$-fellow traveling for length $>0.1\mathcal{M}$ for some $i$. Then by the no-overlap condition of Definition \ref{def:walkpermissible}, we have  $h\Ax(u^{-1}) = n_{l(i)-1} h_{l(i)}\Ax(u_{l(i)})$, with the same orientation. Moreover, we have \[
hu^{-1}h^{-1} = n_{l(i)-1} h_{l(i)} \cdot u_{l(i)} \cdot (n_{l(i)-1} h_{l(i)})^{-1}.
\]
This forces that  \[
n' = n_{l(i)-1} \cdot (h_{l(i)+1} u_{l(i)+1} h_{l(i)+1}^{-1} ) \cdots (h_{m} u_{m} h_{m}^{-1}),
\]
and $n'$ becomes a product of at most $m-1$ tight conjugates. Such an element satisfies the conclusion of the lemma by induction.

\textbf{Case II.} The part of $[q_{0}, p_{0}]$ that is $20\delta$-fellow traveling with a subsegment of $[x_{0}, nx_{0}]$ has length $< 0.8\mathcal{M}$. This forces that $\pi_{[q_{0}, p_{0}]}(nx_{0})$ and $\pi_{[q_{0}, p_{0}]}(x_{0}) \subseteq \mathcal{N}_{10\delta}(q_{0})$ must have distance at most $0.9\mathcal{M}$, and we get $(p_{0} \cdot nx_{0})_{q_{0}} < \mathcal{M}$. Then $n'$ satisfies the conclusion of the lemma with $\{l'(1), l'(2),\ldots\} = \{1\}$. 

\textbf{Case III.} A part of $[q_{0}, p_{0}]$ is $20\delta$-fellow traveling with a subsegmemt of $[q_{T}, x_{0}]$. Then one can see that $[x_{0}, q_{0}]$ contains a subsegment that is $30\delta$-fellow traveling with $[p_{T}, q_{T}]$, of length $> L(u_{t}) - 2.9 \mathcal{M}$.
Namely, we have an axis $\gamma_{T}$ that is closer to $x_{0}$ than $h Ax(u)$ is, whereas $d_{\gamma_T}(x_0,huh^{-1}x_0)>0.6L(u_T)$. This contradicts the fact that $h u h^{-1}$ was a tight conjugate.

\textbf{Case IV.} We are not in Case I, but some part of $[q_{0}, p_{0}]$ is $20\delta$-fellow traveling with a subsegmemt of $[ \pi_{[p_{i}, q_{i}]}(p_{i+1}), p_{i+1}]$ for some $i$, for length $>0.6 \mathcal{M}$. 

In this case, we claim that $(p_{0} \cdot p_{i+1})_{q_0}<0.7L(u)$. This will conclude that $n'$ has the desired decomposition with indices $0 < l(i) < \ldots < l(T)$.

To see this, remember that $(x_{0} \cdot q_{i})_{p_{i}} < 20\delta$ and $(p_{i} \cdot p_{i+1})_{q_{i}} \leq 0.7 L(u_{l(i)})$. In particular, there are points $P, Q \in [x_{0}, p_{i+1}]$ with $P$ coming before $Q$ such that $P \in \mathcal{N}_{30\delta}(p_{i})$, $Q \in \mathcal{N}_{20\delta}( \pi_{[p_{i}, q_{i}]}(p_{i+1}))$ and such that $d(P, Q) > 0.2L(u_{i})$.

Now, let $[q', p']\subseteq [q_{0}, p_{0}]$ be the maximal subsegment that is $20\delta$-fellow traveling with a subsegment of  $[ \pi_{[p_{i}, q_{i}]}(p_{i+1}), p_{i+1}] \subseteq \mathcal{N}_{50\delta}([n_{l(i+1)-1} x_{0}, n_{l(i+1)} x_{0}])$. Then we have $d(q', p') < 0.6L(u)$ by the tightness of $h_{l(i)} u_{l(i)} h_{l(i)}^{-1}$. Also, note that $[q_0, q']$ (however short it is) will be $50\delta$-fellow traveling with part of $[p_{i}, \pi_{[p_{i}, q_{i}]}(p_{i+1})]$. Such fellow traveling should be shorter than $0.1\mathcal{M}$ as Case I is excluded. In short, we have $d(q_{0}, p') < 0.65L(u)$. It follows that $(p_{0} \cdot p_{i+1})_{q_{0}} < 0.7L(u)$ as desired. Also, note that $d(q_{0}, [n_{l(i+1)-1} x_{0}, p_{i+1}]) < d(q_{0}, q') + 50\delta < 0.5\mathcal{M}$. 
\end{proof}

\textbf{Proof of Lemma \ref{lem:CM2.1}.} Given $n \in N$ with the decomposition described in Lemma \ref{lem:axisDecomp}, one has that $(x_{0}, p_{T}, q_{T}, nx_{0})$ is an $\mathcal{M}$-aligned sequence with $p_{T}, q_{T} \in n_{l(T)-1} h_{l(T)} Ax(u_{T})$ and $d(p_{T}, q_{T}) = L(u_{T})$. $\square$

\

\begin{figure}[htbp]
    \centering 
    \includegraphics[width=1\textwidth]{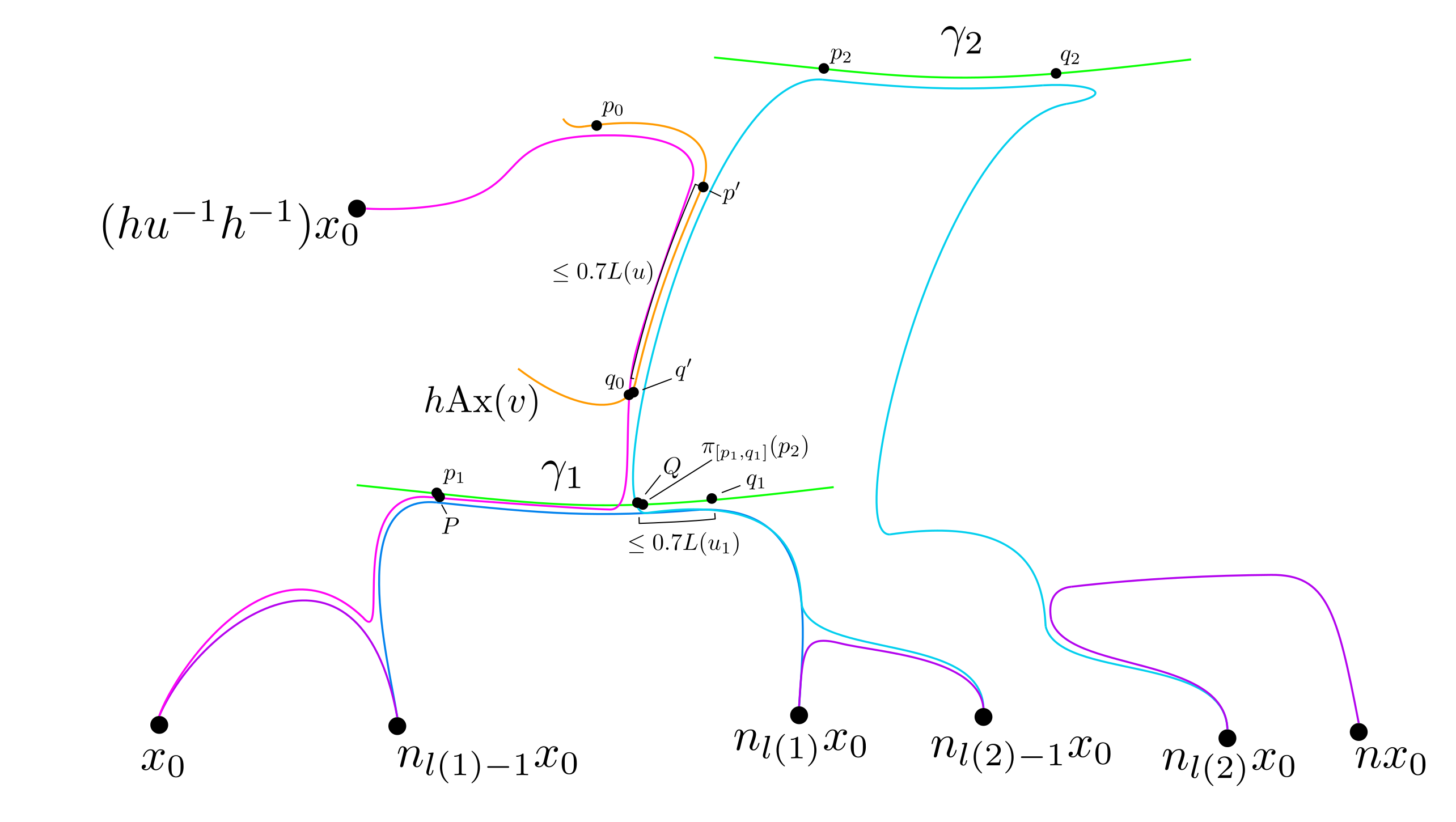} 
    \caption{The decomposition defined in Lemma \ref{lem:axisDecomp}. In the case depicted, $T=2$, and we have extended the construction by defining $n'=(huh^{-1})n$. As $[q_0,p_0]$ fellow-travels a segment of $[\pi_{[p_1,q_1]}(p_2),p_2]$, we are in Case IV. We therefore adjust the auxiliary sequence to $0=l(0)<l(2)$. In particular, we have that $\pi_{\gamma_2}((hu^{-1}h^{-1})x_0,nx_0)\geq L(u_2)-3\M$.}
    \label{fig:angle}
\end{figure}

As projections between axes have $\M$-bounded diameters by the non-overlapping condition and $\delta$-hyperbolicity, Lemma \ref{lem:CM2.1} also holds for axes, albeit with slightly worse constants.

\begin{corollary}
\label{cor:CM2.1strutedition}

Let $h_1\Ax(v_1)$ and $h_2\Ax(v_2)$ be distinct axes such that $\pi_{X/N}(h_1\Ax(v_1))=\pi_{X/N}(h_2\Ax(v_2))$. Then there exists an axis $h\Ax(v)$ such that $d_{h\Ax(v)}(h_1\Ax(v_1),h_2\Ax(v_2))\geq L(v)-10\M$.

\end{corollary}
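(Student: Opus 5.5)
Since the two axes have the same image in $X/N$, there is $n\in N$ with $n h_1\Ax(v_1)=h_2\Ax(v_2)$; in particular $v_1$ and $v_2$ are conjugate to the same generator, so we may write $v_1=v_2$. The element $n$ is nontrivial, because the axes are distinct. The plan is to apply Lemma \ref{lem:CM2.1} to $n$ at well-chosen points of the two axes, and then use the non-overlapping condition to upgrade point projections to axis projections.

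\textbf{Step 1: choosing points.} Let $\gamma_1=h_1\Ax(v_1)$ and $\gamma_2=h_2\Ax(v_2)=n\gamma_1$. Since $\gamma_1\neq\gamma_2$, condition 3 of Definition \ref{def:walkpermissible}, together with $\delta$-hyperbolicity, shows that $\pi_{\gamma_1}(\gamma_2)$ and $\pi_{\gamma_2}(\gamma_1)$ have diameter at most about $\M$. Pick $x_1\in\pi_{\gamma_1}(\gamma_2)$ and set $x_2:=nx_1\in\gamma_2$. Lemma \ref{lem:CM2.1} gives an axis $\gamma=h\Ax(v)\in\mathcal{A}$ with $d_\gamma(x_1,x_2)\ge L(v)-3\M$. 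By Lemma \ref{lem:BGIrouting}, a geodesic $[x_1,x_2]$ then fellow-travels $\gamma$ along a subsegment of length roughly $L(v)-3\M$.

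\textbf{Step 2: $\gamma$ is neither $\gamma_1$ nor $\gamma_2$.} Suppose $\gamma=\gamma_2$. Then $d_{\gamma_2}(x_1,x_2)$ would be large. But $x_2\in\gamma_2$, and $x_1$ lies close to the projection of $\gamma_1$ to $\gamma_2$ (by the choice of $x_1$ and hyperbolicity). This would force $\pi_{\gamma_2}(\gamma_1)$ to be far from $x_2$, which need not be contradictory. The fix is to choose $x_1$ more carefully: take $x_1$ as a point of $\gamma_1$ nearest to $\gamma_2$ and $y_2\in\gamma_2$ the corresponding nearest point, so that $[x_1,y_2]$ is a short bridge. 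Then compare $n$ with the translate along $\gamma_2$ taking $nx_1$ to $y_2$. Since the stabilizer of $\gamma_2$ is $h_2\langle v_2\rangle h_2^{-1}$ by condition 4, we may precompose $n$ with a power of $h_2v_2h_2^{-1}\in N$. This lets us assume $d(nx_1,y_2)\le L(v)$, and hence that $d_{\gamma_2}(x_1,x_2)\lesssim L(v)+\M$. The same argument with the stabilizer of $\gamma_1$ handles $\gamma=\gamma_1$.

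\textbf{Step 3: transfer and the main obstacle.} The main obstacle is relating $d_\gamma(x_1,x_2)$ to $d_\gamma(\gamma_1,\gamma_2)$, given that $\gamma$ differs from both axes. The geodesic $[x_1,x_2]$ has its initial portion near $\gamma_1$ (near $x_1$) and its final portion near $\gamma_2$. By the non-overlapping condition, $\gamma$ can fellow-travel $\gamma_1$ or $\gamma_2$ for length at most about $\M$. The fellow-travelling stretch of $\gamma$ along $[x_1,x_2]$ therefore loses at most a few $\M$ at each end to any overlap with $\gamma_1$ or $\gamma_2$. Using Lemma \ref{lem:BGI}, we get $\pi_\gamma(\gamma_i)$ within roughly $2\M$ of $\pi_\gamma(x_i)$. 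Combining this with $d_\gamma(x_1,x_2)\ge L(v)-3\M$ gives $d_\gamma(\gamma_1,\gamma_2)\ge L(v)-10\M$, after absorbing the $\delta$-errors into $\M$, which is much larger than $\delta$.
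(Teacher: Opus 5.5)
Your route is the paper's: apply Lemma \ref{lem:CM2.1} to a point $x_1\in\gamma_1$ and its translate $nx_1\in\gamma_2$, then pass from points to axes using that distinct axes have mutual projections of diameter at most $0.02\M$. The paper's one-line argument never checks that the axis $\gamma$ returned by Lemma \ref{lem:CM2.1} differs from $\gamma_1$ and $\gamma_2$. If $\gamma=\gamma_i$, then $d_\gamma(\gamma_1,\gamma_2)$ is infinite and the statement says nothing, yet the way the fact is used, e.g.\ in the proof of Lemma \ref{lem:strutlifts}, needs $\gamma\neq\gamma_1,\gamma_2$. So your Step 2 is the right extra care, but as written it does not close. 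Normalising to $d(nx_1,y_2)\le L(v_2)$ only gives $d_{\gamma_2}(x_1,x_2)\lesssim L(v_2)+\M$. That is compatible with the lower bound $d_{\gamma_2}(x_1,x_2)\ge L(v_2)-3\M$ from Lemma \ref{lem:CM2.1}, so $\gamma=\gamma_2$ is not excluded.

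The repair is to normalise to half a period. Replace $n$ by $g^kn$ with $g=h_2v_2h_2^{-1}\in N$; this element is still nontrivial and still maps $\gamma_1$ onto $\gamma_2$. Choose $k$ so that $g^knx_1$ lies within $L(v_2)/2+O(\delta)$ of $y_2$. This is possible because consecutive points of the $\langle g\rangle$-orbit of $nx_1$ are at most $L(v_2)+4\delta$ apart along $\gamma_2$. Since $y_2\in\pi_{\gamma_2}(x_1)$, this yields $d_{\gamma_2}(x_1,x_2)\le L(v_2)/2+O(\delta)<L(v_2)-3\M$, which is the contradiction you need.

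Two further simplifications. First, no separate normalisation is needed for $\gamma=\gamma_1$. With $x_1$ a nearest point of $\gamma_1$ to $\gamma_2$, you have $x_1\in\pi_{\gamma_1}(\gamma_2)$ and $\pi_{\gamma_1}(x_2)\subseteq\pi_{\gamma_1}(\gamma_2)$, so $d_{\gamma_1}(x_1,x_2)\le 0.02\M$ outright. Second, Step 3 is not an obstacle. Since $x_i\in\gamma_i$, we have $\pi_\gamma(x_i)\subseteq\pi_\gamma(\gamma_i)$. With $d_\gamma$ defined as the diameter of the union of projections, monotonicity alone gives $d_\gamma(\gamma_1,\gamma_2)\ge d_\gamma(x_1,x_2)\ge L(v)-3\M$. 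No fellow-travelling or Lemma \ref{lem:BGI} argument is required there.
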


Lemma \ref{lem:CM2.1} is useful to show that $\bar{X} = X/N$ is hyperbolic, with hyperbolicity constant at most $\max_i(L_i)+1000\delta$. 

\begin{lemma}
\label{lem:quotienthyp}

The space $X/N$ is Gromov hyperbolic, with hyperbolicity constant at most $\max_i(L_i)+1000\delta$.

\end{lemma}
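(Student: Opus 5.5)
We prove that $\bar X=X/N$ satisfies a thin-triangles (equivalently, four-point) condition by lifting geodesics to $X$ and using Lemma \ref{lem:CM2.1} to control how lifts can be identified. Write $\pi=\pi_{X/N}$. Since $\pi$ is $1$-Lipschitz and $N$ acts by isometries, $d_{\bar X}(\pi x,\pi y)=\min_{h\in N}d_X(x,hy)$. Any geodesic in $\bar X$ therefore lifts to a geodesic in $X$ of the same length: lift edge by edge, since $X$ is a simplicial graph and $N$ acts on it.

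Given a geodesic triangle in $\bar X$ with vertices $\bar x,\bar y,\bar z$, we lift two sides. Choose $x$, then lifts $y$ and $z$ with $[x,y]_X$ and $[x,z]_X$ projecting to the sides $[\bar x,\bar y]$ and $[\bar x,\bar z]$. The third side lifts to a geodesic $[y',z'']$ with $y'=h y$ for some $h\in N$; translating, we may take it as $[y,hz]$ for some $h\in N$. If $h=e$, the triangle lifts to a geodesic triangle in $X$, which is $\delta$-thin, and thinness descends because $\pi$ is $1$-Lipschitz. If $h\neq e$, Lemma \ref{lem:CM2.1} applied to $z$ and $hz$ gives an axis $\gamma\in\mathcal A$ with $d_\gamma(z,hz)\ge L(v)-3\M$. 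By Lemma \ref{lem:BGIrouting}, the path $[y,hz]$ together with the broken path from $z$ to $hz$ via the projections is forced to pass near $\gamma$ along most of its length.

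The key observation is that $\gamma$ projects to a loop of length about $L(v)$ in $\bar X$, since $v$ (a conjugate of some $w_i$) lies in $N$ and translates along $\gamma$. So a segment of $\gamma$ of length greater than $L(v)/2$ cannot be part of a geodesic in $\bar X$: the other way around the loop is shorter. We would use this to show that the geodesics $[\bar x,\bar z]$ and $[\bar y,\bar z]$ each cover at most about half of the loop $\pi(\gamma)$. Hence, within the loop region, every point of one side is within roughly $L(v)/2+O(\M+\delta)$ of the other sides, and outside it the lifted configuration is a genuine $X$-triangle up to $O(\delta)$ error. Iterating over all such $h$ is not needed if we instead argue via the minimal-length lift: choose $h$ minimizing $d_X(x,hz)$, and then apply the argument to the quadrilateral $x,y,hz,z$. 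This gives a bound of the form $L/2+O(\M)+O(\delta)$, which is at most $\max_i L_i+1000\delta$ once we note $\M\le L/M$ is tiny compared to $L$.

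The main obstacle is the second step. We must rule out that the two sides $[x,z]$ and the lift of $[\bar y,\bar z]$ each pass through many different axes in a complicated pattern, rather than through a single one. The plan is to appeal to the full decomposition of Lemma \ref{lem:axisDecomp}, which shows that the aligned sequence $(x_0,p_i,q_i,\dots,nx_0)$ passes through several axes. On each of these, the geodesicity in $\bar X$ forces the sides to use at most half of the corresponding loop. Thinness is then checked locally near each axis, with alignment (via the cited \cite[Lemma 3.7]{Gou22}) patching the local pieces together.

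If this proves too delicate, the fallback is a standard coning-off argument: $\bar X$ is quasi-isometric to $X$ with the family $\mathcal A$ coned off, which is hyperbolic by \cite{DGO17}. That route yields hyperbolicity but with a worse constant, so the explicit constant $\max_i L_i+1000\delta$ genuinely requires the direct loop argument above.
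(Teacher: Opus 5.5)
\textbf{There is a genuine gap: the case that makes up the content of the lemma is never handled.} Once you lift $[\bar x,\bar y]$ and $[\bar x,\bar z]$ from $x$, and then $[\bar y,\bar z]$ from $y$, the element $h$ is no longer at your disposal: the endpoint $hz$ of the third lifted side is determined by the triangle. Moreover, $z$ is already a minimal lift relative to $x$. So ``choosing $h$ to minimize $d_X(x,hz)$'' simply returns $h=e$ and discards the third side.

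For the forced $h$, $d_X(z,hz)$ is unbounded, and $h$ is in general a product of many tight conjugates. The fourth side $[z,hz]$ of your quadrilateral therefore runs along many axes, while Lemma \ref{lem:CM2.1} hands you only one of them. Nothing in your sketch explains why a point of $[x,y]$ lying near the middle of $[z,hz]$ is close in $X/N$ to $\pi([x,z])\cup\pi([y,hz])$. Lemma \ref{lem:BGIrouting} applied to $z,hz$ and $\gamma$ also says nothing about $[y,hz]$.

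You rightly flag this as the main obstacle, but the remedy you propose (invoke Lemma \ref{lem:axisDecomp}, check thinness locally near each axis, patch by alignment) is a plan rather than an argument. You would have to show that at each axis the two lifted sides hand off to one another rather than both running past it, and that the local errors do not accumulate. The coning-off fallback, as you note yourself, gives only qualitative hyperbolicity, not the constant $\max_i L_i+1000\delta$.

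\textbf{How the paper gets past this: two devices.} First, it uses Papasoglu's theorem to reduce to triangles with one short side, $d_{X/N}(\bar y,\bar z)<100\delta$. It then shows $[\bar x,\bar y]$ and $[\bar x,\bar z]$ are within Hausdorff distance $\max_i L_i+1000\delta$.

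Second, it inducts, peeling off one axis at a time:
\begin{itemize}
\item Lift the three sides to a single path $x_0\to x_n\to x_{n'}\to x_{n''}=nx_0$.
\item Apply Lemma \ref{lem:CM2.1} to get an aligned $(x_0,gx_0,gvx_0,nx_0)$, and let $\gamma\subset[x_0,nx_0]$ be the segment tracking $[gx_0,gvx_0]$.
\item Neither long lifted side can fellow-travel $\gamma$ for more than $0.6L(v)$, since otherwise translating by $gvg^{-1}$ would shorten a quotient geodesic. This is exactly your half-loop observation.
\item Since the middle side is short, $\gamma$ must begin near a point $x_i$ of the first side and end near a point $x_j$ of the third, with $\pi(x_i)$ and $\pi(x_j)$ within $100\delta$.
\item The $X$-quadrangle $x_i,x_n,x_{n'},x_j$ has sides of length at most $100\delta$ and $L(v)+100\delta$. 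This controls $[x_i,x_n]$ against $[x_{n'},x_j]$ directly in $X$, up to $L(v)+1000\delta$.
\item The inductive hypothesis is then applied to the strictly smaller configuration $\bar x,\pi(x_i),\pi(x_j)$.
\end{itemize}
This reduction-plus-induction is what turns the single axis from Lemma \ref{lem:CM2.1} into a complete proof, and it is what your proposal is missing.
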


\begin{proof}
By Papasoglu's theorem \cite[Theorem 1.4]{papa95}, it suffices to prove that all geodesic bigons are $\delta$-coarsely $\max_{i}(L_{i})$-thin. In fact, we will prove that: \begin{quote}
    Let $\bar{x}, \bar{y}, \bar{z} \in X/N$ be such that $d_{X/N}(\bar{y}, \bar{z}) < 100\delta$. Then $[\bar{x}, \bar{y}]$ and $[\bar{x}, \bar{z}]$ are within Hausdorff distance $\le \max_{i} L_{i} + 1000\delta$.
\end{quote}
To show this, let us lift $[\bar{x}, \bar{y}]$, $[\bar{y}, \bar{z}]$ and $[\bar{z}, \bar{x}]$ to $X$. More precisely, denoting the projection from $X$ to $X/N$ by $\pi_N$, let $[\bar{x}, \bar{y}] = \pi_N(x=x_{0}, x_{1}, \ldots, x_{n})$ for some $X$-path $(x_{0}, \ldots, x_{n})$. Similarly, pick $(x_{n}, \ldots, x_{n'})$ and $(x_{n'}, \ldots, x_{n''})$ that respectively project to $[\bar{y}, \bar{z}]$ and $[\bar{z}, \bar{x}]$. Then $x_{0}$ and $x_{n''}$ are identified under projection, so there exists $n \in N$ such that $nx_{0} = x_{n''}$. Here, because  $[\bar{x}, \bar{y}]$ is a geodesic, $(x_{0}, \ldots, x_{n})$ is also a geodesic from $x_{0}$ to $x_{n}$. Similarly, $(x_{n}, \ldots, x_{n}'), (x_{n'}, \ldots, x_{n''})$ are also geodesics.

Now, by Lemma \ref{lem:CM2.1}, there exists $g \in G$ and a conjugate $v$ of $\{w_{1}^{\pm1}, \ldots, w_{k}^{\pm 1}\}$ such that $(x_{0}, gx_{0}, gvx_{0}, nx_{0})$ is $L(v)/50$-aligned. Let $\gamma \subseteq [x_{0}, nx_{0}]$ be the subsegment of $[x_0,nx_0]$ contained in the $50\delta$-neighborhood of $[gx_{0}, gvx_{0}]$. If $N_{50\delta}(\gamma)$ intersects the geodesic $(x_{0}, \ldots, x_{n})$ for length $>0.6L(v)$, then $d_{X}(gvg^{-1} x_{0}, x_{n})$ becomes shorter than $d_{X}(x_{0}, x_{n})$. This gives a contradiction, as then $ d_{X/N}(\bar{x}, \bar{y})=d_{X}(x_{0}, x_{n})  < d_{X}(gvg^{-1} x_{0}, x_{n}) = d_{X/N}(\bar{x}, \bar{y})$. (Note that $\pi(x_{0}) = \pi(gvg^{-1} x_{0})$.) Likewise, $N_{50\delta}(\gamma)$ cannot intersect the geodesic $(x_{n'}, \ldots, x_{n''})$ for length more than $0.6L(v)$.

It follows that the beginning point of $\gamma$ is $50\delta$-close to a point $x_{i}$ on $(x_{0}, \ldots, x_{n})$ and the ending point of $\gamma$ is $50\delta$-close to a point $x_{j}$ on $(x_{n'}, \ldots, x_{n''})$. Clearly, $\pi(x_{i})$ and $\pi(x_{j}) = \pi(g v^{-1} g^{-1} x_{i})$ are $100\delta$-close. Now, for the quadrangle with vertices $x_{i}, x_{n}, x_{n'}, x_{j}$, we know that $d(x_{n}, x_{n'}) \le 100\delta$ and $d(x_{i}, x_{j}) < L(v) + 100\delta$. For such quadrangles, $[x_{i}, x_{n}]$ and $[x_{n'}, x_{j}]$ are $L(v) + 1000\delta$-close in the Hausdorff distance (even at the level of $X$). Now we can apply our inductive hypothesis to $y' = \pi(x_{i})$ and $z' = \pi(x_{j})$.
\end{proof}

\section{Strut Sequences}

For this section, it will be important to sometimes replace projections with the modified projections defined by Bestvina--Bromberg--Fujiwara--Sisto in \cite{BBFS19}, and measure projection distances with respect to these modified projections. These modified projection distances capture a `stability' property of closest point projections with respect to translation along axes, and thus make certain inductive arguments cleaner.

Recall the following definition:

\begin{definition}
\label{def:axes}

Given $w_1,...,w_k$ as in Hypothesis \ref{hyp:main}, let $v_1,...,v_k$ be as in Definition \ref{def:walkpermissible}. We define the set $\A$ of \emph{axes} by 
$$\A=\big\{h\Ax(v):h\in G,\ v\in\{v_1,...,v_k\}\big\}.$$
Note that $h\Ax(v)=\Ax(hvh^{-1})$.
\end{definition}

Note that distinct axes in $\mathcal{A}$ have mutual projections with diameter $\le 0.02 \mathcal{M}$, by the 5th condition in Hypothesis \ref{hyp:main}. This also gives the Behrstock inequality (given that $L(v_{i})$'s are all long enough so that $\mathcal{M} > 1000\delta$): \[
d_{\gamma}(\gamma', \gamma'') \ge 0.05 \mathcal{M} \Rightarrow d_{\gamma'}(\gamma, \gamma'') \le 0.05 \mathcal{M} \quad (\forall \gamma, \gamma', \gamma'' \in \mathcal{A}).
\]
These facts lead to the following consequence. 

\begin{lemma}[{\cite[Theorem 4.1]{BBFS19}}]
\label{lem:complex}

There exist modified projection functions $\Pi$ and modified distance functions $D$ such that, for any axes $\alpha$ and $\beta$ and $x,y,z\in X\cup\A$:
\begin{enumerate}
\item We have $\Pi_\alpha(x)\subset \N_{\mathcal{M}}(\pi_\alpha(x))$ and $\diam(\Pi_\alpha(x))\leq\mathcal{M}$.
\item We have
$$D_\alpha(x,y)=_{\theta}d_\alpha(x,z)$$
\item We have
$$D_\alpha(x,y)=\diam(\Pi_\alpha(x)\cup\Pi_\alpha(y))$$
\item For any $g\in G$, $\Pi_{g\alpha}(gx)=g\Pi_\alpha(x)$, so as a consequence $D_{\alpha}(gx,gy)=D_\alpha(x,y)$.
\item We have $D_\alpha(x,z)\leq D_\alpha(x,y)+D_\alpha(y,z)$.
\item (Behrstock Inequality) We have
$$\min(D_\alpha(x,\beta),D_\beta(x,\alpha))\leq\mathcal{M}.$$
\item The set $\{\eta\in\A:D_\eta(x,y)\geq\mathcal{M}\}$ is finite.
\item (Strong Behrstock Inequality / `Forcing') If $d_\beta(x,\alpha)>\theta$, then $\Pi_\alpha(x)=\Pi_\alpha(\beta)$. As a consequence,
$$d_\beta(x,\alpha)>\mathcal{M}\text{ implies } D_\alpha(x,w)=D_\alpha(\beta,w)$$

\end{enumerate}

\end{lemma}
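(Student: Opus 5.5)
The plan is to deduce the statement from the general projection-complex machinery of Bestvina--Bromberg--Fujiwara and its refinement in \cite[Theorem 4.1]{BBFS19}. That theorem needs two inputs on the collection $\A$: a uniform bound on mutual projections between distinct axes, and the Behrstock inequality with a constant $\theta_0$. The modified projections $\Pi$ and distances $D$ it produces satisfy items 1--8 with constants that are explicit multiples of $\theta_0$. So the work is, first, to verify those two inputs for $\A$ with $\theta_0$ a small multiple of $\M$ (much larger than $\delta$), and second, to check that the constants in the output are at most $\M$. If they are not, we redefine $\theta$ and absorb a fixed factor into $\M$, using that $M>10^{20}$ gives us slack.

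First, bounded mutual projections. Take distinct $\alpha=h\Ax(v_i)$ and $\beta=h'\Ax(v_j)$, both $8\delta$-convex $(1,4\delta)$-quasi-geodesics by item 1 of Definition \ref{def:walkpermissible}. Suppose $\diam\pi_\alpha(\beta)$ were large. Then Lemma \ref{lem:BGIrouting} and Lemma \ref{lem:morse} would give long subsegments of $\alpha$ and $\beta$ that $50\delta$-fellow travel. Translating by powers of $v_i$ and $v_j$, these become subsegments of $[x_0,v_ix_0]$ and $g[x_0,v_jx_0]$ of length $D/M$. This contradicts item 3 (non-overlapping axes) when $i\neq j$. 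When $i=j$, it forces $g\in\mathrm{Stab}(\Ax(v_i))$, and then item 4 together with trivial $\mathcal{E}(G)$ gives $\alpha=\beta$. Hence $\diam\pi_\alpha(\beta)\le 0.02\M$.

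Second, the Behrstock inequality. Suppose $d_\gamma(\gamma',\gamma'')\ge 0.05\M$. Then a geodesic from $\gamma'$ to $\gamma''$ fellow travels $\gamma$ along a segment of length about $0.05\M$. A standard thin-quadrilateral argument then shows that $\pi_{\gamma'}(\gamma)$ and $\pi_{\gamma'}(\gamma'')$ both lie near the point where this geodesic leaves $\gamma'$. Therefore $d_{\gamma'}(\gamma,\gamma'')$ is bounded by $0.02\M+O(\delta)\le 0.05\M$.

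With these inputs, \cite[Theorem 4.1]{BBFS19} provides $\Pi$ satisfying properties 1, 2 (with $\theta$ its constant), 3, 4 (equivariance holds because the construction is canonical and $\A$ is $G$-invariant), 5, 6, 7 and 8, for points of $X$ as well as for axes. Finiteness in property 7 follows from the corresponding finiteness for closest-point projections, which holds since a geodesic $[x,y]$ has finite length and can fellow travel only finitely many axes for length $\M$. The main obstacle I expect is bookkeeping. BBFS states its result for families with a projection axiom of the form $d_Y(X,Z)>\theta\Rightarrow d_X(Y,Z)\le\theta$, and its constants for the strong forcing property (item 8) come out as a fixed multiple of $\theta$. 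Reconciling these with the single constant $\M$ as written will require checking that the multiples are dominated by the ratio between $0.05\M$ and $\M$, possibly after enlarging $\M$ by a universal factor.
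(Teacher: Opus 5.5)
Your proposal follows the paper's route. The paper likewise obtains the $0.02\mathcal{M}$ bound on mutual projections of distinct axes from the non-overlapping condition in Definition~\ref{def:walkpermissible}, deduces the Behrstock inequality at scale $0.05\mathcal{M}$ (using $\mathcal{M}>1000\delta$), and then simply invokes \cite[Theorem 4.1]{BBFS19}; your write-up only makes explicit the verifications and constant bookkeeping that the paper leaves implicit.

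One point that both you and the paper pass over is that BBFS is stated for the family of axes alone. To get items 1--8 for points of $X$, you can adjoin each point of $X$ as a degenerate member of the family, which is harmless because its projection axioms follow from the same thin-quadrilateral argument.
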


This last item is generally called the Strong Behrstock Inequality, although we will use the term `forcing' to describe its conclusion.

\begin{remark}
From here onwards, we increase $\delta$ so that for all $\alpha \in \A$ and $x\in X\cup \A$, $\N_{10\delta}(\pi_\alpha(x))\subset \Pi_\alpha(x)$.

\end{remark}

We would like to be able to lift geodesic triangles from $X/N$ to $X$; unfortunately, this is usually not possible. Instead, we will lift triangles that are \emph{near-geodesic} in the following sense.

\begin{definition}
\label{def:minimal}

Let $x,y\in X\cup\A$, with projections $\bar{x}=\pi_{X/N}(x)$ and $\bar{y}=\pi_{X/N}(y)$. We say that $\bar{x}$ and $\bar{y}$ are \emph{minimal} if $d_X(x,y)=d_{X/N}(\bar{x},\bar{y})$. Furthermore, we say that $x$ and $y$ are $\tau$-near-minimal for $0<\tau<0.99$ if for all axes $h\Ax(v)$, $D_{h\Ax(v)}(x,y)\leq \tau L(v)$. If no $\tau$ is stated, we assume $\tau=0.8$.

\end{definition}

\begin{lemma}
\label{lem:minnearmin}

If $x$ and $y$ are minimal, then for all axes $h\Ax(v)$, $d_{h\Ax(v)}(x,y)\leq 0.51L(v)$, so in particular $x$ and $y$ are near-minimal. Furthermore, if $x$ and $y$ are $\tau$-near-minimal, then for all axes $h\Ax(v)$, $d_{h\Ax(v)}(x,y)\leq (\tau+0.01)L(v)$.

\end{lemma}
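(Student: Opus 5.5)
For the first claim we argue by contradiction, using the same shortcut as in the proof of Lemma \ref{lem:quotienthyp}. Suppose $x,y$ are minimal but $d_{h\Ax(v)}(x,y) > 0.51L(v)$ for some axis $\gamma = h\Ax(v)$. Since $L(v)\ge 10^{20}M \gg \Omega$, Lemma \ref{lem:BGIrouting} and Lemma \ref{lem:morse} apply. They give a subsegment $\sigma$ of $[x,y]_X$ with three properties: it lies in the $10\delta$-neighborhood of $\gamma$, it has length $\ge 0.51L(v) - O(\delta)$, and its endpoints are $O(\delta)$-close to $\pi_\gamma(x)$ and $\pi_\gamma(y)$. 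After replacing $v$ by $v^{-1}$ if needed, the element $t = hvh^{-1}\in N$ translates along $\gamma$ by $L(v)$ in the direction from $\pi_\gamma(x)$ toward $\pi_\gamma(y)$, up to $4\delta$ since $\Ax(v)$ is a $(1,4\delta)$-quasi-geodesic.

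We then compare $d_X(tx, y)$ with $d_X(x,y)$. The path from $tx$ to $t\pi_\gamma(x)$ has length $d(x,\gamma)$, and $t\pi_\gamma(x)$ lies on $\gamma$ at distance $L(v)$ past $\pi_\gamma(x)$. The projection distance exceeds $0.51 L(v)$, so the point $t\pi_\gamma(x)$ sits at distance roughly $|d_\gamma(x,y) - L(v)|$ from $\pi_\gamma(y)$, and we then continue to $y$. This gives
\[
d_X(tx,y) \le d(x,\gamma) + |d_\gamma(x,y)-L(v)| + d(y,\gamma) + O(\delta).
\]
By Lemma \ref{lem:BGIrouting} we also have $d_X(x,y) \ge d(x,\gamma)+d_\gamma(x,y)+d(y,\gamma) - O(\delta)$. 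Together these give
\[
d_X(tx,y) \le d_X(x,y) - \bigl(2d_\gamma(x,y) - L(v)\bigr) + O(\delta) \le d_X(x,y) - 0.02L(v) + O(\delta) < d_X(x,y).
\]
Since $t\in N$, we have $\pi_{X/N}(tx)=\bar x$, so $d_{X/N}(\bar x,\bar y) \le d_X(tx,y) < d_X(x,y)$, contradicting minimality.

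If $x$ or $y$ is an axis rather than a point, we read $d_X$ as the set distance and repeat the argument with $\pi_\gamma(x)$ replaced by the projection of the axis. That projection has diameter $\le 0.02\M$, and this error is absorbed. Near-minimality then follows from item (2) of Lemma \ref{lem:complex}: $D_\gamma =_\theta d_\gamma$ with $\theta \ll 0.29L(v)$, so $D_\gamma(x,y)\le 0.51L(v)+\theta \le 0.8L(v)$.

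The second claim is immediate from the same comparison. Items (1)–(3) of Lemma \ref{lem:complex} give $\Pi_\gamma\subset \N_\M(\pi_\gamma)$ and $\N_{10\delta}(\pi_\gamma)\subset\Pi_\gamma$. Hence $d_\gamma(x,y)\le D_\gamma(x,y)+2\M+O(\delta)\le \tau L(v) + 3\M$, and since $\M = 50L(v)/M$ with $M>10^{20}$, we get $3\M \le 0.01L(v)$.

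The main obstacle is the constant bookkeeping in the displayed estimate. We must check that the $O(\delta)$ and $\M$ error terms coming from Lemma \ref{lem:BGIrouting}, the quasi-geodesic constants of the axis, and the projection diameters are all dominated by $0.02L(v)$. This holds because $L(v)\ge 10^{20}M\delta$-scale quantities dominate everything else.
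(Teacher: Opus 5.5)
Your proof is correct and follows the paper's argument. Both rotate one endpoint along $h\Ax(v)$ by a power of $hvh^{-1}\in N$, use Lemma \ref{lem:BGIrouting} to show that the $X$-distance strictly drops (contradicting minimality), and read off the second claim from the comparison between $\pi$ and $\Pi$ in Lemma \ref{lem:complex}. One small slip: your displayed decrease $2d_\gamma(x,y)-L(v)$ is only valid when $d_\gamma(x,y)\le L(v)$; if $d_\gamma(x,y)>L(v)$, the same path estimate gives a decrease of $L(v)-O(\delta)$ instead, which still yields the contradiction.
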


\textbf{Proof.} For any axis $h\Ax(v)$, there exists some integer $m$ so that $d_{h\Ax(v)}(x,gy)\leq 0.51L(v)$ for $g=hv^mh^{-1}$. In fact, this upper bound may be sharpened to $0.505L(v)$. If $x$ and $y$ are minimal but $d_{h\Ax(v)}(x,y)>0.51L(v)$ for some axis $h\Ax(v)$, then it is straightforward to check from the Bounded Geodesic Image Theorem that $d_X(x,gy)<d_X(x,y)$, contradicting minimality. The second claim is immediate from Lemma \ref{lem:complex}. $\square$

\begin{lemma}
\label{lem:verifyminimal}

Fix $\bar{x},\bar{y}\in X/N\cup\A/N$, along with respective lifts $x$ and $y$ in $X\cup\A$. Assume that for all axes $h\Ax(v)$, $d_{h\Ax(v)}(x,y)\leq 0.45L(v)$. Then $x$ and $y$ are minimal.

\end{lemma}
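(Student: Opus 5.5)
We argue by contradiction, using Lemma \ref{lem:CM2.1} exactly as in the proof of Lemma \ref{lem:quotienthyp}. Suppose $x$ and $y$ are not minimal. Then there is some $h\in N\setminus\{e\}$ with
$$d_X(x,hy)=d_{X/N}(\bar{x},\bar{y})<d_X(x,y).$$
(When $x$ or $y$ is an axis, we read distances as set distances, and we take $h$ realizing the infimum over the $N$-orbit of $y$.) We will locate an axis onto which $x$ and $y$ have large projection, which contradicts the hypothesis $d_{h\Ax(v)}(x,y)\leq 0.45L(v)$.

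\textbf{Step 1: an axis carrying most of the translation.} Choose points $p\in x$ and $q\in y$ realizing $d_X(x,hy)=d_X(p,hq)$. Consider the loop formed by $[p,q]_X$ and $[hq,p]_X$, together with the translation $h$ taking $q$ to $hq$. Apply Lemma \ref{lem:CM2.1} to the pair $(q,hq)$. It gives an axis $\gamma=g\Ax(v)\in\A$ such that
$$d_\gamma(q,hq)\geq L(v)-3\M.$$
Moreover, the decomposition of Lemma \ref{lem:axisDecomp} provides $\M$-aligned points $q,\ a,\ b,\ hq$ with $a,b\in\gamma$, $d(a,b)=L(v)$, and $gvg^{-1}$-type translation carrying $a$ toward $b$. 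In fact we would rather apply the lemma to the element $h$ measured from $p$'s viewpoint. Concretely, the geodesic $[q,hq]$ contains a subsegment of length about $L(v)$ that $10\delta$-fellow travels $\gamma$.

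\textbf{Step 2: that segment lies mostly on $[p,q]$.} Consider the geodesic triangle with vertices $p$, $q$, $hq$. The fellow-travelling segment of $[q,hq]$ lies in the $\delta$-neighborhood of $[p,q]\cup[p,hq]$. Its portion near $[p,hq]$ must have length $<0.6L(v)$. Otherwise, replacing $hq$ by $(gv^{\pm1}g^{-1})hq$, which has the same image in $X/N$, would shorten $d(p,hq)$ by the Bounded Geodesic Image and fellow-travel argument used in Lemma \ref{lem:quotienthyp} and Lemma \ref{lem:minnearmin}. That contradicts the minimality of $d(p,hq)$. Hence a subsegment of $[p,q]$ of length at least
$$L(v)-3\M-0.6L(v)-O(\delta)$$
fellow-travels $\gamma$, so by Lemma \ref{lem:BGIrouting}
$$d_\gamma(x,y)\gtrsim 0.4L(v)-3\M.$$

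\textbf{Step 3: the main obstacle.} The bound from Step 2 is short of $0.45L(v)$. Sharpening it is the main difficulty, and it requires using that $d(p,hq)<d(p,q)$ strictly, rather than only the minimality of $hq$. First, take $h$ among the minimizers with the fewest tight conjugates. Second, exploit the $\M$-alignment of $(q,a,b,hq)$, which says both endpoints project to opposite ends of the length-$L(v)$ segment $[a,b]$. Together these should force $\pi_\gamma(p)$ to lie within $0.5L(v)+O(\M)$ of $b$, using the sharpened $0.505L(v)$ bound from the proof of Lemma \ref{lem:minnearmin}. Since $\pi_\gamma(q)$ is within $\M$ of $a$, this yields
$$d_\gamma(p,q)\geq 0.495L(v)-O(\M)>0.45L(v)$$
(as $\M\le L(v)/M$ with $M$ huge), the desired contradiction. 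I expect the bookkeeping of where $\pi_\gamma(p)$ sits relative to $[a,b]$ to be the delicate point.
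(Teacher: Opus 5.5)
Your proposal has a genuine gap: as written it never reaches a contradiction. Step 2 only gives $d_\gamma(x,y)\gtrsim 0.4L(v)-3\M$, which is compatible with the hypothesis $d_\gamma(x,y)\le 0.45L(v)$. Step 3 says what ``should'' happen (choosing $h$ with the fewest tight conjugates, tracking $\pi_\gamma(p)$ along $[a,b]$), but does not carry it out. The loss happens in Step 2, where you bound the part of $[q,hq]$ near $[p,hq]$ by the $0.6L(v)$ shortening threshold. You also misdiagnose the obstacle: the strict inequality $d(x,hy)<d(x,y)$ is not what is missing.

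The missing step is to use the minimality of the pair $(x,hy)$ through Lemma \ref{lem:minnearmin}. That lemma gives $d_\gamma(x,hy)\le 0.51L(v)$ for every axis, in particular for the axis $\gamma$ produced by Lemma \ref{lem:CM2.1}. The inequality $d_\gamma(a,c)\le d_\gamma(a,b)+d_\gamma(b,c)$ holds exactly, since these are diameters of unions of projection sets. Hence
\[
L(v)-3\M\le d_\gamma(y,hy)\le d_\gamma(x,y)+d_\gamma(x,hy)\le d_\gamma(x,y)+0.51L(v),
\]
so $d_\gamma(x,y)\ge 0.49L(v)-3\M>0.45L(v)$, a contradiction. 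Your Step 3 target $0.495L(v)-O(\M)$ is exactly this inequality with the sharpened $0.505$ constant.

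None of the triangle $p,q,hq$, the fellow-travelling analysis, or the alignment bookkeeping is needed. Non-minimality is used only to ensure $hy\neq y$, i.e.\ $h\neq e$.

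This is the paper's proof. There it is phrased with a minimal lift $x'$ of $\bar{x}$ relative to $y$ in place of your $hy$, giving $0.99L(v)\le d_{h\Ax(v)}(x,x')\le 0.45L(v)+0.51L(v)$. When the lifts are axes, invoke Corollary \ref{cor:CM2.1strutedition} instead of Lemma \ref{lem:CM2.1}, rather than passing to nearest points $p,q$.
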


\textbf{Proof.} Let $x'$ be a minimal lift of $\bar{x}$ with respect to $y$. If $x=x'$, we're done. Otherwise, by Lemma \ref{lem:CM2.1}, there exists an axis $h\Ax(v)$ such that $d_{h\Ax(v)}(x,x')\geq 0.99L(v)$. We have $d_{h\Ax(v)}(x,y)\leq 0.45L(v)$ by assumption, and $d_{h\Ax(v)}(x',y)\leq 0.51L(v)$ by minimality, so by the triangle inequality $d_{h\Ax(v)}(x,x')\leq 0.96L(v)$. This is a contradiction. $\square$

\begin{figure}[htbp]
    \centering 
    \includegraphics[width=0.9\textwidth]{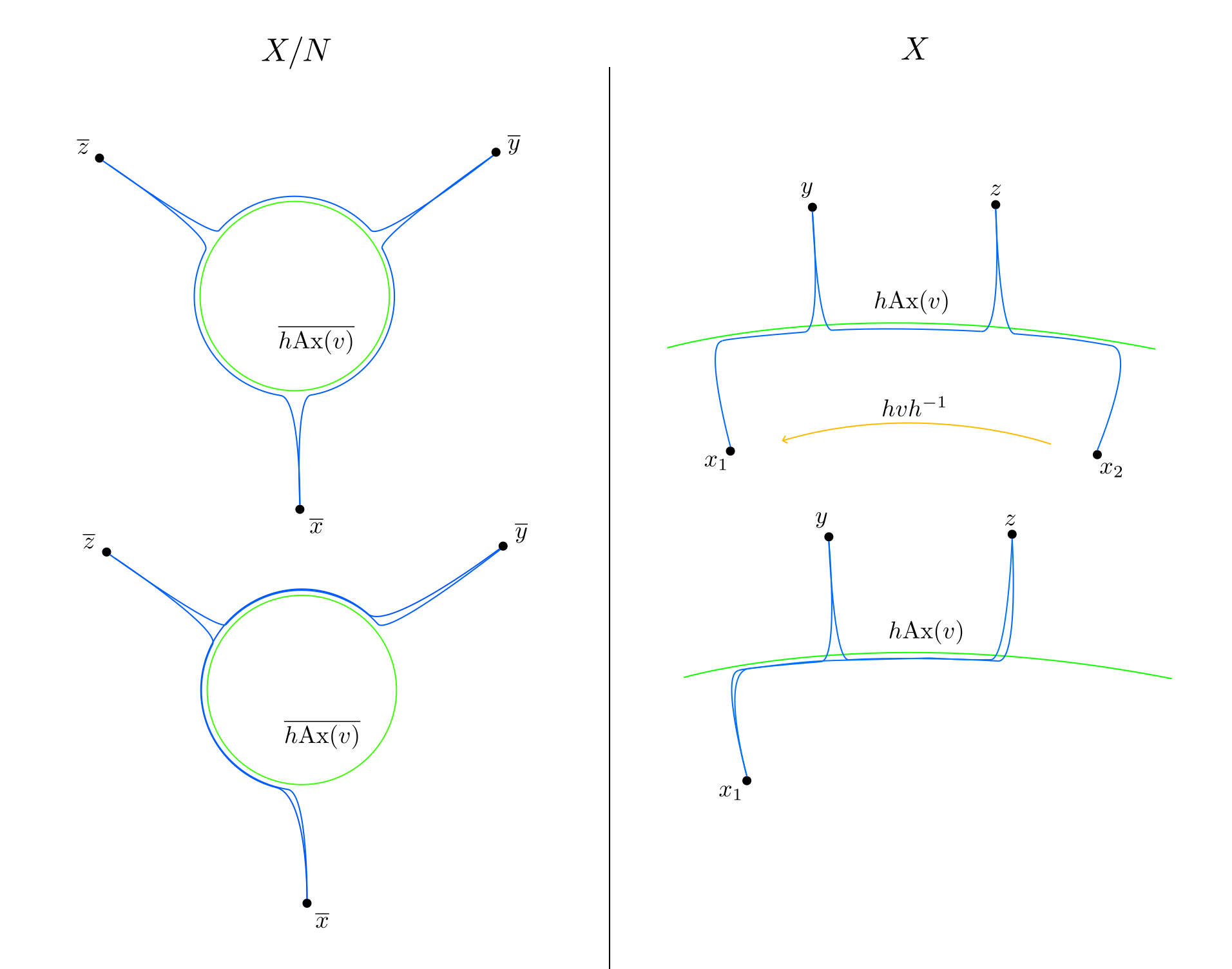} 
    \caption{A representative illustration of Lemma \ref{lem:nearmintriple}. The left pane shows the quotient space $X/N$, while the right pane shows $X$. While lifting the geodesic triangle between $\bar{x}$, $\bar{y}$, and $\bar{z}$ from $X/N$ to $X$ is not possible, we may `close it up' by succesive `rotations' along axes. In the quotient, these rotations correspond to `rerouting' paths around axes.}
    \label{fig:rotating}
\end{figure}

\

We will now define \emph{strut sequences}, which will be a primary technical tool for the rest of the paper. This machinery captures the necessary small-cancellation properties of axes that we need.

\begin{definition}[Strut Sequences]
\label{def:strutsequences}

Let $x,y\in X\cup\A$, and assume $x$ and $y$ are near-minimal. For $0<\theta<1$, the \emph{$\theta$-strut sequence} for $x$ and $y$ is
$$\S_\theta(x,y)=\{h\Ax(v)\in\A:D_{h\Ax(v)}(x,y)\geq \theta L(v)\}\cup\{x,y\}.$$
We define a finite total order $<_\S$ on $\S_\theta(x,y)$ by defining $x$ to be the minimal element and then setting $\alpha<_\S\beta$ if $d_\alpha(x,\beta)>10\M$. If no $\theta$ is stated, then it is implied that $\theta=0.1$.

\end{definition}

For an illustration of strut sequences in $X$ and in $X/N$, see Figure $\ref{fig:strutseq}$. Note that the visual representation in this figure assumes several facts that have not yet been proven.

\begin{figure}[htbp]
    \centering 
    \includegraphics[width=0.9\textwidth]{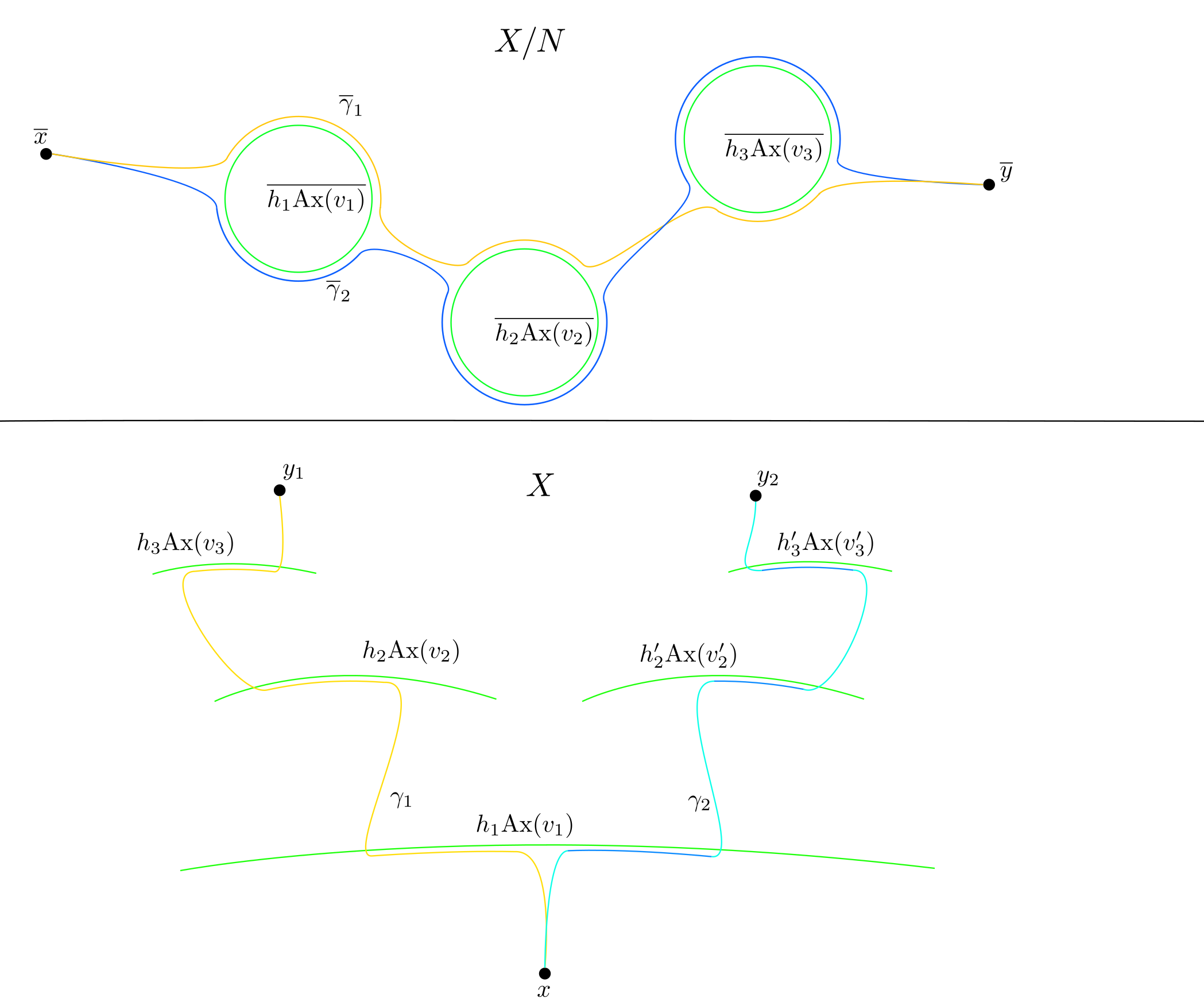} 
    \caption{A depiction of strut sequences in $X$ and under projection to $X/N$, and how these sequences control routing of geodesics between near-minimal pairs. The lower pane shows a point $x\in X$ and two points $y_1,y_2$ in a common orbit, such that $y_1$ and $x$ are minimal and $y_2$ and $x$ are near-minimal. In $X$, the strut sequences $\S(x,y_i)$ represent sequences of axes on which the $[x,y_i]$ make `large turns'. In $X/N$, we see that the paths project to two paths between $\bar{x}$ and $\bar{y}$, which $50\delta$-fellow-travel except when routing around the quotient axes obtained by projecting either strut sequence. A heuristic to keep in mind is that while $X/N$ is \textbf{not} $\delta$-hyperbolic, the failure of $\delta$-hyperbolicity only comes from looping around quotient axes. In particular, the two geodesics $\overline{\gamma}_1$ and $\overline{\gamma}_2$ must approximately $\delta$-fellow-travel between the struts, allowing us to recover some features of $\delta$-hyperbolicity in $X/N$.}
    \label{fig:strutseq}
\end{figure}

We now set up some basic facts about strut sequences.

\begin{lemma}
\label{lem:strutfacts}
We have the following properties for any $x,y\in X\cup\A$ and any $0.01\leq \theta\leq 0.4$. Assume that $\S_\theta(x,y)$ is enumerated as
$$x<_\S h_1\Ax(v_1)<_\S...<_\S h_t\Ax(v_t)<_\S y$$
\begin{enumerate}
\item For any $x,y\in X\cup\A$, the total order $<_\S$ defined in Definition \ref{def:strutsequences} is well-defined, and $\S_\theta(x,y)$ is finite.
\item For any $h_i\Ax(v_i)$ and any $j_1<i<j_2$,
$$\Pi_{h_i\Ax(v_i)}(x)=\Pi_{h_i\Ax(v_i)}(h_{j_1}\Ax(v_{j_1})),\ \pi_{h_i\Ax(v_i)}(h_{j_2}\Ax(v_{j_2}))= \Pi_{h_i\Ax(v_i)}(y)$$

\item We have that $<_\S$-consecutive elements are minimal. If $x$ and $y$ are near-minimal, then any two elements in $\S_\theta(x,y)$ are near-minimal.
\item If $w,z\in\S_\theta(x,y)$ with $w<_\S z$, then $\S_\theta(w,z)$ is equal to the suborder of $\S_\theta(x,y)$ bounded by $w$ and $z$.
\end{enumerate}
\end{lemma}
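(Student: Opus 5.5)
We will prove the four items in order, relying throughout on the projection-complex axioms of Lemma \ref{lem:complex}, especially the Behrstock inequality (item 6), forcing (item 8) and finiteness (item 7).

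For item 1, finiteness is immediate. Since $\theta L(v)\geq 0.01L(v)\gg\M$, every strut satisfies $D_{h\Ax(v)}(x,y)\geq\M$, so item 7 of Lemma \ref{lem:complex} applies.

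For the order, we follow the standard BBF argument. Take distinct struts $\alpha,\beta$. From the triangle inequality and Lemma \ref{lem:complex}(2) we get
\[
D_\alpha(x,\beta)+D_\alpha(\beta,y)\geq D_\alpha(x,y)\geq\theta L\gg\M,
\]
and the same holds with $\alpha$ and $\beta$ swapped. Behrstock then forces one of the following: $d_\alpha(x,\beta)$ is large and $d_\beta(x,\alpha)$ is small, or the reverse. So exactly one of $\alpha<_\S\beta$ and $\beta<_\S\alpha$ holds. To see this carefully, suppose $d_\alpha(x,\beta)\le 10\M$. Then $D_\alpha(\beta,y)$ is large, so Behrstock gives $D_\beta(\alpha,y)\le\M$. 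Hence $D_\beta(x,\alpha)$ is large, i.e. $\beta<_\S\alpha$.

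Transitivity is the usual argument: if $\alpha<\beta<\gamma$, then forcing gives $\Pi_\beta(\alpha)=\Pi_\beta(x)$ and $\Pi_\beta(\gamma)=\Pi_\beta(y)$. Therefore $D_\beta(\alpha,\gamma)$ is large, Behrstock makes $D_\alpha(\beta,\gamma)$ small, and so $d_\alpha(x,\gamma)\approx d_\alpha(x,\beta)>10\M$. The extra margin of $\M$ is absorbed by the slack between $\theta L$ and $10\M$. Finally, $x$ is minimal by definition, and $y$ is compared with struts via $d_\alpha(x,y)$ being large.

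Item 2 is a direct consequence of this ordering. If $j_1<i$, then $d_{h_{j_1}\Ax}(x,h_i\Ax)>10\M$, so the forcing clause (item 8) yields $\Pi_{h_i\Ax}(x)=\Pi_{h_i\Ax}(h_{j_1}\Ax)$. Symmetrically, for $j_2>i$ we use that $d_{h_{j_2}\Ax}(y,h_i\Ax)$ is large, which follows from the same Behrstock argument applied from the $y$ side.

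Item 4 follows from item 2 and the triangle inequality. Take an axis $\eta$ strictly between $w$ and $z$. Forcing gives $D_\eta(w,z)=D_\eta(x,y)$ up to $\theta$-type constants. Conversely, an axis outside the interval has $D_\eta(w,z)$ small, since $\Pi_\eta(w)=\Pi_\eta(z)$ by forcing. We also need that no new axis $\eta\notin\S_\theta(x,y)$ appears in $\S_\theta(w,z)$. For this, we show $D_\eta(x,w)$ is small whenever $D_\eta(w,z)$ is large: if $D_\eta(x,w)$ were large, Behrstock would force $D_w(x,\eta)$ small, hence $\Pi_w(\eta)\approx\Pi_w(y)$, and so $D_\eta(w,y)$ is bounded. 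The threshold comparison $\theta L$ versus $\theta L\pm O(\M)$ is the delicate point. It is handled by noting that the modified distances are exact under forcing (item 8 gives equality, not approximation).

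Item 3 needs a different tool. For two $<_\S$-consecutive elements $\alpha<\beta$, item 4 gives $\S_\theta(\alpha,\beta)=\{\alpha,\beta\}$. So every axis $\eta$ has $D_\eta(\alpha,\beta)<\theta L\le 0.4L$, and hence $d_\eta\le 0.41L<0.45L$. Lemma \ref{lem:verifyminimal} then gives minimality. Near-minimality of arbitrary pairs follows from item 4 and forcing: for every $\eta$ we have $D_\eta(w,z)\le D_\eta(x,y)\le 0.8L$, where the middle case is equality and the outer cases are bounded by $\M$.

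We expect the main obstacle to be keeping the constants consistent, so that forcing applies at every stage. The order relies on $d>10\M$, while forcing requires $d>\M$ and Behrstock delivers $\le\M$. Checking that these thresholds nest correctly, particularly in transitivity and in item 4's exclusion of new axes, is where most of the care goes.
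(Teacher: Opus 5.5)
Your items 1 and 2 are sound. For item 1 you take a different route from the paper. The paper fixes a geodesic $[x,y]$ and uses bounded geodesic image to attach to each strut a segment of length at least $0.09L(v)$ that fellow-travels it. By the no-overlap condition these segments overlap by at most $10\M$, so the total order can be read off their positions along the geodesic; Behrstock and forcing are used only to identify that order with $<_\S$. Your BBF-style argument uses only the axioms of Lemma \ref{lem:complex}, so it treats $x,y\in\A$ uniformly and needs no geodesic, but you must then prove transitivity by hand. There, make the ``slack'' explicit:
\begin{itemize}
\item $\alpha<_\S\beta$ gives $D_\beta(x,\alpha)\le\M$ by Behrstock, so $D_\beta(\alpha,y)$ is large;
\item forcing then gives $\Pi_\alpha(\beta)=\Pi_\alpha(y)$, i.e.\ $D_\alpha(x,\beta)=D_\alpha(x,y)\ge\theta L(\alpha)$;
\item only with this does losing $\M+O(1)$ keep $d_\alpha(x,\gamma)$ above $10\M$.
\end{itemize}
Item 2 matches the paper.

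The genuine gap is in item 4, in excluding new axes $\eta\notin\S_\theta(x,y)$; your item 3 rests on this. The paper omits items 3 and 4, so your argument has to stand on its own, and as written it does not close, for three reasons.
\begin{itemize}
\item If $D_w(x,\eta)$ is small, then $\Pi_w(\eta)$ is near $\Pi_w(x)$, not $\Pi_w(y)$. It is because $\Pi_w(\eta)$ is then \emph{far} from $\Pi_w(y)$ that forcing gives $\Pi_\eta(w)=\Pi_\eta(y)$.
\item The resulting bound $D_\eta(w,y)\le\M$ contradicts $D_\eta(w,z)\ge\theta L(\eta)$ only after you also control $z$.
\item Most importantly, in the remaining case $D_\eta(x,w),D_\eta(z,y)\le\M$, the triangle inequality gives only $D_\eta(x,y)\ge\theta L(\eta)-2\M$. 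This is compatible with $\eta\notin\S_\theta(x,y)$. Saying that forcing is exact does not help, because a case split on projections to $\eta$ never puts you in a position where forcing applies.
\end{itemize}

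The missing idea is to split on the projections to $w$ and $z$ instead. Since $D_w(x,y)\ge\theta L(w)\gg\M$, one of $D_w(x,\eta)$ and $D_w(\eta,y)$ is large. Item 8 of Lemma \ref{lem:complex}, with $\beta=w$ and $\alpha=\eta$, then gives \emph{exactly} $\Pi_\eta(w)=\Pi_\eta(x)$ or $\Pi_\eta(w)=\Pi_\eta(y)$ (trivially the former if $w=x$). The same holds for $z$. Hence for every axis $\eta\neq w,z$, $D_\eta(w,z)$ is either at most $\M$ or exactly $D_\eta(x,y)$. This yields $\S_\theta(w,z)\subseteq\S_\theta(x,y)$ with no threshold loss. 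For axes strictly between $w$ and $z$, your ``up to $\theta$-type constants'' should likewise be the exact equality that item 2 already gives; an approximate equality would again fail at the threshold. The same dichotomy gives $D_\eta(w,z)\le\max(\M,D_\eta(x,y))$ for all $\eta$, which is exactly what the near-minimality half of item 3 needs.
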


\textbf{Proof.} Finiteness follows immediately from Lemma \ref{lem:complex}. Now, for any $h\Ax(v)\in \S_\theta(x,y)$, there must be some maximal segment $\gamma_{h\Ax(v)}$ of $\gamma$ such that $\gamma_{h\Ax(v)}$ $10\delta$-fellow-travels $h\Ax(v)$. By the Bounded Geodesic Image Theorem, these segments each have length at least $0.09L(v)$, and by the no-overlap hypothesis for axes (item 3 in Definition \ref{def:walkpermissible}), may only overlap for distance $10\M$. Their order on $\gamma$ gives a finite total order, and items 6 and 8 of Lemma \ref{lem:complex} implies this order is equal to the order $<_\S$.

The second item now follows immediately from the forcing property in Lemma \ref{lem:complex}. The proofs of the third and fourth claim are omitted.  $\square$

\

The following useful lemma follows near-immediately from the definitions and the forwarding property for projections.

\begin{lemma}
\label{lem:paste}

Fix $0.01\leq\theta\leq 0.4$, $0.55\leq\tau\leq 0.95$, $x,y\in X\cup \A$ and $h\Ax(v)\in \A$ such that $\theta\leq D_{h\Ax(v)}(x,y) / L(v) \leq \tau$. Then $\S_\theta(x,y)$ is obtained by concatenating $S_\theta(x,h\Ax(v))$, $\{h\Ax(v)\}$, and $S_\theta(h\Ax(v),y)$ in that order. Furthermore, if $h\Ax(v)$ is near-minimal with respect to both $x$ and $y$, then $x$ and $y$ are near-minimal.

\end{lemma}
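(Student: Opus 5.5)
We prove the two claims of Lemma \ref{lem:paste} separately. Both rest on the forcing property (item 8 of Lemma \ref{lem:complex}) and the Behrstock inequality (item 6).

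Write $\alpha=h\Ax(v)$. For the concatenation claim, take any axis $\beta\neq\alpha$ in $\A$. Since $D_\alpha(x,y)\geq\theta L(v)\gg\M$, Behrstock together with the triangle inequality (item 5) shows that at most one of $D_\alpha(x,\beta)$, $D_\alpha(y,\beta)$ can be small. So either $D_\alpha(x,\beta)>10\M$ or $D_\alpha(y,\beta)>10\M$; we treat the first case, the second being symmetric.

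In that first case Behrstock gives $D_\beta(x,\alpha)\leq\M$. Forcing applied with $d_\alpha(\beta,x)$ large then gives $\Pi_\beta(y)=\Pi_\beta(\alpha)$ up to $\M$-errors, since $\Pi_\beta(\alpha)$ is forced from the $y$-side as well. We therefore get
$$D_\beta(x,y)=_{2\M}D_\beta(x,\alpha)\quad\text{and}\quad D_\beta(\alpha,y)\leq 2\M.$$
Hence $\beta\in\S_\theta(x,y)$ if and only if $\beta\in\S_\theta(x,\alpha)$, and then $\beta<_\S\alpha$. The thresholds $\theta L(\cdot)$ are enormous compared with $\M$, so the additive error is harmless. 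One should use the exact equalities from forcing (item 8, which gives $D_\beta(x,w)=D_\beta(\alpha,w)$ exactly) rather than only coarse ones, so that membership in the strut sequences does not flip near the threshold.

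Finally, $\alpha$ itself lies in $\S_\theta(x,y)$ because $D_\alpha(x,y)\geq\theta L(v)$. The orders agree because each is defined through projections to the same axes: $\beta<_\S\alpha$ holds exactly when $d_\beta(x,\alpha)$ is small, and the order inside $\S_\theta(x,\alpha)$ is inherited from $\S_\theta(x,y)$ by the same forcing identity.

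For the near-minimality claim, the hypothesis is that $\alpha$ is near-minimal with respect to both $x$ and $y$. We must bound $D_\beta(x,y)\leq 0.8L(v_\beta)$ for every axis $\beta$, and split into three cases.
\begin{itemize}
\item If $\beta=\alpha$, then $D_\alpha(x,y)\leq\tau L(v)$ by hypothesis. Here we need $\tau$ compatible with the near-minimality threshold $0.8$; for $\tau\in(0.8,0.95]$ one should interpret near-minimality with parameter $\tau$ as in Definition \ref{def:minimal}.
\item If $\beta\neq\alpha$ and we are in the first case above, then $D_\beta(x,y)=D_\beta(x,\alpha)\leq 0.8L(v_\beta)$, by the exact forcing identity and the near-minimality of $x$ and $\alpha$.
\item If $\beta\neq\alpha$ and we are in the second case, the same argument applies with the roles of $x$ and $y$ swapped.
\end{itemize}

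The main obstacle is the bookkeeping of $\M$-sized errors against the thresholds $\theta L$ and $\tau L$. We plan to avoid coarse equalities altogether by always invoking the exact conclusion of item 8, which requires checking that the relevant $d_\alpha(\cdot,\beta)$ exceeds $\M$ (or $\theta$ in item 8's notation). That check follows because $D_\alpha(x,y)\geq\theta L(v)>100\M$ and $\Pi_\alpha(\beta)$ has diameter at most $\M$.
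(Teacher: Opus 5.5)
Your strategy matches what the paper has in mind when it says the lemma follows from the definitions and forwarding: a triangle inequality on $\alpha=h\Ax(v)$, then Behrstock and exact forcing to transfer strut membership, then a two-case bound for near-minimality. However, the central forcing step is applied to the wrong endpoint. In your first case $d_\alpha(x,\beta)$ is large, and item 8 of Lemma \ref{lem:complex} then gives $\Pi_\beta(x)=\Pi_\beta(\alpha)$. That is the same information as your Behrstock bound $D_\beta(x,\alpha)\le\M$, and it says nothing about $\Pi_\beta(y)$. The claim $\Pi_\beta(y)=\Pi_\beta(\alpha)$ would need $d_\alpha(y,\beta)$ large, which is not part of this case. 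Taken literally, your displayed relations together with $D_\beta(x,\alpha)\le\M$ give $D_\beta(x,y)\le 3\M$, so no axis in this case could ever be a strut. That is false: if $[x,y]$ runs along $\alpha$ and then along $\beta$, you are in your first case, yet $D_\beta(x,y)$ and $D_\beta(\alpha,y)$ are both roughly the length of the $\beta$-segment. Your ordering conclusion is also reversed. By Definition \ref{def:strutsequences}, $d_\alpha(x,\beta)>10\M$ means $\alpha<_\S\beta$, and $\beta<_\S\alpha$ holds when $d_\beta(x,\alpha)$ is \emph{large}, not small as you state. The near-minimality step inherits the swap, since $D_\beta(x,y)=D_\beta(x,\alpha)$ is claimed in the case where it fails.

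The repair is to exchange the roles. If $d_\alpha(x,\beta)>\M$, then $\Pi_\beta(x)=\Pi_\beta(\alpha)$ exactly, so $D_\beta(x,y)=D_\beta(\alpha,y)$, $D_\beta(x,\alpha)\le\M$ and $\alpha<_\S\beta$. Hence $\beta\in\S_\theta(x,y)$ if and only if $\beta\in\S_\theta(\alpha,y)$, and $\beta\notin\S_\theta(x,\alpha)$. Symmetrically, if $d_\alpha(y,\beta)>\M$, then $D_\beta(x,y)=D_\beta(x,\alpha)$, $\beta<_\S\alpha$, and $\beta$ belongs to the $\S_\theta(x,\alpha)$ block. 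When both hold, $\beta$ lies in none of the three pieces. Your triangle-inequality observation puts every $\beta\neq\alpha$ into at least one of these cases, so this yields both inclusions and the block order. These identities are exact, which addresses your threshold concern. For near-minimality, the first case uses near-minimality of $\alpha$ and $y$, and the second uses that of $x$ and $\alpha$. Your remark that $\tau\in(0.8,0.95]$ forces one to read near-minimality with parameter $\tau$ is a fair observation about the statement.
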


The primary reason to study near-minimal representatives is the following lemma. This result is conceptually analogous to Prop. 3.12 in \cite{ABMNR26}. 

\begin{lemma}
\label{lem:nearmintriple}

Let $\bar{x},\bar{y},\bar{z}\in X/N$. Also let $0.78\leq\tau<0.95$. Then there exist respective lifts $x,y,z\in X$ of $\bar{x}$, $\bar{y}$, and $\bar{z}$ such that $y$ and $z$ are minimal, $x$ and $y$ are $\tau$-near-minimal, and $x$ and $z$ are $\tau$-near-minimal.

\end{lemma}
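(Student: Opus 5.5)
Fix $z$ to be any lift of $\bar z$, and choose $y$ to be a lift of $\bar y$ minimal with respect to $z$ (one exists since $N$ acts and distances are integer valued); by Lemma \ref{lem:minnearmin}, $D_\alpha(y,z)\le 0.52L(v)$ for every axis $\alpha=h\Ax(v)$. It remains to choose the lift $x$ of $\bar x$. Among all lifts $x'=nx_1$ ($n\in N$) of $\bar x$, I choose $x$ to minimise the complexity
$$c(x'):=d_X(x',y)+d_X(x',z).$$
This minimum is attained because distances are integers. The claim is then that such an $x$ is $\tau$-near-minimal with respect to both $y$ and $z$. I would prove this by contradiction. Suppose some axis $\alpha=h\Ax(v)$ has $D_\alpha(x,y)>\tau L(v)$. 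I then try to replace $x$ by $x''=(hv^{\pm1}h^{-1})x$, rotating along $\alpha$ in the direction that decreases $D_\alpha(\cdot,y)$, and show that $c(x'')<c(x)$.

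The key computation is a case analysis on $D_\alpha(x,z)$, using the triangle inequality for $D_\alpha$ (Lemma \ref{lem:complex}(5)) together with $D_\alpha(y,z)\le0.52L(v)$. Since $D_\alpha(x,y)>\tau L(v)\ge0.78L(v)$, the projections $\Pi_\alpha(y)$ and $\Pi_\alpha(z)$ both lie on the same side of $\Pi_\alpha(x)$ at distance at least roughly $0.26L(v)$. More precisely, $\Pi_\alpha(z)$ lies within $0.52L(v)$ of $\Pi_\alpha(y)$, which is itself more than $0.78L(v)$ from $\Pi_\alpha(x)$. Translating $x$ by $L(v)$ toward them moves $\Pi_\alpha(x)$ past both. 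The new projection distance to $y$ becomes $D_\alpha(x,y)-L(v)$ in absolute value, which is at most about $0.22L(v)$ beyond, and the gain in $d_\alpha(\cdot,y)$ is at least $(2\tau-1)L(v)-O(\M)\ge0.56L(v)$. For $z$ the net change is at most $L(v)-2D_\alpha(x,z)$, and $D_\alpha(x,z)\ge D_\alpha(x,y)-0.52L(v)$, so the loss is at most about $(1.04-2\tau)L(v)+\dots$, and overall the sum decreases. To convert gains in projection distance into gains in $d_X$, I use Lemma \ref{lem:BGIrouting}. Whenever $d_\alpha(x,y)\ge\Omega$, the geodesic $[x,y]$ routes through $\pi_\alpha(x)$ and $\pi_\alpha(y)$, so $d_X(x,y)=_{O(\Omega)} d(x,\alpha)+d_\alpha(x,y)+d(\alpha,y)$. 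The distance $d(x,\alpha)$ is invariant under the rotation since $hvh^{-1}$ stabilises $\alpha$. The case where a projection distance falls below $\Omega$ after rotation is handled by the cruder bound $d_X\le d(x,\alpha)+d_\alpha+d(\alpha,y)+O(\delta)$. The same argument applies symmetrically if instead $D_\alpha(x,z)>\tau L(v)$.

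The main obstacle is the bookkeeping that yields strict decrease with margin for all $\tau\ge0.78$. The constant $0.78$ should come precisely from the inequality $2\tau-1>1.04-2\tau+O(\M/L)$, i.e. $\tau>0.51+$small. That inequality is comfortable at $0.78$, which suggests the threshold is driven instead by passing between the modified $D_\alpha$ and the actual $d_\alpha$ (Lemma \ref{lem:minnearmin}, Lemma \ref{lem:complex}(1),(2)) and by the $O(\M)$ error terms. I would first write the estimate purely in terms of $d_\alpha$ with the errors $\theta,\M\ll L(v)$, and only at the end convert back to $D_\alpha$. A subtle point is that the rotation only changes projections to $\alpha$ itself, so there is no need to control other axes: minimality of $c$ already rules out every bad axis simultaneously.
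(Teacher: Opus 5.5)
Your route is genuinely different from the paper's, and it works. The paper fixes $x_1$ minimal to $y$ and $x_2$ minimal to $z$, and inducts on $d_X(x_1,x_2)$. At each step, Lemma \ref{lem:CM2.1} gives an axis with $d_{h\Ax(v)}(x_1,x_2)\ge 0.99L(v)$. Whichever of $x_1,x_2$ sits at least $0.24L(v)$ from $y$ (resp.\ $z$) on that axis is rotated by $hvh^{-1}$. Lemma \ref{lem:paste} then certifies that the rotated point is still $\tau$-near-minimal with respect to every axis, and bounded geodesic image makes $d_X(x_1,x_2)$ drop.

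You instead take a global minimiser of $d_X(\cdot,y)+d_X(\cdot,z)$ on the $N$-orbit and argue one axis at a time. Because any bad axis produces a strictly better lift, control of all axes simultaneously comes for free. You therefore need neither Lemma \ref{lem:CM2.1} nor the strut machinery (Lemmas \ref{lem:strutfacts}, \ref{lem:paste}). You only use Lemma \ref{lem:minnearmin}, the triangle inequality for $D_\alpha$, and Lemma \ref{lem:BGIrouting} for the \emph{old} pairs, whose projection distances ($>\tau L(v)$ and $>0.26L(v)$) are far above $\Omega$. For the new pairs you only need the trivial upper bound $d_X\le d(\cdot,\alpha)+d_\alpha+d(\alpha,\cdot)+O(\delta)$. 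Your version is more self-contained; the paper's stays inside the strut formalism that it reuses in Lemma \ref{lem:strutlifts}.

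Fix one computation, though. The loss on $z$ is at most $L(v)-2D_\alpha(x,z)\le L(v)-2(\tau-0.52)L(v)=(2.04-2\tau)L(v)$, not $(1.04-2\tau)L(v)$. The latter is negative at $\tau=0.78$, which should have been a warning. The needed inequality is therefore $2\tau-1>2.04-2\tau$, i.e.\ $\tau>0.76$. This is exactly where the hypothesis $\tau\ge 0.78$ enters, not the $D_\alpha$-versus-$d_\alpha$ conversion you speculate about. The paper's threshold has the same source: after rotation it needs $1-0.24\approx 0.77\le\tau$.

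The margin is only about $0.08L(v)$, but that still dominates the $O(\Omega+\M+\theta)$ errors because $\M\ll L(v)$, so your conclusion stands. The cases $D_\alpha(x,y)\ge L(v)$ or $D_\alpha(x,z)\ge L(v)$, where the rotation does not carry the projection past $y$ or $z$, give an even larger decrease.
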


\textbf{Proof.} Choose lifts $x_1$, $y$, $z$, and $x_2$ so that $x_1$ and $y$, $y$ and $z$, and $z$ and $x_2$ are all minimal. We induct on $d_X(x_1,x_2)$, maintaining at each step that $y$ and $x_1$ are $\tau$-near-minimal and that $z$ and $x_2$ are $\tau$-near-minimal. If $d_X(x_1,x_2)$ is ever equal to $0$, we finish immediately.

Otherwise, by Lemma \ref{lem:CM2.1}, there exists an axis $h\Ax(v)$ such that $d_{h\Ax(v)}(x_1,x_2)\geq 0.99L(v)$. We consider the projections of $x_1$, $y$, $z$, and $x_2$ to $h\Ax(v)$. By assumption, we have $d_{h\Ax(v)}(x_1,y)\leq (\tau +0.01)L(v)$, $d_{h\Ax(v)}(z,x_2)\leq (\tau +0.01) L(v)$, and $d_{h\Ax(v)}(y,z)\leq 0.51L(v)$. We may also assume that $v$ is oriented so that the projection order on $h\Ax(v)$ is $x_1\to x_2$.

From the distance bounds, either $d_{h\Ax(v)}(x_1,y)\geq 0.24L(v)$ and we have the projection order $x_1\to y\to x_2$, or $d_{h\Ax(v)}(z,x_2)\geq 0.24L(v)$ and we have the projection order $x_1\to z\to x_2$. Assume without loss of generality that we are in the first case. Then if $g=hvh^{-1}$, by the distance bounds and the translation length of $g$ we have 
$$(0.99-\tau)L(v)\leq d_{h\Ax(v)}(gx_1,y)\leq 0.77L(v).$$
By the inductive hypothesis plus Lemma \ref{lem:strutfacts}, we have that $gx_1$ and $gh\Ax(v)=h\Ax(v)$ are $\tau$-near-minimal, and that $y$ and $h\Ax(v)$ are $\tau$-near-minimal. We may therefore apply Lemma \ref{lem:paste} and conclude $gx_1$ and $y$ are near-minimal. Furthermore, by the Bounded Geodesic Image Theorem, $d_X(gx_1,x_2)<d_X(x_1,x_2)$, so we may apply induction and conclude. $\square$

\

This `rotation' action is depicted in Figure \ref{fig:rotating}, although with $g$ applied to $x_2$. 

We may now prove a crucial result about how struts sequences behave under quotients. See Figure \ref{fig:strutseq} for an illustration of this property.

\begin{lemma}
\label{lem:strutlifts}

Fix $0.55<\tau<0.95$ and $0.01<\theta<0.4$, and assume $\theta+\tau<0.98$. Fix $\bar{x},\bar{y}\in X/N$, and consider lifts $x_1,x_2\in \pi_{X/N}^{-1}(\bar{x})$ and $y_1,y_2\in \pi_{X/N}^{-1}(\bar{y})$. Assume that $x_1$ and $y_1$ are $\tau$-near-minimal, and that $x_2$ and $y_2$ are $\tau$-near-minimal. Enumerate strut sequences
$$\S_\theta(x_1,y_1)=x_1<_\S h_1\Ax(v_1)<_\S...<_\S h_t\Ax(v_t)<_\S y_1$$
$$\S_\theta(x_2,y_2)=x_2<_\S h_1'\Ax(v_1')<_\S...<_\S h_{t'}'\Ax(v_{t'}')<_\S y_2$$
Then $t=t'$ and $\pi_{X/N}(h_i\Ax(v_i))=\pi_{X/N}(h'_i\Ax(v_i'))$ for all $h_i\Ax(v_i)$. 

Furthermore, fix $i<j$, and consider $h_i\Ax(v_i),h_{j}\Ax(v_{j}),h_i'\Ax(v_i'),h_{j}'\Ax(v_{j}')$ with the assumption that for all $i<k<j$,
$$D_{h_k\Ax(v_k)}(h_i\Ax(v_i),h_j\Ax(v_j))\leq 0.48L(v_k).$$
Then there is a single $h\in N$ such that $$h\pi_{h_i\Ax(v_i)}(h_{i+1}\Ax(v_{i+1}))=\pi_{h_i'\Ax(v_i')}(h_{i+1}'\Ax(v_{i+1}'))$$
and
$$h\pi_{h_{i+1}\Ax(v_{i+1})}(h_{i}\Ax(v_{i}))=\pi_{h_{i+1}'\Ax(v_{i+1}')}(h_{i}'\Ax(v_{i}'))$$

\end{lemma}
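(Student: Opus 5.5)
We would first reduce to a normalized situation. Since $x_1,x_2$ both lift $\bar{x}$, there is $g\in N$ with $gx_2=x_1$. By item 4 of Lemma \ref{lem:complex}, $D$ and strut sequences are $G$-equivariant, so $\S_\theta(x_2,y_2)$ is carried by $g$ to $\S_\theta(x_1,gy_2)$, preserving order. Both images in $X/N$ are unchanged. So we may assume $x_1=x_2=:x$. Set $y:=y_1$ and $y':=y_2$, so that $y'=ny$ for some $n\in N$.

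We would then induct on the number of struts, or on $d_X(y,y')$. If $y=y'$ there is nothing to prove. Otherwise, Lemma \ref{lem:CM2.1} gives an axis $\alpha=h\Ax(v)$ with $d_\alpha(y,y')\geq 0.99L(v)$. Near-minimality bounds $D_\alpha(x,y)$ and $D_\alpha(x,y')$ by $\tau L(v)$. Combined with $\theta+\tau<0.98$, the triangle inequality for $D$ forces at least one of them to be at least $\theta L(v)$, and in fact forces $\alpha$ to lie in the corresponding strut sequence. Using the rotation trick from the proof of Lemma \ref{lem:nearmintriple}, we replace $y'$ by $y'':=g y'$ with $g=hv^{\pm1}h^{-1}\in N$, choosing the sign to move $\Pi_\alpha(y')$ toward $\Pi_\alpha(y)$. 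The point is to show that the first part of the sequence is untouched. Lemma \ref{lem:paste} splits $\S_\theta(x,y')$ at $\alpha$. The portion $\S_\theta(x,\alpha)$ is unchanged, since $g$ fixes $\alpha$, and the forcing property (item 8 of Lemma \ref{lem:complex}) keeps the struts before $\alpha$ fixed. The portion $\S_\theta(\alpha,y')$ is translated by $g$, and translation does not change images in $X/N$. The projection of $y''$ to $\alpha$ may now have passed below $\theta$ or stayed above it, so $\alpha$ either drops out on both sides or remains on both sides. Checking this is where the bookkeeping with $\tau$ and $\theta$ is needed. The distance $d_X(y,y'')$ decreases by the Bounded Geodesic Image Lemma \ref{lem:BGI}, so induction gives $t=t'$ and matching images.

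For the second claim, track the composite $N$-element accumulated along the way. Under the hypothesis $D_{h_k\Ax(v_k)}(h_i\Ax(v_i),h_j\Ax(v_j))\leq 0.48L(v_k)$ for $i<k<j$, Lemma \ref{lem:verifyminimal} shows that the pair $h_i\Ax(v_i)$, $h_j\Ax(v_j)$ is genuinely minimal. We would then apply the argument above to the sub-sequences given by item 4 of Lemma \ref{lem:strutfacts}, normalized so that $h_i\Ax(v_i)=h'_i\Ax(v'_i)$. Any rotation needed then occurs along an axis among the intermediate struts. The $0.48$ bound together with the $0.51$ bound from Lemma \ref{lem:minnearmin} prevents such a rotation from touching the segment between $h_i\Ax(v_i)$ and $h_{i+1}\Ax(v_{i+1})$. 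So a single $h\in N$ carries both mutual projections, $\pi_{h_i\Ax(v_i)}(h_{i+1}\Ax(v_{i+1}))$ and $\pi_{h_{i+1}\Ax(v_{i+1})}(h_i\Ax(v_i))$, to their primed counterparts.

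The main obstacle is the step in the first paragraph where $\alpha$ might belong to one strut sequence but not the other. There we must show the rotation does not create or destroy struts elsewhere. We would try to rule this out using Corollary \ref{cor:CM2.1strutedition} and the Behrstock inequality.
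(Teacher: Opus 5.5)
\textbf{First claim.} The gap here comes from understating what your triangle inequality gives. You conclude only that one of $D_\alpha(x,y)$, $D_\alpha(x,y')$ is at least $\theta L(v)$, and this produces both the rotation detour and your unresolved ``main obstacle''. In fact both are large. Since $D_\alpha(y,y')\geq 0.99L(v)-O(\M)$ and each of $D_\alpha(x,y)$, $D_\alpha(x,y')$ is at most $\tau L(v)$, each is also at least $(0.99-\tau)L(v)-O(\M)>\theta L(v)$; this is exactly what the hypothesis $\theta+\tau<0.98$ is for. So $\alpha$ is a strut of \emph{both} $\S_\theta(x,y)$ and $\S_\theta(x,y')$, with $\theta\leq D_\alpha/L(v)\leq\tau$ in both. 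The situation you hoped to exclude via Corollary \ref{cor:CM2.1strutedition} and Behrstock therefore never arises.

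Lemma \ref{lem:paste} then splits both sequences at $\alpha$. With your normalization $x_1=x_2=x$, the initial pieces are literally the same set $\S_\theta(x,\alpha)$. The final pieces $\S_\theta(\alpha,y)$ and $\S_\theta(\alpha,y')$ form a new instance of the lemma: they are near-minimal by Lemma \ref{lem:strutfacts}(3) and have strictly smaller distance, since $d_X(\alpha,y)<d_X(x,y)$. Induction finishes with no rotation at all. This is the paper's proof mirrored; the paper normalizes $y_1=y_2$ and recurses on the initial pieces instead.

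Your rotation step is also not sound as written. Rotating $y'$ toward $y$ can give $D_\alpha(x,y'')$ as large as $(\tau+0.01)L(v)$, for instance when $D_\alpha(x,y)=\tau L(v)$ and $D_\alpha(x,y')=(0.99-\tau)L(v)$. Then the $\tau$-near-minimality of $(x,y'')$, needed both for Lemma \ref{lem:paste} and for your inductive hypothesis, is lost. You would at least have to rotate whichever endpoint projects farther from $\Pi_\alpha(x)$. Your worry about $\alpha$ ``dropping out'' is unfounded, since $D_\alpha(x,y'')\geq(1-\tau)L(v)>\theta L(v)$.

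\textbf{Second claim.} The proposal misidentifies the mechanism. Lemma \ref{lem:verifyminimal} does not apply under the hypothesis, since $0.48>0.45$; its proof needs $0.45+0.51<0.99$. More importantly, after normalizing $h_i\Ax(v_i)=h_i'\Ax(v_i')$, the remaining ambiguity is precisely the stabilizer $\langle h_iv_ih_i^{-1}\rangle$. That is a rotation along $h_i\Ax(v_i)$ itself, not along an intermediate strut. Indeed, Corollary \ref{cor:CM2.1strutedition} applied to $h_j\Ax(v_j)$ and $h_j'\Ax(v_j')$ may well return $h_i\Ax(v_i)$.

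The paper's argument takes one step. Choose $n=h_iv_i^mh_i^{-1}$ with $d_{h_i\Ax(v_i)}(nh_j\Ax(v_j),h_j'\Ax(v_j'))\leq 0.52L(v_i)$. For every other axis, the assumed $0.48$ bounds on intermediate struts, together with $D<\theta L(v)$ off the strut sequence and the triangle inequality, give $d_{h\Ax(v)}(nh_j\Ax(v_j),h_j'\Ax(v_j'))\leq 0.98L(v)$. Lemma \ref{lem:CM2.1} then forces $nh_j\Ax(v_j)=h_j'\Ax(v_j')$. By equivariance and Lemma \ref{lem:strutfacts}(4), $n$ carries $\S_\theta(h_i\Ax(v_i),h_j\Ax(v_j))$ onto its primed counterpart. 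In particular it sends $h_{i+1}\Ax(v_{i+1})$ to $h_{i+1}'\Ax(v_{i+1}')$ and the two mutual projections to theirs. The role of the $0.48$ hypothesis is to show that no rotation beyond this single one along $h_i\Ax(v_i)$ is needed.
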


\textbf{Proof.} We consider the first claim, assuming without loss of generality that $y_1=y_2=:y$. The strategy is induction on $\max(d_X(x_1,y_1),d_X(x_2,y_2))$; if both are less than, say, $0.01\min_vL(v)$, then by Lemma \ref{lem:CM2.1} and the triangle inequality we must have $x_1=x_2$, and are done. If $x_1\neq x_2$, by Lemma \ref{lem:CM2.1} there exists an axis $h\Ax(v)$ such that $d_{h\Ax(v)}(x_1,x_2)\geq 0.99L(v)$. By assumption, $d_{h\Ax(v)}(x_i,y)\leq \tau$ for $i\in\{1,2\}$, so by the triangle inequality $d_{h\Ax(v)}(x_i,y)\geq 0.98-\tau$. Therefore $h\Ax(v)\in \S_\theta(x_i,y)$. Furthermore, certainly $d_X(x_i,h\Ax(v))<d_X(x_i,y)$, so we have the claim for $\S_\theta(x_i,h\Ax(v))$. Applying Lemma \ref{lem:paste} with $h\Ax(v)$, $\S_\theta(x_i,h\Ax(v))$, and $\S_\theta(h\Ax(v),y)$ finishes the argument. 

For the second claim, we may assume without loss of generality that $h_i\Ax(v_i)=h_i'\Ax(v_i')$. Certainly there exists some $n=h_iv_i^mh_i^{-1}$ such that $d_{h_i\Ax(v_i)}(nh_j\Ax(v_j), h'_j\Ax(v_j'))\leq 0.52L(v_i)$. The assumed bounds and the triangle inequality now imply that for all $h\Ax(v)$, $d_{h\Ax(v)}(nh_j\Ax(v_j),h'_j\Ax(v_j'))\leq 0.98L(v)$, and so by Lemma \ref{lem:CM2.1} we have $nh_j\Ax(v_j)=h_j'\Ax(v_j')$. The claim follows. $\square$

\section{Routing Geodesics in the Quotient}

We now establish consequences of the results we have obtained so far. The following claim is illustrated in Figure \ref{fig:strutseq}.

\begin{lemma}
\label{lem:nearmindistbound}

Fix $0.55<\tau<0.95$ and $0.01<\theta<0.4$, and assume $\theta+\tau<0.98$. Fix $\bar{x},\bar{y}\in X/N$, and consider lifts $x_1,x_2\in \pi_{X/N}^{-1}(\bar{x})$ and $y_1,y_2\in \pi_{X/N}^{-1}(\bar{y})$. Assume that $x_1$ and $y_1$ are $\tau$-near-minimal, and that $x_2$ and $y_2$ are $\tau$-near-minimal, and choose geodesics $[x_1,y_1]$ and $[x_2,y_2]$. Then the Hausdorff distance of $\pi_{X/N}([x_1,y_1])$ and $\pi_{X/N}([x_2,y_2])$ is at most $\max_vL(v)+1000\M$. Furthermore, 
$$d(x_1,y_1)\leq 1.01\frac{\tau}{\theta}d(x_2,y_2)$$

\end{lemma}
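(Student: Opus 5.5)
The plan is to use Lemma \ref{lem:strutlifts} to compare the two strut sequences. Enumerate
$$\S_\theta(x_1,y_1)=x_1<_\S\alpha_1<_\S\dots<_\S\alpha_t<_\S y_1,\qquad \S_\theta(x_2,y_2)=x_2<_\S\alpha'_1<_\S\dots<_\S\alpha'_t<_\S y_2,$$
with $\pi_{X/N}(\alpha_i)=\pi_{X/N}(\alpha_i')$, and set $\alpha_0=x_1$, $\alpha_{t+1}=y_1$ (and similarly with primes). Since the $\alpha_i$ fellow-travel $[x_1,y_1]$ (proof of Lemma \ref{lem:strutfacts}), we cut $[x_1,y_1]$ into alternating pieces: segments $\sigma_i$ that $10\delta$-fellow-travel $\alpha_i$, and gap segments $\rho_i$ running between $\pi_{\alpha_i}(\alpha_{i+1})$ and $\pi_{\alpha_{i+1}}(\alpha_i)$, up to error $O(\M)$. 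We cut $[x_2,y_2]$ in the same way into $\sigma_i'$ and $\rho_i'$.

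\textbf{Gap pieces.} Consecutive struts satisfy the hypothesis of the second part of Lemma \ref{lem:strutlifts} vacuously, since there is no $k$ strictly between $i$ and $i+1$. So there is $h\in N$ carrying the endpoint projections of the gap $\rho_i$ to those of $\rho_i'$. Both gaps are geodesics, up to $O(\M)$ fellow-travelling at the ends, so by Lemma \ref{lem:morse} $h\rho_i$ and $\rho_i'$ are $O(\M)$-Hausdorff close. Hence $\pi_{X/N}(\rho_i)$ and $\pi_{X/N}(\rho_i')$ are $O(\M)$-close. The endpoints $x_1,x_2$ and $y_1,y_2$ already have equal images, which covers $\rho_0$ and $\rho_t$.

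\textbf{Axis pieces.} $\sigma_i$ and $\sigma_i'$ lie near $\alpha_i$ and $\alpha_i'$, which have the same image, a quotient axis, i.e. a loop of length roughly $L(v_i)$. Any two subsegments of this loop are within $\max_v L(v)/2+O(\M)$ of each other in Hausdorff distance, and this gives the bound $\max_vL(v)+1000\M$. The delicate point is matching the endpoints of $\sigma_i$ with those of $\sigma_i'$, since the $h$'s from adjacent gaps may differ by a power of the axis element. This is exactly where the loop bound is used, so no finer control is needed.

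\textbf{Distance inequality.} Each strut contributes $d(\sigma_i)\in[\theta L(v_i),(\tau+0.01)L(v_i)]$, using near-minimality and Lemma \ref{lem:minnearmin}. Matching gaps have equal length up to $O(\M)$. Summing gives
$$d(x_1,y_1)\le\sum_i|\rho_i|+\sum_i(\tau+0.01)L(v_i)+O(t\M),\qquad d(x_2,y_2)\ge\sum_i|\rho_i'|+\sum_i\theta L(v_i)-O(t\M).$$
Since $\M=50\min L/M$ with $M$ huge, the $O(\M)$ errors are absorbed into the factor $1.01$. The ratio of the strut terms is at most $(\tau+0.01)/\theta$, and the gap terms have ratio $\approx 1\le \tau/\theta$. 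It follows that $d(x_1,y_1)\le 1.01\frac{\tau}{\theta}d(x_2,y_2)$, after a slight adjustment of the constants in the comparison $(\tau+0.01)/\theta$ versus $1.01\,\tau/\theta$ (using $\tau\ge0.55$).

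\textbf{Main obstacle.} The hardest step is the bookkeeping that the gaps $\rho_i$ are genuinely disjoint from the $\sigma_i$ up to $O(\M)$ overlap, using the no-overlap condition, and that each gap is matched by a single $h$. The second part of Lemma \ref{lem:strutlifts} handles the latter.
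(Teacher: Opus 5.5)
Your decomposition into axis pieces $\sigma_i$ and gaps $\rho_i$ is the paper's argument. So is matching gaps through the vacuous second part of Lemma \ref{lem:strutlifts} and bounding axis pieces by the diameter of the quotient axis, and the Hausdorff bound goes through as you describe. The problem is the last step of the distance inequality. You bound $|\sigma_i|$ by $(\tau+0.01)L(v_i)$ via Lemma \ref{lem:minnearmin}, then claim $(\tau+0.01)/\theta$ can be adjusted to $1.01\,\tau/\theta$ using $\tau\ge 0.55$. But $\tau+0.01\le 1.01\tau$ holds only for $\tau\ge 1$. For $\tau\in(0.55,0.95)$ you have $(\tau+0.01)/\tau=1+0.01/\tau\in(1.0105,1.0182)$, so your bound exceeds the claimed one for every allowed $\tau$.

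The slack on the gap terms (ratio about $1$ against $\tau/\theta>1.375$) does not rescue this. The gaps can have total length $O(t\M)$, for instance when $x_1,y_1$ lie within $O(\M)$ of the first and last struts and consecutive struts nearly meet. In that case your estimate is essentially $(\tau+0.01)/\theta$. The fix is to avoid Lemma \ref{lem:minnearmin}. The length of $\sigma_i$ is $d_{\alpha_i}(x_1,y_1)+O(\delta)\le D_{\alpha_i}(x_1,y_1)+O(\M)\le\tau L(v_i)+O(\M)$, and likewise $|\sigma_i'|\ge\theta L(v_i)-O(\M)$; this is the estimate the paper records. Since $\M\le 50L(v_i)/M$ with $M>10^{20}$, the additive errors, totalling $O((t+1)\M)$, are negligible against $d(x_2,y_2)\ge t\bigl(\theta\min_v L(v)-O(\M)\bigr)$ once $t\ge 1$.

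When $t=0$ there is nothing to absorb an additive error, so you need $d(x_1,y_1)=d(x_2,y_2)$ exactly. That requires a single $h\in N$ with $hx_1=x_2$ and $hy_1=y_2$. Knowing only that $x_1,x_2$ and $y_1,y_2$ have common images does not give this. It comes from the argument in the proof of Lemma \ref{lem:strutlifts}: translate so that $y_1=y_2$; if $x_1\neq x_2$, Lemma \ref{lem:CM2.1} produces an axis that would be a $\theta$-strut for both pairs, contradicting $t=0$. The same point applies to your end gaps $\rho_0,\rho_t$ when $t\ge 1$. You need one $h$ sending $x_1\mapsto x_2$ and $\alpha_1\mapsto\alpha_1'$ simultaneously, which is the endpoint version of the second part of Lemma \ref{lem:strutlifts}. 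The paper also dispatches this case as ``essentially the same argument.''
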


\textbf{Proof.} Enumerate struts as follows:
$$\S_\theta(x_1,y_1)=x_1<_\S h_1\Ax(v_1)<_\S...<_\S h_t\Ax(v_t)<_\S y_1$$
$$\S_\theta(x_2,y_2)=x_2<_\S h_1'\Ax(v_1')<_\S...<_\S h_{t'}'\Ax(v_{t}')<_\S y$$
By the Bounded Geodesic Image Theorem, the geodesic $[x_1,y_1]$ may be decomposed into segments $\gamma_i$ that each $10\delta$-fellow-travel $h_i\Ax(v_i)$ and interstitial segments $\eta_i$ that each originate in $\N_{10\delta}(\pi_{h_i\Ax(v_i)}(h_{i+1}\Ax(v_{i+1})))$ and terminate in $\N_{10\delta}(\pi_{h_{i+1}\Ax(v_{i+1})}(h_i\Ax(v_i)))$. We construct analogous segments $\gamma_i'$ and $\eta_i'$ for $[x_2,y_2]$. Note that this notation breaks slightly at the endpoints; these cases follow from essentially the same argument.

We now apply Lemma \ref{lem:strutlifts}. Under projection to $X/N$, any $\gamma_i$ and $\gamma_i'$ both lie in $\N_{10\delta}(\overline{h_i\Ax(v_i)})$; this set has diameter at most $L(v_i)+100\delta$. Furthermore, both $\gamma_i$ and $\gamma_i'$ have length at least $\theta-100\M$ and at most $\tau+100\M$.

From the projection complex bounds and the second claim of Lemma \ref{lem:strutlifts}, we obtain that any $\eta_i$ and $\eta_i'$ $10\M$-fellow-travel, and have lengths that differ by no more than $10\M$.

We may now assemble these results to obtain the claims. If in fact there are no struts between $x_1$ and $y_1$, then $[x_1,y_1]$ and $[x_2,y_2]$ in fact $10\delta$-fellow-travel after projection to $X/N$. $\square$

\

For our main construction, we will need to build our own strut sequences, and obtain fairly precise control over those sequences. We do so as follows.

\begin{lemma}
\label{lem:enemyprongs}

Fix $\bar{x},\bar{y}\in X/N$. Given any fixed lift $x\in\pi_N^{-1}(\bar{x})$, there exist two (possibly non-distinct) points $p_1=p_1(x,\bar{y})$, $p_2=p_2(x,\bar{y})$ in $X$ with the following properties:
\begin{enumerate}
\item The distances $d_X(x,p_1)$ and $d_X(x,p_2)$ are bounded above by $\frac{\M}{10}+1$ and bounded below by $\min(\frac{\M}{10}-1,d_X(x,y))$.
\item For any $y\in \pi_N^{-1}(\bar{y})$ such that $x$ and $y$ are near-minimal, either $(x\cdot y)_{p_1}\leq 100\delta$ or $(x\cdot y)_{p_2}\leq 100\delta$
\end{enumerate}

\end{lemma}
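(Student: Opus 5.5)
The plan is to show that, as $y$ ranges over near-minimal lifts of $\bar{y}$ relative to the fixed $x$, the initial directions of $[x,y]$ at scale $\M/10$ all lie in at most two coarse ``directions'', and then to place $p_1,p_2$ off to the side of those directions.

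\textbf{Step 1: trivial case.} If $d_X(x,y)<\M/10$ for some near-minimal lift $y$, then the second claim of Lemma \ref{lem:nearmindistbound} shows that every near-minimal lift $y'$ has $d_X(x,y')$ comparable to $d_X(x,y)$. Lemma \ref{lem:CM2.1} then forces all such lifts to coincide, because two distinct lifts differ by an element of $N$ and so have some axis projection of size $\geq L(v)-3\M$. In this case we take $p_1=p_2$ to be the point of $[x,y]$ at distance $\min(\M/10,d_X(x,y))$ from $y$'s side, and the required Gromov product is then $0$.

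\textbf{Step 2: compare initial segments.} Assume now that all near-minimal lifts are far from $x$. Fix one near-minimal lift $y_1$ and enumerate $\S_\theta(x,y_1)$. For any other near-minimal lift $y_2$, Lemma \ref{lem:strutlifts} says the strut sequences match in $X/N$, and its proof shows that the two sequences agree up to the first strut at which $y_1$ and $y_2$ differ. Let $\alpha=h_1\Ax(v_1)$ be the first strut. Two subcases arise.
\begin{itemize}
\item If $d_X(x,\alpha)\geq \M/5$ for $y_1$, the interstitial segment $\eta_0$ from $x$ fellow-travels $\eta_0'$ up to $10\M$ by the proof of Lemma \ref{lem:nearmindistbound}. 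In fact, before reaching the first strut the two lifts are not yet separated, so the fellow-traveling is at scale $10\delta$. All geodesics $[x,y]$ then share a common initial direction, and any $p$ at distance $\M/10$ along it has $(x\cdot y)_{p}\leq 100\delta$.
\item If $\alpha$ is within $\M/5$ of $x$, the lifts differ by rotations $h_1v_1^mh_1^{-1}$ about $\alpha$ (the second claim of Lemma \ref{lem:strutlifts}). Near-minimality forces $D_\alpha(x,y)\leq 0.8L(v_1)$, so only the two rotation exponents $m$ for which $\pi_\alpha(y)$ lies on either side of $\pi_\alpha(x)$ are possible. Hence $[x,y]$ leaves $x$ heading along $\alpha$ in one of exactly two directions.
\end{itemize}

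\textbf{Step 3: define the two points.} In the second subcase, let $p_1$ and $p_2$ be the points of $\alpha$ (or of $[x,\pi_\alpha(x)]\cup\alpha$) at distance $\M/10$ from $x$ in the two directions. Every near-minimal $[x,y]$ passes within $10\delta$ of one of them by Lemma \ref{lem:BGIrouting}, which gives $(x\cdot y)_{p_i}\leq 100\delta$ via Lemma \ref{lem:gromovprops}. The distance bounds in item 1 are by construction; the $\pm1$ slack accounts for integer-valued distances.

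\textbf{Main obstacle.} The main obstacle is making Step 2 quantitative. The rotation description of Lemma \ref{lem:strutlifts} is stated only up to $\M$-scale errors, while the conclusion needs $100\delta$ precision at distance $\M/10$. To handle this, I would first try the following. Any two near-minimal lifts $y,y'$ differ by $h\in N$, and Lemma \ref{lem:CM2.1} produces an axis separating them with projection nearly $L(v)$. If that axis is far from $x$, beyond distance $\M/5$, then $[x,y]$ and $[x,y']$ $10\delta$-fellow-travel up to it, by thin triangles together with Lemma \ref{lem:BGI}. If that axis is near $x$, it must be the common first strut $\alpha$. Near-minimality then leaves only two possible sides, which yields the two points.
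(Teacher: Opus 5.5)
Your proposal is correct and follows essentially the paper's route. Both take the first strut $\alpha$ of $\S(x,y)$ for a (near-)minimal lift. Both place $p_1,p_2$ at distance $\M/10$ along the two branches $[x,\pi_\alpha(x)]\cup\alpha$, which collapse to a single point on the common initial segment when $\alpha$ is absent or far from $x$. Both then use that every near-minimal $[x,y']$ passes near $\pi_\alpha(x)$ and runs along $\alpha$ in one of its two directions.

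Your last-paragraph argument is in fact the justification for the paper's terse claim that all near-minimal lifts share the same first strut. There, Lemma \ref{lem:CM2.1} gives a separating axis that is a strut for both lifts, so it is either far from $x$ or equal to that first strut. Two small corrections: your Step 2 assertion that lifts differ only by rotations $h_1v_1^mh_1^{-1}$ about $\alpha$ is false in general (they can differ at later struts), but you never need it. The no-strut case with $d_X(x,y)\ge\M/10$ follows from the same separating-axis argument, since it forces the lift to be unique.
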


\textbf{Proof.} Choose $y\in \pi_{X/N}^{-1}(\bar{y})$ so that $x$ and $y$ are minimal. Enumerate the strut sequence for $x$ and $y$ as
$$\S(x,y)=x<_\S h_1\Ax(v_1)<_\S...<_\S h_t\Ax(v_t)<_\S y$$
If there are no struts between $x$ and $y$ (i.e. $t=0$), then we choose $p_1=p_2$ to be a point on $[x,y]$ satisfying the distance bounds. Otherwise, we take a geodesic $[x,q]$ to a point $q$ on $h_1\Ax(v_1)$ realizing $d_X(x,h_1\Ax(v_1))$, and then `branch' this geodesic along $h_1\Ax(v_1)$ in both possible directions for distance $0.01L(v_1)$ to obtain points $q_1$ and $q_2$. We then choose $p_1$ and $p_2$ to be points along $[x,q_1]$ and $[x,q_2]$ satisfying the appropriate length conditions.

For any $y'\in \pi_{X/N}^{-1}(\bar{y})$ such that $x$ and $y'$ are near-minimal, then the last claim of Lemma \ref{lem:strutfacts} implies that both $[x,y]$ and $[x,y']$ have $h_1\Ax(v_1)$ as a initial strut. So $[x,y']$ must pass within $10\delta$ of $\pi_{h_1\Ax(v_1)}(x)$ and then follow $h_1\Ax(v_1)$ for at least distance $0.09L(v_1)$ in one of the two possible directions, and the claim follows. $\square$

\

\begin{definition}

Given $\bar{x}$, $\bar{y}$, and $x\in\pi_N^{-1}(\bar{x})$ as in Lemma \ref{lem:enemyprongs}, we call the $p_1=p_1(x,\bar{y})$ and $p_2=p_2(x,\bar{y})$ constructed in this lemma \emph{enemy prongs} for $x$ relative to $\bar{y}$.

\end{definition}

\begin{corollary}
\label{cor:enemyprongfriends}

Fix $\bar{x},\bar{y}\in X/N$, choose a fixed lift $x\in \pi_N^{-1}(\bar{x})$, and construct enemy prongs $p_1$ and $p_2$ as in Lemma \ref{lem:enemyprongs}. Assume that $d_{X/N}(\bar{x},\bar{y})\geq 1000L$, and let $\bar{z}\in X/N$ be a point satisfying $d_{X/N}(\bar{y},\bar{z})\leq 200L$. Let $y$ and $z$ be respective lifts of $\bar{y}$ and $\bar{z}$ such that $x$ and $y$ are near-minimal and $y$ and $z$ are near-minimal. Then either $(x\cdot z)_{p_1}\leq 110\delta$ or $(x\cdot z)_{p_2}\leq 110\delta$.

\end{corollary}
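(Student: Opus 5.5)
The plan is to show that the near-minimal geodesic $[x,z]$ begins exactly like some near-minimal geodesic $[x,y']$ with $y'$ a lift of $\bar{y}$, and then to read off the conclusion from Lemma \ref{lem:enemyprongs}. The point is that $\bar{z}$ is within $200L$ of $\bar{y}$, while $\bar{y}$ is at distance at least $1000L$ from $\bar{x}$. So the first strut (or, if there are no struts, the initial segment) of $[x,z]$ lies far from the region where $z$ and $y$ differ, and should coincide with the first strut of $[x,y]$.

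First I would handle the case $t=0$, where $\S(x,y)$ has no struts and $p_1=p_2$ lies on $[x,y]$ at distance about $\M/10$ from $x$. Since $y$ and $z$ are near-minimal, the two geodesics differ only near $y$. Concretely, I would apply the Gromov product triangle inequality (Lemma \ref{lem:gromovprops}) at the point $p_1$:
$$(x\cdot z)_{p_1}\le (x\cdot y)_{p_1}+d_X(y,z)\text{-type error}$$
is not good enough directly, because $d_X(y,z)$ may be much larger than $d_{X/N}(\bar y,\bar z)$. Instead I would argue with the geodesic triangle $x,y,z$ in $X$, since $X$ is $\delta$-hyperbolic. The point $p_1$ lies within $100\delta$ of $[x,y]$. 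If $[x,z]$ avoided the $110\delta$-neighborhood of $p_1$, then $p_1$ would be $\delta$-close to $[y,z]$. That would force $d_X(x,[y,z])\lesssim \M/10$, i.e.\ $(y\cdot z)_x\lesssim \M/10$.

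The key step is to rule out this configuration using near-minimality. I would use the strut sequence $\S(y,z)$ and Lemma \ref{lem:nearmindistbound}. Since $y,z$ are near-minimal, $d_X(y,z)\le 1.01(\tau/\theta)\,d_{X/N}(\bar y,\bar z)\lesssim 1700L$. This bound needs a lift pair at distance $d_{X/N}$, namely a minimal pair, which exists; it is the second claim of that lemma. Near-minimality of $x,y$ similarly gives $d_X(x,y)\le 8.08\,d_{X/N}(\bar x,\bar y)$ but also $d_X(x,y)\ge d_{X/N}(\bar x,\bar y)\ge 1000L$. This lower bound on $d_X(x,y)$ is too weak against $1700L$, so pure distance comparison fails. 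I would therefore refine it by working in the quotient. Along $[x,y]$ the projected path has length at least $1000L$, while $[y,z]$ projects into the $200L$-ball about $\bar y$ up to strut excursions of size $L$ (Lemma \ref{lem:nearmindistbound}, Hausdorff part). Hence no point of $[y,z]$ projects within $\approx 700L$ of $\bar x$. In particular $[y,z]$ stays far from $x$ in $X$, and so $(y\cdot z)_x \ge 700L-O(L)\gg\M/10$. Thin triangles then place $[x,z]$ within $\delta$ of $[x,y]$ along an initial segment of length far exceeding $\M/10$, which gives $(x\cdot z)_{p_1}\le 110\delta$.

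For the case $t\ge1$, I would run the same argument with the first strut $h_1\Ax(v_1)$. Because $(y\cdot z)_x$ is large, $[x,z]$ and $[x,y]$ fellow travel beyond the point where $[x,y]$ follows $h_1\Ax(v_1)$ for length $0.09L(v_1)$. This requires $d_X(x,h_1\Ax(v_1))+L(v_1)\ll 700L$, which holds since consecutive struts are minimal and the first strut is reached quickly relative to the projected distance. Then $[x,z]$ passes near $\pi_{h_1\Ax(v_1)}(x)$ and branches along $h_1\Ax(v_1)$ in the same direction as $[x,y']$ for some near-minimal lift $y'$, so one of the enemy prongs satisfies the bound with $100\delta$ plus $10\delta$ of fellow-travel error. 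The main obstacle I expect is the quotient step, namely justifying that $[y,z]$ stays far from $x$ in $X$. The lemma's Hausdorff bound is stated for two near-minimal lifts of the same pair. I would apply it to $(y,z)$ versus a minimal lift of $(\bar y,\bar z)$, whose geodesic projects into the $200L$-ball about $\bar{y}$.
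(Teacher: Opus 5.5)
Your argument is correct in its essential part, and it departs from the paper's proof at exactly the step you flagged. Both proofs obtain a prong close to $[x,y]$ from Lemma~\ref{lem:enemyprongs} and then run a thin-triangle argument in the triangle $x,y,z$. The difference is how the far side $[y,z]$ is controlled.

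The paper asserts $d_X(y,z)\le 200L$ ``by Lemma~\ref{lem:nearmindistbound}'' and compares this with $d_X(x,y)\ge 1000L$. That lemma only gives $d_X(y,z)\le 1.01(\tau/\theta)\cdot 200L\approx 1616L$ for a near-minimal pair, which is not enough, as you observed. You control $[y,z]$ through the quotient instead:
\begin{itemize}
\item Apply the Hausdorff part of Lemma~\ref{lem:nearmindistbound} to $(y,z)$ versus a minimal lift pair $(y',z')$, whose geodesic projects into the $200L$-ball about $\bar{y}$.
\item Since $\pi_{X/N}$ is $1$-Lipschitz, this gives $d_X(x,[y,z])\ge 1000L-201L-1000\M$, hence $(y\cdot z)_x\gtrsim 798L$.
\end{itemize}
This is what the thin-triangle step needs. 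Suppose $(x\cdot y)_{p_i}\le 100\delta$. The point $q\in[x,y]$ within $102\delta$ of $p_i$ satisfies $d_X(x,q)\le \M/10+1+102\delta$. So $d_X(q,[y,z])>\delta$, and $q$ is $\delta$-close to $[x,z]$. This gives $(x\cdot z)_{p_i}\le d_X(p_i,[x,z])\le 103\delta\le 110\delta$. Your route therefore supplies a justification for the step the paper leaves unsupported.

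There are two corrections to make.
\begin{itemize}
\item The case split on $t$ is unnecessary. Lemma~\ref{lem:enemyprongs}(2) applies to every $y$ that is near-minimal with $x$, whatever $\S(x,y)$ looks like. It hands you some $p_i$ with $(x\cdot y)_{p_i}\le 100\delta$, and the argument above finishes all cases at once.
\item Drop the $t\ge 1$ paragraph rather than rely on it. Its claim $d_X(x,h_1\Ax(v_1))+L(v_1)\ll 700L$ is not true in general. Only a lower bound on $d_{X/N}(\bar{x},\bar{y})$ is assumed, so the first strut can lie arbitrarily far from $x$. You never need fellow-traveling past the first strut anyway: it suffices that $[x,z]$ and $[x,y]$ fellow-travel out to distance about $\M/10$ from $x$, where the prongs lie.
\end{itemize}
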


\textbf{Proof.} By the assumed distance bounds and Lemma \ref{lem:nearmindistbound}, $d_X(x,y)\geq 1000L$ while $d_X(y,z)\leq 200L$. Furthermore, by Lemma \ref{lem:enemyprongs}, either $(x\cdot y)_{p_1}\leq 200\delta$ or $(x\cdot y)_{p_2}\leq 1000\delta$, and $d_X(x,p_1)$, $d_X(x,p_2)$ are bounded above by, say, $L$. The claim now follows from a standard $\delta$-thinness argument applied to a triangle with vertices $x$, $y$, and $z$. $\square$

\

We apply the enemy prongs construction as follows. See Figure \ref{fig:repel} for an illustration of the following lemma.

\begin{lemma}
\label{lem:quotientrouting}
Let $B$ be a large enough constant determined only in terms of the action $G \curvearrowright X$. Let $n$ be a sufficiently large number, let $w_{n}^{(1)}, \ldots, w_{n}^{(k)}$ be the $n$-step elements of independent permissible random walks on $G$ and let $N = \llangle w_{n}^{(1)}, \ldots, w_{n}^{(k)} \rrangle$.


Fix $\bar{x},\bar{y},\bar{z}\in X/N$ and fix respective lifts $x\in\pi_N^{-1}(\bar{x})$ and $z\in\pi_N^{-1}(\bar{z})$, and let $p_1,p_2$ be constructed from $x$ and $\bar{y}$ as in Lemma \ref{lem:enemyprongs}. Make the following assumptions:
\begin{enumerate}
\item We have $(p_1\cdot z)_x\leq B$, $(p_2\cdot z)_x\leq B$.
\item We have that $x$ and $z$ are near-minimal.
\item Further assume that any axis $h\Ax(v)$ that $50\delta$-fellow-travels $[x,z]_X$ for at least $0.0001L(v)$ does not $10B$-fellow-travel $[x,p_1]_X$ or $[x,p_2]_X$ for distance more than $100B$.
\end{enumerate} 
Then $[\bar{z},\bar{y}]$ passes within $1000B$ of $\bar{x}$, and in fact for any near-minimal respective lifts $z'$ of $\bar{z}$ and $y'$ of $\bar{y}$ and any $[z',y']$, this geodesic passes within $1000B$ of some lift $x'$ of $\bar{x}$. 

Furthermore, if $d_{X/N}(\bar{x},\bar{y})>1000B$ and $\bar{w}$ is some point within distance $200B$ of $\bar{y}$, then $[\bar{z},\bar{w}]$ passes within $2000B$ of $\bar{x}$.
\end{lemma}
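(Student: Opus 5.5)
The plan is to reduce everything to a statement in $X$ about near-minimal lifts, and then read it off in the quotient. Fix near-minimal lifts $z'$ of $\bar z$ and $y'$ of $\bar y$. By Lemma \ref{lem:nearmintriple} (with $\tau$ around $0.8$), after translating by an element of $N$ we may assume $z'=z$ is our fixed lift, and we may choose a lift $y$ of $\bar y$ so that $x,y$ are near-minimal and $z,y$ are near-minimal. Lemma \ref{lem:nearmindistbound} shows that any two near-minimal geodesics between the same pair of quotient points have projections at Hausdorff distance at most $\max_v L(v)+1000\M$. It therefore suffices to prove that one particular geodesic $[z,y]_X$ passes within roughly $100B$ of $x$ in $X$; the projection to $X/N$ is $1$-Lipschitz, and constants of size $L$ are absorbed because $B$ is large.

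\textbf{Step 1: $[z,y]_X$ passes near $x$.} By Lemma \ref{lem:enemyprongs}, applied to the near-minimal pair $x,y$, either $(x\cdot y)_{p_1}\le 100\delta$ or $(x\cdot y)_{p_2}\le 100\delta$. Say it is $p_1$, so $[x,y]$ leaves $x$ through a point near $p_1$. Assumption 1 says $(p_1\cdot z)_x\le B$, i.e. $[x,z]$ and $[x,p_1]$ diverge within distance about $B$ of $x$. We also have $d(x,p_1)\approx \M/10\gg B$, except in the degenerate case $d(x,y)$ small, which is handled directly. Hence $(z\cdot y)_x\lesssim B+100\delta$ by the triangle inequality for Gromov products (Lemma \ref{lem:gromovprops}). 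Lemma \ref{lem:gromovprops}(1) then places $[z,y]_X$ within $B+200\delta$ of $x$.

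\textbf{Step 2: near-minimality of $z,y$ (the main obstacle).} Step 1 needed a lift $y$ that is simultaneously near-minimal with $x$ and with $z$, and Lemma \ref{lem:nearmintriple} only gives this after rotating one of the points. The plan is to start with $x,z$ fixed (near-minimal by Assumption 2) and to take $y$ near-minimal to $x$.

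Suppose some axis $h\Ax(v)$ has $D_{h\Ax(v)}(z,y)>0.8L(v)$. The $x$-side bound (near-minimality of $x,y$) and the $x,z$ bound, combined with the Behrstock and forcing properties of Lemma \ref{lem:complex}, should force this axis to lie in $\S(x,z)$ and in $\S(x,y)$ simultaneously, with opposite orientations near $x$.

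\begin{itemize}
\item If the axis is far from $x$, the strut sequences of Lemma \ref{lem:strutfacts} through $x$ would be inconsistent.
\item If the axis is close to $x$, then $[x,z]$ fellow-travels it for at least $0.0001L(v)$, and so does $[x,y]$, which begins along $[x,p_1]$. This is excluded by Assumption 3.
\end{itemize}

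I expect this case analysis, controlling which axes can pass near $x$, to be the delicate step. I would first try the version where $x$ is on, or within $\M$ of, the axis, using the $100B$ fellow-travel bound.

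\textbf{Step 3: the perturbed statement.} For $\bar w$ with $d(\bar w,\bar y)\le 200B$, take near-minimal lifts as in Corollary \ref{cor:enemyprongfriends}. That corollary was stated with constants in $L$, and the same argument works with $B$ after enlarging constants. It shows that $x,p_i,w$ satisfy the same Gromov product bound $(x\cdot w)_{p_i}\le 110\delta$ as $y$ does. Rerunning Step 1 with $w$ in place of $y$ gives passage within $1000B$ in $X$, hence within $2000B$ in $X/N$, using $d(\bar x,\bar y)>1000B$ to keep $w$ far from the prongs.
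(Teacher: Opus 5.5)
\textbf{There is a genuine gap.} It sits in your first paragraph, and that is where the actual content of the lemma lies.

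Lemma \ref{lem:nearmindistbound} controls the projections of two near-minimal geodesics only up to Hausdorff distance $\max_v L(v)+1000\M$. Both $L$ and $\M$ grow with $n$. By contrast, $B$ is fixed in terms of the action alone, and $n$ is taken large afterwards; the proof explicitly assumes $\min_i L(v_i)\ge 10^6B$, and later $\M\ge 10^6B$. So an error of order $L$ cannot be absorbed into $1000B$. Your remark that ``constants of size $L$ are absorbed because $B$ is large'' has the dependence backwards.

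This is not a cosmetic loss. Downstream, in Lemma \ref{sublem:innergain}, $F$ is chosen large relative to $B$, and then $M$ large relative to $F$. An $O(L)$ error in the routing would make that ordering circular.

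Geometrically, the $L$-sized discrepancy in Lemma \ref{lem:nearmindistbound} comes from two lifted geodesics going around opposite sides of a quotient axis. If $x$ sat on a strut-following segment of $[z,y]$, another pair of near-minimal lifts $z',y'$ could route around that quotient axis and miss every lift of $\bar x$ by an amount of order $L$. So Step 1, which shows that one particular $[z,y]_X$ passes near $x$, is necessary but far from sufficient. Step 3 inherits the same defect.

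\textbf{What is missing} is an argument that $x$ lies in the hop between two \emph{consecutive} struts of $\S(z,y)$ whose mutual projections have size $O(B)$. Such a hop is rigid under change of lifts. The paper's proof does this in four steps.
\begin{enumerate}
\item It shows that $\S_{0.099}(z,y)$ contains $\S(z,x)$ followed by $\S(x,y)$. Any strut lying between the last strut $h_s\Ax(v_s)$ of $\S(z,x)$ and the first strut $j_1\Ax(w_1)$ of $\S(x,y)$ has small projection.
\item It shows that $h_s\Ax(v_s)$ and $j_1\Ax(w_1)$ cannot fellow-travel for $400B$. Any such overlap would have to occur within about $110B$ of $x$. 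It would then run along $[x,p_1]$ or $[x,p_2]$ for $150B$, contradicting Assumption 3. This is exactly why there are two enemy prongs, obtained by branching along $j_1\Ax(w_1)$ in both directions.
\item Consequently the mutual projections of these two axes have diameter at most $450B$, and a geodesic between them passes within $900B$ of $x$.
\item The second claim of Lemma \ref{lem:strutlifts} supplies a single $h\in N$ carrying these projection sets onto the corresponding ones for any other near-minimal pair $z',y'$. Hence $[z',y']$ passes within $1000B$ of $hx$.
\end{enumerate}

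Your proposal uses the prongs only through one Gromov-product inequality and never uses Lemma \ref{lem:strutlifts} in this translated form, so this step has no substitute in your argument.

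A smaller point on Step 2: an axis with $D_{h\Ax(v)}(z,y)>0.8L(v)$ need not lie in both $\S(x,z)$ and $\S(x,y)$. What the assumptions actually give is $D_{h\Ax(v)}(x,y)\le 0.01L(v)$ for every strut $h\Ax(v)\in\S(x,z)$, and symmetrically for struts of $\S(x,y)$. This yields $0.81$-near-minimality of $z,y$, not $0.8$.
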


\textbf{Proof.} We note that Figure \ref{fig:repel} illustrates many components of the construction used in this proof.

Let $y$ be a minimal lift of $\bar{y}$ with respect to $x$. We then label the strut sequences \[\begin{aligned}
\mathcal{S}(z, x) &:= \big\{ x <_{\mathcal{S}} h_{1} \Ax (v_{1}) <_{\mathcal{S}} \ldots <_{\mathcal{S}} h_{s} \Ax(v_{s}) <_{\mathcal{S}} x\big\}, \\
\mathcal{S}(x, y) &:= \big\{ x <_{\mathcal{S}} j_{1} \Ax (w_{1}) <_{\mathcal{S}} \ldots <_{\mathcal{S}} j_{t} \Ax(w_{t}) <_{\mathcal{S}} z \big\}.
\end{aligned}
\]We may assume without loss of generality that $d_{X/N}(\bar{x},\bar{y})> 500B$, and so $p_1$ and $p_2$ are both at least distance $400B$ from $x$. Also assume that $n$ is sufficiently large so that $\min_iL(v_i)\geq 10^6B$. 
By Lemma \ref{lem:enemyprongs}, as $x$ and $y$ are near-minimal, we have that either $(x\cdot y)_{p_1}\leq 100\delta$ or $(x\cdot y)_{p_2}\leq 100\delta$; assume we are in the first case. Recall that by assumption we have $(p_1\cdot z)_x\leq B$. Taking arbitrary geodesics $[z,x]$, $[x,y]$, and $[z,y]$, we have that $p_1$ lies within $100\delta$ of $[x,y]$, and that $x$ lies within $B$ of $[z,p_1]$. Recall that $d_X(x,p_1)\geq 400B$. Applying the second and third properties of Lemma \ref{lem:gromovprops}, we obtain that $(z\cdot y)_x\leq 2B$ and $(z\cdot y)_{p_1}\leq 150\delta$. 

Applying hyperbolicity, we obtain that $\gamma:=[z,y]$ may be subdivided into an arc $\gamma_z$ that $10\delta$-fellow-travels $[z,x]$ outside of $\N_{3B}(\{x\})$, and a disjoint arc $\gamma_y$ that $10\delta$-fellow-travels $[x,y]$ outside of $\N_{3B}(\{x\})$. Furthermore, $\gamma_y$ passes within $200\delta$ of $p_1$. 

We note that any axis $h\Ax(v)$ that $50\delta$-fellow-travels $\gamma_z$ for distance at least $0.0002L(v)$ must $50\delta$-fellow-travel $[x,z]_X$ for distance at least $0.0001L(v)$. But then such a geodesic can only $50\delta$-fellow-travel $\gamma_y$ for distance at most $105B$. As a consequence, any strut $h\Ax(v)\in \S(x,z)$ satisfies $D_{h\Ax(v)}(x,y)\leq 0.01L(v)$, and vice-versa. As a consequence, $z$ and $y$ must be $0.81$-near-minimal.

Next, from properties of $\gamma_z$ and $\gamma_y$ and a axis-fellow-traveling argument, we see that for any $h_i\Ax(v_i)$, $D_{h_i\Ax(v_i)}(z,y)\geq 0.099L(v_i)$, while for any $j_i\Ax(w_i)$, $D_{j_i\Ax(w_i)}(z,y)\geq 0.099L(w)$. So every strut in $\S(z,x)$ and $\S(x,y)$ lies in $\S_{0.099}(z,y)$. Furthermore, as $\gamma_z$ precedes $\gamma_x$, we must have $h_s\Ax(v_s)<_\S j_1\Ax(w_1)$, and as a consequence the strut sequence $\S_{0.099}(z,y)$ contains (as a subsequence) all struts in $\S(z,x)$ followed by all struts of $\S(x,y)$. By Lemma \ref{lem:strutlifts}, we have the second claim.
If any strut $h\Ax(v)$ lies between $h_s\Ax(v_2)$ and $j_1\Ax(w_1)$, then as it did not appear in $\S(z,x)$ or $\S(x,y)$, and cannot $10\delta$-fellow-travel both $\gamma_z$ and $\gamma_y$ for distance at least $0.01L(v)$, we must have that $D_{h\Ax(v)}(h_s\Ax(v_s),j_1\Ax(w_1))\leq 0.12L(v)$. 

Now, recall the procedure for constructing $p_1$ and $p_2$: we construct a geodesic from $x$ to $j_1\Ax(w_1)$, branch this geodesic in both directions along $j_1\Ax(w_1)$ to produce two geodesics, and then take $p_1$ and $p_2$ as points along this geodesic with (in this case) distance at least $400B$ from $x$.

Assume now that $h_s\Ax(v_s)$ and $j_1\Ax(w_1)$ $20\delta$-fellow-travel for distance $400B$, and let $\eta$ be the segment of $j_1\Ax(w_1)$ where this fellow-traveling occurs. As $h_s\Ax(v_s)$ may only $10\delta$-fellow-travel the first $105B$ units of $\gamma_y$, there must be a point $r$ on $\eta$ within $110B$ of $x$. Furthermore, $r$ must lie within $110B$ of $\pi_{j_1\Ax(w_1)}(x)$. We then have that a segment of either $[x,p_1]$ or $[x,p_2]$ must $10\delta$-fellow-travel $\eta$ of length at least $150B$, contradicting our assumptions on $h\Ax(v_i)$. As a consequence, we obtain the bounds
$$\diam(\pi_{h_s\Ax(v_s)}(j_1\Ax(w_1))),\diam(\pi_{j_1\Ax(w_1)}(h_s\Ax(v_w)))\leq 450B$$
Furthermore, from the fact that $h_s\Ax(v_s)$ $10\delta$-fellow-travels $\gamma_z$ and $j_1\Ax(w_1)$ $10\delta$-fellow-travels $\gamma_y$, we must have that any geodesic from $\pi_{h_s\Ax(v_s)}(j_1\Ax(w_1)))$ to $\pi_{j_1\Ax(w_1)}(h_s\Ax(v_w))$ passes within $900B$ of $x$. We apply Lemma \ref{lem:strutlifts} to conclude.

To obtain the first conclusion for $\bar{w}$, we note that we may find respective lifts $z'$, $y'$, and $w'$ of $\bar{z}$, $\bar{y}$, and $\bar{w}$ so that $w'$ and $z'$ are minimal, $z'$ and $w'$ are near-minimal, and $y'$ and $w'$ are near-minimal. From Lemma \ref{lem:nearmindistbound}, $d_X(y',w')\leq 5000B$, while from the analysis above we have that any $[z',y']$ passes within $1000B$ of some lift $x'$ of $\bar{x}$. From $\delta$-thinness of triangles in $X$, $[w',z']$ must pass within $1200B$ of $x'$. The proof of the second condition is similar. $\square$

\begin{figure}[htbp]
    \centering 
    \includegraphics[width=0.8\textwidth]{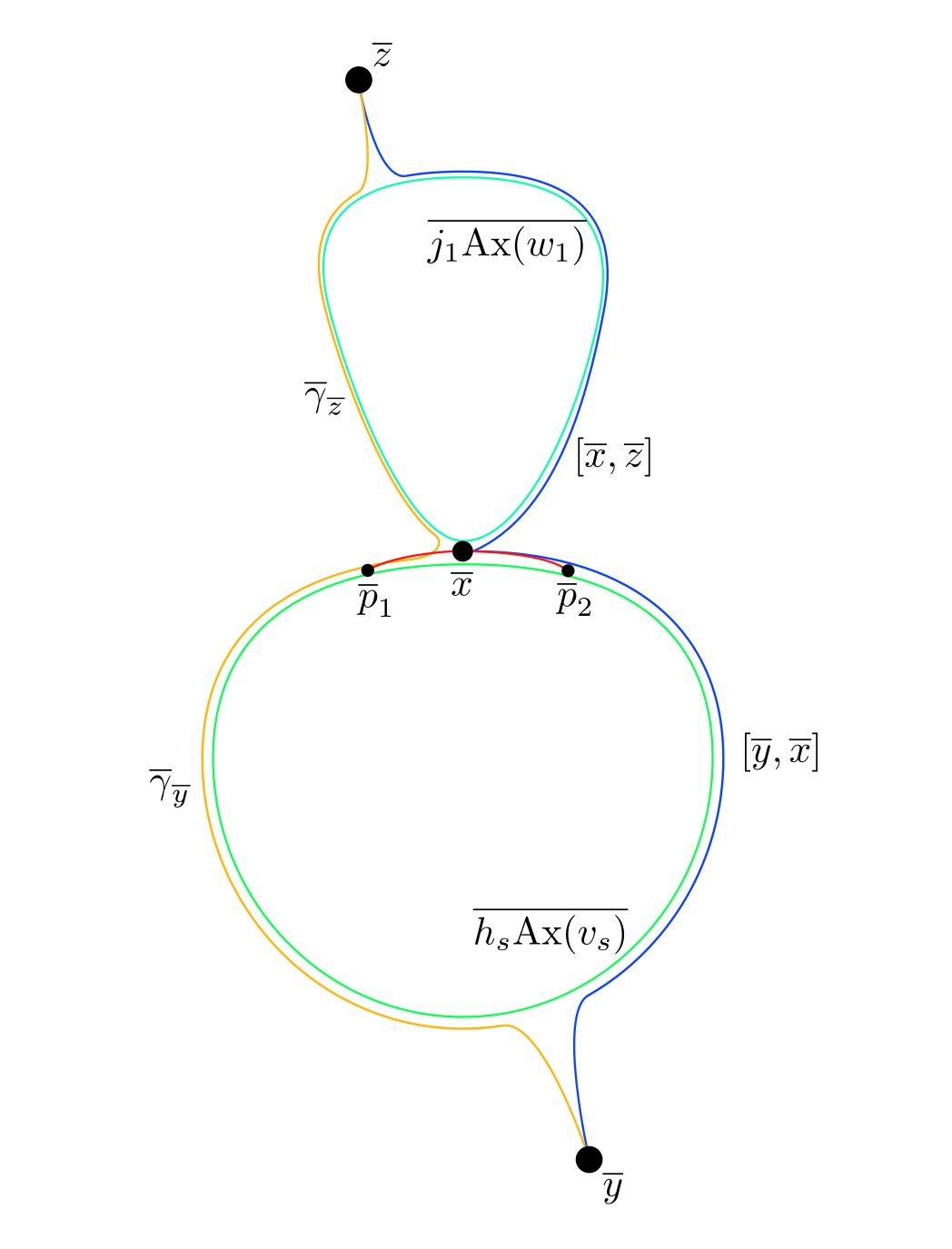} 
    \caption{The construction used in Lemma \ref{lem:quotientrouting}, in the case where $\S(y,x)$ and $\S(x,z)$ each contain only one strut. The key point is that our assumptions force $\overline{j_1\Ax(w_1)}$ to `repulse' from $[\bar{x},\bar{p}_1]$ and $[\bar{x},\bar{p}_2]$. As a result $\overline{j_1\Ax(v_1)}$ and $\overline{h_s\Ax(v_s)}$ may only overlap on the scale of $B$ (which does not grow with $n$), not on the scale of $\M$. This overlap bound plus properties of strut sequences forces the desired routing.}
    \label{fig:repel}
\end{figure}

In order to make use of this lemma, we must demonstrate that we can find axes that are sufficiently `repulsing' from a given set of enemy prongs. To do so, we use the notion of a \emph{Schottky set}. These sets were first defined in \cite{BMSS20}, although the formulation we use is slightly modified from the formulation used by Gouëzel in \cite{SG22}. Intuitively, given two independent loxodromic isometries in $G$, the construction gives a method for building a large number of distinct loxodromic isometries, all pointing in `different directions'  from a basepoint $x_0$.

\begin{definition}

Let $\eta,E_G,E_D, E_D',E_M\geq 0$. A finite set $\EE\subset G$ of isometries is a $(\eta,E_G,E_D,E_D',E_M)$-\emph{Schottky set} if the following conditions are satisfied:
\begin{enumerate}
\item For all $x,y\in X$, we have $|\{s\in \EE:(x\cdot sy)_{x_0}\leq E_G\}|\geq (1-\eta)|\EE|$ and $|\{s\in S: (x,s^{-1}y)_{x_0}\leq E_G\}|\geq (1-\eta)|\EE|$.
\item For all $s\in \EE$, we have $E_D\leq d_X(x_0,sx_0)\leq E_D'$, and $d_G(e,s)\leq E_M$.

\end{enumerate}

\end{definition}

The following lemma is a straightforward consequence of the proof of Prop. 3.12 in \cite{SG22}. It will be used to build axes in the main construction so that Lemma \ref{lem:quotientrouting} can be applied.

\begin{lemma}[Prop. 3.12, \cite{SG22}]
\label{lem:schottkyset}

For all $\eta>0$ and $E,F>0$, there exists $E_G$, $E_D$, $E_D'$ and $E_M$ and a set $\EE$ such that $\EE$ is a $(\eta,E_G,E_D,E_D',E_M)$-Schottky set, and such that $E_D\geq 10E_G+F$ and $E_G\geq 1000(\delta+E)$. Furthermore, $E_G$, $E_D$,, $E_D'$ and $E_M$ depend only on $\eta$ and the action $G\curvearrowright X$, and in particular are independent of $n$.

\end{lemma}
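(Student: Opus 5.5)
The plan is to follow Gouëzel's construction in \cite[Prop.~3.12]{SG22}. I will build $\EE$ from high powers of many pairwise independent loxodromics, and fix the order of constants so that $E_G$ is chosen before the power $N$ and does not depend on it. This ordering is what allows $E_D\geq 10E_G+F$: $E_D$ grows linearly in $N$ while $E_G$ stays fixed.

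\textbf{Step 1 (many independent loxodromics).} Since $G\curvearrowright X$ is non-elementary, fix independent loxodromics $a,b$ in the sense of Definition~\ref{def:loxodromic}. Given $\eta$, choose $K\geq 10/\eta$. Set $g_i:=a^{ir}ba^{-ir}$ for $i=1,\ldots,K$, with $r$ large. A standard ping-pong argument shows that the $g_i$ are loxodromic and that $g_i^{\pm1},g_j^{\pm1}$ are pairwise independent for $i\neq j$, because their axes are translates of $\Ax(b)$ pushed far apart along $\Ax(a)$. Consequently there is a constant $C$, depending only on the $g_i$ and $\delta$, such that
$$(g_i^{\epsilon m}x_0\cdot g_j^{\epsilon' m'}x_0)_{x_0}\leq C$$
for all $m,m'\geq 1$ and all $(i,\epsilon)\neq(j,\epsilon')$. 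The crucial feature is that $C$ is uniform in the exponents.

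\textbf{Step 2 (choice of constants and the set).} Put $E_G:=\max(100C+1000\delta,\,1000(\delta+E))$; this depends only on $\eta$ through $K$ and on the action. Then choose $N$ so large that $d_X(x_0,g_i^Nx_0)\geq 10E_G+F$ for every $i$, using positivity of the translation lengths. Set $\EE:=\{g_1^N,\ldots,g_K^N\}$. Condition~2 of the definition now holds with $E_D=\min_i d_X(x_0,g_i^Nx_0)$, $E_D'=\max_i d_X(x_0,g_i^Nx_0)$ and $E_M=\max_i d_G(e,g_i^N)$. None of these depends on $n$.

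\textbf{Step 3 (the $(1-\eta)$ condition).} Fix $x,y\in X$. First, at most one index $i$ has $(y\cdot g_i^{-N}x_0)_{x_0}>E_G/10$: if two indices did, the triangle inequality for Gromov products (Lemma~\ref{lem:gromovprops}) would force $(g_i^{-N}x_0\cdot g_j^{-N}x_0)_{x_0}>E_G/10-10\delta>C$, contradicting Step~1. For every other index, a thin-triangle argument together with $d(x_0,g_i^Nx_0)\gg E_G$ shows that $g_i^Ny$ lies "beyond" $g_i^Nx_0$ as seen from $x_0$. More precisely,
$$(g_i^Ny\cdot g_i^Nx_0)_{x_0}\geq d(x_0,g_i^Nx_0)-E_G/10-O(\delta).$$
Now suppose two such indices $i\neq j$ satisfy $(x\cdot g_i^Ny)_{x_0}>E_G$ and $(x\cdot g_j^Ny)_{x_0}>E_G$. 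Chaining the triangle inequality gives $(g_i^Nx_0\cdot g_j^Nx_0)_{x_0}>E_G-O(\delta)>C$, again a contradiction. Hence at most $2$ elements $s\in\EE$ violate $(x\cdot sy)_{x_0}\leq E_G$. The same argument with $g_i^{-N}$ in place of $g_i^N$ handles $s^{-1}$. Since $2\leq\eta K$, both inequalities in condition~1 hold.

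The main obstacle I expect is Step~1: obtaining a bound $C$ on cross Gromov products that is genuinely uniform in all exponents, for both signs, and for all $K$ elements at once. I would handle this by the ping-pong/aligned-sequence argument, using the alignment lemma \cite[Lemma~3.7]{Gou22} on the concatenation $a^{ir}\,b^{\pm m}\,a^{-ir}a^{jr}\,b^{\pm m'}$, with $r$ large relative to the independence constant of $a$ and $b$. This is exactly the content of Gouëzel's proof, and the statement here only adds the bookkeeping on constants.
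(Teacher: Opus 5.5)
Your proposal is correct and follows essentially the same route as the paper, which gives no argument beyond citing the proof of Gouëzel's Prop.~3.12: take $K\gtrsim 1/\eta$ loxodromics whose $2K$ fixed points are pairwise distinct, fix $E_G$ from a cross-Gromov-product bound that is uniform in the exponents, then pass to a high power $N$ and show that at most two elements of $\EE$ are bad for each pair $(x,y)$. The one point you add is the explicit ordering of constants (fix $C$ and $E_G$ first, then choose $N$), which is exactly what gives $E_D\geq 10E_G+F$ with all constants independent of $n$, though strictly they also depend on $E$ and $F$, as your construction shows.
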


\section{Main construction}

In this section, we aim to prove the following result. Once this result is proven, linear connectivity of the boundary will follow from standard arguments.

As usual, let $n$ be a sufficiently large number, let $w_{n}^{(1)}, \ldots, w_{n}^{(k)}$ be the $n$-step elements of independent permissible random walks on $G$ and let $N = \llangle w_{n}^{(1)}, \ldots, w_{n}^{(k)} \rrangle$. We assume we are in the setup of Hypothesis \ref{hyp:main}, with $M$ as in this hypothesis. Note that we may take $M$ to be as large as needed by taking $n$ sufficiently large.

We will fix a uniform basepoint $\bar{x}_0\in X/N$ throughout these arguments. Before we begin, we note that by Lemma \ref{lem:quotienthyp}, all boundary points in $\partial (X/N)$ may be represented by $(1,20L)$-quasi-geodesic rays, and furthermore, when calculating Gromov products for boundary points as in Equation \ref{eq:boundarygromov}, it suffices to take the infimum over all representative sequences derived from $(1,20L)$-quasi-geodesic rays. 

Our goal is to prove the following lemma, which will be broken up over several subsections and sublemmas.

\begin{lemma}
\label{lem:mainconstruction}

Fix $\xi_1,\xi_2\in\partial (X/N)$ with $(\xi_1\cdot \xi_2)_{\bar{x}_0}=D$, along with a constant $F$. Then for all $n$ sufficiently large in terms of $F$, the following property holds: there exists a finite sequence of boundary points $\xi_1=\chi_1,\chi_2,...,\chi_Q=\xi_2$ such that for $1\leq i\leq Q-1$, $(\chi_i\cdot\chi_{i+1})_{\bar{x}_0}\geq D+\frac{1}{2}F$, and for all $2\leq i\leq Q-1$ and $j\in\{1,Q\}$, $(\chi_i\cdot\chi_j)_{\bar{x}_0}\geq D-100L$.

\end{lemma}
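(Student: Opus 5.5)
We follow the ``pushing out'' scheme sketched in the introduction. First, represent $\xi_1,\xi_2$ by $(1,20L)$-quasi-geodesic rays from $\bar{x}_0$. Let $\bar{y}_1,\bar{y}_2$ be the points on these rays at distance roughly $D+F$ from $\bar{x}_0$. Since $(\xi_1\cdot\xi_2)_{\bar{x}_0}=D$, hyperbolicity of $X/N$ (Lemma \ref{lem:quotienthyp}) lets us pick a point $\bar{c}$ near the ``branching point'' at distance about $D$ from $\bar{x}_0$. We join $\bar{y}_1$ to $\bar{y}_2$ by a path $\bar{c}=\bar{a}_0,\bar{a}_1,\ldots$ of orbit points, using action path-connectedness (Definition \ref{def:actiopathconnected}) applied to near-minimal lifts provided by Lemma \ref{lem:nearmintriple}. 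Consecutive points are $W$-close in $G$, and the path is an $(E,E)$-quasi-geodesic. In a first pass we work at the sphere of radius $\approx D$. The goal is to replace each point $\bar{a}_i$ along the concatenation $[\bar{y}_1,\bar{c}]\cup[\bar{c},\bar{y}_2]$ that lies near radius $D$ by a boundary point $\chi_i$ with $(\chi_i\cdot\bar{x}_0)$-direction ``outward''. We want $(\chi_i\cdot\chi_{i+1})_{\bar{x}_0}\ge D+F/2$, and we want every $\chi_i$ to stay in the $D-100L$ shadow of the endpoints.

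The key step is to construct, at each $\bar{a}_i$, a prong that is directed away from $\bar{x}_0$ and repels from the enemy prongs. Take a lift $a_i=g_ix_0$. Form the enemy prongs $p_1,p_2$ of $a_i$ relative to $\bar{x}_0$ (Lemma \ref{lem:enemyprongs}), and also those relative to the neighbours. Then use the Schottky set of Lemma \ref{lem:schottkyset}, with $\eta$ small and $F$ replaced by a large multiple of $F+B$. This lets us choose $s_i\in\EE$ such that $g_is_i x_0$ has Gromov product at most $E_G$ with all enemy prongs at $a_i$. We choose $s_i$ jointly with $s_{i+1}$ so that $(g_is_ix_0\cdot g_{i+1}s_{i+1}x_0)$ is large, which mirrors the overlapping loops in the free group sketch. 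This is possible because, by Definition \ref{def:walkpermissible}(2), the relator axis contains the bounded word $s_i^{-1}g_i^{-1}g_{i+1}s_{i+1}$ (of length at most $2E_M+W\le M$) in aligned position. Thus some translate of a relator axis passes through both prong tips, and in $X/N$ the images of $g_is_ix_0$ and $g_{i+1}s_{i+1}x_0$ are joined through a loop of length about $L$ that stays outside the ball of radius $D$. We then iterate from each prong tip: repeat the Schottky choice to obtain a $(1,C)$-quasi-geodesic ray, and let its limit be $\chi_i$. Item 3 of Definition \ref{def:walkpermissible} ensures the relevant axes fellow-travel the prongs only for distance $\ll \M$, and this is exactly hypothesis 3 of Lemma \ref{lem:quotientrouting}.

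The estimates then come from Lemma \ref{lem:quotientrouting} and Corollary \ref{cor:enemyprongfriends}. Hypothesis 1 of Lemma \ref{lem:quotientrouting} holds by the Schottky choice, and hypothesis 2 holds by construction of near-minimal lifts. The lemma then says any geodesic from $\bar{x}_0$ toward $\chi_i$ passes within $1000B$ of $\bar{a}_i$. Combined with the $200B$-perturbation clause of the lemma, this gives $(\chi_i\cdot\chi_j)_{\bar{x}_0}\ge D-100L$, because $\bar{a}_i$ sits within the fellow-traveling region of $\xi_1,\xi_2$ up to $O(L)$. For consecutive points, the two rays to $\chi_i$ and $\chi_{i+1}$ share the loop segment beyond distance $D+F$. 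This gives $(\chi_i\cdot\chi_{i+1})_{\bar{x}_0}\ge D+F/2$, with errors $O(E_D'+B)$ absorbed by taking $n$ (hence $L$ and $\M$) large in terms of $F$. The endpoints $\chi_1=\xi_1$ and $\chi_Q=\xi_2$ are handled by starting the chain at $\bar{y}_1,\bar{y}_2$ on the defining rays.

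I expect the main obstacle to be controlling the Gromov product between consecutive prong rays. The difficulty is verifying that the quotient loop through the relator axis really pushes outward, so that $X/N$ geodesics from $\bar{x}_0$ to it do not shortcut back through radius $<D+F/2$ via other axes. For this I would apply Lemma \ref{lem:strutlifts} and Lemma \ref{lem:nearmindistbound} to the strut sequence from $x_0$ to the loop apex. The aim is to check that the only new strut is the chosen relator axis, and that its projection lies past $a_i$.
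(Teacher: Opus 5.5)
\textbf{There is a genuine gap in the step meant to give $(\chi_i\cdot\chi_{i+1})_{\bar{x}_0}\ge D+\frac{1}{2}F$.} Your setup up to the choice of prong tips, and of an axis through consecutive tips, matches Lemma \ref{sublem:insertaxes}. That setup consists of the lifts via Lemma \ref{lem:nearmintriple}, action path-connectedness, enemy prongs, the Schottky choices, and Definition \ref{def:walkpermissible}(2).

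The problem comes next. You define $\chi_i$ as the endpoint of an independent Schottky ray started at the prong tip $u_i=g_is_ix_0$. So the rays to $\chi_i$ and $\chi_{i+1}$ never run along the relator loop, and the claim that they ``share the loop segment beyond distance $D+F$'' contradicts your own construction.

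The loop does not help by itself either. We have $d_X(u_i,u_{i+1})\le 2E_D'+O(W)\ll L$, while the other arc of the loop has length $\approx L$. So the loop does not shorten $d_{X/N}(\bar u_i,\bar u_{i+1})$, and cannot raise $(\bar u_i\cdot\bar u_{i+1})_{\bar x_0}$ above what $X$ already gives at scale $E_D'$. If $X$ is a tree, short outward prongs on the two sides of the branch point gain nothing.

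In the introduction's sketch the refined path passes through the loop \emph{apexes}, not the prong tips, and this is what the paper implements:
\begin{itemize}
\item The axis $A_i$ through $u_i$ and $u_{i+1}$ is forced to pass near $w_i$ by requiring $(u_i\cdot u_{i+1})_{w_i}\le E_G$ (small, not large).
\item One takes $z_{i,1},z_{i,2}\in A_i$ at $\approx 0.4L$ from $w_i$ on either side. These are $\approx 0.2L$ apart in $X/N$, across the far side of $\bar A_i$.
\item Each of them is $0.1L$ off the antipode of $\bar w_i$. This forces geodesics to $\bar x_0$ back along the $\bar u_i$-side, respectively the $\bar u_{i+1}$-side, of the loop.
\item They are extended to rays by elements $c_{i,j}$ tracking axis translates for $\approx L/3$ at a time (Lemmas \ref{sublem:axisextender}, \ref{sublem:sequencelimits}). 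This gives two boundary points per loop and the chain $\xi_1,\chi_{2,1},\chi_{2,2},\chi_{3,1},\dots,\chi_{k-1,2},\xi_2$.
\end{itemize}
The gains then come from two facts. First, rays to $\chi_{i,1},\chi_{i,2}$ pass through points at radius $\ge D+0.37L$ that are at mutual distance $\le 0.25L$. Second, $\chi_{i,2}$ and $\chi_{i+1,1}$ both route through the shared tip $\bar u_{i+1}$, at radius $\ge D+E_D-O(B)$ with $E_D\ge 10(E_G+F+B)$. Both estimates use $(\bar s_1\cdot\bar s_2)_{\bar x_0}\ge(\bar t_1\cdot\bar t_2)_{\bar x_0}$ for $\bar t_j\in[\bar x_0,\bar s_j]$. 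This holds in any geodesic space, and so avoids the $O(L)$ hyperbolicity error of $X/N$.

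\textbf{Three further steps also fail as written.}

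First, hypothesis 3 of Lemma \ref{lem:quotientrouting} asks that axes fellow-travelling $[x,z]$ for $0.0001L(v)$ fellow-travel the enemy prongs for at most $100B$, with $B$ independent of $n$. The no-overlap condition only controls overlaps at scale $\M\gg B$, so it does not give this. The paper obtains it because its rays consist of axis segments. A long fellow-travelling axis must then be one of the tracked axes, and being near $w_i$ it must be $A_i$. Since $A_i$ is pinned to $u_i,u_{i+1}$, it repels the enemy prongs. Along a Schottky ray nothing prevents an axis from arriving along an enemy prong, passing $a_i$, and following $[a_i,u_i]$ and the ray outward. For the same reason, minimality of $a_i$ with far points of your ray is not established.

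Second, placing $\bar y_1,\bar y_2$ at distance only $\approx D+F$ from $\bar x_0$ cannot give the gain for the first and last pairs. The Gromov-product triangle inequality in $X/N$ costs $O(L)\gg F$. The paper instead uses $d_{X/N}(\bar x_0,\bar x_j)\ge\mathcal{Z}\ge D+F+10^6L$.

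Third, errors of size $O(E_D'+B)$ do not depend on $n$. They must be absorbed by enlarging $F$ relative to $B$, not by enlarging $n$.
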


\subsection{Building the Construction}

\begin{figure}[htbp]
    \centering 
    \includegraphics[width=1\textwidth]{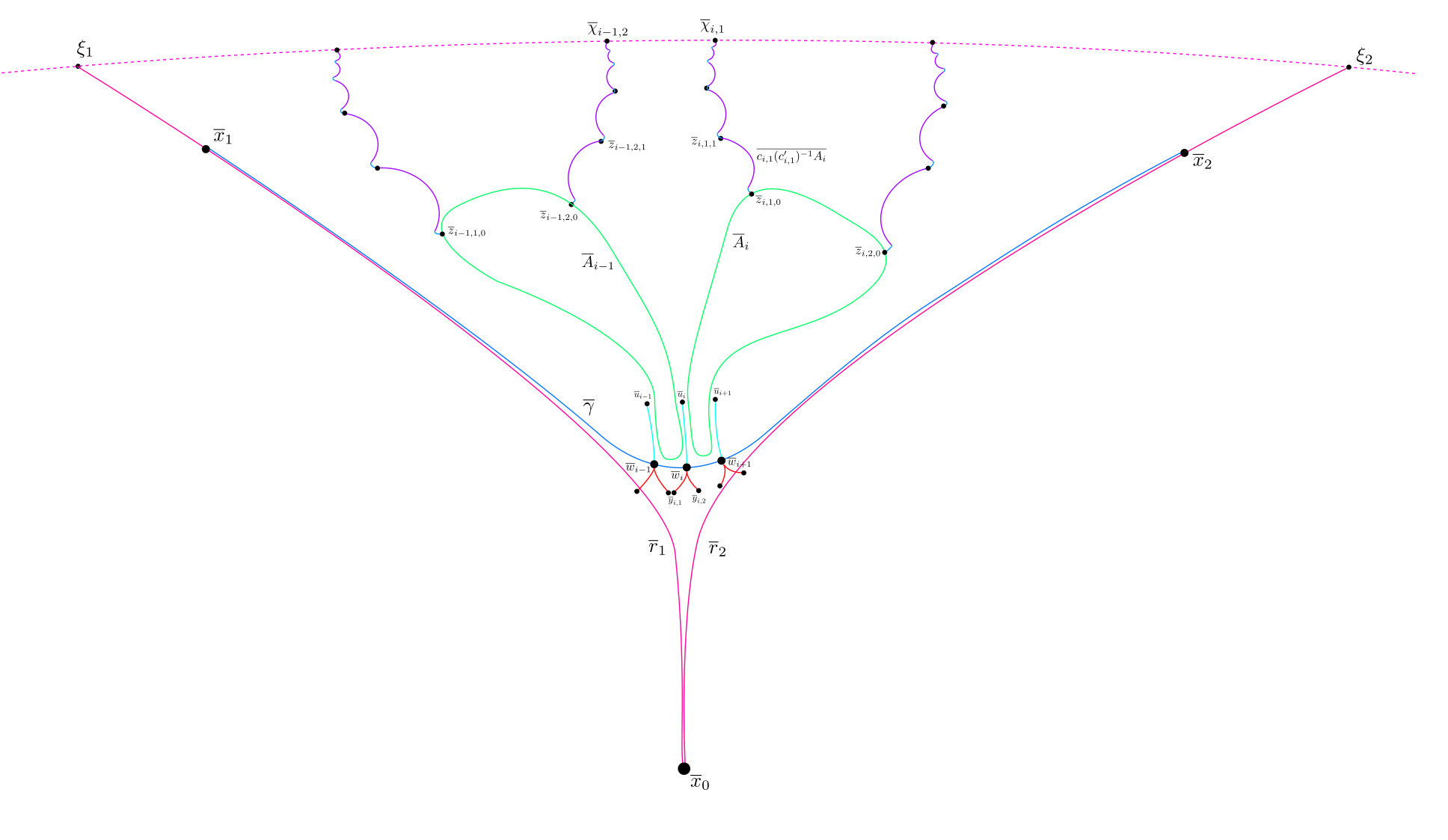} 
    \caption{A schematic of the construction used to prove Lemma \ref{lem:mainconstruction}, showing the relevant objects under projection to $X/N$. Note that the axes $\bar{A}_i$ are constructed using the points $\bar{u}_i$ as guides, and in particular must be repulsed from the points $\bar{y}_{i,j}$.}
    \label{fig:mainconstruction}
\end{figure}

We will first set up the construction. Note that most relevant features of the construction will appear in Figure \ref{fig:mainconstruction}; keeping this figure in hand throughout the construction and its verification may be useful. We will also need several constants with a somewhat complex relationship; see Figure \ref{fig:dependence} as a guide. 

\begin{figure}[htbp]
    \centering 
    \begin{tikzcd}
                                      & G\curvearrowright X \arrow[ldd] \arrow[rdd] \arrow[d] &                          &              &                          &                         &             &             \\
                                      & \delta \arrow[lddd] \arrow[rddd] \arrow[rd]           &                          &              &                          &                         &             &             \\
E \arrow[dd]                          &                                                       & W \arrow[dd]             &              &                          &                         & \mathcal{M} &             \\
                                      &                                                       &                          & B \arrow[rd] &                          & M \arrow[ru] \arrow[rd] &             &             \\
E' \arrow[rr] \arrow[rrru, bend left] &                                                       & E_G \arrow[d] \arrow[ru] &              & F \arrow[lld] \arrow[ru] &                         & L \arrow[r] & \mathcal{Z} \\
                                      &                                                       & {E_D, E_D', E_M}         &              &                          &                         &             &            
\end{tikzcd}
    \caption{A dependence chart of constants used in the main construction. An arrow from $a$ to $b$ indicates that $b$ must depend on the value of $a$. An arrow from $G\curvearrowright X$ indicates constants that depend only on the action of $G$ on $X$, including constants intrinsic to $X$. This graph must contain no cycles. We note that while $F$ is an input to Lemma \ref{lem:mainconstruction}, we may need to replace it with a larger constant, and it is sufficient to verify the lemma for this new constant.}
    \label{fig:dependence}
\end{figure}

Let $\bar{r}_1,\bar{r}_2:\intgr_{\geq 0}\to X/N$ be $(1,20L)$-quasi-geodesic rays originating at $\bar{x}_0$ and respectively representing $\xi_1$ and $\xi_2$. Recalling that 
$$L:=\max_{h\Ax(v)\in \A}L(v),$$
we let $\mathcal{Z}$ be a large constant, on the order of $1000L$ or larger, that will be chosen later. This constant is uniform with respect to $\xi_1$, $\xi_2$, and $D$. We may choose $\bar{r}_1$ and $\bar{r}_2$, and respective points $\bar{x}_1$ and $\bar{x}_2$ on these rays satisfying the following conditions:
\begin{enumerate}
\item We have $d_{X/N}(\bar{x}_0,\bar{x}_i)\geq \mathcal{Z}$.
\item We have $(\bar{x}_1,\bar{x}_2)_{\bar{x}_0}=_{10} D$.
\end{enumerate}

Let $x_0,x_1,x_2$ be respective lifts of $\bar{x}_0,\bar{x}_1,\bar{x}_2$ to $X$, chosen via Lemma \ref{lem:nearmintriple} so that $x_1$ and $x_2$ are minimal, $x_0$ and $x_1$ are near-minimal, and $x_0$ and $x_2$ are near-minimal. Let $\gamma$ be a geodesic from $x_1$ to $x_2$; setting $\bar{\gamma}=\pi_{X/N}(\gamma)$, $\bar{\gamma}$ is a geodesic from $\bar{x}_1$ to $\bar{x}_2$. As $X$ is action path-connected (see Definition \ref{def:actiopathconnected}), there exist a sequence of group elements $a_1,...,a_k$ such that $a_1x_0=x_1$, $a_kx_0=x_2$, such that $d_G(a_{i},a_{i+1})\leq W$ for all $1\leq i\leq k-1$, and so that $x_1=a_1x_0,a_2x_0,...,a_{k-1}x_0,a_kx_0=x_2$ is a unparameterized $(E,E)$-quasi-geodesic. Here, $E$ and $W$ are independent of $n$ and $L$. We will fix the notation $w_i=a_ix_0$, and define $\bar{w}_i=\pi_{X/N}(w_i)$. Of course, $a_i\bar{x}_0=\bar{w}_i$. 

Now, there exists some constant $E'$ depending only on $E$ and $\delta$ so that every $w_i$ lies within distance $E'$ of $\gamma$, and hence every $\bar{w}_i$ lies within distance $E'$ of $\bar{\gamma}$. Furthermore, for all $\bar{w}_i$ and $j\in\{1,2\}$, we have $(\bar{w}_i\cdot \bar{x}_j)_{\bar{x}_0}\geq D-10E'$. Note that this particular Gromov-product bound follows from the triangle inequality, and so holds in any geodesic metric space.

We now construct enemy prongs. Let $B$ be the constant appearing in Lemma \ref{lem:quotientrouting}; we will need to assume this constant is sufficiently large in terms of several other constants, but not that it will \emph{not} need to depend on $n$. For each $w_i$ with $2\leq i\leq k-1$, define $y_{i,1},y_{i,2}$ via Lemma \ref{lem:enemyprongs} so that, in the notation of that lemma, $y_1=p_1(w_i,\bar{x}_0)$ and $y_2=p_2(w_i,\bar{x}_0)$.

\begin{lemma}
\label{sublem:insertaxes}

There exist constants $E_G$, $E_D$, $E_D'$ and $E_M$ satisfying
$$E_D\geq 10(E_G+F+B),\ E_G\geq 10000(\delta+F+E')$$
where $E'$ is as in the preceding discussion, such that the following statement is true.

There exist group elements $s_i\in G$, points $u_i=w_is_ix_0$ in $X$, and axes $A_i=h_i\Ax(v_i)$ (stretching notation so that here the $v_i$ index is different from the index used in our hypotheses) such that for all $2\leq i\leq k-1$ and $j\in\{1,2\}$, the following properties hold, with appropriate modifications for $i=2$ and $i=k-1$:
\begin{enumerate}
\item We have
$$(y_{i,j}\cdot u_i)_{w_i}\leq E_G \text{, } (y_{i-1,j}\cdot u_i)_{w_i}\leq E_G, \text{ and } (y_{i+1,j}\cdot u_i)_{w_i}\leq E_G.$$
\item We have
$$(u_{i-1}\cdot u_{i})_{w_i}\leq E_G\text{ and } (u_{i+1}\cdot u_i)_{w_i}\leq E_G.$$
\item We have
$$E_D\leq d_X(w_i,u_i)\leq E_D'.$$
\item We have
$$d_G(e,s_i)\leq E_M.$$
\item The points $w_i$ and $u_i$ are minimal. In particular,
$$d_{X/N}(\pi_N(w_i),\pi_N(u_i))\geq E_D.$$
\item The axis $A_i$ passes within distance $10\delta$ of both $u_i$ and $u_{i+1}$.

\end{enumerate}

\end{lemma}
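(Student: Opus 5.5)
We plan to obtain the $s_i$ from a Schottky set via Lemma \ref{lem:schottkyset}, and the axes $A_i$ from property (2) of admissibility (Definition \ref{def:walkpermissible}), applied to the short connecting words between consecutive $s_i$.

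\textbf{Choosing the $s_i$.} First we fix a small $\eta$, say $\eta=1/100$. We apply Lemma \ref{lem:schottkyset} with $E$ replaced by $E'+F$ and with $F$ replaced by $10(F+B)$. This gives a $(\eta,E_G,E_D,E_D',E_M)$-Schottky set $\EE$ with $E_D\geq 10(E_G+F+B)$ and $E_G\geq 1000(\delta+E'+F)$. The constant $1000$ can be inflated to $10000$ by enlarging $E$ before invoking the lemma. All these constants are independent of $n$.

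We then choose the $s_i$ inductively in $i$. Each of conditions (1) and (2) is a bound of the form $(a\cdot w_is_ix_0)_{w_i}\le E_G$ for at most seven points $a$. These points are the $y_{i,j}$, the $y_{i\pm1,j}$, and $u_{i-1}$. Translating by $a_i^{-1}$ turns each into a condition $(a_i^{-1}a\cdot s_ix_0)_{x_0}\le E_G$. By the Schottky property, each such condition fails for at most $\eta|\EE|$ elements of $\EE$.

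The bound $(u_{i+1}\cdot u_i)_{w_i}\le E_G$ is handled as follows. Since $d(w_i,w_{i+1})\le W'$ is uniformly bounded and $u_{i+1}$ is far from $w_{i+1}$, this condition is implied, up to a constant absorbed into $E_G$, by $(u_{i+1}\cdot u_i)_{w_{i+1}}$ being small. That is exactly condition (2) at index $i+1$, which we impose when choosing $s_{i+1}$. Here we also use the second half of the Schottky condition, with $s^{-1}$, for the backward direction.

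Since $7\eta<1$, a valid $s_i$ exists at every step. Conditions (3) and (4) are then automatic from the Schottky set.

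\textbf{Minimality (5).} The key point is that $d_X(w_i,u_i)\le E_D'$ is bounded independently of $n$. In contrast, by Lemma \ref{lem:CM2.1}, a nontrivial $h\in N$ identifying two points forces a projection of length at least $L(v)-3\M$, which is much larger than $E_D'$. Hence every axis projection satisfies $d_{h\Ax(v)}(w_i,u_i)\le E_D'+O(\delta)\le 0.45L(v)$ once $n$ is large. Lemma \ref{lem:verifyminimal} then shows that $w_i$ and $u_i$ are minimal, which gives $d_{X/N}\geq E_D$.

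\textbf{Axes (6); the main obstacle.} The consecutive points satisfy $u_i^{-1}$-type relations: the connecting element $c_i=s_i^{-1}a_i^{-1}a_{i+1}s_{i+1}$ has word length at most $2E_M+W\le M$. We apply item (2) of Definition \ref{def:walkpermissible} to $u=c_i$ and some $v\in\{v_1,\dots,v_k\}$. This yields $g$ with $(x_0,gx_0,gc_ix_0,vx_0)$ being $10\delta$-aligned.

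We then set $h_i=a_is_ig^{-1}$ and $A_i=h_i\Ax(v)$. The translate of the aligned sequence has $h_igx_0=u_i$ and $h_igc_ix_0=u_{i+1}$. Since these points are aligned along $h_i[x_0,vx_0]\subset \N_{O(\delta)}(h_i\Ax(v))$, Gouëzel's alignment lemma puts $u_i$ and $u_{i+1}$ within $10\delta$ of $A_i$, after the $\delta$-enlargement remark.

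The delicate part is making the alignment constants produce $10\delta$ rather than, say, $30\delta$. It is also unclear whether the endpoints $x_0,vx_0$ should be identified with points of $\Ax(v)$ or only with points near it, which would give $(1,\delta)$-coarse fellow travel. We expect to absorb this slack by using the convexity of $\Ax(v_i)$ from Definition \ref{def:walkpermissible}(1) and the earlier increase of $\delta$. The boundary cases $i=2$ and $i=k-1$ simply drop the missing neighbor conditions.
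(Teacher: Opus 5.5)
Your proposal is correct and follows essentially the same route as the paper. The $s_i$ come from a Schottky set (Lemma \ref{lem:schottkyset}) chosen inductively by a union bound, with $(u_{i+1}\cdot u_i)_{w_i}$ reduced to the corresponding condition at $w_{i+1}$ by the bounded change of basepoint. Minimality follows from $E_D'\ll\min_v L(v)$, and the axes $A_i$ come from item (2) of Definition \ref{def:walkpermissible} applied to the connecting word, whose length is at most $2E_M+W\le M$.

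The differences are cosmetic. You certify minimality via Lemma \ref{lem:verifyminimal} rather than the paper's trivial-strut-sequence argument with Lemma \ref{lem:strutfacts}, and you write out the conjugator $h_i=a_is_ig^{-1}$ explicitly. Your concern about landing exactly within $10\delta$ is a constant-level slack that the paper does not track either (one may take $x_0$ itself as the basepoint defining $\Ax(v)$, so that $[x_0,vx_0]\subset\Ax(v)$).
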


\textbf{Proof.} We apply Lemma \ref{lem:schottkyset}. Consider $w_2$. Set $\eta=\frac{1}{1000}$, and let $E_G,E_D,E_D',E_M$ be as in the aforementioned lemma, so that there exists a $(\eta,E_G,E_D,E_D',E_M)$-Schottky set $\EE$ with respect to basepoint $w_2$, and so that $E_D\geq 10E_G+F$ and $E_G\geq 1000(\delta+F+E')$, where $E'$ is the constant from the action path-connectedness construction. Note that the dependence of these constants is somewhat tricky; see \ref{fig:dependence}. However, note that these constants are independent of $n$ and $L$. For $j\in\{1,2\}$, by definition of $\EE$,
$$|\{s\in\EE:(y_{2,j}\cdot a_2sx_0)_{w_2}\leq E_G\}|\geq \frac{999}{1000}|\EE|$$
By a union bound, there exists at least one (in fact, many) $s_2\in \EE$  such that if $u_2=a_2s_2x_0$, then $(y_{2,j}\cdot u_2)_{w_2}\leq E_G$ and $(y_{3,j}\cdot u_2)_{w_2}\leq E_G$ for $j\in\{1,2\}$. Furthermore, we have $d_X(w_2,u_2)\geq E_D$, and $d_G(e,s_2)\leq E_M$.

For sufficiently large $n$, $E_D'\leq 0.01\min_v(L(v))$, and we obtain as a result that for all $h\Ax(v)\in \A$, $d_{h\Ax(v)}(w_2,u_2)\leq 0.01L(v)$. So $w_2$ and $u_2$ have trivial strut sequence, and by Lemma \ref{lem:strutfacts} are minimal.

For each $w_i$ up to $w_{k-1}$, we construct $u_i$ and $s_i$ analogously, except we also require that 
$$(u_i\cdot u_{i-1})_{w_i}\leq E_G\text{ and } (u_i\cdot y_{i-1,j})_{w_i}\leq E_G.$$
These additional conditions pose no obstacle to the union bound argument. Notice the two Gromov products in the second item of the lemma statementare coarsely equal with coarseness only depending on the action $G\curvearrowright X$, so, up to allowing a slightly larger $E_G$, it suffices to prove only one of the two.

We now construct sequence of partially overlapping axes. Consider some $u_i$ and $u_{i+1}$. Recalling $u_i=a_is_ix_0$, we have 
$$d_G(a_is_i,a_{i+1}s_{i+1})\leq d_G(e,s_i)+d_G(a_i,a_{i+1})+d_G(e,s_{i+1})\leq 2E_M+W$$
Notice that the bound $2E_M+W$ does not depend on $n$ or $L$. We may therefore apply Theorem \ref{thm:randomwalkproperties} and conclude that, for sufficiently large $n$, there exists an axis $A_i=h_i\Ax(v_{ i})$ such that $A_i$ passes within $10\delta$ of both $u_2$ and $u_3$ (the word-length bound is not needed here). $\square$

\

Set $L_i=L(v_i)$. For each $A_i$, we choose two points $z_{i,1}$, $z_{i,2}$ on $A_i$ such that $d_{A_i}(w_i,z_{i,1})=_{100\delta}0.4L(A_i)$, $d_{A_i}(w_i,z_{i,2})=_{100\delta} 0.4L(A_i)$, and $d_{A_i}(z_{i,1},z_{i,2})=_{100\delta}0.2L(A_i)$. Necessarily  $d_X(u_i,z_{i,1})\leq 0.45L(A_i)$, and $d_X(u_{i+1},z_{i,2})\leq 0.45L(A_i)$. 

For each $z_{i,j}$, say $z_{i,j}=h_{i,j}x_0$. 

\begin{lemma}
\label{sublem:axisextender}

For each point $z_{i,j}$ and axis $A_i=h_i\Ax(v_i)$ as above, there exists a group element $c_{i,j}\in G$ with the following properties:

\begin{enumerate}
\item There exists a group element $c'_{i,j}$ so that $[x_0,c_{i,j}x_0]\subset\N_{20\delta}((c_{i,j}')^{-1}h_i\Ax(v_i))$.
\item We have $d_X(x_0,c_{i,j}x_0)=_{100\delta}\frac{1}{3}L_i$.
\item No $(c_{i,j}')^{-1}h_iv_ih_i^{-1}c'_{i,j}$-translate of $(c_{i,j}')^{-1}z_{i,j}$ lies within distance $\frac{1}{4}L_i$ of $[x_0,c_{i,j}x_0]$.
\end{enumerate}

\end{lemma}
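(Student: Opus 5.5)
The plan is to translate the axis back to the basepoint and then choose a subsegment of length $\frac13 L_i$ that avoids the orbit of $z_{i,j}$. The axis $A_i=h_i\Ax(v_i)$ is $g$-invariant for $g:=h_iv_ih_i^{-1}$, and $g$ acts on $A_i$ as a translation by roughly $L_i$. By Item 1 of Definition \ref{def:walkpermissible}, $A_i$ is an $8\delta$-convex $(1,4\delta)$-quasi-geodesic. Since $G$ acts vertex-transitively on $X$, every point of $A_i$ lies within distance $1$ of an orbit point $cx_0$.

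First, consider the orbit $\{g^m z_{i,j}:m\in\intgr\}$. Its points lie on $A_i$ and are spaced at distance $=_{100\delta}L_i$ consecutively along the axis. So $A_i$ decomposes into fundamental domains of length about $L_i$, each with one orbit point of $z_{i,j}$ at its endpoint. In a fundamental domain between $z_{i,j}$ and $gz_{i,j}$, take the middle point $a$ at distance about $\frac{L_i}{3}$ from $z_{i,j}$ along $A_i$. Let $b$ be the point further along $A_i$ at distance $\frac{L_i}{3}$ from $a$, which puts $b$ at distance about $\frac{L_i}{3}$ before $gz_{i,j}$. Every point of the subarc $[a,b]$ of $A_i$ is then at distance at least $\frac{L_i}{3}-O(\delta)$ along the axis from each $g^mz_{i,j}$. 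Because $A_i$ is a $(1,4\delta)$-quasi-geodesic, the $X$-distance is also at least $\frac{L_i}{3}-O(\delta)>\frac{L_i}{4}$, using $L_i\gg\delta$.

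Next, pick group elements $c'$ and $c''$ with $c'x_0$ within $1$ of $a$ and $c''x_0$ within $1$ of $b$. Set $c'_{i,j}:=c'$ and $c_{i,j}:=(c')^{-1}c''$. Then $d_X(x_0,c_{i,j}x_0)=d_X(c'x_0,c''x_0)=_{100\delta}\frac13 L_i$, which gives Item 2. The geodesic $[c'x_0,c''x_0]$ has endpoints within $1$ of $A_i$. By the Morse Lemma \ref{lem:morse}, or directly by $8\delta$-convexity, it lies in $\N_{20\delta}([a,b])\subset\N_{20\delta}(A_i)$. Translating by $(c')^{-1}$ gives $[x_0,c_{i,j}x_0]\subset\N_{20\delta}((c')^{-1}A_i)$, which is Item 1.

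For Item 3, note that the $(c')^{-1}g c'$-translates of $(c')^{-1}z_{i,j}$ are exactly $(c')^{-1}g^mz_{i,j}$. Their distance to $[x_0,c_{i,j}x_0]$ equals the distance of $g^mz_{i,j}$ to $[c'x_0,c''x_0]$. This is at least $\frac{L_i}{3}-O(\delta)-20\delta>\frac14 L_i$ by the previous paragraph.

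The main obstacle is the bookkeeping of the $O(\delta)$ errors. One must confirm that the orbit of $z_{i,j}$ along the axis is spaced by $L_i$ up to $O(\delta)$. This follows from the coarse-geodesic property at speed $L_i$ together with the choice of $\Ax$ in Definition \ref{def:axis}. All losses are then absorbed, since $L_i$ is huge compared to $\delta$ for large $n$.
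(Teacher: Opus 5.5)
Your proposal is correct and takes the same approach as the paper. You choose a subsegment of length about $\frac{1}{3}L_i$ in the middle third of a fundamental domain for $\langle h_iv_ih_i^{-1}\rangle$ acting on $A_i$, so that it stays far from the $L_i$-spaced orbit of $z_{i,j}$, and then translate it back to $x_0$. Your write-up fills in what the paper's two-line sketch leaves implicit: the explicit choice $c_{i,j}=(c')^{-1}c''$ via vertex-transitivity, the conjugation identity identifying the translates as $(c')^{-1}g^mz_{i,j}$, and the $O(\delta)$ bookkeeping.
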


\textbf{Proof.} Note that the $(c_{i,j}')^{-1}h_iv_ih_i^{-1}c'_{i,j}$-translates of $(c_{i,j}')^{-1}z_{i,j}$ are spaced at distance at least $0.99L_i$ along $(c_{i,j}')^{-1}\Ax(v_i)$. Therefore, we may construct the desired $c_{i,j}$ by choosing a segment of $(c_{i,j}')^{-1}\Ax(v_i)$ with length approximately $\frac{1}{3}$ of a fully translation such that the segment is appropriately far away from any of these translates. $\square$

\

We now define a sequence $z_{i,j,q}=h_{i,j}c_{i,j}^qx_0$ for $q\in\intgr_{\geq 0}$, and consider projections $\bar{z}_{i,j,q}=\pi_{X/N}(z_{i,j,q})$ of thse points. We will define $\chi_{i,j}$ to be the boundary point derived from the sequence $(\bar{z}_{i,j,q})_q$. (Clearly we must check that this sequence defines a boundary point; we will do so shortly.) Our claim will be that the sequence of boundary points
$$\xi_1=\chi_1,\chi_{2,1},\chi_{2,2},\chi_{3,1},...,\chi_{k-1,1},\chi_{k-1,2},\chi_k=\xi_2$$
satisfies the conclusion of Lemma \ref{lem:mainconstruction} after the appropriate reindexing.

\subsection{Verifying the Construction}

We have several claims to verify, which we split into a sequence of lemmas. As before, most relevant objects are depicted in Figure \ref{fig:mainconstruction}.

\begin{lemma}
\label{sublem:sequencelimits}

For all $2\leq i\leq k-1$, $1\leq j\leq 2$ the sequence $(\bar{z}_{i,j,q})_q$ defines a boundary point in $\partial (X/N)$, and in fact there exists a $(1,1000\M)$-quasi-geodesic ray originating at $\bar{z}_{i,j,0}$ and passing through each $\bar{z}_{i,j,q}$.

\end{lemma}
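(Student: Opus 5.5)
The plan is to show that the projected path $\bar{z}_{i,j,0},\bar{z}_{i,j,1},\dots$ is the image of a coarse geodesic \emph{in $X$} whose consecutive points are near-minimal with no struts in between. Then Lemma \ref{lem:nearmindistbound} and Lemma \ref{lem:verifyminimal} will show that projection to $X/N$ loses only bounded error.

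\textbf{Step 1: the ray in $X$.} Set $z_{i,j,q}=h_{i,j}c_{i,j}^q x_0$. By Lemma \ref{sublem:axisextender}(1), the segment $[x_0,c_{i,j}x_0]$ lies in the $20\delta$-neighborhood of the axis $\alpha:=(c'_{i,j})^{-1}A_i$, with length $\approx\frac13 L_i$. The first thing to check is that the translates $c_{i,j}^q[x_0,c_{i,j}x_0]$ do not fellow-travel $\alpha$ for long. For $c_{i,j}\alpha\neq\alpha$ this follows from the non-overlapping condition (item 3 of Definition \ref{def:walkpermissible}). It also uses the fact that $\frac13 L_i\gg\mathcal{M}$, and that the stabilizer of $\alpha$ is cyclic by item 4. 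The one degenerate case to rule out is $c_{i,j}$ stabilizing $\alpha$. In that case $c_{i,j}$ would be a power of the axis generator, which is impossible since its translation length $\frac13 L_i<L_i$ is not a multiple of $L_i$.

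It follows that consecutive translates $c^{q}\alpha$, $c^{q+1}\alpha$ have projections onto each other of diameter $\le 0.02\mathcal{M}$. The concatenation $\bigcup_q c_{i,j}^q[x_0,c_{i,j}x_0]$ is then a local quasi-geodesic with backtracking of size $O(\mathcal{M})$ at the junctions. Applying Lemma \ref{lem:BGIrouting} (or the alignment lemma of \cite{Gou22}) to the aligned sequence $(c^qx_0)_q$, whose gaps $\approx L_i/3\gg\mathcal{M}$, gives that $(z_{i,j,q})_q$ lies on a $(1,100\mathcal{M})$-quasi-geodesic ray in $X$.

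\textbf{Step 2: no struts.} Next I bound $D_\beta(z_{i,j,q},z_{i,j,q'})$ for every axis $\beta\in\A$. The translates $h_{i,j}c^q\alpha$ each contribute only $\approx\frac13 L_i+O(\mathcal{M})$, which is below $0.45L_i$. For any other axis, Step 1 forces the ray to fellow-travel it for at most $O(\mathcal{M})$, by the non-overlap condition. Hence every such projection is at most $0.45L(\beta)$.

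Lemma \ref{lem:verifyminimal} then shows that every pair $z_{i,j,q},z_{i,j,q'}$ is minimal, so $d_{X/N}(\bar z_{i,j,q},\bar z_{i,j,q'})=d_X(z_{i,j,q},z_{i,j,q'})$. Projecting the ray to $X/N$ therefore gives a $(1,1000\mathcal{M})$-quasi-geodesic ray through every $\bar z_{i,j,q}$. This ray starts at $\bar z_{i,j,0}$, and its points have Gromov products going to infinity, so it defines a boundary point.

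\textbf{Main obstacle.} The delicate step is the uniform projection bound in Step 2, specifically for axes that are translates $h_{i,j}c^q\alpha$ but are approached from the two neighbouring segments. Here I need the projections of the neighbouring translates onto $c^q\alpha$ to add only $O(\mathcal{M})$. This is the role of Lemma \ref{sublem:axisextender}(3): the segment avoids the $v$-translates of $z_{i,j}$, so no rotation of the axis can shortcut it. I would handle this with the forcing property (Lemma \ref{lem:complex}(8)) applied to the $<_\S$-ordered family $\{h_{i,j}c^q\alpha\}_q$, which shows that the projections are determined by the adjacent axes.
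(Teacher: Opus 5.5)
Your proposal is correct and follows essentially the same route as the paper. It rules out $c_{i,j}$ stabilizing the axis, then uses the non-overlap condition to get an aligned quasi-geodesic concatenation of axis segments of length $\approx\frac13 L_i$. It then shows the $z_{i,j,q}$ are pairwise minimal, so that distances and Gromov products pass unchanged to $X/N$; you cite Lemma \ref{lem:verifyminimal} for this, where the paper reruns the same Lemma \ref{lem:CM2.1} contradiction inline.

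The obstacle in your last paragraph is already settled by your Step 1 fact that $c_{i,j}\alpha\neq\alpha$, combined with non-overlap and quasi-convexity. Lemma \ref{sublem:axisextender}(3) plays no role here; the paper uses it only in Lemma \ref{sublem:forcedpassthrough}, to distinguish $A_i$ from the next axis.
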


\textbf{Proof.} For ease of notation, fix $i$ and $j$ and set $z_q=z_{i,j,q}$ and $\bar{z}_q=\bar{z}_{i,j,q}$. Also, we may assume without loss of generality that $h_{i,j}=e$. We must show that
$$\lim_{q,r\to\infty}(\bar{z}_q\cdot \bar{z}_{r})_{\bar{x}_0}=\infty$$
Consider the concatenation
$$[x_0,c_{i,j}x_0]\cup[c_{i,j}x_0,c_{i,j}^2x_0]\cup...\cup[c_{i,j}^{q-1}x_0,c_{i,j}^qx_0]$$
Each geodesic segment has length at least $0.31L_i$. On the other hand, observe that by construction
$$[c_{i,j}^nx_0,c_{i,j}^{n+1}x_0]\subset \N_{20\delta}(c_{i,j}^n(c_{i,j}')^{-1}h_i\Ax(v_i)).$$
Therefore, if $[c_{i,j}^nx_0,c_{i,j}^{n+1}x_0]$ and $[c_{i,j}^{n+1}x_0,c_{i,j}^{n+2}x_0]$ $50\delta$-fellow-travel for more than distance $100\M$, then by the no-overlap condition, $c_{i,j}$ must fix the axis $(c_{i,j}')^{-1}h_i\Ax(v_i)$. But then, as the length of $[x_0,c_{i,j}x_0]$ is approximately $\frac{1}{3}L_i$, we would find that the no-self-overlap condition is violated. In particular, $c_{i,j}$ does not fix $(c_{i,j}')^{-1}h_i\Ax(v_i)$.

As $100\M$ is far smaller than $\frac{1}{3}L_i$, we conclude that the $z_q$ are $200\M$-aligned, and that any geodesic $[z_0,z_q]$ must decompose into segments that $50\delta$-fellow-travel translates of $\Ax(v_i)$ for length at least $0.3L_i$ and at most $0.35L_i$, and `hops' in between these segments having length at most $200\M$. It is straightforward to verify from these conditions that the resulting path is a $(1,1000\M)$-quasi-geodesic.

We claim that all $z_q$ are minimal with respect to $z_0$. If not, there is some distinct $z_q'$ that is minimal with respect to $z_0$, with $z_q'=hz_q$ for $h\in N\setminus\{e\}$ and $d_X(z_q',z_0)<d_X(z_q,z_0)$. By Lemma \ref{lem:CM2.1}, there is some axis $h\Ax(v)$ such that $d_{h\Ax(v)}(z_q',z_q)\geq 0.99L(v)$. As $z_0$ and $z_q'$ are minimal, $d_{h\Ax(v)}(z_q',z_0)\leq 0.51L(v)$, so we must have $d_{h\Ax(v)}(z_q,z_0)\geq 0.4L(v)$ by the triangle inequality. Therefore, there is a segment $\gamma$ of $[z_0,z_q]$ that $50\delta$-fellow-travels $h\Ax(v)$ for length at least $0.36L(v)$, and this segment must overlap a segment on which $[z_0,z_q]$ $50\delta$-fellow-travels a translate of $\Ax(v_i)$, with the overlap having length at least $1000\M$. We have that $h\Ax(v)$ cannot be equal to this translate, as the axis-fellow-traveling segments of $[z_0,z_q]$ have length uniformly bounded above by $0.35L_i$. Therefore, we obtain a match between $h\Ax(v)$ and some translate of $\Ax(v_i)$, which is a contradiction by the no-overlap condition.

By translation, we obtain that all elements of the sequence $(z_q)_q$ are pairwise minimal. Therefore $(\bar{z}_q\cdot \bar{z}_r)_{\bar{z}_0}=(z_q\cdot z_r)_{z_0}$ for all $q,r$. Furthermore, by the quasi-geodesic claim made above, it is more or less immediate that $\lim_{q,r\to\infty}(z_q\cdot z_r)_{z_0}=\infty$. $\square$

\begin{lemma}
\label{sublem:forcedpassthrough}

For all $z_{i,j,q}$ and all $h\Ax(v)\in \A$, we have
$$d_{h\Ax(v)}(z_{i,j,q},w_i)\leq 0.41L(v).$$
In particular, $z_{i,j,q}$ and $w_i$ are minimal.

Furthermore, there exists a constant $B$ independent of $n$ and $F$ with the following property. For $M$ sufficiently large in terms of $B$, and for all $\bar{z}_{i,j,q}$ and any geodesic $[\bar{x}_0,\bar{z}_{i,j,q}]$, this geodesic passes within $1000B$ of $\bar{w}_i$ and within $150\M$ of $\bar{z}_{i,j,0}$.

\end{lemma}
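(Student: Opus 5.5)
We begin with the projection bound, splitting the axes $h\Ax(v)\in\A$ into those close to the chain $w_i \to z_{i,j,0}\to z_{i,j,1}\to\cdots$ and all others. Write $z_q=z_{i,j,q}$ and assume, after translating, that $z_0=z_{i,j}$ lies on $A_i$. The path from $w_i$ to $z_q$ splits into three pieces. The first is $[w_i,\pi_{A_i}(w_i)]$. The second is the arc of $A_i$ from $\pi_{A_i}(w_i)$ to $z_{i,j}$, of length $=_{100\delta}0.4L_i$. The third is the concatenation of the segments $[z_{r},z_{r+1}]$; each lies in $\N_{20\delta}$ of a translate of $\Ax(v_i)$ and has length $\approx\frac13 L_i$. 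Lemma \ref{sublem:axisextender}(3) says the first segment leaves $A_i$ at a point far from every $v_i$-translate of $z_{i,j}$. So the turn at $z_{i,j}$ is genuine: the concatenation is $200\M$-aligned at $z_{i,j}$, exactly as in the proof of Lemma \ref{sublem:sequencelimits}.

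By Lemma \ref{lem:BGIrouting} and Lemma \ref{lem:morse}, any geodesic $[w_i,z_q]$ fellow-travels this concatenation. We now bound $d_{h\Ax(v)}(z_q,w_i)$ case by case.
\begin{itemize}
\item If $h\Ax(v)=A_i$, the projection distance is essentially $0.4L_i$ plus a $\M$-size error from the later segments, whose projections to $A_i$ are $\M$-bounded by non-overlap. This gives at most $0.41L_i$.
\item If $h\Ax(v)$ is one of the translates carrying a segment $[z_r,z_{r+1}]$, the projection distance is at most $0.35L_i+O(\M)$.
\item For $[w_i,\pi_{A_i}(w_i)]$: it has length at most $E_D'+0.45L_i$. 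Its initial stretch $[w_i,u_i]$ has length at most $E_D'\le 0.01\min L$, so any large projection there is impossible. Near $u_i$ it fellow-travels $A_i$ itself, which is covered by the first case.
\item For any other axis, the no-overlap condition (Definition \ref{def:walkpermissible}(3)) caps its fellow-traveling with each piece at $O(\M)$. A long fellow-traveling would have to span a hop between pieces, and hops have length at most $200\M$. So the projection is $O(\M)\ll 0.41L(v)$.
\end{itemize}
This case analysis, especially ruling out an axis that straddles the corner at $z_{i,j}$, is the main obstacle. We expect it to be handled by the same overlap-counting argument as in Lemma \ref{sublem:sequencelimits}, using Lemma \ref{lem:complex}(8) to pass between $\pi$ and $\Pi$. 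Minimality of $z_q$ and $w_i$ then does not follow from Lemma \ref{lem:verifyminimal}, which needs $0.45$ and not $0.41$. Instead we argue as in the minimality step of Lemma \ref{sublem:sequencelimits}. If some lift $hz_q$ with $h\in N\setminus\{e\}$ were closer to $w_i$, Lemma \ref{lem:CM2.1} would give an axis with projection at least $0.99L(v)$ between $z_q$ and $hz_q$. With the $0.51$ bound from Lemma \ref{lem:minnearmin}, this axis would have projection at least $0.48L(v)$ between $w_i$ and $z_q$, contradicting $0.41$.

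For the routing claim we apply Lemma \ref{lem:quotientrouting} with $x=w_i$, $\bar{y}=\bar{x}_0$, $z=z_q$, and prongs $y_{i,1},y_{i,2}$. We check its three hypotheses in turn.
\begin{itemize}
\item Hypothesis (2), near-minimality of $w_i$ and $z_q$, follows from the minimality just proved.
\item Hypothesis (1) asks that $(y_{i,j'}\cdot z_q)_{w_i}\le B$. By Lemma \ref{sublem:insertaxes}(1), $(y_{i,j'}\cdot u_i)_{w_i}\le E_G$. Also $[w_i,z_q]$ passes near $u_i$, since $A_i$ passes within $10\delta$ of $u_i$ and $d(u_i,z_{i,1})\le 0.45L_i$. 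So by Lemma \ref{lem:gromovprops}(3), $(y_{i,j'}\cdot z_q)_{w_i}\le E_G+E_D'+20\delta$, and we take $B$ to be at least this.
\item Hypothesis (3) asks that axes fellow-traveling $[w_i,z_q]$ not fellow-travel the prongs. The relevant axes are $A_i$ and the translates above. These leave $w_i$ in the direction of $u_i$, which by the Schottky choice has Gromov product at most $E_G$ with each prong. So their fellow-traveling with $[w_i,y_{i,j'}]$ is at most about $E_G+E_D'$, which is within $100B$ once $B$ is large but independent of $n$ and $F$.
\end{itemize}
Lemma \ref{lem:quotientrouting} then shows that every geodesic $[\bar{x}_0,\bar{z}_q]$ passes within $1000B$ of $\bar{w}_i$; this uses $d(\bar{x}_0,\bar{w}_i)\ge\mathcal{Z}\gg B$.

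Finally, to show that such a geodesic passes within $150\M$ of $\bar{z}_0$, we lift to near-minimal $x_0',z_q'$. By Lemma \ref{lem:strutlifts}, the strut sequence $\S(x_0',z_q')$ has the same projections as one containing the struts of $\S(w_i,z_q)$. These struts are the translates carrying the segments $[z_r,z_{r+1}]$, and $A_i$ if $D_{A_i}\ge 0.1L_i$. The geodesic must route through the hop between $A_i$, or the entry near $w_i$, and the first translate, and that hop lies within $O(\M)$ of $z_0$ by the alignment above. Lemma \ref{lem:nearmindistbound} transfers this to the quotient with the stated constant $150\M$.
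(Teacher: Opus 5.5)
Your route is the paper's. You get the $0.41$ bound by patching together the axis-following pieces of $w_i\to z_{i,j,0}\to z_{i,j,1}\to\cdots$, with Lemma \ref{sublem:axisextender} forcing a genuine turn at $z_{i,j,0}$. You then deduce minimality, apply Lemma \ref{lem:quotientrouting} with $x=w_i$, prongs $y_{i,1},y_{i,2}$ and $z=z_{i,j,q}$, and read off the $150\M$ claim from the strut order. The gap is in your verification of hypothesis (3) of Lemma \ref{lem:quotientrouting}.

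That hypothesis concerns the entire axis $A_i$, not just the stretch of it that $[w_i,z_{i,j,q}]$ follows. In the proof of Lemma \ref{lem:quotientrouting}, the overlap between the last strut $A_i$ and the first strut toward $\bar{x}_0$ is only known to lie near $x=w_i$, and it may sit on either side of $\pi_{A_i}(w_i)$. By construction $A_i$ is guided by both $u_i$ and $u_{i+1}$. Since $(u_i\cdot u_{i+1})_{w_i}\le E_G$ and both points are at distance at least about $E_D$ from $w_i$, the axis passes within $O(E_G)$ of $w_i$ and runs from there in two directions, toward $u_i$ and toward $u_{i+1}$. Your statement that these axes ``leave $w_i$ in the direction of $u_i$'' controls only the first ray.

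Nothing you invoke stops the ray toward $u_{i+1}$ from fellow-travelling $[w_i,y_{i,1}]$ or $[w_i,y_{i,2}]$ for a length comparable to $\M/10$. If it did, $\bar{A}_i$ could overlap the first strut toward $\bar{x}_0$ on the scale of $\M$, which is exactly what breaks the routing through $\bar{w}_i$. The missing input is the cross-index condition in Lemma \ref{sublem:insertaxes}(1). At index $i+1$ it gives $(y_{i,j'}\cdot u_{i+1})_{w_{i+1}}\le E_G$. Because $d_X(w_i,w_{i+1})$ is bounded independently of $n$, this yields $(y_{i,j'}\cdot u_{i+1})_{w_i}\le 2E_G$. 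Only with both $u_i$ and $u_{i+1}$ repelled from both prongs does a thin-triangle argument bound the fellow-travelling of all of $A_i$ with the prongs by $O(B)$. The same omission affects your check of hypothesis (1) when $j=2$: $[w_i,z_{i,2,q}]$ passes near $u_{i+1}$ rather than $u_i$, so again you need $(y_{i,j'}\cdot u_{i+1})_{w_i}$ small.

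There are two smaller points. First, the fact that $[w_i,z_{i,1,q}]$ passes near $u_i$ should be derived from this two-sided picture: $A_i$ passes within $O(E_G)$ of $w_i$, with $u_i$ on the $z_{i,1}$ side and $u_{i+1}$ on the other. The inequality $d(u_i,z_{i,1})\le 0.45L_i$ does not tell you on which side of $\pi_{A_i}(w_i)$ the point $u_i$ lies. Relatedly, the piece $[w_i,\pi_{A_i}(w_i)]$ has length $O(E_G)$ and does not pass near $u_i$, so your third bullet misdescribes it (harmlessly, since it is short). Second, your remark on Lemma \ref{lem:verifyminimal} is backwards. Since $0.41L(v)\le 0.45L(v)$, that lemma applies directly, and your replacement argument is just its proof.
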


\textbf{Proof.} It essentially suffices to verify the hypotheses of Lemma \ref{lem:quotientrouting}. Let $w_i$ and $z_{i,j,q}$ be as in the construction, and recall that $y_{i,1}$ and $y_{i,2}$ are constructed as enemy prongs relative to $\bar{x}_0$. Assume that $j=1$; the case for $j=2$ is analogous.

First, we check that $w_i$ and $z_{i,1,q}$ are minimal. Consider the path $[w_i,z_{i,1,0}]\cup [z_{i,1,0},z_{i,1,q}]$. The axis $A_i$ passes within $10\delta$ of $u_i$ and $u_{i+1}$, and by construction $(u_i,u_{i+1})_{w_i}\leq E_G$. Furthermore, both $u_i$ and $u_{i+1}$ have distance at least $9E_G$ from $w_i$, and so we may conclude that $A_i$ must pass within $2E_G$ of $w_i$. Furthermore, $z_{i,1,0}$ lies on $A_i$, and $d_{A_i}(w_i,z_{i,1,0})=_{100\delta} 0.4L(A_i)$. We conclude by the Bounded Geodesic Image Theorem that $[w_i,z_{i,1,0}]$ has length at least $0.39L(A_i)$ and at most $0.41L(A_i)$, and $50\delta$-fellow-travels $A_i$ except for a segment of length at most $4E_G$.

We verified in the proof of Lemma \ref{sublem:sequencelimits} that there is a $(1,1000\M)$-quasi-geodesic path from $z_{i,1,0}$ to $z_{i,1,q}$, which $50\delta$-fellow-travels a sequence of distinct translates of $A_i$ for lengths between $0.3L_i$ and $0.35L_i$, with segments in between of length at most $200\M$. We also know that $[z_{i,1,0},z_{i,1,1}]$ $50\delta$-fellow-travels some translate $A_i'$ of $\Ax(v_i)$ for length at least $0.32L_i$ and at most $0.34L_i$, and that $[w_i,z_{i,1,0}]$ $5E_G$-fellow-travels $A_i$ for length at least $0.39L_i$ and at most $0.41L_i$. 

We claim that $A_i'\neq A_i$, using Lemma \ref{sublem:axisextender}. If not, then as 
$$[h_{i,1}x_0,h_{i,1}c_{i,1}x_0]=[z_{i,1,0},z_{i,1,1}]\subset \N_{20\delta}(h_{i,1}(c'_{i,1})^{-1}h_i\Ax(v_i))$$
we must have that $A_i'=h_{i,1}(c'_{i,1})^{-1}h_i\Ax(v_i)$, and as $h_{i,1}(c'_{i,1})^{-1}$ therefore fixes $A_i'=A_i=h_i\Ax(v_i)$, we must have that $h_{i,j}(c'_{i,1})^{-1}\in\langle h_iv_ih_i^{-1}\rangle$. As a consequence of Lemma \ref{sublem:axisextender}, no $h_{i,1}(c'_{i,1})^{-1}h_iv_ih_i^{-1}c'_{i,1}(h_{i,1})^{-1}$-translate of $h_{i,1}(c'_{i,1})^{-1}z_{i,1,0}$ lies within distance $\frac{1}{4}L_i$ of $[z_{i,1,0},z_{i,1,1}]$. Using $h_{i,j}(c'_{i,1})^{-1}\in\langle h_iv_ih_i^{-1}\rangle$ and the fact that $z_{i,1,0}$ lies on $A_i$, we have that no $h_iv_ih_i^{-1}$-translate of $z_{i,1,0}$ lies within distance $\frac{1}{4}L_i$ of $[z_{i,1,0},z_{i,1,1}]$, which is clearly contradictory. 

As such, $[w_i,z_{i,1,0}]$ and $[z_{i,1,0},z_{i,1,1}]$ cannot $50\delta$-fellow-travel for length more than $100\M$ without inducing a failure of the no-overlap condition. Arguing as in Lemma \ref{sublem:sequencelimits}, we may patch axis-following segments together and conclude that for any $h\Ax(v)\in \A$,
$$d_{h\Ax(v)}(w_i,z_{i,1,q})\leq0.41L(v).$$
Therefore $w_i$ is minimal relative to all $z_{i,1,q}$. As a further consequence, any geodesic $[w_i,z_{i,1,q}]$ must $50\delta$-fellow-travel $[w_i,z_{i,1,0}]$ until it has passed within $1000\M$ of $z_{i,1,0}$. Since $w_i$ lies within $4E_G$ of $h_i\Ax(v_i)$ and $z_{i,1,0}$ lies on $h_i\Ax(v_i)$, we must have that $h_i\Ax(v_i)$ is the first strut appearing in $\S(w_i,z_{i,1,q})$. By analogous analysis, the axis that $50\delta$-fellow-travels $[z_{i,1,0},z_{i,1,1}]$ is the second strut appearing in $\S(w_i,z_{i,1,q})$, and if this axis is labeled $h_i'\Ax(v_i')$, $z_{i,1,0}\in \N_{120\M}(\pi_{h_i\Ax(v_i)}(h_i'\Ax(v_i'))$. We aim to apply this analysis together with Lemma \ref{lem:quotientrouting} to conclude that for any near-minimal respective lifts $x_0'$ of $x_0$ and $z'_{i,1,q}$ of $\bar{z}_{i,1,q}$, any $[x_0',z'_{i,1,q}]$ passes within $150\M$ of some lift of $\bar{z}_{i,1,0}$

Next, it follows from the geometric analysis performed above, the fact that $d_X(w_i,u_i)\geq 9E_G$, and a standard thin-quadrilaterals argument that any geodesic $[w_i,z_{i,1,q}]$ passes within $50\delta$ of $u_i$. Recall that for $j\in\{1,2\}$, $(y_{i,j}\cdot u_i)_{w_i}\leq E_G$. Therefore, for $B$ sufficiently large in terms of $E_G$, we have that $(y_{i,1}\cdot z_{i,j,q})_{w_i}$ and $(y_{i,2}\cdot z_{i,j,q})_{w_i}$ are both bounded above by $B$. Lastly, assume we have some axis $h\Ax(v)$ such that $h\Ax(v)$ $50\delta$-fellow-travels $[w_i,z_{i,j,q}]$ for length at least $0.0001L(v)$, and that this axis $10B$-fellow-travels, say, $[w_i,y_{i,1}]$ for distance at least $200B$. By taking $M$ and $L$ sufficiently large, we may assume that $\M\geq 10^6*B$, and so $h\Ax(v)$ must pass within $10\M$ of $w_i$. From the structure of the $(1,1000\M)$-quasi-geodesic constructed above between $w_i$ and $z_{i,j,q}$, we have that $h\Ax(v)$ must $50\delta$-fellow-travel one of the axes tracked by this quasi-geodesic for length at least $0.00001L(v)$, and thus be equal to one of these axes (as otherwise the no-overlap condition would be violated). But then its proximity to $w_i$ forces $h\Ax(v)=A_i$. Lastly, $A_i$ passes within $10\delta$ of both $u_i$ and $u_{i+1}$, while $(y_{i,1}\cdot u_i)_{w_i}$ and $(y_{i,1}\cdot u_{i+1})_{w_{i+1}}$ are bounded above by $E_G$. Recall that $d_X(w_i,w_{i+1})\leq 10E$ (where $E$ is the action path-connectedness constant), so for $E_G$ taken suitably large relative to $E$, $(y_{i,1}\cdot u_{i+1})_{w_i}\leq 2E_G$ as well. For $B$ sufficiently large in terms of $E_G$, a straightforward $\delta$-hyperbolicity argument now shows that $A_i$ cannot $10B$-fellow-travel $[w_i,y_{i,1}]$ for distance more than $200B$.

Lemma \ref{lem:quotientrouting} now yields both results. $\square$

\

We now turn to the Gromov-product claims.

\begin{lemma}
\label{sublem:outernoslip}

For all $1\leq i\leq Q-1$, we have $(\xi_1\cdot \chi_{i})_{\bar{x}_0}\geq D-100L$ and $(\xi_2,\chi_i)_{\bar{x}_0}\geq D-100L$.

\end{lemma}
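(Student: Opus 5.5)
We reduce the claim to Gromov products of finite points in $X/N$ and then use Lemma \ref{sublem:forcedpassthrough}. The endpoints $\chi_1=\xi_1$ and $\chi_Q=\xi_2$ are immediate, since $(\xi_1\cdot\xi_2)_{\bar{x}_0}=D$. So consider an intermediate point $\chi_{i,j}$, represented by the sequence $(\bar{z}_{i,j,q})_q$. By Lemma \ref{lem:consistentgromov}, up to an additive error of $100$ times the hyperbolicity constant of $X/N$, we may compute $(\xi_1\cdot\chi_{i,j})_{\bar{x}_0}$ as $\liminf_{q}(\bar{r}_1(t)\cdot\bar{z}_{i,j,q})_{\bar{x}_0}$ with $t\to\infty$. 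By Lemma \ref{lem:quotienthyp} that hyperbolicity constant is at most $L+1000\delta$. We then compare this with $(\bar{x}_1\cdot\bar{z}_{i,j,q})_{\bar{x}_0}$, since $\bar{x}_1$ lies on $\bar{r}_1$.

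The key steps, in order, are as follows.
\begin{enumerate}
\item Lemma \ref{sublem:forcedpassthrough} gives that every geodesic $[\bar{x}_0,\bar{z}_{i,j,q}]$ passes within $1000B$ of $\bar{w}_i$. Combining this with the triangle inequality gives
$$(\bar{z}_{i,j,q}\cdot\bar{w}_i)_{\bar{x}_0}\geq d_{X/N}(\bar{x}_0,\bar{w}_i)-2000B.$$
\item From the construction we already know $(\bar{w}_i\cdot\bar{x}_1)_{\bar{x}_0}\geq D-10E'$.
\item The triangle inequality for Gromov products in the hyperbolic space $X/N$ (Lemma \ref{lem:gromovprops}(3)) then yields
$$(\bar{z}_{i,j,q}\cdot\bar{x}_1)_{\bar{x}_0}\geq \min\big(d(\bar{x}_0,\bar{w}_i)-2000B,\ D-10E'\big)-10(L+1000\delta).$$
\item We must also check that $d(\bar{x}_0,\bar{w}_i)\geq D-10E'$. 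This holds because $(\bar{w}_i\cdot \bar{x}_1)_{\bar{x}_0}\leq d(\bar{x}_0,\bar{w}_i)$. The minimum is therefore at least $D-10E'-2000B$.
\end{enumerate}

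It remains to pass from $\bar{x}_1$ to the boundary point $\xi_1$. Since $\bar{x}_1$ lies on the $(1,20L)$-quasi-geodesic ray $\bar{r}_1$, we have $(\bar{x}_1\cdot \bar{r}_1(t))_{\bar{x}_0}\geq d(\bar{x}_0,\bar{x}_1)-40L$ for all large $t$. This quantity is at least $D-40L$, because $d(\bar{x}_0,\bar{x}_1)\geq(\bar{x}_1\cdot\bar{x}_2)_{\bar{x}_0}\geq D-10$. One more application of the triangle inequality for Gromov products, followed by taking the liminf and applying Lemma \ref{lem:consistentgromov}, gives the bound
$$(\xi_1\cdot\chi_{i,j})_{\bar{x}_0}\geq D-10E'-2000B-40L-O(L+\delta).$$
The case of $\xi_2$ is symmetric, using $\bar{x}_2$ and $\bar{r}_2$.

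The main obstacle is bookkeeping the additive constants so that they total at most $100L$. The constants $E'$, $B$ and $\delta$ are independent of $n$, while $L$ grows with $n$. Hence for $n$ large we have $10E'+2000B\leq L$, and the dominant error is a fixed multiple of $L$. That multiple comes from the hyperbolicity constant of $X/N$ entering each triangle-inequality step, about $10L$ per step over roughly three steps, together with the quasi-geodesic constant $40L$ and the $100L$-scale error from Lemma \ref{lem:consistentgromov}. If this naive count exceeds $100L$, I would avoid the Lemma \ref{lem:consistentgromov} error by choosing representative sequences directly: take the points $\bar{r}_1(t)$ and $\bar{z}_{i,j,q}$ themselves and use the infimum definition. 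I would also merge steps by working with a single geodesic quadrilateral through $\bar{x}_0$, $\bar{w}_i$, $\bar{x}_1$ and $\bar{z}_{i,j,q}$, so that hyperbolicity is invoked only once or twice.
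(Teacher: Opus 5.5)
Your argument is essentially the paper's proof. Both pass from $\bar{z}_{i,j,q}$ to $\bar{w}_i$ via Lemma \ref{sublem:forcedpassthrough}, then to $\bar{x}_1$ via the construction's bound on $(\bar{w}_i\cdot\bar{x}_1)_{\bar{x}_0}$, then out along $\bar{r}_1$, using two applications of the Gromov-product triangle inequality and finishing with Lemma \ref{lem:consistentgromov}. The only differences are cosmetic: the paper nests the two triangle inequalities in the other order, and it uses an auxiliary far point $\bar{x}_1'$ on $\bar{r}_1$ whose term $(\bar{x}_1'\cdot\bar{x}_1)_{\bar{x}_0}$ it makes non-binding by taking $\mathcal{Z}$ large.

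Your proposed fallback for the constants does not work. Since $(\xi_1\cdot\chi_{i,j})_{\bar{x}_0}$ is an infimum over representative sequences, a lower bound for the particular sequences $\bar{r}_1(t)$ and $\bar{z}_{i,j,q}$ only bounds it from above. So the loss in Lemma \ref{lem:consistentgromov} cannot be bypassed that way. This does not damage the argument, though. The paper's own count (reaching $D-25L$ and then applying that lemma) carries the same slack. Downstream, in Theorem \ref{thm:maintheorem}, only some loss of order $O(L)$ is actually used.
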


\textbf{Proof.} We prove the case of $\xi_1$; the case of $\xi_2$ follows by identical argument. Let $\bar{x}_1'$ be a point on $\bar{r}_1$; we may assume by taking $\bar{x}_1'$ sufficiently far from $\bar{x}_0$ that $d_{X/N}(\bar{x}_0,\bar{x}_1')\geq 2d_{X/N}(\bar{x}_0,\bar{x}_1)$. Applying the triangle inequality for Gromov products together with Lemma \ref{lem:quotienthyp}, for any $z_{i,j,q}$,
$$(\bar{x}_1'\cdot \bar{z}_{i,j,q})_{\bar{x}_0}\geq\min((\bar{x}_1',\bar{w}_i)_{\bar{x}_0},(\bar{w}_i,\bar{z}_{i,j,q})_{\bar{x}_0})-10L$$
By Lemma \ref{sublem:forcedpassthrough} and the previous observation on $d_{X/N}(\bar{w}_i,\bar{x}_0)$,
$$(\bar{w}_i,\bar{z}_{i,j,q})_{\bar{x}_0}\geq d_{X/N}(\bar{w}_i,\bar{x}_0)-2000B\geq D-10E'-3000B$$
On the other hand,
$$(\bar{x}_1',\bar{w}_i)_{\bar{x}_0}\geq\min((\bar{x}_1'\cdot\bar{x}_1)_{\bar{x}_0},(\bar{x}_1\cdot \bar{w}_i)_{\bar{x}_0})-10L$$
By setting $\mathcal{Z}$ (which controls $d_{X/N}(\bar{x}_0,\bar{x}_1)$) sufficiently large in terms of $L$ and $D$, we can ensure $(\bar{x}_1'\cdot\bar{x}_1)_{\bar{x}_0}\geq 10D$, as $\bar{x}_1'$ and $\bar{x}_1$ lie on the same quasi-geodesic ray $\bar{r}_1$. On the other hand, $(\bar{x}_1\cdot \bar{w}_i)_{\bar{x}_0}$ is at least $D-1000B$ by Lemma \ref{sublem:forcedpassthrough} and the triangle inequality. After unwinding the minima, we obtain that for the particular representatives of $\xi_1$ and $\chi_i$ chosen, the limiting Gromov product is at least $D-25L$. We may now apply Lemma \ref{lem:consistentgromov} to conclude. $\square$

\begin{lemma}
\label{sublem:innergain}

For all $\chi_i$, $1\leq i\leq Q-1$, $(\chi_i\cdot\chi_{i+1})_{\bar{x}_0}\geq D+\frac{1}{2}F$.

\end{lemma}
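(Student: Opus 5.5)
We need to show that consecutive boundary points in the sequence $\xi_1=\chi_1,\chi_{2,1},\chi_{2,2},\chi_{3,1},\ldots,\chi_k=\xi_2$ have Gromov product at least $D+\frac12 F$. There are three kinds of consecutive pairs, and I would treat them separately: the endpoint pairs $(\xi_1,\chi_{2,1})$ and $(\chi_{k-1,2},\xi_2)$; the intra-axis pairs $(\chi_{i,1},\chi_{i,2})$; and the inter-axis pairs $(\chi_{i,2},\chi_{i+1,1})$.

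In each case the plan is the same. By Lemma \ref{lem:consistentgromov}, it suffices to bound $\liminf_{q,r}(\bar z_{i,j,q}\cdot\bar z_{i',j',r})_{\bar x_0}$ from below using the specific representative sequences. By Lemma \ref{sublem:forcedpassthrough}, every geodesic $[\bar x_0,\bar z_{i,j,q}]$ passes within $1000B$ of $\bar w_i$ and within $150\M$ of $\bar z_{i,j,0}$. This gives
$$d_{X/N}(\bar x_0,\bar z_{i,j,q})=_{C} d_{X/N}(\bar x_0,\bar z_{i,j,0})+d_{X/N}(\bar z_{i,j,0},\bar z_{i,j,q}),$$
where $C$ is of order $\M+B$, and similarly through $\bar w_i$. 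So the Gromov product of two far-out points reduces, up to additive error $O(\M+B)$, to the Gromov product of the "entry points" $\bar z_{i,j,0},\bar z_{i',j',0}$ seen from $\bar x_0$. This uses hyperbolicity of $X/N$ with constant $\sim L$ from Lemma \ref{lem:quotienthyp}. It also needs a separate check that the two rays do not backtrack toward each other.

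For the intra-axis pair, $z_{i,1,0}$ and $z_{i,2,0}$ both lie on $A_i$, each about $0.4L_i$ from $\pi_{A_i}(w_i)$, and $d_{A_i}(z_{i,1},z_{i,2})\approx 0.2L_i$. By Lemma \ref{sublem:forcedpassthrough}, $\bar w_i$ is near every geodesic from $\bar x_0$, and $d_{X/N}(\bar x_0,\bar w_i)\ge D-10E'$. The geodesics $[\bar w_i,\bar z_{i,1,q}]$ and $[\bar w_i,\bar z_{i,2,q}]$ then both run along $\bar A_i$ until they separate. Because the $u_i$ were built repulsed from the enemy prongs $y_{i,j}$, Lemma \ref{lem:quotientrouting} shows this common stretch is moving away from $\bar x_0$. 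The common stretch along $\bar A_i$ has length at least $E_D\ge 10(E_G+F+B)$, since $d(w_i,u_i)\ge E_D$ and $u_i,u_{i+1}$ are near $A_i$. This yields a Gromov product of at least $D-10E'-2000B+E_D-O(E_G)\ge D+F$.

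For the inter-axis pair $(\chi_{i,2},\chi_{i+1,1})$, I would route both through $\bar u_{i+1}$. The point $z_{i,2}$ is within $0.45L_i$ of $u_{i+1}$ along $A_i$. The point $z_{i+1,1}$ is near $u_{i+1}$ along $A_{i+1}$, since $A_{i+1}$ passes within $10\delta$ of $u_{i+1}$. Both geodesics from $\bar x_0$ pass near $\bar w_i$ and $\bar w_{i+1}$, which are $O(E)$ apart, and then both travel out to distance $\ge E_D$ toward $\bar u_{i+1}$. Thin-triangle estimates then give a product of at least $D+E_D/2-O(E_G+B)\ge D+\frac12F$. The endpoint pairs use the same estimate with $\bar x_1$ or $\bar x_2$ and $u_2$ (resp. $u_{k-1}$), together with $\mathcal Z$ large as in Lemma \ref{sublem:outernoslip}.

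\textbf{Main obstacle.} The hard step is certifying that the shared stretch near $\bar u_i$ really increases the distance from $\bar x_0$ in $X/N$, rather than being shortcut by a loop around some quotient axis. Here I would use minimality of $w_i,u_i$ (Lemma \ref{sublem:insertaxes}(5)), strut sequences (Lemma \ref{lem:strutlifts}), and the repulsion property $(y_{i,j}\cdot u_i)_{w_i}\le E_G$ fed into Lemma \ref{lem:quotientrouting}. The goal is to show that $[\bar x_0,\bar u_i]$ passes within $1000B$ of $\bar w_i$, so that $d(\bar x_0,\bar u_i)\ge d(\bar x_0,\bar w_i)+E_D-2000B$. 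Once this is established, the remaining Gromov product bounds are routine hyperbolic bookkeeping, with constants absorbed by choosing $E_D$ large relative to $F$, $B$ and $E_G$.
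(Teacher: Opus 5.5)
Your endpoint cases and your inter-axis routing through $\bar{u}_{i+1}$ match the paper's, but the intra-axis case $(\chi_{i,1},\chi_{i,2})$ relies on a common stretch that does not exist. The point $z_{i,1,0}$ lies on the $u_i$-side of $\pi_{A_i}(w_i)$ and $z_{i,2,0}$ on the $u_{i+1}$-side; you use the latter yourself in the inter-axis case. Lemma \ref{sublem:insertaxes}(2) gives $(u_i\cdot u_{i+1})_{w_i}\le E_G$. So the geodesics from $w_i$ toward $z_{i,1,q}$ and $z_{i,2,q}$ leave in opposite directions along $A_i$, through $u_i$ and $u_{i+1}$ respectively; by minimality the same holds in $X/N$ from $\bar{w}_i$. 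Beyond $\bar{w}_i$ they share only $O(E_G)$, so $d(w_i,u_i)\ge E_D$ gives you no gain here.

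The quantity $d_{A_i}(z_{i,1},z_{i,2})\approx 0.2L_i$ that you quote is a distance in $X/N$, measured around the short side of the loop $\bar{A}_i$; in $X$ the two points are about $0.8L_i$ apart. That loop is the missing idea. Going around it gives $d_{X/N}(\bar{z}_{i,1,0},\bar{z}_{i,2,0})\lesssim 0.25L_i$. On the other hand, minimality of $w_i$ and $z_{i,j,0}$ (Lemma \ref{sublem:forcedpassthrough}), together with passage of $[\bar{x}_0,\bar{z}_{i,j,q}]$ near $\bar{w}_i$, gives $d_{X/N}(\bar{x}_0,\bar{z}_{i,j,0})\ge D+0.37L_i$. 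Hence the product is at least $D+\frac{1}{2}(0.74-0.25)L_i$. The paper does exactly this, using points $\bar{t}_j\in[\bar{x}_0,\bar{s}_j]$ near $A_i$ and the exact inequality $(\bar{s}_1\cdot\bar{s}_2)_{\bar{x}_0}\ge(\bar{t}_1\cdot\bar{t}_2)_{\bar{x}_0}$.

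The size of the gain matters, and it exposes a second problem in your bookkeeping. $E_D$ and $F$ are independent of $n$, but the errors you allow grow with $n$. Your entry-point reduction costs $O(\M)$ with $\M=50\min_iL(w_i)/M$, and thin triangles in $X/N$ cost $O(L)$ (Lemma \ref{lem:quotienthyp}). A gain of order $E_D$ cannot absorb either, so your bound $D-10E'-2000B+E_D-O(E_G)$ silently drops a term that dominates for large $n$. In the intra-axis case the correct gain is of order $L_i$, which does absorb $O(\M)$. In the inter-axis case you must show that $[\bar{x}_0,\bar{s}_j]$ passes within $O(\delta)$ of $\bar{u}_{i+1}$, working with lifts in $X$ where triangles are $\delta$-thin; the paper asserts $100\delta$. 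Your main-obstacle statement that $[\bar{x}_0,\bar{u}_i]$ passes near $\bar{w}_i$ is needed but does not cover this. After that, use only the error-free monotonicity of Gromov products along geodesics rather than hyperbolicity of $X/N$.
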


\textbf{Proof.} Without loss of generality, we assume that $F\geq 10^9B$, as $B$ does not depend on $F$.  We then require that $\mathcal{Z}\geq D+F+10^6*L$ Recalling the reindexing required to construct $\chi_1,...,\chi_Q$, there are a few cases to handle.

The first case is $\xi_1$ and $\chi_{2,1}$. For this case, Lemma \ref{sublem:forcedpassthrough} implies that any geodesic from $\bar{z}_{2,1,q}$ to $\bar{x}_0$ passes within $1000B$ of $\bar{w}_2$, and by construction $(\bar{x}_1,\bar{w}_2)_{\bar{x}_0}\geq \mathcal{Z}-10L$. Arguing as in Lemma \ref{sublem:outernoslip} then yields that $(\xi_1,\chi_{2,1})_{\bar{x}_0}\geq \mathcal{Z}-100L$.

The case of $\xi_2$ and $\chi_{k-1,2}$ is analogous.

Next, we consider $\chi_{i,1}$ and $\chi_{i,2}$. Let $\bar{\rho}_1$ be a $(1,20L)$-quasi-geodesic ray representing $\chi_{i,1}$, and let $\bar{\rho}_2$ be a $(1,20L)$-quasi-geodesic ray representing $\chi_{i,2}$, such that both rays originate from $\bar{x}_0$. Recall the existing $(1,1000\M)$-quasi-geodesic rays representing $\chi_{i,1}$ and $\chi_{i,2}$ constructed from the $z_{i,j,q}$ points. 

Applying the Morse Lemma, every point $\bar{s}_1$ sufficiently far along on $\bar{\rho}_1$ satisfies $d_{X/N}(\bar{x}_0,\bar{s}_1)\geq 10^6*L$, $d_{X/N}(\bar{w}_i,\bar{s}_1)\geq 10^6*L$, $d_{X/N}(\bar{z}_{i,1,0},\bar{s}_1)\geq 10^6 *L$, and $d_{X/N}(\bar{s}_1,\bar{z}_{i,1,q})\leq 200L$ for some $\bar{z}_{i,1,q}$. Applying Lemma \ref{lem:nearmintriple}, there exists $s_1$ lifting $\bar{s}_1$ and $x_0'$ lifting $\bar{x}_0$ such that $s_1$ and $x_0$ are minimal, $z_{i,1,q}$ and $x_0$ are near-minimal, and $z_{i,1,q}$ and $s_1$ are near-minimal. Lemma \ref{lem:nearmindistbound} then implies $d_X(s_1,z_{i,1,q})\leq 2000L$. 

By Lemma \ref{sublem:forcedpassthrough}, $[z_{i,1,q},x_0']$ passes within $1000B$ of some lift $w_i'$ of $\bar{w}_i$, and within $150\M$ of some lift $z_{i,1,0}'$ of $\bar{z}_{i,1,0}$. As $z_{i,1,q}$ and $s_1$ both have distance at least $10^5L$ from $w_i'$ and $z_{i,1,0}'$, we may conclude by $\delta$-thinness of the triangle with sides $[s_1,x_0']$, $[z_{i,1,q}',x_0']$, and $[s_1,z_{i,1,q}']$, along with minimality of $s_1$ and $x_0'$, that $[\bar{x}_0,\bar{s}_1]$ passes within $1100B$ of $\bar{w}_i$ at some point $\bar{s}^w_1$ and within $160\M$ of $\bar{z}_{i,1,0}$ at some point $\bar{s}^z_1$. Let $s^w_1$ be the unique lift of $\bar{s}^w_1$ that lies within $1100B$ of $w_i$, and let $s^z_1$ be the unique lift of $\bar{z}_{i,1,0}$ that lies within $160\M$ of $z_{i,1,0}$. Essentially by the same argument used to prove that $w_i$ and $z_{i,1,0}$ are minimal in Lemma \ref{sublem:forcedpassthrough}, $s^w_1$ and $s^z_1$ are minimal. By a standard thin-quadrilaterals argument, any geodesic $[s^w_1,s^z_1]$ passes through some point $t_1$ lying within distance $50\delta$ of $A_i$ and within distance $40000\M$ of $z_{i,1,0}$. Let $\bar{t}_1$ be the projection of such a point on $[\bar{x}_0,\bar{s}_1]$. 

By identical argument, $[\bar{x}_0,\bar{s}_2]$ passes through some point $\bar{t}_2$ lying within distance $50\delta$ of $\bar{A}_i$ and within distance $40000\M$ of $\bar{z}_{i,1,0}$.

From here, we observe the following inequality, which follows solely from the triangle inequality and does not require hyperbolicity:
$$(\bar{s}_1\cdot \bar{s}_2)_{\bar{x}_0}\geq (\bar{t}_1\cdot \bar{t}_2)_{\bar{x}_0}.$$
Using the lifts $t_1$ of $\bar{t}_1$ and $t_2$ of $\bar{t}_2$ constructed above, we have that $d_{A_i}(t_j,w_i)$ both lie in $[0.39L(A_i),0.41L(A_i)]$ for $j\in\{1,2\}$. As a consequence, $d_{X/N}(\bar{t}_1,\bar{t}_2)\leq 0.25L(A_i)$. Now, it is straightforward to verify as in Lemma \ref{sublem:forcedpassthrough} that $t_1$ and $t_2$ satisfy the hypotheses of Lemma \ref{lem:quotientrouting} (by the same argument used for $z_{i,1,0})$, and so $[\bar{t}_1,\bar{x}_0]$ and $[\bar{t}_2,\bar{x}_0]$ both pass within $1000B$ of $\bar{w}_i$, say at points $\bar{t}_1'$ and $\bar{t}_2'$. By taking the (unique) lifts of these points that lie within distance $1000B$ close to $w_i$, we may check minimality as above and so verify that $d_{X/N}(\bar{t}_1,\bar{t}_1')$ and $d_{X/N}(\bar{t}_2,\bar{t}_2')$ are both at least $0.38L(A_i)$. Therefore, for $j\in\{1,2\}$,
$$d_{X/N}(\bar{t}_j,\bar{x}_0)\geq 0.37L(A_i)+D$$
and so
$$(\bar{s}_1,\bar{s}_2)_{\bar{x}_0}\geq(\bar{t}_1,\bar{t}_2)_{\bar{x}_0}=\frac{d_{X/N}(\bar{t}_1,\bar{x}_0)+d_{X/N}(\bar{t}_2,\bar{x}_0)-d_{X/N}(\bar{t}_1,\bar{t}_2)}{2}.$$
$$\geq \frac{0.74L(A_i)+2D-0.25L(A_i)}{2}\geq D+F$$
as long as $M$ (and hence $\min_iL_i$) is chosen to be sufficiently large with respect to $F$. As $\bar{s}_1$ and $\bar{s}_2$ were chosen arbitrarily, we obtain the conclusion that $(\chi_{i,1}\cdot\chi_{i,2})_{\bar{x}_0}\geq D+F$ (and in fact obtain a significantly larger increment).

The last remaining case is $\chi_{i,2}$ and $\chi_{i+1,1}$. Copying the argument used in the previous case, we obtain $\bar{t}_1$ and $\bar{t}_2$ respectively lying close to $\bar{z}_{i,1,0}$ and $\bar{z}_{i,2,0}$. Furthermore, $[\bar{t}_1,\bar{x}_0]$ must pass within $1000B$ of $\bar{w}_{i+1}$, say at some point $\bar{t}_1'$, and $[\bar{t}_2,\bar{x}_0]$ must pass within $1000B$ of $\bar{w}_{i+1}$, say at some point $\bar{t}_2'$. Let $t_1'$ and $t_2'$ be respective lifts of these points lying within $1000B$ of $\bar{w}_{i+1}$. Now, $t_1$, $t_1'$, $w_{i+1}$, and $u_{i+1}$ all lie within $200\M$ of a subsegment of $A_i$ with length at most $0.41L_i$, and it is straightforward to check from this fact that these four points are pairwise minimal. Recall that $u_{i+1}$ lies within $10\delta$ of $A_i$, and its projection to $A_i$ lies between the projection of $w_i$ and the projection of $z_{i,2,0}$, with an analogous statement for $u_{i+1}$ and $A_{i+1}$. As $d_{X/N}(\bar{w}_{i+1},\bar{u}_{i+1})\geq 10E_G+F$  and $F$ be chosen to be sufficiently large with respect to $B$, we may assert that any $[\bar{t}_1,\bar{x}_0]$ passes within $100\delta$ of $\bar{u}_{i+1}$, say at some point $\bar{y}_1$. Likewise, $[\bar{t}_2,\bar{x}_0]$ passes within $100\delta$ of $\bar{u}_{i+1}$, say at $\bar{y}_2$ - here it is important that both $A_i$ and $A_{i+1}$ were constructed with $u_{i+1}$ as a `guide'. Lastly, we may check as in Lemma \ref{sublem:forcedpassthrough} that any geodesic $[\bar{u}_{i+1},\bar{x}_0]$ or $[\bar{y}_1,\bar{x}_0]$ passes within $1000B$ of $\bar{w}_i$. As a result,
$$d_{X/N}(\bar{u}_{i+1},\bar{x}_0)\geq d_{X/N}(\bar{u}_{i+1},\bar{w}_{i+1})+d_{X/N}(\bar{w}_{i+1},\bar{x}_0)-10000B$$
We already know that $d_{X/N}(\bar{w}_{i+1},\bar{x}_0)\geq D-B$ for $B$ sufficiently large in terms of $E$ and $E'$, as well as $d_{X/N}(\bar{w}_{i+1},\bar{u}_{i+1})\geq 10E_G+F$ by the minimality claim of Lemma \ref{sublem:insertaxes}. We conclude that
$$(\bar{s}_1\cdot\bar{s}_2)_{\bar{x}_0}\geq (\bar{t}_1\cdot\bar{t}_2)_{\bar{x}_0}\geq (\bar{u}_{i+1}\cdot \bar{u}_{i+1})_{\bar{x}_0}-1000\delta$$
$$=d_{X/N}(\bar{u}_{i+1},\bar{x}_0)-1000\delta\geq F+D-20000B$$
Choosing $F$ sufficiently large in terms of $B$ produces the desired bound $(\chi_{i,2}\cdot \chi_{i+1,1})_{\bar{x}_0}\geq D+\frac{1}{2}F$. $\square$

\

We have completed the proof of Lemma \ref{lem:mainconstruction}.

\subsection{Linear Connectivity}

We obtain linear connectivity of the boundary as a result of Lemma \ref{lem:mainconstruction}.

\begin{theorem}
\label{thm:maintheorem}

Assume that $G\curvearrowright X$ and random elements $w_{1}, \ldots, w_{k}$ chosen as the $n$-step element of independent permissible random walks satisfy Hypothesis \ref{hyp:main}, where $n$ is sufficiently large in terms of $G\curvearrowright X$. Then there exists a constant $\mathcal{C}'$ such that for all $\xi_0,\xi_1\in\partial (X/N)$, there exists a path $\gamma(t):[0,1]\to\partial(X/N)$ with $\gamma(0)=\xi_0$, $\gamma(1)=\xi_1$, and for all $0\leq t\leq 1$ and $j\in\{0,1\}$,
$$(\xi_j\cdot\gamma(t))_{\bar{x}_0}\geq (\xi_0\cdot\xi_1)_{\bar{x}_0}-\mathcal{C}'.$$

As an immediate consequence, $\partial (X/N)$ is linearly connected.

\end{theorem}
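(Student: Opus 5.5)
We will build the path $\gamma$ by iterating Lemma \ref{lem:mainconstruction} on a dyadic-type refinement scheme, in the spirit of Champetier's argument. Fix $F$ large enough that Lemma \ref{lem:mainconstruction} applies (with $n$ large in terms of $F$), and also large compared with $L$, say $F\ge 10^4 L$. Set $D_0=(\xi_0\cdot\xi_1)_{\bar{x}_0}$. At stage $0$ we have the chain $(\xi_0,\xi_1)$, indexed by the parameters $t=0,1$. Given a stage-$m$ chain $\chi^m_0,\dots,\chi^m_{Q_m}$, each attached to a parameter $t^m_0<\dots<t^m_{Q_m}$ in $[0,1]$, with consecutive Gromov products at least $D_0+\frac{m}{2}F$, we apply Lemma \ref{lem:mainconstruction} to each consecutive pair. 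The new points are inserted between the old ones, with parameters spaced evenly in $[t^m_i,t^m_{i+1}]$. This yields a stage-$(m+1)$ chain whose consecutive products are at least $D_0+\frac{m+1}{2}F$. It also guarantees that each inserted point $\chi$ between $\chi^m_i$ and $\chi^m_{i+1}$ satisfies $(\chi\cdot\chi^m_i)_{\bar{x}_0},(\chi\cdot\chi^m_{i+1})_{\bar{x}_0}\ge D_0+\frac m2F-100L$.

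The key quantitative step is a drift estimate. Let $\chi$ be a point created at stage $m'$ and let $\chi'$ be its stage-$m$ ancestor, meaning an endpoint of the stage-$m$ interval containing it, with $m<m'$. We want
$$(\chi\cdot\chi')_{\bar{x}_0}\ge D_0+\tfrac m2F-\mathcal{C}_1$$
with $\mathcal{C}_1$ independent of $m,m'$. We prove this by chaining through the intermediate ancestors at stages $m,m+1,\dots,m'$, using the triangle inequality for Gromov products (Lemma \ref{lem:gromovprops} together with Lemma \ref{lem:quotienthyp}, so the error is about $10L$ per step). The naive chaining loses $110L$ at every stage, which is the main obstacle. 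It is overcome because the $k$-th link has product at least $D_0+\frac{m+k}{2}F-100L$, which grows linearly in $k$. Hence in the nested minimum $\min_k(\cdot)-10L\cdot(\text{depth})$ only the first link matters once $F\gg L$. The clean way to do this is induction downward from the finest stage, showing at each step that the min is attained by the coarsest link. This gives $\mathcal{C}_1\approx 200L$. Taking $m=0$ gives $(\xi_j\cdot\chi)_{\bar{x}_0}\ge D_0-\mathcal{C}_1-\mathcal{C}_2$ for all constructed $\chi$ and $j\in\{0,1\}$, where $\mathcal{C}_2$ accounts for passing from the nearest endpoint to the far endpoint using the stage-$0$ bound of Lemma \ref{lem:mainconstruction}.

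To define $\gamma$, note that the set of constructed parameters is dense in $[0,1]$, since the interval lengths shrink as long as each $Q\ge2$; if some pair needs no insertion, we repeat a point, which is harmless. For arbitrary $t$, choose for each $m$ a stage-$m$ point with parameter adjacent to $t$. By the drift estimate, any two such points of stages $\ge m$ have Gromov product at least $D_0+\frac m2F-\mathcal{C}_1-10L$. So they form a Cauchy sequence in the visual metric, and they converge by completeness of $\partial(X/N)$; if completeness is a concern, use the sequential definition of the boundary directly. Set $\gamma(t)$ to be the limit. The same estimate shows that parameters in a common or adjacent stage-$m$ interval map to points at visual distance $\lesssim a^{-mF/2}$. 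This gives continuity of $\gamma$, in fact Hölder continuity. The inequality in the theorem then follows from the drift bound and Lemma \ref{lem:consistentgromov}, with $\mathcal{C}'=\mathcal{C}_1+\mathcal{C}_2+100\delta'$ for the hyperbolicity constant $\delta'$ of $X/N$.

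Finally, linear connectivity follows immediately from the visual metric estimate. For all $t$, $d_v(\xi_j,\gamma(t))\le Ca^{-(D_0-\mathcal{C}')}=C^2a^{\mathcal{C}'}\cdot\frac1C a^{-D_0}\le C^2a^{\mathcal{C}'}d_v(\xi_0,\xi_1)$. Hence $\diam\gamma\le 2C^2a^{\mathcal{C}'}d_v(\xi_0,\xi_1)$.
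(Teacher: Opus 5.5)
There is a genuine gap in your drift estimate. It needs the per-stage gain $\frac12F$ to beat the loss $c_0\approx 10$--$20L$ per application of the Gromov-product triangle inequality in $X/N$, whose hyperbolicity constant is of order $L$. That is why you take $F\ge 10^4L$, but the order of quantifiers does not allow this. Lemma \ref{lem:mainconstruction} holds only for $n$ sufficiently large in terms of $F$, and $L=\max_iL(w_i)$ grows with $n$: translation lengths are at least $\Delta n$, and $L\ge 10^{20}M$ with $M$ chosen after $F$.

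The proof of that lemma in fact forces $F\ll L$:
\begin{itemize}
\item Lemma \ref{sublem:insertaxes} needs $E_D\ge 10(E_G+F+B)$ while $E_D'\le 0.01\min_vL(v)$.
\item The increment in Lemma \ref{sublem:innergain} comes from requiring $0.245L(A_i)\ge F$.
\end{itemize}
With $F\ll L$, your recursion $P_k\ge\min(P_{k+1},\ell_k)-c_0$ only gives bounds like $P_m\ge D_0+\frac m2F-100L-(m'-m)(c_0-\frac F2)$, which degenerate as $m'\to\infty$. So there is no uniform $\mathcal{C}_1$, and both your Cauchy argument defining $\gamma(t)$ and the final inequality fail. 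Grouping several applications of the lemma into a block to manufacture an effective $F'\gg L$ does not help with naive chaining either: the linear accumulation of $c_0$ inside a block outgrows the block's gain.

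The missing idea is to accumulate errors in the visual metric, which is a genuine metric with an exact triangle inequality. Every stage-$(r+1)$ point lies within $Ca^{-(D_0+\frac r2F-100L)}$ of a stage-$r$ point. The sum $\sum_{r\ge0}Ca^{-(D_0+\frac r2F-100L)}=\frac{Ca^{100L}}{1-a^{-F/2}}\,a^{-D_0}$ converges for every fixed $F>0$, so $F$ never has to dominate $L$. This gives $\diam\gamma\le\mathcal{L}a^{-D_0}$ and, as a byproduct, the Cauchy and uniform-continuity estimates you need. Converting back through $\frac1Ca^{-(\xi_j\cdot\gamma(t))_{\bar{x}_0}}\le d_v(\xi_j,\gamma(t))$ yields $\mathcal{C}'=\log_a(C\mathcal{L})$. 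This constant depends on $F$, $L$, $a$ and $C$, but is fixed once $n$ is. This is the paper's route; a logarithmic chain estimate for Gromov products would be an equivalent repair.

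The rest of your scheme matches the paper and is fine: the refinement, density of parameters, extension by continuity, and the final comparison with $d_v(\xi_0,\xi_1)$.
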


\textbf{Proof.} Given Lemma \ref{lem:mainconstruction}, the remainder of the proof is essentially standard, and we will follow the proof of Prop. 2.2 in \cite{AW24}. We present the details for clarity.

Recall that the visual metric $d_v$ on $\partial (X/N)$ satisfies the property that there exists $a>1$ and $C>1$ so that for all $\xi,\chi\in\partial (X/N)$,
$$\frac{1}{C}a^{-(\xi\cdot\chi)_{\bar{x}_0}}\leq d_v(\xi,\chi)\leq Ca^{-(\xi\cdot\chi)_{\bar{x}_0}}.$$

Given points $\xi_0$ and $\xi_1$ on $\partial (X/N)$, we will iteratively define functions $\gamma_r:I_r\to\partial (X/N)$, for $r\in\intgr_{\geq 0}$, where $I_r\subset[0,1]$ and $I_r\subset I_{r+1}$ for all $r$. We further require that for all $r<r'$, $\gamma_{r'}|_{I_r}=\gamma_r$.We will set the convention that $\xi_t:=\gamma_r(t)$ for any $t\in[0,1]$ and any $r$ such that $t\in I_r$. The base case is $I_0=\{0,1\}$ and $\gamma_0(0)=\xi_0$, $\gamma_0(1)=\xi_1$.

Define $(\xi_0\cdot\xi_1)_{\bar{x}_0}=A$.

We make the following inductive hypotheses:
\begin{enumerate}
\item We have $I_r\subset I_{r+1}$, and $\gamma_{r+1}|_{I_r}=\gamma_r$.
\item If $t_1$ and $t_2$ are adjacent points of $I_r$ with $t_1<t_2$, then
$$(\xi_{t_1}\cdot \xi_{t_2})_{\bar{x}_0}\geq A+\frac{1}{2}Fr$$
\item If $t_1$ and $t_3$ are adjacent points of $I_{r}$ with $t_1<t_3$, and $t_2\in I_{r+1}$ satisfies $t_1\leq t_2\leq t_3$, then
$$(\xi_{t_1}\cdot \xi_{t_2})_{\bar{x}_0}\geq A+\frac{1}{2}Fr-100L$$
\end{enumerate}
Assuming these hypotheses are satisfied, the inductive procedure consists of applying Lemma \ref{lem:mainconstruction} to each pair of adjacent points of $I_r$. Specifically, say $t_1$ and $t_2$ are adjacent in $I_r$ with $t_1<t_2$, so $(\xi_{t_1}\cdot\xi_{t_2})_{\bar{x}_0}\geq A+\frac{1}{2}Fr$. Lemma \ref{lem:mainconstruction} gives a list $\xi_{t_1}=\chi_0,\chi_1,...,\chi_Q=\xi_{t_2}$ such that for $1\leq i\leq Q-1$, $(\chi_i\cdot\chi_{i+1})_{\bar{x}_0}\geq A+\frac{1}{2}F(r+1)$, while for all $1\leq i\leq Q-1$, $(\chi_0\cdot \chi_i)_{\bar{x}_0}\geq A+\frac{1}{2}Fr-100L$. We may therefore set
$$I_{r+1}\cap[t_1,t_2]=\left\{t_1+i\frac{t_2-t_1}{n}:i\in\intgr\cap[0,Q]\right\}$$
and define $\gamma_{r+1}\left(t_1+i\frac{t_2-t_1}{n}\right)=\chi_i$. Performing this construction for all pairs of consecutive points in $I_r$ yields $I_{r+1}$ satisfying the inductive hypotheses.

We set $I=\bigcup_{r=0}^\infty I_r$, and may by the consistency assumption define $\gamma:I\to\partial X$ by extending the $\gamma_r$. Next, we observe the formula
$$\diam(\gamma(I))\leq \diam(\gamma(I_0))+2\sum_{r=0}^\infty \max_{s\in I_{r+1}}\min_{t\in I_r}d_v(\gamma(s),\gamma(t))$$
From our inductive hypotheses, for every $s\in I_{r+1}$, there is a $t\in I_{r}$ with $(\xi_s\cdot\xi_t)_{\bar{x}_0}\geq A+Fr-100L$. Translating to the visual metric,
$$d_v(\gamma(s),\gamma(t))\leq Ca^{-A-Fr+100L}$$
so we obtain
$$\diam(\gamma(I))\leq Ca^{-A}+2C\sum_{r=0}^\infty a^{-A-Fr+100L}.$$
This expression is certainly bounded by $a^{-A}\geq \frac{1}{C}d_v(\xi_0,\xi_1)$ times a constant $\mathcal{L}$ depending only on $C$ and $L$. We conclude that there exists some constant $\mathcal{C'}$ such that for that every $t\in I$,
$$(\xi_1\cdot\gamma(t))_{\bar{x}_0}\geq (\xi_1\cdot\xi_2)_{\bar{x}_0}-\mathcal{C}'$$

Essentially the same estimate yields that if $t_1$ and $t_2$ are consecutive points of $I_r$, then the diameter of $\gamma(I\cap[t_1,t_2])$ is at most $a^{-A}\mathcal{L}$ times $a^{-Fr}$. Given any $\epsilon>0$, we may therefore find $R$ such that if $t_1$ and $t_2$ are consecutive points on $I_R$, then $\diam(\gamma(I\cap[t_1,t_2]))\leq\epsilon /2$. Choosing $\delta$ to be, say, $\frac{1}{10}$ of the length of the smallest distance between consecutive points of $I_R$, then for any $s_1,s_2$ on $I$ with $|s_1-s_2|<\delta$, there is a $z\in I_R$ within $\delta$ of both $x$ and $y$. From the triangle inequality between $\gamma(s_1)$, $\gamma(s_2)$, and $\gamma(z)$, we have $d(\gamma(s_1),\gamma(s_2))\leq\epsilon$.

Therefore, $\gamma$ is uniformly continuous on $I$, and must extend to a continuous function on the closure $\bar{I}$. From the iterative construction, it is straightforward to check that $\bar{I}$ must equal $[0,1]$, and so $\gamma([0,1])$ is connected. Lastly, we have that the diameter of $\gamma(\bar{I})=\bar{\gamma(I)}$ is equal to the diameter of $\gamma(I)$.

We conclude that $\partial (X/N)$ is linearly connected. $\square$

\

\textbf{Proof of Theorem \ref{thm:maintheoremintro}}. With the setup of Theorem \ref{thm:maintheoremintro}, we have that Theorem \ref{thm:randomwalkproperties} implies that Hypothesis \ref{hyp:main} holds with probability approaching $1$ as $n\to\infty$. Theorem \ref{thm:maintheorem} now supplies the desired result. $\square$

\section{Dimension of the Boundary}

\begin{remark}

In this section, all topological spaces will be assumed to be separable metric spaces. This condition holds for all spaces relevant to us, and avoids some edge cases related to the definition of topological dimension.

\end{remark}

\subsection{Goal and Setup}

Recall the following definition.

\begin{definition}
\label{def:topdim}

A separable metric space $S$ has topological dimension $\dim(S)=k$ if $k$ is the minimum nonnegative integer such that every open cover $\mathcal{U}$ of $S$ admits a refinement $\mathcal{U}'$ such that every $s\in S$ lies in at most $k+1$ elements of $\mathcal{U}'$.

\end{definition}

We note that for all $T\subset S$ with the subspace topology, $\dim(T)\leq \dim(S)$. Furthermore, if $S$ is path-connected, then $\dim(S)\geq 1$.

Our goal is to prove the following lemma.

\begin{lemma}
\label{lem:onedim}

Let $G$, $X$, and $N$ be as in Hypothesis \ref{hyp:main}. Then 
$$1\leq \dim(\partial (X/N))\leq\dim (\partial X)+1.$$

\end{lemma}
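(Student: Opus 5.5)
The lower bound I would take straight from Theorem \ref{thm:maintheorem}. By that theorem $\partial(X/N)$ is linearly connected, so in particular path-connected, and it contains more than one point: $G/N$ acts on $X/N$ non-elementarily, since the Gromov products of the boundary points $\chi_{i,j}$ built in Lemma \ref{lem:mainconstruction} are finite. The remark after Definition \ref{def:topdim} then gives $\dim(\partial(X/N))\geq 1$.

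For the upper bound the tool is the Urysohn--Menger sum formula: if $S=S_1\cup S_2$, then $\dim S\leq \dim S_1+\dim S_2+1$. More usefully, a countable union of closed subsets of dimension at most $d$ has dimension at most $d$. I would split $\partial(X/N)$ into two pieces, according to how a quasi-geodesic ray $\bar r$ from $\bar x_0$ representing $\xi$ interacts with the quotient axes $\bar{\A}=\pi_{X/N}(\A)$.

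The first piece is $S_{\mathrm{fin}}$: those $\xi$ whose ray eventually stops routing around struts. By Lemma \ref{lem:strutlifts} and Lemma \ref{lem:nearmindistbound}, the strut sequences of near-minimal lifts of $[\bar x_0,\bar r(t)]$ are coherent as $t\to\infty$. So for such $\xi$ there is a lift ray in $X$ which, beyond some strut $\alpha$, has every projection to axes at most $0.45L(v)$. Lemma \ref{lem:verifyminimal} then shows it is a minimal ray. This exhibits $S_{\mathrm{fin}}$ as a countable union, indexed by the finite strut prefix (countably many, as $\A$ is countable), of sets $T_\sigma$. Each $T_\sigma$ is the injective image under $\pi_{X/N}$ of a subset of $\partial X$, namely the boundary points whose ray from a fixed point has all axis projections at most $0.45L$. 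I would check that this map is a homeomorphism onto its image, using that minimal pairs have equal Gromov products in $X$ and $X/N$, as in Lemma \ref{sublem:sequencelimits}. Then $\dim T_\sigma\leq\dim\partial X$. After arranging each $T_\sigma$ to be closed, for instance by using closed conditions like "projection at most $0.45L$", the countable sum theorem gives $\dim S_{\mathrm{fin}}\leq\dim\partial X$.

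The second piece is $S_\infty$: those $\xi$ with infinitely many struts. I would show $\dim S_\infty\leq 0$. Such a point is determined by an infinite strut sequence in $\bar{\A}$ together with the exit data at each strut. Two such points whose strut sequences first differ at the $m$-th strut have Gromov product roughly the distance to that strut, up to $O(L)$, by Lemma \ref{lem:quotientrouting}. Hence the cylinder sets "first $m$ struts fixed" are clopen in $S_\infty$, have diameters tending to zero, and form a basis. This makes $S_\infty$ zero-dimensional, and the sum formula yields $\dim\partial(X/N)\leq\dim\partial X+0+1$.

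I expect the main obstacle to be the middle step. Proving that each $T_\sigma$ embeds homeomorphically, and that it can be taken closed or at least $F_\sigma$ so that the countable sum theorem applies, requires delicate control of limits of minimal lifts. The difficulty is near points where projections approach the threshold; I would handle this by using overlapping thresholds $0.45$ versus $0.5$.
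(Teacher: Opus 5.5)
Your argument is essentially the paper's. It uses the same split into boundary points with finitely many (stable) struts, covered by countably many closed pieces embedding in $\partial X$, and those with infinitely many, which are zero-dimensional because strut data gives clopen neighbourhoods (your cylinders, the paper's $\partial T_{\overline{h\Ax(v)}}$); it then applies Lemma \ref{lem:countableclosure} and the Urysohn--Menger formula, with the lower bound from Theorem \ref{thm:maintheorem}.

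The paper avoids the closedness obstacle you anticipate by taking the pieces to be the limit sets $\Lambda(S_{\bar z})$, $\bar z\in X/N$, of the starred non-turning loci. These are closed because $S_{\bar z}$ is quasi-convex, and they embed in $\partial X$ using only near-minimality and Lemma \ref{lem:nearmindistbound}. You need that bounded-distortion version too: two rays from a common point with all axis projections at most $0.45L(v)$ need not be a minimal pair, since they can wind around a common axis in opposite directions. The alternative is to lower your per-ray threshold to roughly $0.2L(v)$, so that Lemma \ref{lem:verifyminimal} applies to pairs.
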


Before going on, we state two topological results. Both results are classical.

\begin{lemma}
\label{lem:countableclosure}

Let $X$ be a separable metric space, and let $T_1,T_2,...$ be a countable collection of closed subsets. Then
$$\dim\left(\bigcup_{i=1}^\infty T_i\right)\leq\max_i\dim(T_i)$$
where all subsets carry the subspace topology.

\end{lemma}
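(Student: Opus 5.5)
The plan is to follow the classical \v{C}ech--Lebesgue argument for the countable closed sum theorem, as in Engelking's treatment of covering dimension. Set $Y=\bigcup_i T_i$ and $n=\max_i\dim(T_i)$; if the maximum is infinite there is nothing to prove. Since $Y$ is a separable metric space, it is normal and Lindel\"of, hence paracompact. We will therefore use the standard fact that $\dim(Y)\le n$ can be tested on \emph{finite} open covers: it suffices that every finite open cover $\{U_1,\dots,U_k\}$ of $Y$ has an open refinement of order at most $n+1$. We also use the swelling lemma for normal spaces: a finite closed cover $\{W_j\}$ with $W_j\subset U_j$ can be enlarged to an open cover $\{O_j\}$ with $W_j\subset O_j\subset U_j$ and the same order. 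Each $T_i$ is closed in $Y$, so $\dim(T_i)\le n$ as a subspace.

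\textbf{Key lemma.} Let $Z$ be normal, $F\subset Z$ closed with $\dim F\le n$, and $\{U_1,\dots,U_k\}$ a finite open cover of $Z$. Then there is an open cover $\{V_1,\dots,V_k\}$ of $Z$ with $\overline{V_j}\subset U_j$ such that $\{V_j\cap F\}$ has order at most $n+1$. To prove it, refine $\{U_j\cap F\}$ in $F$ to a finite open cover of order $\le n+1$, and group its members by a choice of containing $U_j$. This gives an open cover $\{G_j\}$ of $F$ with $G_j\subset U_j$ and order $\le n+1$. Shrink it to a closed cover $\{H_j\}$ of $F$ with $H_j \subset G_j$; as $F$ is closed in $Z$, each $H_j$ is closed in $Z$. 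Swell $\{H_j\}$ inside $Z$ to open sets $O_j\subset U_j$ with $O_j\cap F$ preserving the order bound. Then set $V_j=O_j\cup(U_j\setminus F)$ and shrink once more so that $\overline{V_j}\subset U_j$. The delicate point is keeping the order bound on $F$ after the final shrinking; to handle it, I will perform the shrinking before taking unions, exploiting normality.

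\textbf{Iteration.} Starting from $U^0_j=U_j$, apply the lemma with $F=T_i$ to obtain open covers $\{U^i_j\}_j$ of $Y$ satisfying
\[
\overline{U^{i}_j}\subset U^{i-1}_j \quad\text{and}\quad \operatorname{ord}\{U^i_j\cap T_i\}_j\le n+1 .
\]
Put $W_j=\bigcap_i U^i_j=\bigcap_i\overline{U^i_j}$; each $W_j$ is closed and $W_j\subset U_j$.

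\textbf{$\{W_j\}$ is a cover.} For $x\in Y$, the index sets $J_i(x)=\{j: x\in U^i_j\}$ are nonempty, because each stage is a cover, and they decrease in $i$ within the finite set $\{1,\dots,k\}$. Hence they stabilize at a nonempty set, so some $j$ has $x\in U^i_j$ for all $i$, i.e. $x\in W_j$.

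\textbf{$\{W_j\}$ has order at most $n+1$.} If $x\in T_m$, then every $j$ with $x\in W_j$ satisfies $x\in U^m_j$, and there are at most $n+1$ such $j$ because $\{U^m_j\cap T_m\}$ has order $\le n+1$.

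Thus $\{W_j\}$ is a finite closed cover of $Y$ of order $\le n+1$ with $W_j\subset U_j$. Swelling it yields an open refinement of $\{U_j\}$ of order $\le n+1$, so $\dim(Y)\le n$.

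I expect the main obstacle to be the key lemma. Its content is simultaneously achieving a closure-shrinking in the whole space $Z$ and the order bound on the closed subset $F$. It relies on repeated use of the shrinking and swelling lemmas in the normal space $Z$, and requires care that the order bound on $F$ survives the passage from open sets in $F$ to open sets in $Z$.
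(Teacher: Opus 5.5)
Your argument is correct. It is the standard \v{C}ech--Lebesgue proof of the countable closed sum theorem, as in Engelking's dimension theory texts. The paper gives no proof of its own and simply cites the result as classical, so your approach is essentially the intended one.

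One simplification: the point you flag as delicate in the key lemma is not actually delicate. Shrinking $\{V_j\}$ to an open cover $\{V_j'\}$ with $\overline{V_j'}\subset V_j$ replaces each set by a subset, so the order of $\{V_j'\cap F\}$ cannot exceed the order of $\{V_j\cap F\}$. No reordering of the shrink-then-union steps is needed.
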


\begin{lemma}[Urysohn-Menger Sum Formula]
\label{lem:urymengformula}

Let $X$ be a separable metric space, and let $Y_1$ and $Y_2$ be arbitrary subspaces of $X$ such that $X=Y_1\cup Y_2$. Then
$$\dim(X)\leq \dim(Y_1)+\dim(Y_2)+1$$
where $Y_1$ and $Y_2$ carry the subspace topology.
\end{lemma}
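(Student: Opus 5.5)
The plan is to reduce the Sum Formula to the classical \emph{decomposition theorem} for separable metric spaces: a separable metric space $S$ satisfies $\dim(S)\leq n$ if and only if $S$ is a union of $n+1$ subspaces, each of dimension $\leq 0$. Given this characterization, the formula is immediate. If $\dim(Y_1)$ or $\dim(Y_2)$ is infinite there is nothing to prove, so set $a=\dim(Y_1)$ and $b=\dim(Y_2)$, both finite. Write $Y_1=Z_0\cup\dots\cup Z_a$ and $Y_2=Z'_0\cup\dots\cup Z'_b$ with every piece zero-dimensional. Subspaces of separable metric spaces are separable metric, so the decomposition applies to $Y_1$ and $Y_2$. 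Dimension is intrinsic to the subspace topology, so each piece is zero-dimensional as a subspace of $X$ as well. Then $X$ is a union of $a+b+2$ zero-dimensional subspaces, and the converse direction of the decomposition theorem gives $\dim(X)\leq a+b+1$.

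Throughout, I will use the standard fact that for separable metric spaces the covering dimension of Definition \ref{def:topdim} coincides with the small inductive dimension $\mathrm{ind}$. Here $\mathrm{ind}(S)\leq n$ means that every point has arbitrarily small open neighborhoods whose boundaries have $\mathrm{ind}\leq n-1$. I will also use the countable sum theorem, Lemma \ref{lem:countableclosure}.

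For the forward direction of the decomposition theorem, I argue by induction on $n$. Suppose $\dim(S)\leq n$, and choose a countable base $\{U_m\}$ such that each boundary $\partial U_m$ satisfies $\dim(\partial U_m)\leq n-1$. Set $T=\bigcup_m\partial U_m$. This is a countable union of closed sets, so Lemma \ref{lem:countableclosure} gives $\dim(T)\leq n-1$. By induction, $T$ is a union of $n$ zero-dimensional sets. The complement $S\setminus T$ has the base $\{U_m\cap(S\setminus T)\}$, and these sets have empty boundary in $S\setminus T$, hence are clopen there. So $S\setminus T$ is zero-dimensional, and adding it as one more piece gives $n+1$ zero-dimensional pieces.

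For the reverse direction, suppose $S=Z_0\cup\dots\cup Z_n$ with each $Z_i$ zero-dimensional. The key input is the \emph{separation lemma}: if $Z\subset S$ is zero-dimensional and $A,B\subset S$ are disjoint closed sets, then there is a partition $L$ between $A$ and $B$ with $L\cap Z=\emptyset$. Applying this with $Z=Z_0$, $A=\{p\}$ and $B=S\setminus U$, for any point $p$ and open set $U\ni p$, yields a neighborhood of $p$ inside $U$ whose boundary lies in $Z_1\cup\dots\cup Z_n$. By induction, such a boundary has dimension $\leq n-1$, so $\mathrm{ind}(S)\leq n$, and hence $\dim(S)\leq n$.

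I expect the separation lemma to be the main obstacle, since it is where zero-dimensionality of a \emph{non-closed} subset has to be exploited. My first approach is the classical one. Take disjoint open sets $V_A\supset A$ and $V_B\supset B$ with $\bar V_A\cap\bar V_B=\emptyset$. Then $Z\cap\bar V_A$ and $Z\cap\bar V_B$ are disjoint closed subsets of the zero-dimensional separable metric space $Z$. Such sets can be separated by a clopen subset $C$ of $Z$; this uses zero-dimensionality together with separability, via a countable refinement argument. Next, extend the splitting of $Z$ as $C\cup(Z\setminus C)$ to disjoint open sets $O_1,O_2$ of $S$, using the standard fact that two separated subsets of a metric space have disjoint open neighborhoods. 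Finally, take $L$ to be the complement of $(V_A\cup O_1)\cup(V_B\cup O_2)$, adjusted so that the two open sets remain disjoint. This $L$ separates $A$ from $B$ and misses $Z$.
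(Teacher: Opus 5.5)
The paper does not prove this lemma. It quotes it as a classical fact, and your argument is the standard textbook route (Hurewicz--Wallman, Engelking): prove the decomposition theorem for $\mathrm{ind}$, then transfer it to covering dimension using $\dim=\mathrm{ind}$ on separable metric spaces. Compared with a bare citation, this makes the mechanism visible. In particular, the paper only applies the formula with $\dim(\partial T)=0$, and your argument shows that this needs just one extra zero-dimensional piece. It is not more elementary, though: it still rests on the coincidence theorem and the countable sum theorem, and the standard proofs of the coincidence theorem use this same decomposition machinery. So your proof reorganizes the classical one rather than bypassing it. The structure is correct, including the routine checks you indicate:
\begin{itemize}
\item extracting a countable base from an $\mathrm{ind}$-witnessing base by second countability;
\item boundaries in $S\setminus T$ lying in $\partial U_m\cap(S\setminus T)=\emptyset$;
\item monotonicity showing that the pieces of the new boundary inside $Z_1,\dots,Z_n$ are zero-dimensional.
\end{itemize}

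The one step you should make explicit is the last line of the separation lemma. The sets $V_A\cup O_1$ and $V_B\cup O_2$ need not be disjoint, since $O_2$ may meet $V_A$ and $O_1$ may meet $V_B$, so ``adjusted'' is doing real work. The fix comes from your choice of $C$. Because $Z\cap\overline{V_A}\subset C$ and $C\cap\overline{V_B}=\emptyset$, you also have $(Z\setminus C)\cap\overline{V_A}=\emptyset$. Now set $W_A=V_A\cup(O_1\setminus\overline{V_B})$ and $W_B=V_B\cup(O_2\setminus\overline{V_A})$. These are disjoint open sets with $A\subset W_A$, $B\subset W_B$ and $Z\subset W_A\cup W_B$, and $L=S\setminus(W_A\cup W_B)$ is the required partition missing $Z$. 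Equivalently, you can note that $A\cup C$ and $B\cup(Z\setminus C)$ are separated sets in $S$ and apply the separated-sets fact to them directly; this is how Engelking argues. With that step filled in, the proof is complete.
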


We now cover a few elementary results concerning boundary sets of hyperbolic spaces.

\begin{definition}
\label{def:starset}

Let $X$ be a geodesic metric space and let $A$ and $B$ be two subsets of $X$ with $A\subset B$. We define the \emph{star} of $B$ relative to $A$, $B^*(A)$, as
$$B^*(A)=\{x\in X:\exists \ a\in A,b\in B \text{ such that }x\in [a,b]\}.$$

\end{definition}

\begin{definition}
\label{def:limitset}

Let $X$ be a Gromov hyperbolic metric space, and let $A$ be a subset of $M$. We define the \emph{limit set} $\Lambda(A)$ as the subset of $\partial M$ definable via sequences lying entirely in $A$. 
\end{definition}

The following two lemmas are elementary.

\begin{lemma}
\label{lem:starset}

Let $X$ be a Gromov hyperbolic geodesic metric space, and let $A\subset B\subset X$ with $A\neq\emptyset$ and $\diam(A)<\infty$. Then $B^*(A)$ is quasi-convex.

\end{lemma}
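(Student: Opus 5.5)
We show directly that $B^*(A)$ is $C$-quasi-convex with $C$ depending only on $\delta$ and $\diam(A)$; write $R=\diam(A)$. Take $x,y\in B^*(A)$. By definition $x\in[a,b]$ and $y\in[a',b']$ for some $a,a'\in A$ and $b,b'\in B$. We need every point $p$ of an arbitrary geodesic $[x,y]$ to lie within $C$ of $B^*(A)$.

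The first step replaces $a'$ by $a$. Since $d(a,a')\le R$, the geodesic $[a',b']$ and any geodesic $[a,b']$ lie within Hausdorff distance roughly $R+2\delta$ of each other, by thinness of the triangle with vertices $a,a',b'$. So there is a point $y'\in[a,b']$ with $d(y,y')\le R+2\delta$. The geodesic $[a,b']$ lies in $B^*(A)$, since $a\in A$ and $b'\in B$. The same holds for $[a,b]$, which contains $x$.

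The second step uses the thin quadrilateral with vertices $x,a,y',y$, with sides the subsegment $[x,a]\subset[a,b]$, the subsegment $[a,y']\subset[a,b']$, a geodesic $[y',y]$, and the given $[y,x]$. Every point $p\in[x,y]$ lies within $2\delta$ of the union of the other three sides. The sides $[x,a]$ and $[a,y']$ are contained in $B^*(A)$. A point of $[y',y]$ is within $R+2\delta$ of $y'\in B^*(A)$. Hence $d(p,B^*(A))\le R+4\delta$, so we may take $C=R+4\delta$.

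The only point needing care is the first step when $b'$ is far away. This is handled by the standard fact that two geodesics with endpoints within distance $R$ of each other are $(R+2\delta)$-Hausdorff close; that fact comes from thin triangles by splitting the quadrilateral $a,a',b',b'$ into two triangles. There is no real obstacle here. The hypothesis $A\neq\emptyset$ only ensures that $B^*(A)$ is nonempty and that $A\subset B^*(A)$. In particular $A$ and $B$ themselves lie in $B^*(A)$ (take the degenerate geodesic $[b,b]$ when $b\in A$, and use $A\subset B$), so $B^*(A)$ is a quasi-convex set containing $B$.
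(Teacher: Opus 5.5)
Your proof is correct. Moving $y$ onto $[a,b']$ by thin triangles and then using the $2\delta$-thin quadrilateral $x,a,y',y$ gives quasi-convexity with constant $\diam(A)+4\delta$; this relies on the natural reading that $x\in B^*(A)$ means $x$ lies on \emph{some} geodesic from a point of $A$ to a point of $B$. The paper states this lemma as elementary and gives no proof, so your argument is presumably the standard one it has in mind.

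One small blemish in your closing side remark: $B\subset B^*(A)$ holds because $b$ is an endpoint of $[a,b]$ for any $a\in A$, and this is where $A\neq\emptyset$ is used. The degenerate geodesic only gives $A\subset B^*(A)$.
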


\begin{lemma}
\label{lem:limittopology}

Let $X$ be a Gromov hyperbolic simplicial graph, and let $A$ be a connected quasi-convex subsgraph of $X$. Equip $A$ with the induced subgraph metric. Then $\Lambda(A)$ is naturally indentified with $\partial A$. Furthermore, $\Lambda(A)$ is closed in the visual-metric topology on $\partial X$, and the intrinsic visual-metric topology on $\partial A$ agrees with the subspace topology on $\Lambda(A)$.

\end{lemma}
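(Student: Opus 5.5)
The plan is to reduce everything to the statement that the inclusion $\iota:A\hookrightarrow X$ is a quasi-isometric embedding when $A$ carries its induced path metric $d_A$. Once that is in hand, the lemma follows from the standard fact that a quasi-isometric embedding between Gromov hyperbolic spaces extends to a topological embedding of Gromov boundaries. Its image is exactly the set of boundary points represented by sequences in $A$, which is $\Lambda(A)$.

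\textbf{Step 1: quasi-isometric embedding.} Clearly $d_X\le d_A$ on $A$. For the reverse bound, fix $a,b\in A$ and a geodesic $[a,b]_X$. By $K$-quasi-convexity this geodesic lies in $\N_K(A)$. Choose points $c_0=a,c_1,\dots,c_m=b$ of $A$ with $m\le d_X(a,b)+1$, each $c_\ell$ within $K$ of the $\ell$-th vertex of $[a,b]_X$; consecutive points then satisfy $d_X(c_\ell,c_{\ell+1})\le 2K+1$. The remaining task is a uniform bound $R$ on $d_A(c,c')$ whenever $c,c'\in A$ and $d_X(c,c')\le 2K+1$, which gives $d_A(a,b)\le R(d_X(a,b)+1)$.

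This uniform local bound is the step I expect to be the main obstacle. Connectedness together with quasi-convexity does not obviously imply it in complete generality, so I would verify it directly in the situation where the lemma is used. There $A$ will be a star set $B^*(A_0)$ with $\diam(A_0)<\infty$, as in Lemma \ref{lem:starset}. Any two of its points lie on geodesics $[a_0,b]$ and $[a_0',b']$ contained in the star set, with $a_0,a_0'\in A_0$. One then reroutes through $A_0$ and uses thinness of the resulting quadrilateral in $X$ to obtain the bound. If the general statement is intended, I would add this local bound as an explicit hypothesis.

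\textbf{Step 2: identifying $\partial A$ with $\Lambda(A)$.} Given the quasi-isometric embedding, $(A,d_A)$ is hyperbolic. Comparing Gromov products via Lemma \ref{lem:gromovprops}(1) and the Morse Lemma \ref{lem:morse}, a $d_A$-geodesic is a uniform quasi-geodesic in $X$, so for $x,y,p\in A$ we have $(x\cdot y)^A_p$ and $(x\cdot y)^X_p$ equal up to a uniform multiplicative and additive error. Hence a sequence in $A$ converges at infinity in $A$ if and only if it does so in $X$, and equivalence classes agree. This gives a bijection $\partial A\to\Lambda(A)$. The neighborhoods $U(\xi,r)$ in the two spaces are nested within one another after an affine change of $r$, so the bijection is a homeomorphism onto $\Lambda(A)$ with its subspace topology.

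\textbf{Step 3: closedness.} Let $\xi_n\in\Lambda(A)$ with $\xi_n\to\xi$ in $\partial X$. For each $n$, pick a point $a_n\in A$ far out along a sequence in $A$ representing $\xi_n$, with $(a_n\cdot\xi_n)_{x_0}\ge (\xi_n\cdot\xi)_{x_0}$. Lemma \ref{lem:consistentgromov} lets us do this up to $100\delta$. The triangle inequality for Gromov products then gives $(a_n\cdot\xi)_{x_0}\to\infty$, so $(a_n)$ is a sequence in $A$ representing $\xi$. Thus $\xi\in\Lambda(A)$.
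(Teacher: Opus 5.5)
\textbf{Step 1 is a genuine gap, and it cannot be filled.} You were right to be suspicious. If $A$ carries its path metric $d_A$, the uniform local bound ($d_X(c,c')\le 2K+1 \Rightarrow d_A(c,c')\le R$) is false in general, and so is the lemma. The paper gives no argument (it calls the lemma elementary), so nothing there supplies this step.

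For a counterexample, let $X$ be the cone on a ray: vertices $c,0,1,2,\dots$ and edges $\{i,i+1\}$ and $\{c,i\}$. Then $X$ has diameter $2$, so $\partial X=\emptyset$. The induced subgraph $A$ on $\{0,1,2,\dots\}$ is connected and $1$-quasi-convex, but in its path metric it is a ray, so $\partial A$ is a point.

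Your proposed repair for star sets also fails. Rerouting through $A_0$ costs roughly $d(c,A_0)+d(c',A_0)$, which is unbounded. Thinness of quadrilaterals in $X$ only gives short connections in $X$, not paths inside $A$. Concretely, take two rays $r_0,r_1$ from a common vertex $z$, and join each $r_0(i)$ to $r_1(i)$ by a path of length $2$ through a new vertex $y_i$.
\begin{itemize}
\item The only geodesic from $z$ to $r_j(n)$ runs along $r_j$, since the level function is $1$-Lipschitz and the rungs do not change level.
\item Hence $B^*(\{z\})=r_0\cup r_1$ for $B=r_0\cup r_1$. This star set is connected and $1$-quasi-convex.
\item Its induced subgraph is a bi-infinite line, with two boundary points. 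But $X$ is quasi-isometric to a ray, so $\Lambda(A)=\partial X$ is a single point.
\end{itemize}

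\textbf{How to fix it.} Read ``induced metric'' as the restriction $d_X|_A$. This is also what the paper actually uses: in Corollary \ref{cor:quotientquasitree}, the comparison map $\iota$ is bi-Lipschitz for $d_{X/N}$ restricted to $S_{\bar{x}_0}$. With this reading Step 1 is unnecessary.
\begin{itemize}
\item Gromov products computed in $A$ are literally those of $X$. So convergence at infinity and equivalence of sequences in $A$ are the same in both spaces, and $\partial A=\Lambda(A)$ as sets.
\item The boundary Gromov products agree up to $O(\delta)$. By Lemma \ref{lem:consistentgromov}, for points of $\Lambda(A)$ the infimum over representing sequences can be computed using sequences in $A$. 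So the visual metrics are comparable and the topologies coincide.
\item Your Step 3 is correct as written. It does not even use quasi-convexity, since the limit set of any subset is closed.
\end{itemize}
If you prefer to keep the path metric, you must add the hypothesis that $(A,d_A)\hookrightarrow X$ is a quasi-isometric embedding. Given that hypothesis, your Steps 2 and 3 go through.
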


\subsection{Strut Prefixes and Decomposition}

Throughout, we fix an arbitrary basepoint $\bar{x}_0\in X$. Recall that $\A$ is the set of all axes $h\Ax(v)$ in $X$, and $\A/N$ is the set of all images of axes $\bar{h\Ax(v)}$ in $X/N$.

\begin{definition}
\label{def:quotientstruts}

Given $\bar{x},\bar{y}\in X/N$, choose near-minimal respective lifts $x$ and $y$ of $\bar{x}$ and $\bar{y}$, and enumerate $\S(x,y)$ as
$$x<_\S h_1\Ax(v_1)<_\S...<_\S h_k\Ax(v_k)<_\S y$$

We then define the \emph{quotient strut sequence} $\bar{\S}(\bar{x},\bar{y})$ to be the ordered sequence
$$\bar{x}<_{\bar{\S}} \bar{h_1\Ax(v_1)}<_{\bar{\S}}...<_{\bar{\S}} \bar{h_k\Ax(v_k)}<_{\bar{\S}} \bar{y}.$$

\end{definition}

By Lemma \ref{lem:strutlifts}, $\bar{\S}(\bar{x},\bar{y})$ is well-defined, regardless of the lifts chosen. 

\begin{definition}
\label{def:branchset}

We define the \emph{non-turning locus based at $\bar{z}$} $S_{\bar{z}}'$ as the set of all points $\bar{x}\in X/N$ such that there exist minimal respective lifts $x$ and $z$ of $\bar{x}$ and $\bar{z}$ such that for all axes $h\Ax(v)\in \A$, 
$$D_{h\Ax(v)}(x,z)\leq 0.39L(v).$$
We then define the set $S_{\bar{z}}$ by
$$S_{\bar{z}}=(S'_{\bar{z}})^*(\{z\}).$$

\end{definition}

\begin{lemma}
\label{lem:branchconvex}

Let $\bar{x}\in S_{\bar{x}_0}$. Then there exists minimal respective lifts $x$ and $x_0$ of $\bar{x}$ and $\bar{x}_0$ such that for all axes $h\Ax(v)\in \A$,
$$D_{h\Ax(v)}(x,x_0)\leq 0.4L(v).$$

\end{lemma}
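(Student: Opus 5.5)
By the definition of the star set, a point $\bar{x}\in S_{\bar{x}_0}$ lies on some geodesic $[\bar{x}_0,\bar{a}]$ in $X/N$, where $\bar{a}\in S'_{\bar{x}_0}$. (Here I read the base set $\{z\}$ as the point $\bar{x}_0$ itself.) The plan is to fix minimal lifts $a$ and $x_0$ of $\bar{a}$ and $\bar{x}_0$ that witness $\bar{a}\in S'_{\bar{x}_0}$. Thus $D_{h\Ax(v)}(a,x_0)\le 0.39L(v)$ for every axis. Since $a$ and $x_0$ are minimal, $d_X(a,x_0)=d_{X/N}(\bar{a},\bar{x}_0)$. Hence any geodesic $[x_0,a]_X$ projects to a geodesic in $X/N$ of the same length.

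The first step is to lift the given geodesic. I would show that the chosen geodesic $[\bar{x}_0,\bar{a}]$ lifts to a geodesic $\sigma$ starting at $x_0$ whose endpoint $a'$ is a lift of $\bar{a}$ minimal with respect to $x_0$. Lift edge by edge starting at $x_0$; lengths are preserved, so $a'$ is a minimal lift. However, $a'$ may differ from $a$. To fix this, apply Lemma \ref{lem:strutlifts} (both pairs are minimal, hence near-minimal by Lemma \ref{lem:minnearmin}) to get $\S_\theta(x_0,a)$ and $\S_\theta(x_0,a')$. I would use $\theta=0.1$ together with a smaller $\theta$, chosen with $\theta+\tau<0.98$. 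These two strut sequences project to the same quotient axes. The bound $D\le 0.39L(v)$ should transfer from $a$ to $a'$ up to an additive error of $O(\M)$, via the second claim of Lemma \ref{lem:strutlifts} and the fellow-traveling decomposition in Lemma \ref{lem:nearmindistbound}. The justification is that projections onto corresponding struts are $N$-translates of each other. Non-strut axes have $D<\theta L(v)$ for both pairs.

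With $\sigma$ in hand, let $x\in\sigma$ be the lift of $\bar{x}$, and check that $x$ is the required lift. The pair $x,x_0$ is minimal, because the subsegment $[x_0,x]\subset\sigma$ projects isometrically onto $[\bar{x}_0,\bar{x}]$. For the projection bound, take any axis $\alpha=h\Ax(v)$. Since $x$ lies on a geodesic from $x_0$ to $a'$, hyperbolicity and Lemma \ref{lem:BGIrouting} show that $\pi_\alpha(x)$ lies coarsely between $\pi_\alpha(x_0)$ and $\pi_\alpha(a')$; the only possible exception is when $\pi_\alpha(x)$ sits within bounded distance of $\pi_\alpha(x_0)$. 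Therefore
$$D_\alpha(x,x_0)\le D_\alpha(a',x_0)+O(\delta)+O(\M)\le 0.39L(v)+O(\M).$$
The additive errors are absorbed because $\M=50\min L/M$ with $M>10^{20}$, so $O(\M)\le 0.01L(v)$. Finally, pass from $d_\alpha$ to $D_\alpha$ using items 1--2 of Lemma \ref{lem:complex}, which costs at most another $\M$.

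The main obstacle is the transfer from the witnessing lift $a$ to the lift endpoint $a'$. If the argument via strut lifts turns out to be too lossy, I would try an alternative: start the lift from $a$ instead of from $x_0$, lifting the geodesic backward to reach a lift $x_0''$ of $\bar{x}_0$. Then use Lemma \ref{lem:strutlifts} on the pairs $(x_0,a)$ and $(x_0'',a)$ to obtain an element $g\in N$ that aligns the struts. The remaining ingredient, monotonicity of projections along geodesics, is routine.
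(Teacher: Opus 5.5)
Your outer frame matches the paper's proof. You fix a witnessing minimal pair $(a,x_0)$ for $\bar a\in S'_{\bar x_0}$, lift $[\bar x_0,\bar a]$ from $x_0$ to get another minimal pair $(x_0,a')$, take $x$ on that lift, and bound $D_\alpha(x,x_0)$ by $D_\alpha(a',x_0)$ plus a small error using bounded geodesic image.

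The gap is the middle step, which you flag as the main obstacle and leave at ``should transfer''. With the justification you give, it does not go through. Lemma \ref{lem:strutlifts} says corresponding struts of $(x_0,a)$ and $(x_0,a')$ are $N$-translates, and it aligns consecutive pairs. From that alone, $D_{\alpha'}(x_0,a')$ and $D_\alpha(x_0,a)$ can a priori differ by a multiple of $L(v)$. The reason is that the elements of $N$ aligning $\alpha$ with its predecessor and with its successor may differ by a power of $hvh^{-1}\in N$, which generates the stabilizer of $\alpha$. To rule this out you would need a single element of $N$ aligning three consecutive struts, with extra care at the end struts where $a$ and $a'$ themselves appear. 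Alternatively, you must bring in the bound $D_{\alpha'}(x_0,a')\le 0.51L(v)$ that comes from minimality of $(x_0,a')$.

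The missing idea is that once you use that minimality bound, no transfer is needed, because $a'=a$. Both $a$ and $a'$ are lifts of $\bar a$ that are minimal with respect to the same $x_0$. So for every axis $\alpha=h\Ax(v)$, the triangle inequality and Lemma \ref{lem:minnearmin} give
$D_\alpha(a,a')\le D_\alpha(a,x_0)+D_\alpha(x_0,a')\le 0.39L(v)+0.51L(v)+O(\M)<L(v)-3\M$.
By Lemma \ref{lem:CM2.1}, two distinct lifts of the same point always admit an axis with projection distance at least $L(v)-3\M$. Hence $a=a'$; this is the same argument as in the proof of Lemma \ref{lem:verifyminimal}. It follows that $D_\alpha(a',x_0)\le 0.39L(v)$ exactly, and your final bounded-geodesic-image step completes the proof as in the paper. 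The strut-sequence machinery is unnecessary here.
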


\textbf{Proof.} For any $\bar{y}\in S_{\bar{x}_0}$, there exists some $\bar{x}\in X/N$ and a geodesic $[\bar{x}_0,\bar{x}]$ such that $\bar{y}$ lies on this geodesic and $\bar{x}\in S'_{\bar{x}_0}$. Let $x$ be defined so that $[x_0,x]$ lifts $[\bar{x}_0,\bar{x}]$, and let $y$ be the unique lift of $\bar{y}$ on this geodesic. By minimality, for all axes $h\Ax(v)\in \A$, $D_{h\Ax(v)}(x_0,x)\leq 0.51L(v)$. As $\bar{x}\in S'_{\bar{x}_0}$, there exists some lift $x'$ of $\bar{x}$ such that $x_0$ and $x'$ are minimal, and such that for all $h\Ax(v)\in \A$, $D_{h\Ax(v)}(x_0,x')\leq 0.39L(v)$. 

From the triangle inequality, we conclude that for all $h\Ax(v)\in \A$, $D_{h\Ax(v)}(x,x')\leq 0.95L(v)$. From Lemma \ref{lem:CM2.1}, we must have $x=x'$. Now, let $h\Ax(v)\in \A$, so that $D_{h\Ax(v)}(x_0,x)\leq 0.39L(v)$. From the Bounded Geodesic Image Theorem, the diameter of $\pi_{h\Ax(v)}([x_0,x])$ is at most $0.395L(v)$. As $y\in[x_0,x]$, we conclude that $D_{h\Ax(v)}(x_0,y)\leq 0.4L(v)$. $\square$

\

\begin{lemma}

There exists a well-defined bi-Lipschitz map $\iota: S_{\bar{x}_0}\to X$.

\end{lemma}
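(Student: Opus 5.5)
The map $\iota$ will be defined by choosing lifts relative to a fixed lift $x_0$ of $\bar{x}_0$. For $\bar{x}\in S_{\bar{x}_0}$, Lemma \ref{lem:branchconvex} provides a lift $x$ that is minimal with respect to $x_0$ and satisfies $D_{h\Ax(v)}(x,x_0)\leq 0.4L(v)$ for every axis; set $\iota(\bar{x})=x$.

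\textbf{Well-definedness.} Suppose $x,x'$ are two such lifts of $\bar{x}$. By Lemma \ref{lem:complex} and the triangle inequality, $d_{h\Ax(v)}(x,x')\leq 0.8L(v)+O(\M)<0.99L(v)$ for every axis. If $x\neq x'$, then $x'=hx$ for some $h\in N\setminus\{e\}$, and Lemma \ref{lem:CM2.1} gives an axis with $d_{h\Ax(v)}(x,x')\geq L(v)-3\M$, a contradiction. Hence $x=x'$. This is the same argument as in the proof of Lemma \ref{lem:branchconvex}.

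\textbf{Lipschitz bounds.} The inequality $d_{X/N}(\bar{x},\bar{y})\leq d_X(\iota(\bar{x}),\iota(\bar{y}))$ is automatic, since $\pi_{X/N}$ is $1$-Lipschitz. For the reverse inequality, take $\bar{x},\bar{y}\in S_{\bar{x}_0}$ with lifts $x=\iota(\bar{x})$ and $y=\iota(\bar{y})$. Then for every axis,
\[
D_{h\Ax(v)}(x,y)\leq D_{h\Ax(v)}(x,x_0)+D_{h\Ax(v)}(x_0,y)\leq 0.8L(v),
\]
so $x$ and $y$ are $0.8$-near-minimal. Now choose a lift $y'$ of $\bar{y}$ that is minimal with respect to $x$, so that $d_X(x,y')=d_{X/N}(\bar{x},\bar{y})$. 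Apply the distance comparison of Lemma \ref{lem:nearmindistbound} to the two near-minimal pairs $(x,y)$ and $(x,y')$, with $\tau=0.8$ and $\theta$ close to $0.17$, which is allowed since $\theta+\tau<0.98$. This gives
\[
d_X(x,y)\leq 1.01\frac{\tau}{\theta}\,d_X(x,y')\leq C\,d_{X/N}(\bar{x},\bar{y})
\]
for a uniform constant $C$.

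\textbf{Main obstacle.} The delicate point is that Lemma \ref{lem:nearmindistbound} is only a coarse statement. Its proof controls lengths strut by strut up to additive errors of order $\M$ per strut, together with the $10\M$ interstitial comparisons. For pairs at distance small compared with $\M$, this gives only a coarse Lipschitz bound, not a genuine one.

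The plan for small scales is to prove directly that $x$ and $y$ are themselves minimal whenever $d_{X/N}(\bar{x},\bar{y})$ is small relative to $\min_v L(v)$, so that $d_X(x,y)=d_{X/N}(\bar{x},\bar{y})$ exactly. Take $y'$ minimal with respect to $x$. If $y'\neq y$, Lemma \ref{lem:CM2.1} produces an axis with $d_{h\Ax(v)}(y,y')\geq 0.99L(v)$. On the other hand, $d_{h\Ax(v)}(x,y')\leq d_X(x,y')$ is tiny, while $d_{h\Ax(v)}(x,y)\leq 0.81L(v)$. The triangle inequality then bounds $d_{h\Ax(v)}(y,y')$ well below $0.99L(v)$, a contradiction, so $y=y'$.

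\textbf{Combining the two regimes.} At large scales, where $d_{X/N}(\bar{x},\bar{y})\gtrsim\M$, the additive errors in Lemma \ref{lem:nearmindistbound} are absorbed into the multiplicative constant. At small scales, the minimality argument above gives equality of distances. Together these yield a uniform bi-Lipschitz bound for $\iota$.
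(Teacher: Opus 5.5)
Your proposal is correct and follows the paper's proof: the same choice of minimal lift, the same well-definedness argument via Lemma \ref{lem:branchconvex}, the triangle inequality for $D$, and Lemma \ref{lem:CM2.1}, and the same upper bound obtained from near-minimality and Lemma \ref{lem:nearmindistbound} (the paper uses the default $\theta=0.1$ to get the constant $10$). The paper does not include your separate small-scale argument, which shows $\iota(\bar{x})$ and $\iota(\bar{y})$ are actually minimal when $d_{X/N}(\bar{x},\bar{y})$ is small relative to $\min_v L(v)$; it simply takes the multiplicative bound of Lemma \ref{lem:nearmindistbound} at face value, and your extra step is a valid safeguard in case that lemma only holds coarsely.
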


\textbf{Proof.} Given $\bar{x}\in S_{\bar{x}_0}$, define $\iota(\bar{x})$ to be a lift $x$ of $\bar{x}$ such that $x$ and $x_0$ are minimal. For any two such minimal lifts $x_1,x_2$, we have by Lemma \ref{lem:branchconvex} that for any $h\Ax(v)\in \A$, $D_{h\Ax(v)}(x_0,x_i)\leq 0.4L(v)$ for $i\in\{1,2\}$, so by the triangle inequality $D_{h\Ax(v)}(x_1,x_2)\leq 0.8L(v)$. But then Lemma \ref{lem:CM2.1} forces $x_1=x_2$, and we conclude that $\iota$ is well-defined. 

An analogous argument shows that for $\bar{x},\bar{y}\in S_{\bar{x}_0}$, $x:=\iota(\bar{x})$ and $y:=\iota(\bar{y})$ are near-minimal. As a result, by Lemma \ref{lem:nearmindistbound},
$$d_X(x,y)\leq 10d_{X/N}(\bar{x},\bar{y}).$$
On the other hand, certainly $d_{X/N}(\bar{x},\bar{y})\leq d_X(x,y)$. We obtain the desired result. $\square$

\

We note that for any $\bar{z}=g\bar{x}_0$, we have by equivariance of projections that $gS_{\bar{x}_0}=S_{\bar{z}}$. We therefore obtain the following corollary.

\begin{corollary}
\label{cor:quotientquasitree}

For any $\bar{z}\in X/N$, the set $S_{\bar{z}}$ is connected, quasi-convex, and quasi-isometric to a subset of $X$. As a result, $\partial S_{\bar{z}}:=\Lambda(S_{\bar{z}})$ is closed in $\partial (X/N)$ and satisfies
$$\dim(\partial S_{\bar{z}})\leq\dim(\partial X).$$

\end{corollary}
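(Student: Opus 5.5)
The corollary rests on the bi-Lipschitz map $\iota:S_{\bar{z}}\to X$ from the preceding lemma, transported from $\bar{x}_0$ to $\bar{z}$ by the translation $gS_{\bar{x}_0}=S_{\bar{z}}$. We check the four claims in order.

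\emph{Connectedness.} By Definition \ref{def:starset}, $S_{\bar{z}}$ is a union of geodesics $[\bar{z},\bar{x}]$ with $\bar{x}\in S'_{\bar{z}}$. Each of these is a connected arc containing $\bar{z}$, so their union is connected. As a subgraph, it is path-connected through the common point $\bar{z}$.

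\emph{Quasi-convexity.} This follows from Lemma \ref{lem:starset}, applied with $A=\{\bar{z}\}$ and $B=S'_{\bar{z}}$. The set $A$ is nonempty of finite diameter, and $X/N$ is Gromov hyperbolic by Lemma \ref{lem:quotienthyp}.

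\emph{Quasi-isometric embedding.} The previous lemma (translated by $g$) gives $d_{X/N}(\bar{x},\bar{y})\le d_X(\iota\bar{x},\iota\bar{y})\le 10\,d_{X/N}(\bar{x},\bar{y})$. So $\iota$ is bi-Lipschitz onto its image. Since $S_{\bar{z}}$ is quasi-convex and connected, its induced path metric is quasi-isometric to the restriction of $d_{X/N}$. Hence $(S_{\bar{z}},d_{S_{\bar{z}}})$ quasi-isometrically embeds in $X$.

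\emph{Closedness of the limit set.} Apply Lemma \ref{lem:limittopology} to the connected quasi-convex subgraph $S_{\bar{z}}$. This identifies $\Lambda(S_{\bar{z}})$ with $\partial S_{\bar{z}}$, shows it is closed in $\partial(X/N)$, and shows the intrinsic visual topology agrees with the subspace topology.

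\emph{Dimension bound.} This is the main step. Because $S_{\bar{z}}$ is a hyperbolic geodesic space, the quasi-isometric embedding $\iota$ into the hyperbolic space $X$ extends to a topological embedding of boundaries $\partial\iota:\partial S_{\bar{z}}\to\partial X$. The standard proof takes quasi-geodesic rays to quasi-geodesic rays, which stay Morse-close, so Gromov products are coarsely preserved up to multiplicative and additive constants. This gives injectivity and continuity of $\partial\iota$ and of its inverse on the image.

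The point needing care is that $\iota$ is defined only on the subset $S_{\bar{z}}$ and not on all of $X/N$. One must check that rays in $S_{\bar{z}}$ (rather than in $X/N$) represent every point of $\Lambda(S_{\bar{z}})$; quasi-convexity supplies this.

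Granting the embedding, $\partial S_{\bar{z}}$ is homeomorphic to a subspace of $\partial X$. Topological dimension is monotone under subspaces of separable metric spaces, as noted after Definition \ref{def:topdim}. Therefore $\dim(\partial S_{\bar{z}})\le\dim(\partial X)$.
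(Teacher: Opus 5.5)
Your outline matches the paper's intended derivation. The paper gives no separate proof: it obtains the corollary from the bi-Lipschitz map $\iota$, the equivariance $gS_{\bar{x}_0}=S_{\bar{z}}$, and Lemmas \ref{lem:starset} and \ref{lem:limittopology}. Your connectedness, quasi-convexity and closedness steps are fine.

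One intermediate claim is false as stated, namely ``since $S_{\bar{z}}$ is quasi-convex and connected, its induced path metric is quasi-isometric to the restriction of $d_{X/N}$.'' Quasi-convexity controls ambient geodesics, not paths inside the set. In a star set, two geodesics issuing from $\bar{z}$ can stay $O(\delta)$-close for a long time with no other geodesic from $\bar{z}$ bridging them. A model is the two rails of a ladder whose rungs have length $3$, with $\bar{z}$ adjacent to the first vertex of each rail. Points at distance $t$ from $\bar{z}$ on the two geodesics are then $O(\delta)$ apart in $X/N$ but about $2t$ apart in the union. The intrinsic boundary of the union can even have more points than $\Lambda(S_{\bar{z}})$, which is exactly what breaks your extension $\partial\iota$ on the intrinsic boundary. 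Nothing about $S'_{\bar z}$ rules this out for general $X$. Lemma \ref{lem:limittopology}, read with the intrinsic path metric, tacitly needs the same assumption, so use it only for the closedness of $\Lambda(S_{\bar z})$.

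The step is dispensable. ``Quasi-isometric to a subset of $X$'' only requires the restricted metric, for which $\iota$ is already bi-Lipschitz. For the boundary, work with the restricted metric directly:
\begin{enumerate}
\item A $d_{X/N}$-geodesic between two points of $S_{\bar z}$ lies in $\N_K(S_{\bar z})$. Choosing nearest points on it gives a coarse geodesic sequence in $S_{\bar z}$.
\item $\iota$ carries this sequence to a quasi-geodesic sequence in $X$.
\item The Morse lemma in $X$ then makes $(\bar x\cdot\bar y)_{\bar z}$ and $(\iota\bar x\cdot\iota\bar y)_{\iota\bar z}$ comparable up to multiplicative and additive constants.
\item Hence $\iota$ induces a homeomorphism from $\Lambda(S_{\bar z})$, with the subspace topology from $\partial(X/N)$, onto a subspace of $\partial X$. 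Monotonicity of dimension finishes the argument.
\end{enumerate}
Alternatively, replace $S_{\bar z}$ by the connected subgraph $\N_K(S_{\bar z})$. Its path metric is quasi-isometric to $d_{X/N}$, it has the same limit set, and $\iota$ extends to it coarsely via nearest points. With either fix, the rest of your argument goes through.
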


We now introduce the main machinery. To do so, we first must define \emph{stable struts}.

\begin{definition}
\label{def:stablestruts}

Let $\xi\in\partial (X/N)$, and let $\bar{r}$ be a $(1,20L)$-quasi-geodesic ray originating from $\bar{x}_0$ representing $\xi$. Let $\bar{h\Ax(v)}\in \A/N$. We say that $\bar{h\Ax(v)}$ is a \emph{stable strut} for $\bar{r}$ if there exists a quantity $t_0\geq 0$ satisfying the following two conditions.
\begin{enumerate}
\item For all $t\geq t_0$, $\bar{h\Ax(v)}\in \bar{\mathcal{S}}(\bar{x}_0,\bar{r}(t))$
\item For all $t\geq t_0$, $d_{X/N}(\bar{r}(t),\bar{h\Ax(v)})\geq 1000L$.
\end{enumerate}

\end{definition}

\begin{definition}
\label{def:boundarydecomp}

Define 
$$S:=\bigcup_{\bar{z}\in X/N}S_{\bar{z}},\ \partial S:=\bigcup_{\bar{z}\in X/N} \partial S_{\bar{z}}.$$
Also define $\partial T=\partial S^C$. For each $\bar{h\Ax(v)}\in \A/N$, define $\partial T_{\bar{h\Ax(v)}}$ to be the set of all $\xi\in \partial T$ for which $\bar{h\Ax(v)}$ is a stable strut.

\end{definition}

An illustration of the construction used in the following lemma is provided as Figure \ref{fig:stablestrut}.

\begin{lemma}
\label{lem:finitestablestruts}

Let $\xi\in \partial (X/N)$, and assume that $\xi$ is represented by some $(1,20L)$-quasi-geodesic ray $\bar{r}$ originating from $\bar{x}_0$ such that $\bar{r}$ has finitely many stable struts. Then there exists some $\bar{z}\in X/N$ such that $\xi\in \partial S_{\bar{z}}$.

\end{lemma}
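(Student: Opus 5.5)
The plan is to use the last stable strut as a new basepoint and show that beyond it the ray makes no large turns, so that its tail lies in a single non-turning locus $S_{\bar z}$. Let $\bar{h_1\Ax(v_1)},\dots,\bar{h_m\Ax(v_m)}$ be the finitely many stable struts of $\bar{r}$. Take $t_0$ large enough that both conditions of Definition \ref{def:stablestruts} hold for all of them when $t\ge t_0$. If $m=0$, set $\bar z=\bar x_0$. Otherwise let $\bar z$ be a point of the last stable strut $\bar{h_m\Ax(v_m)}$ (in the $<_{\bar\S}$ order, which stabilizes since these struts persist in all $\bar\S(\bar x_0,\bar r(t))$, $t\ge t_0$). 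Choose $\bar z$ to be the projection, along that strut, of the later points of the ray; concretely, lift to near-minimal $x_0, r(t)$ and take the image of $\Pi_{h_m\Ax(v_m)}(r(t))$. By the second claim of Lemma \ref{lem:strutlifts} and the forcing property of Lemma \ref{lem:complex}, this image is coarsely independent of $t\ge t_0$. The goal is then to show $\bar r(t)\in S'_{\bar z}$, or at least lies on a geodesic from $\bar z$ to a point of $S'_{\bar z}$, for all $t$ beyond some $t_1$. Then $\xi\in\Lambda(S_{\bar z})=\partial S_{\bar z}$.

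The key step is to control $\bar\S(\bar z,\bar r(t))$. Using Lemma \ref{lem:paste} and Lemma \ref{lem:strutfacts}(4), the strut sequence from $z$ to a near-minimal lift $r(t)$ is the suffix of $\S(x_0,r(t))$ after $h_m\Ax(v_m)$, together possibly with an initial correction on $h_m\Ax(v_m)$ itself. The choice of $\bar z$ as the stabilized projection makes $D_{h_m\Ax(v_m)}(z,r(t))$ small. Any strut $\alpha$ in this suffix is non-stable. I will argue that a strut with $D_\alpha(z,r(t))\ge 0.39L(v)$ at some large time $t$ either persists for all later times, and so is stable (the distance condition holds since the quasi-geodesic ray moves away), or is abandoned. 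Abandonment happens only when the ray comes within $1000L$ of $\bar\alpha$ and rotates around it. The main obstacle is ruling out infinitely many such transient large turns. I would try to show that each abandoned strut forces $\bar r$ to fellow-travel $\bar\alpha$ for about $0.39L(v)$ and then leave it. Lemma \ref{lem:nearmindistbound} and the Morse lemma in $X/N$ (Lemma \ref{lem:quotienthyp}) then make the later portion of $\bar r$ route past $\bar\alpha$ with $D_\alpha$ eventually frozen. If $D_\alpha$ freezes at a value $\ge 0.1L$, then $\alpha$ is a stable strut, contrary to assumption. Hence $D_\alpha(z,r(t))$ eventually drops below $0.39L(v)$ and stays there.

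A pointwise-eventual bound is not enough, since there may be infinitely many $\alpha$. I therefore need a uniform statement: for each $t$ there is $t'\ge t$ such that the minimal lift $r(t')$ relative to $z$ has all $D_\alpha(z,r(t'))\le 0.39L(v)$. Then $\bar r(t')\in S'_{\bar z}$, and earlier points of $\bar r$ lie near geodesics $[\bar z,\bar r(t')]$, hence in a bounded neighborhood of $S_{\bar z}$. To get this, pick $t'$ at a moment when the ray is far from every quotient axis in the current strut sequence past $\bar z$. Such moments exist because consecutive struts are separated and the ray cannot stay within $1000L$ of axes forever without creating stable struts. At such a moment each current strut is either already frozen, and hence stable (excluded), or has small projection. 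Finally, $\Lambda$ is insensitive to bounded neighborhoods and $S_{\bar z}$ is quasi-convex by Corollary \ref{cor:quotientquasitree}, so a subsequence $\bar r(t'_k)$ in $S_{\bar z}$ converges to $\xi$, giving $\xi\in\partial S_{\bar z}$.
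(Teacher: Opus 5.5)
\textbf{There is a genuine gap in your uniform step.} Your last paragraph needs arbitrarily late times $t'$ at which the endpoint $\bar{r}(t')$ itself has no struts past $\bar{z}$. You justify this only by saying that consecutive struts are separated, and that the ray cannot stay near current struts forever without creating stable ones. Neither claim holds as stated:
\begin{itemize}
\item Consecutive struts have bounded \emph{overlap}, but the interstitial gap between them can be essentially zero.
\item The transient turns you describe really occur. A $(1,20L)$-quasi-geodesic can travel about $0.95L(v)$ around a quotient loop $\bar{\alpha}$ and exit near where it entered, or detour around it and come back, because the defect is far below $20L$. Such turns can recur forever without producing a stable strut.
\end{itemize}
So nothing in your sketch prevents $\bar{\S}(\bar{z},\bar{r}(t'))$ from containing a current, eventually abandoned strut at every late time. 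Moreover, by your own freezing remark, a time at which $\bar{r}(t')$ is far from every current strut past $\bar{z}$ is a time at which that strut sequence is trivial, i.e.\ essentially $\bar{r}(t')\in S'_{\bar{z}}$. The existence of such moments therefore restates what you are trying to prove. The pointwise argument in your second paragraph does not upgrade to it.

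\textbf{The missing idea is to test a point lagging behind the endpoint, not the endpoint itself.} First make your freezing observation quantitative. Suppose $\bar{r}$ has moved about $1000L$ past a strut of $\bar{\S}(\bar{x}_0,\bar{r}(s))$. Since $\bar{r}$ is a $(1,20L)$-quasi-geodesic, every geodesic $[\bar{r}(s),\bar{r}(t^*)]$ with $t^*\geq s$ stays far from that strut. Lemma~\ref{lem:strutlifts} and the Bounded Geodesic Image Theorem then keep it a strut for all later times, so it is stable. Hence non-stable struts are confined near the current endpoint, and transient turns never need to be ruled out.

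The paper exploits exactly this:
\begin{itemize}
\item With $t_0$ as in Definition~\ref{def:stablestruts}, every stable strut lies within $d_{X/N}(\bar{x}_0,\bar{r}(t_0))+100L$ of $\bar{x}_0$.
\item Take $\bar{z}$ on $[\bar{x}_0,\bar{r}(t_0+10000L)]$ at distance $4000L$ before its endpoint.
\item For $t\geq t_0+10000L$, take $\bar{z}_t$ on $[\bar{x}_0,\bar{r}(t)]$ at distance $2000L$ before $\bar{r}(t)$.
\end{itemize}
Suppose minimal lifts had $D_{h\Ax(v)}(z,z_t)>0.39L(v)$. Choose a lift of $\bar{x}_0$ near-minimal to both (Lemma~\ref{lem:nearmintriple}). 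Then $\overline{h\Ax(v)}$ is a strut from $\bar{x}_0$ to $\bar{z}$ or to $\bar{z}_t$ with $D\geq 0.19L(v)$. A thin-quadrilaterals argument places it at distance at least $d_{X/N}(\bar{x}_0,\bar{r}(t_0))+2000L$ from $\bar{x}_0$. Yet it lies far behind the current endpoint, so it is frozen and hence stable, which contradicts where the stable struts live.

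Thus $\bar{z}_t\in S'_{\bar{z}}$ for all large $t$. Since $\bar{z}_t$ stays within bounded distance of $\bar{r}$, you get $\xi\in\Lambda(S_{\bar{z}})$. This route also makes your choice of $\bar{z}$ on the last stable strut, and the coarse independence of that projection, unnecessary.
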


\textbf{Proof.} Note that key objects in this proof are illustrated in Figure \ref{fig:stablestrut}.

By assumption, there exists some finite list $\bar{h_1\Ax(v_1)},...,\bar{h_K\Ax(v_K)}$ of stable struts for $\bar{r}$, and some $t_0$ such that for all $t\geq t_0$, we have $d_{X/N}(\bar{h_i\Ax(v_i)},\bar{r}(t))\geq 1000L$ and $\bar{h_i\Ax(v_i)}\in \bar{\S}(\bar{x}_0,\bar{r}(t))$ for $i\in\{1,...,K\}$. As a consequence of the fact that any $\bar{h_i\Ax(v_i)}$ must lie within $50\delta$ of $[\bar{x}_0,\bar{r}(t_0)]$ and each $\bar{h_i\Ax(v_i)}$ has diameter at most $2L$, we have that 
\begin{equation}\label{eq:strutnearbound}d_{X/N}(\bar{x}_0,\bar{h_i\Ax(v_i)})\leq d_{X/N}(\bar{x}_0,\bar{r}(t_0))+100L.\end{equation}
Now, let $\bar{y}=\bar{r}(t_0+10000L)$, and construct a geodesic $[\bar{x}_0,\bar{y}]$. Let $\bar{z}$ be a point on this geodesic such that $d_{X/N}(\bar{z},\bar{y})= 4000L$; certainly such a point exists. We will claim that $\xi\in \partial S_{\bar{z}}$. 

Let $t\geq t_0+10000L$, and let $\bar{y}_t=\bar{r}(t)$. Construct a geodesic $[\bar{x}_0,\bar{y}_t]$, and let $\bar{z}_t$ be a point on this geodesic such that $d_{X/N}(\bar{z}_t,\bar{y}_t)=2000L$. Our goal is to show that $\bar{z}_t\in S_{\bar{z}}$, and in fact $\bar{z}_t\in S'_{\bar{z}}$. Assume not, so there exist minimal respective lifts $z_t$ and $z$ of $\bar{z}_t$ and $\bar{z}$ and some axis $h\Ax(v)\in \A$ with $D_{h\Ax(v)}(z,z_t)>0.39L(v)$. As $\bar{r}$ is a $(1,20L)$-quasi-geodesic, we may apply a standard thin-quadrilaterals argument and the fact that $\bar{h\Ax(v)}$ has diameter at most $2L$ to obtain
\begin{equation}\label{eq:strutfarbound}d_{X/N}(\bar{x}_0,\bar{h\Ax(v)})\geq d_{X/N}(\bar{x}_0,\bar{r}(t_0))+2000L\end{equation}

We may now apply Lemma \ref{lem:nearmintriple} to find a lift $x_0$ of $\bar{x}_0$ such that $x_0$ is near-minimal with respect to both $z$ and $z_t$. From the triangle inequality, we must have that either $D_{h\Ax(v)}(x_0,z)$ or $D_{h\Ax(v)}(x_0,z_t)$ is at least $0.19L(v)$. Assume for now that $D_{h\Ax(v)}(x_0,z)\geq 0.19L(v)$, so $\bar{h\Ax(v)}\in \bar{\S}(\bar{x}_0,\bar{z})$. In particular, $\bar{h\Ax(v)}$ must lie within $50\delta$ of $[\bar{x}_0,\bar{z}]$, and hence 
$$d_{X/N}(\bar{x}_0,\bar{h\Ax(v)})\leq d_{X/N}(\bar{x}_0,\bar{y})-1500L$$

As a consequence of Lemma \ref{lem:strutlifts} with $\theta=0.15$, if $z'$ is a separate lift of $\bar{z}$ such that $[x_0,z']$ lifts $[\bar{x}_0,\bar{z}]\subset [\bar{x}_0,\bar{y}]$, then there exists some lift $h'\Ax(v')$ with $D_{h'\Ax(v')}(x_0,z')\geq 0.15L(v')$. Note that $x_0$ and $z'$ are necessarily minimal; we may furthermore extend $[x_0,z']$ to $[x_0,y']$ for some lift $y'$ of $\bar{y}$ such that $x_0$ and $y$ are minimal. The Bounded Geodesic Image Theorem plus $[\bar{x}_0,\bar{z}]\subset [\bar{x}_0,\bar{y}]$ then implies $D_{h'\Ax(v')}(x_0,y')\geq 0.14L(v')$. So $\bar{h'\Ax(v')}\in \S_{0.14}(\bar{x}_0,\bar{y})$

Now, consider some $\bar{y}^*=\bar{r}(t^*)$ with $t^*\geq t_0+10000L$. As $\bar{r}$ is a $(1,20L)$-quasi-geodesic based at $\bar{x}_0$, we have that $[\bar{y},\bar{y}^*]$ does not pass within $d_{X/N}(\bar{x}_0,\bar{y})-200L$ of $\bar{x}_0$. In particular, this geodesic does not pass within $1200L$ of $\bar{h\Ax(v)}$. We may now find respective lifts $y^*$, $x_0''$, and $y''$ of $\bar{y}^*$, $\bar{x}_0$, and $\bar{y}$ such that $y^*$ and $y''$ are minimal, $y^*$ and $x_0''$ are near-minimal, and $y''$ and $x_0''$ are near-minimal. From Lemma $\ref{lem:strutlifts}$, there is some lift $h''\Ax(v'')$ of $\bar{h\Ax(v)}$ such that $D_{h''\Ax(v'')}(x_0'',y'')\geq 0.14L(v)$. But as $[y'',y^*]$ can be chosen to not pass within $1200L$ of $h''\Ax(v'')$ by the construction above, we may apply the Bounded Geodesic Image Theorem and conclude after projection that $\bar{h\Ax(v)}\in \bar{\S}(\bar{x}_0,\bar{y}^*)$. As $t^*$ was chosen arbitrarily, we conclude that $\bar{h\Ax(v)}$ is a stable strut for $\bar{r}$. Combining Equations \ref{eq:strutnearbound} and \ref{eq:strutfarbound} now yields a contradiction.

We conclude that $\bar{z}_t\in S_{\bar{z}}$ for all $t\geq t_0+10000L$. As the sequence $\bar{z}_t$ $5000L$-fellow-travels $\bar{r}$, it must serve as a defining sequence for $\xi$. We conclude that $\xi\in \Lambda(S_{\bar{z}})$. $\square$

\begin{figure}[htbp]
    \centering 
    \includegraphics[width=0.6\textwidth]{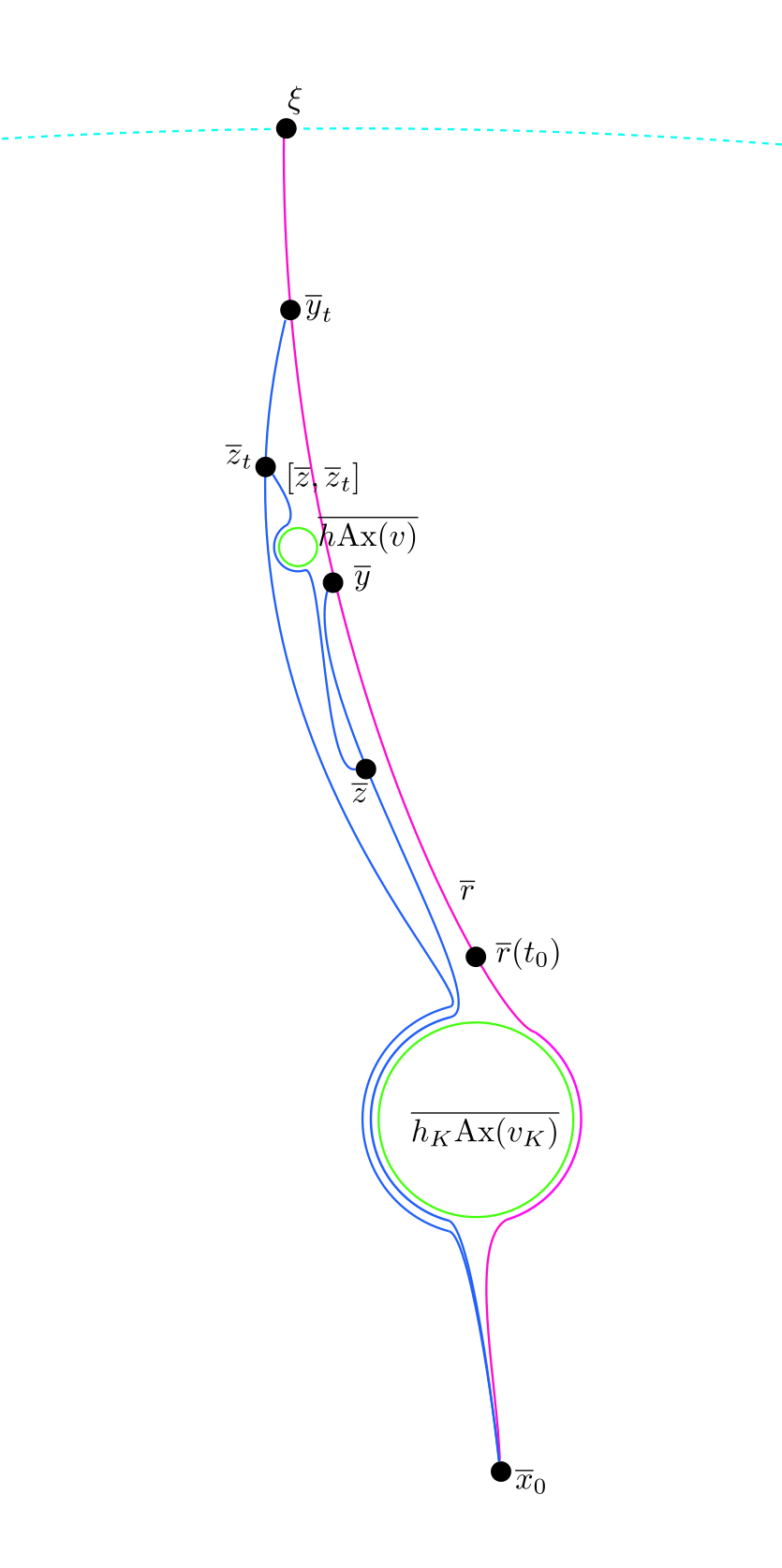} 
    \caption{A schematic of the construction used to prove Lemma \ref{lem:finitestablestruts}, showing the relevant objects under projection to $X/N$. Note that the strut $\overline{h\Ax(v)}$ is proved to in fact not exist over the course of the proof.}
    \label{fig:stablestrut}
\end{figure}

\begin{corollary}
\label{cor:infinitestruts}

If $\xi\in \partial T$, then $\xi$ has infinitely many stable struts.

\end{corollary}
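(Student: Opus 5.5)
This corollary is the contrapositive of Lemma \ref{lem:finitestablestruts}, so the plan is a short argument by contradiction. Let $\xi\in\partial T$, so that $\xi\notin\partial S_{\bar{z}}$ for every $\bar{z}\in X/N$. Suppose that $\xi$ has only finitely many stable struts. We want to produce a $(1,20L)$-quasi-geodesic ray $\bar{r}$ from $\bar{x}_0$ that represents $\xi$ and has finitely many stable struts. Lemma \ref{lem:finitestablestruts} then gives some $\bar{z}$ with $\xi\in\partial S_{\bar{z}}\subset\partial S$, which contradicts $\xi\in\partial T=\partial S^C$.

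The one point needing care is what ``$\xi$ has infinitely many stable struts'' means. Definition \ref{def:stablestruts} defines stable struts relative to a chosen ray $\bar{r}$, not relative to $\xi$ itself. There are two readings.

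\begin{itemize}
\item If the statement means that \emph{some} representing ray has infinitely many stable struts, the contrapositive of Lemma \ref{lem:finitestablestruts} only shows that \emph{every} representing ray has infinitely many. That is stronger, so nothing more is needed.
\item If the statement means that every representing ray has infinitely many stable struts, it is again exactly the contrapositive of Lemma \ref{lem:finitestablestruts}, applied to an arbitrary $(1,20L)$-quasi-geodesic ray from $\bar{x}_0$ representing $\xi$. Such rays exist by Lemma \ref{lem:quotienthyp} and the remarks preceding Lemma \ref{lem:mainconstruction}.
\end{itemize}

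In either reading, fix any such ray $\bar{r}$ representing $\xi$. If $\bar{r}$ had finitely many stable struts, Lemma \ref{lem:finitestablestruts} would place $\xi$ in $\partial S$, which is impossible. Hence every representing ray has infinitely many stable struts.

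Optionally, I would also remark that the set of stable struts is essentially independent of the ray. Two $(1,20L)$-quasi-geodesic rays from $\bar{x}_0$ representing the same point eventually $O(L)$-fellow-travel, by the Morse lemma in the hyperbolic space $X/N$. Combined with the well-definedness of $\bar{\S}$ from Lemma \ref{lem:strutlifts}, a strut far from the rays appears stably for one ray exactly when it does for the other. This step is not required for the corollary as stated.

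I expect no real obstacle here. The only care needed is being explicit that the hypothesis of Lemma \ref{lem:finitestablestruts} concerns a particular representing ray.
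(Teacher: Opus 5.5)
Your proposal is correct and matches the paper, which gives no separate proof and treats the corollary as the immediate contrapositive of Lemma \ref{lem:finitestablestruts}. Your observation that the contrapositive gives infinitely many stable struts for \emph{every} representing $(1,20L)$-quasi-geodesic ray, and hence for some ray, is the right reading; that is the form used later in Lemma \ref{lem:lockin}.
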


We now investigate the $\partial T_{\bar{h\Ax(v)}}$ sets. An illustration of the construction used in the following lemma is provided as Figure \ref{fig:lockin}.

\begin{lemma}
\label{lem:lockin}

For all $\bar{h\Ax(v)}\in \A/N$, the set $\partial T_{\bar{h\Ax(v)}}$ is clopen in the subspace topology on $\partial T\subset \partial (X/N)$. Furthermore, for any $\xi\in \partial T$, there exists a collection of $\partial T_{\bar{h\Ax(v)}}$ forming a basis at $\xi$ in the subspace topology on $\partial (X/N)$.

\end{lemma}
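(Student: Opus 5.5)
The plan is to show first that membership of $\xi\in\partial T$ in $\partial T_{\bar{h\Ax(v)}}$ is detected by a neighborhood condition. Fix $\xi\in\partial T$ and a $(1,20L)$-quasi-geodesic ray $\bar r$ from $\bar x_0$ representing it, and let $\bar\alpha=\bar{h\Ax(v)}$ be a stable strut for $\bar r$ with threshold $t_0$. We aim for the following \emph{lock-in} claim: there is $R$, depending on $t_0$ and $d_{X/N}(\bar x_0,\bar\alpha)$, such that every $\chi\in\partial T$ with $(\xi\cdot\chi)_{\bar x_0}\geq R$ also has $\bar\alpha$ as a stable strut. To prove it, take a representing ray $\bar r'$ for $\chi$. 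By Lemma \ref{lem:quotienthyp} and the Morse Lemma, $\bar r'(t)$ stays within $O(L)$ of $\bar r(t)$ for $t\le R-O(L)$. Choose $t_1\geq t_0$ with $d(\bar r(t_1),\bar\alpha)\ge 2000L$, and $R\gg t_1$.

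For later times $t$, we argue as in the last step of the proof of Lemma \ref{lem:finitestablestruts}. Take lifts via Lemma \ref{lem:nearmintriple} so that $\bar r(t_1)$ and $\bar r'(t)$ are minimal and $\bar x_0$ is near-minimal to both. A geodesic $[\bar r(t_1),\bar r'(t)]$ stays at distance at least $1000L$ from $\bar\alpha$, since $\bar\alpha$ lies within $O(L)$ of $[\bar x_0,\bar r(t_1)]$ near its beginning. The Bounded Geodesic Image Theorem (Lemma \ref{lem:BGI}) then preserves the projection of the lift of $\bar\alpha$, and Lemma \ref{lem:strutlifts} (with $\theta$ slightly below $0.1$, as in Lemma \ref{lem:finitestablestruts}) gives $\bar\alpha\in\bar\S(\bar x_0,\bar r'(t))$ for all large $t$. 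The distance condition in Definition \ref{def:stablestruts} is immediate for large $t$. Hence $\chi\in\partial T_{\bar\alpha}$, which shows that $\partial T_{\bar\alpha}$ is open in $\partial T$.

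For closedness, suppose $\xi\in\partial T\setminus\partial T_{\bar\alpha}$. We want a neighborhood of $\xi$ disjoint from $\partial T_{\bar\alpha}$. By Corollary \ref{cor:infinitestruts}, $\xi$ has infinitely many stable struts, so we can choose a stable strut $\bar\beta$ of $\xi$ with $d(\bar x_0,\bar\beta)\ge d(\bar x_0,\bar\alpha)+10^4L$. By the lock-in claim, points $\chi$ close to $\xi$ also have $\bar\beta$ as a stable strut. If such a $\chi$ also had $\bar\alpha$ as a stable strut, we would examine the quotient strut sequence $\bar\S(\bar x_0,\bar r'(t))$, which is well-defined by Lemma \ref{lem:strutlifts}. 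It is totally ordered, $\bar\alpha$ precedes $\bar\beta$ in it, and by Lemma \ref{lem:strutfacts}(4) and Lemma \ref{lem:paste} the prefix preceding $\bar\beta$ depends only on the portion of the ray before $\bar\beta$, where $\bar r'$ fellow-travels $\bar r$. This would force $\bar\alpha\in\bar\S(\bar x_0,\bar r(t))$ for all large $t$, and the far-away condition holds trivially, so $\bar\alpha$ would be a stable strut of $\xi$, a contradiction. Hence $\partial T_{\bar\alpha}$ is clopen.

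For the basis statement, enumerate the stable struts $\bar\alpha_1,\bar\alpha_2,\dots$ of $\xi$; there are infinitely many by Corollary \ref{cor:infinitestruts}, and their distances from $\bar x_0$ tend to infinity. The key step is to show that every $\chi\in\partial T_{\bar\alpha_m}$ satisfies $(\xi\cdot\chi)_{\bar x_0}\ge d(\bar x_0,\bar\alpha_m)-O(L)$. The idea is that a geodesic from $\bar x_0$ to $\bar r'(t)$ must pass within $50\delta$ of $\bar\alpha_m$, as must $[\bar x_0,\bar r(t)]$, and $\bar\alpha_m$ has diameter at most $2L$; the Gromov product bound then follows. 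Combining this with the lock-in claim, the sets $\partial T_{\bar\alpha_m}$ are squeezed between shrinking visual balls around $\xi$, so they form a neighborhood basis at $\xi$ within $\partial T$. The main obstacle is the lock-in step: controlling the strut sequence after concatenating with a far tail, with near-minimal rather than minimal lifts, requires carefully redoing the argument of Lemma \ref{lem:finitestablestruts} with adjusted $\theta,\tau$.
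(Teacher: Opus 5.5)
Your overall architecture matches the paper's: openness via local inheritance of a stable strut, closedness via a farther stable strut $\bar\beta$, and the basis via the diameter bound on $\bar\alpha_m$. The basis paragraph is fine. The gap is in the lock-in step that is supposed to give openness. There you apply the Bounded Geodesic Image Theorem to the lift $\alpha$ of $\bar\alpha$ itself. Because $[r(t_1),r'(t)]$ stays far from $\alpha$, $d_\alpha(r(t_1),r'(t))$ is bounded. Hence $D_\alpha(x_0,r'(t))\geq D_\alpha(x_0,r(t_1))-c$ for an additive constant $c$ independent of $L(v)$.

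That is not enough. Definition \ref{def:stablestruts} requires $\bar\alpha\in\bar{\mathcal{S}}(\bar x_0,\bar r'(t))$ with the fixed cutoff $\theta=0.1$. Being a stable strut of $\xi$ only gives $D_\alpha(x_0,r(t_1))\geq 0.1L(v)$, with no margin, so the value may sit within $c$ of the cutoff. Your remedy of running Lemma \ref{lem:strutlifts} with $\theta$ slightly below $0.1$ only yields $\bar\alpha\in\bar{\mathcal{S}}_{0.099}$, which is not what the definition asks for. Compare Lemma \ref{lem:finitestablestruts}, where the paper starts at $\theta=0.15$ precisely so it has room to lose. A telltale sign is that this step never uses $\xi\in\partial T$. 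Without a further strut, nothing pins the projection of far points to $\alpha$.

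The missing idea, which is the core of the paper's proof, is to lock in the projection to $\alpha$ exactly by forcing through a farther strut, not by BGI on $\alpha$. Since $\xi\in\partial T$, choose a stable strut $\bar\beta$ of $\xi$ much farther from $\bar x_0$ than $\bar\alpha$, so that $\bar\alpha\in\bar{\mathcal{S}}(\bar x_0,\bar\beta)$. Take pairwise near-minimal lifts $x_0,y_\xi,y_\chi$ of $\bar x_0$ and of far points on the two rays, with $y_\xi,y_\chi$ minimal. BGI applied to $\beta$, which is far from $[y_\xi,y_\chi]$, keeps $d_\beta(y_\chi,\alpha)$ of order $0.099$ times the translation length of $\beta$. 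This is far above the constant forcing threshold in Lemma \ref{lem:complex}(8). Forcing then gives $\Pi_\alpha(y_\chi)=\Pi_\alpha(\beta)=\Pi_\alpha(y_\xi)$, so $D_\alpha(x_0,y_\chi)=D_\alpha(x_0,y_\xi)\geq 0.1L(v)$ with no loss at all.

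Your closedness argument already uses this device implicitly, since the prefix property in Lemma \ref{lem:strutfacts} is forcing. There it does no harm that BGI only makes $\bar\beta$ a strut of $\chi$ at a slightly lowered threshold, because forcing transfers $D_\alpha$ exactly. Once you route the openness step through $\bar\beta$ in the same way, the proof goes through.
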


\textbf{Proof.} Note that key objects in the following construction are depicted in Figure \ref{fig:lockin}.

If $\xi\in\partial T$, then there exists a $(1,20L)$-quasi-geodesic ray $\bar{r}$ representing $\xi$ such that $\bar{r}$ has infinitely many distinct stable struts. Furthermore, let $\bar{h'\Ax(v')}$ be a stable strut for $\bar{r}$, and consider some $t$ such that for all $s\geq t$, $d_{X/N}(\bar{r}(s),\bar{h\Ax(v')})\geq 1000L$. Certainly such a $t$ exists. Then for all minimal respective lifts $x_0$ and $r(t)$, of $\bar{x}_0$ and $\bar{r}(t)$, there exists some lift $h'\Ax(v')$ of $\bar{h'\Ax(v')}$ such that, by Lemma \ref{lem:strutfacts} and the proof of Lemma \ref{lem:finitestablestruts}, $D_{h'\Ax(v')}(x_0,r(t))\geq 0.099L(v')$. Therefore $[x_0,r(t)]$ must $100\delta$-fellow-travel $h'\Ax(v')$ for distance at least $0.099L(v')$, and the same must hold after projection to the quotient. By the non-overlapping condition on axes, if $\bar{r}$ possesses infinitely many stable struts, then it must possess stable struts with arbitrary distance from $\bar{x}_0$.

Now, fix $\bar{h\Ax(v)}\in \A/N$, and let $\xi\in T_{\bar{h\Ax(v)}}$. From the preceding paragraph, we may find some $\bar{h'\Ax(v')}$ such that $\bar{h'\Ax(v')}$ is a stable strut for $\xi$, and such that
$$10d_{X/N}(\bar{x}_0,\bar{h\Ax(v)})+10000L\leq d_{X/N}(\bar{x}_0,\bar{h'\Ax(v')})$$
There exists some $t_0$ such that $\bar{h\Ax(v)},\bar{h'\Ax(v')}\in \bar{\mathcal{S}}(\bar{x}_0,\bar{r}(t_0))$, and such that both $\bar{h\Ax(v)}$ and $\bar{h'\Ax(v')}$ have distance at least $1000L$ from $\bar{r}(t_0)$. Applying the distance bound and Lemmas \ref{lem:strutlifts} and \ref{lem:strutfacts} to $[\bar{x}_0,\bar{r}(t_0)]$, we must have that $\bar{h\Ax(v)}$ comes before $\bar{h'\Ax(v')}$ in $\bar{\mathcal{S}}(\bar{x}_0,\bar{r}(t_0))$, and so $\bar{h\Ax(v)}\in \bar{\mathcal{S}}(\bar{x}_0,\bar{h'\Ax(v')})$.

Choose a constant
$$\mathcal{L}\geq d_{X/N}(\bar{x}_0,\bar{h'\Ax(v')})+10000L$$
and consider the neighborhood
$$\{\chi\in \partial T: (\chi\cdot\xi)_{\bar{x}_0}\geq \mathcal{L}\}.$$
For any such $\chi$, let $\bar{r}'$ be a $(1,20L)$-quasi-geodesic ray originating from $\bar{x}_0$ representing $\chi$. Let $\bar{y}_\chi$ be a point on $\bar{r}'$ such that $d_{X/N}(\bar{x}_0,\bar{y}_\xi)\geq \mathcal{L}+1000L$; note that there is certainly some $t_0$ so that for all $t\geq t_0$, $\bar{r}'(t)$ satisfies this condition. Furthermore, $\bar{y}_\chi$ must satisfy $d_{X/N}(\bar{h'\Ax(v')},\bar{y}_\chi)\geq 1000L$. Likewise, choose a point $\bar{y}_\xi$ on $\bar{r}$ such that
$$d_{X/N}(\bar{x}_0,\bar{y}_\xi)=_{100L} \mathcal{L}+1000L$$
From $(\chi\cdot\xi)_{\bar{x}_0}\geq\mathcal{L}$, the definition of $\mathcal{L}$, and a standard thin-triangles argument, any geodesic $[\bar{y}_\chi,\bar{y}_\xi]$ does not pass within $1000L$ of $\bar{h'\Ax(v')}$.

We may then choose pairwise near-minimal respective lifts $y_\chi$, $y_\xi$, and $x_0$ of $\bar{y}_\chi$, $\bar{y}_\xi$, and $\bar{x}_0$, so that $y_\chi$ and $y_\xi$ are in fact minimal. Necessarily there exists some lift $h'\Ax(v')$ of $\bar{h'\Ax(v')}$ in $\mathcal{S}(x_0,y_\xi)$, and $D_{h'\Ax(v')}(x_0,y_\xi)\geq 0.1L(v')$. There also exists some lift $h\Ax(v)$ of $\bar{h\Ax(v)}$ in $\mathcal{S}(x_0,y_\xi)$, and this axis satisfies
$$D_{h\Ax(v)}(x_0,y_\xi)\geq 0.1L(v),\ D_{h'\Ax(v')}(h\Ax(v),y_\xi)\geq 0.1L(v)$$
Here, the second inequality comes from the fact established above that $\bar{h\Ax(v)}\in \bar{\S}(\bar{x}_0,\bar{h'\Ax(v')})$. As $h'\Ax(v')$ has distance at least $1000L$ from any geodesic from $y_\xi$ to $y_\chi$ by construction, the Bounded Geodesic Image Theorem implies that 
$$D_{h'\Ax(v')}(h\Ax(v),y_\chi)\geq 0.099L(v')$$
By forwarding, we therefore have
$$D_{h\Ax(v)}(x_0,y_\chi)\geq 0.1L(v)$$
and as $x_0$ and $y_\chi$ are near-minimal, $\bar{h\Ax(v)}\in\bar{\mathcal{S}}(\bar{x}_0,\bar{y}_\chi)$.
From this analysis, we may conclude that $\bar{h\Ax(v)}$ is a stable strut for $\chi$, and hence $\chi\in \partial T_{\bar{h\Ax(v)}}$. So
$$\{\chi\in\partial T:(\chi\cdot\xi)_{\bar{x_0}}\geq\mathcal{L}\}\subset \partial T_{\bar{h\Ax(v)}}.$$

Next, assume instead that $\xi\notin T_{\bar{h\Ax(v)}}$. We choose $\bar{h'\Ax(v')}$ and $\mathcal{L}$ as before. Let $\bar{y}_\xi$ be a point on $\bar{r}$ such that $\bar{h'\Ax(v')}\in\bar{\mathcal{S}}(\bar{x}_0, \bar{y})$ and $d_{X/N}(\bar{h'\Ax(v')}, \bar{y}_\xi)\geq 10000L$. Without loss of generality, we may also assume that $d_{X/N}(\bar{x}_0,\bar{y}_\xi)\geq \mathcal{L}+1000L$. If $\bar{h\Ax(v)}\in\bar{\mathcal{S}}(\bar{x}_0,\bar{h'\Ax(v')})$, we would have $\bar{h\Ax(v)}\in \bar{\S}(\bar{x}_0,\bar{y}_\xi)$. Then setting $\chi=\xi$ and rerunning the preceding argument would yield that $\bar{h\Ax(v)}$ is a stable strut for $\xi$, contradicting $\xi\notin \partial T_{\bar{h\Ax(v)}}$. We conclude that for any pairwise minimal lifts $x_0$ and $h'\Ax(v')$ of $\bar{x}_0$ and $\bar{h'\Ax(v')}$ and any lift $h\Ax(v)$ of $\bar{h\Ax(v)}$, we have $D_{h\Ax(v)}(x_0,h'\Ax(v'))<0.1L(v)$.

Consider the neighborhood
$$\{\chi\in\partial T:(\chi\cdot\xi)_{\bar{x}_0}\geq\mathcal{L}\}$$
and assume there exists some $\chi$ lying in this neighborhood so that $\chi\in \partial T_{\bar{h\Ax(v)}}$. We may choose a $(1,20L)$-quasi-geodesic ray $\bar{r}'$ representing $\chi$; for all $y_\chi$ on $\bar{r}'$ sufficiently far from $\bar{x}_0$, we have that $\bar{h\Ax(v)}\in \bar{\mathcal{S}}(\bar{x}_0,\bar{y}_\chi)$ and $d_{X/N}(\bar{x}_0,\bar{y}_\xi)\geq \mathcal{L}+1000L$. As before, by choice of $\mathcal{L}$ and $(\chi\cdot\xi)_{\bar{x}_0}\geq \mathcal{L}$, any geodesic in $X/N$ from $\bar{y}_\xi$ to $\bar{y}_\chi$ must not pass within $1000L$ of $\bar{h'\Ax(v')}$

By essentially the same forwarding argument as in the previous case, plus the upper bound on $D_{h\Ax(v)}(x_0,h'\Ax(v'))$ given above, we conclude that
$$\{\chi\in\partial (X/N):(\chi\cdot\xi)_{\bar{x}_0}\geq\mathcal{L}\}\subset \partial T_{\bar{h\Ax(v)}}^C$$
As sets of the form
$$\{\chi\in\partial (X/N):(\chi\cdot\xi)_{\bar{x}_0}\geq R\}$$
for $R\geq 0$ form a basis neighborhood for $\xi$ in the visual topology on $\partial (X/N)$, we conclude that $\partial T_{\bar{h\Ax(v)}}$ is both closed and open.

Lastly, for any $\bar{h\Ax(v)}\in \A/N$ and for any $\xi,\chi\in \partial T_{\bar{h\Ax(v)}}$, it is straightforward to verify from Lemma \ref{lem:strutfacts} and the stable strut condition that any $(1,20L)$-quasi-geodesic rays representing $\xi$ or $\chi$ must both pass within, say, $100L$ of $\bar{h\Ax(v)}$. As $\bar{h\Ax(v)}$ has diameter at most $2L$, we may conclude that
$$(\xi\cdot\chi)_{\bar{x}_0}\geq d_{X/N}(\bar{x}_0,\bar{h\Ax(v)})-1000L$$
For any $\xi$, we may find stable struts for $\xi$ with arbitrary distance from $\bar{x}_0$. Constructing sets $\partial T_{\bar{h\Ax(v)}}$ from these stable struts therefore gives a neighborhood basis for $\xi$. $\square$

\

\begin{figure}[htbp]
    \centering 
    \includegraphics[width=1.1\textwidth]{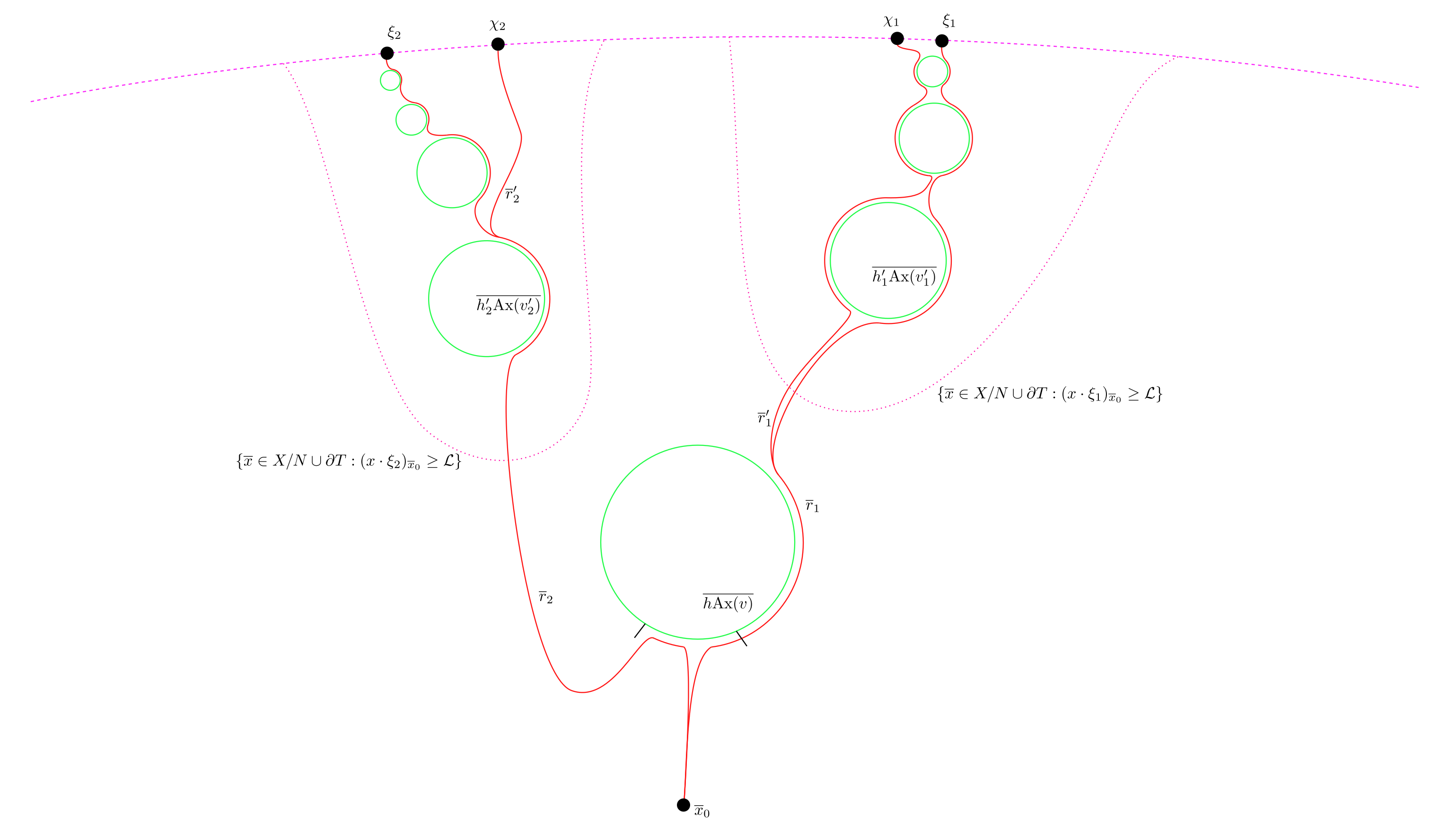} 
    \caption{A schematic of the construction used to prove Lemma \ref{lem:lockin}, showing the relevant objects under projection to $X/N$. The key idea is that the struts $\overline{h_1'\Ax(v_1')}$ and $\overline{h_2'\Ax(v_2')}$ `lock in' projection angles for (respectively) boundary points near $\xi_1$ and $\xi_2$ to $\overline{h\Ax(v)}$. Here, $\xi_1\in \partial T_{\overline{h\Ax(v)}}$, whereas $\xi_2\notin \partial T_{\overline{h\Ax(v)}}$}
    \label{fig:lockin}
\end{figure}

As every point of $\partial T$ has a neighborhood basis of clopen sets of the form $\partial T_{\bar{h\Ax(v)}}$.

\begin{corollary}
\label{cor:infinitestrutsDim}

The space $\partial T\subset \partial (X/N)$ equipped with the subspace topology has topological dimension $0$.

\end{corollary}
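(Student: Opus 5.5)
The plan is to use the small inductive dimension characterization for separable metric spaces: a separable metric space has covering dimension $0$ if and only if every point has a neighborhood basis consisting of clopen sets. So it suffices to show that each $\xi\in\partial T$ has arbitrarily small neighborhoods in $\partial T$ that are clopen relative to $\partial T$. Nonemptiness of $\partial T$ is irrelevant here; if $\partial T=\emptyset$ the statement is vacuous under the usual convention, so we may assume $\partial T\neq\emptyset$.

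Fix $\xi\in\partial T$ and a relative neighborhood $U$ of $\xi$ in $\partial T$. By the last claim of Lemma \ref{lem:lockin}, there is a collection of sets $\partial T_{\bar{h\Ax(v)}}$, each containing $\xi$, that forms a neighborhood basis at $\xi$. Concretely, by Corollary \ref{cor:infinitestruts} we may choose stable struts $\bar{h\Ax(v)}$ of $\xi$ arbitrarily far from $\bar{x}_0$. The estimate
$$(\xi\cdot\chi)_{\bar{x}_0}\geq d_{X/N}(\bar{x}_0,\bar{h\Ax(v)})-1000L \quad \text{for } \chi\in \partial T_{\bar{h\Ax(v)}}$$
places $\partial T_{\bar{h\Ax(v)}}$ inside an arbitrarily small visual ball around $\xi$, and therefore inside $U$. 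By the first claim of Lemma \ref{lem:lockin}, each such $\partial T_{\bar{h\Ax(v)}}$ is clopen in $\partial T$. This gives a clopen neighborhood basis at every point, so $\mathrm{ind}(\partial T)=0$.

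The remaining step is to pass from $\mathrm{ind}=0$ to covering dimension $0$ in the sense of Definition \ref{def:topdim}. For separable metric spaces this is the classical equality $\mathrm{ind}=\dim$. To keep things self-contained, one can instead argue directly. Given an open cover $\mathcal{U}$ of $\partial T$, refine it to the cover by clopen basis sets $\partial T_{\bar{h\Ax(v)}}$ each contained in some member of $\mathcal{U}$. The set $\A/N$ is countable, since $G$ is finitely generated and hence countable. So this refinement is a countable clopen cover $V_1,V_2,\dots$, and the sets $V_i\setminus\bigcup_{j<i}V_j$ form a disjoint clopen refinement. This shows that every point lies in at most one element, as required. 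Lemma \ref{lem:countableclosure} could also be invoked, but the disjointification is more direct.

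The main obstacle is not in this corollary itself but in what it relies on: that the sets $\partial T_{\bar{h\Ax(v)}}$ are genuinely clopen in the subspace topology and shrink to $\xi$. Both facts are supplied by Lemma \ref{lem:lockin}. The one point requiring care is that the basis sets from Lemma \ref{lem:lockin} are computed relative to $\partial T$, not all of $\partial(X/N)$. This is exactly what we need, because dimension is taken with the subspace topology on $\partial T$.
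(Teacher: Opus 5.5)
Your proposal is correct and follows the paper's argument. The paper likewise deduces the corollary directly from Lemma \ref{lem:lockin}: every point of $\partial T$ has a neighborhood basis of sets $\partial T_{\overline{h\Ax(v)}}$ that are clopen in $\partial T$, and the paper implicitly invokes the classical fact that a separable metric space with a clopen basis has covering dimension $0$. Your countable disjointification (using that $\A/N$ is countable because $G$ is finitely generated, and that finite unions of clopen sets are clopen) just makes that classical step explicit, and it is valid.
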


\

At last, we may prove our main result.

\

\textbf{Proof of Lemma \ref{lem:onedim}.} Recall the definitions
$$\partial S=\bigcup_{\bar{z}\in X/N}\Lambda(S_{\bar{z}}),\ \partial T=\partial S^C.$$
As the action of $G$ on $X/N$ is transitive and $G$ is finitely generated, the union is countable. From Lemma \ref{lem:countableclosure} and Corollary \ref{cor:quotientquasitree}, we obtain $\dim(\partial S)\leq \dim(\partial X)$, with the subspace topology on $\partial S$. As $\dim(\partial T)=0$ by Corollary \ref{cor:infinitestrutsDim}, we may apply Lemma \ref{lem:urymengformula} to conclude that as $\partial (X/N)=\partial S\cup\partial T$,
$$\dim(\partial (X/N))\leq \dim(\partial S)+\dim(\partial T)+1=\dim(\partial S)+1. $$
Lastly, by Theorem \ref{thm:maintheorem}, $\partial (X/N)$ is linearly connected, and hence has dimension at least $1$. $\square$

\section{Boundary Quotients and the N\"obeling Curve}

\subsection{The N\"obeling Curve}

We now begin the proof of Theorem \ref{thm:nobeling}, motivated largely by the results of \cite{HP11}. First, we provide a brief introduction to the N\"obeling Curve.

\begin{definition}
\label{def:nobeling}

The \emph{N\"obeling Curve} is the topological space defined by removing from $\reals^3$ all points with at least two rational coordinates and taking the subspace topology.

\end{definition}

It is reasonable to think of the N\"obeling curve as a non-compact analogue of the Menger curve (also known as the Menger sponge).

The N\"obeling curve admits a characterization theorem. It may also be defined via a universal property, although we will not state this definition. Before stating the theorem, we must define a somewhat peculiar property.

\begin{definition}
\label{def:locfinite}

A topological space $X$ satisfies the \emph{locally finite $1$-disks property} if the following conditions are satsfied. For any family of continuous maps $f_m:[0,1]\to X$ and any open cover $\mathcal{U}$ of $X$, there is a family of continuous maps $g_m:[0,1]\to X$ such that:
\begin{enumerate}

\item For each $x\in X$, there is an open neighborhood $U$ of $x$ (not necessarily in $\mathcal{U}$) such that the image of $g_n$ does not intersect $U$ for all sufficiently large $n$.
\item For each $t\in[0,1]$ and $n\in\ntrl$, there is an open set $U\in\mathcal{U}$ such that $f_n(t)$ and $g_n(t)$ both lie in $U$.

\end{enumerate}

\end{definition}

For an exposition on why the N\"obeling curve possesses the locally finite $1$-disks property, see Prop. 6.1 in \cite{HP11}. The following result was proven by Andrej Andrzej Nagórko based on results in \cite{KLT97}, but is stated in \cite{HP11}.

\begin{theorem}
\label{thm:nobchar}

A topological space $U$ is homeomorphic to the N\"obeling curve if and only if the following properties are satisfied.
\begin{enumerate}
\item $U$ is Polish (separable and complete metrizable).
\item $U$ is connected and locally path-connected.
\item $U$ has topological dimension $1$.
\item $U$ possesses the locally finite $1$-disks property.

\end{enumerate}

\end{theorem}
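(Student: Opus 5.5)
The plan is not to prove this characterization from scratch. We reduce it to the general characterization of the $n$-dimensional N\"obeling space $\nu^{n}_{2n+1}$ due to Nag\'orko, building on the work of Kawamura--Levin--Tymchatyn \cite{KLT97}. In its standard form that theorem says: a space is homeomorphic to $\nu^n_{2n+1}$ if and only if it is
\begin{enumerate}
\item Polish,
\item $n$-dimensional,
\item an absolute extensor in dimension $n$ ($AE(n)$),
\item satisfies the locally finite (equivalently, discrete) $n$-cells property.
\end{enumerate}
The N\"obeling curve of Definition \ref{def:nobeling} is $\nu^1_3$. So the work is to match the four conditions in Theorem \ref{thm:nobchar} with the $n=1$ case of these hypotheses.

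First I would handle necessity. The N\"obeling curve is the complement in $\reals^3$ of a countable union of affine lines, so it is a $G_\delta$ subset of a Polish space and hence Polish. Its dimension is $1$ by the classical computation of N\"obeling, or via Lemma \ref{lem:countableclosure} together with the fact that it contains no nonempty open subset of any plane. Connectedness and local path-connectedness follow because any two points can be joined by polygonal paths avoiding the removed lines; these can be chosen inside small balls, since each removed line is codimension two in $\reals^3$. The locally finite $1$-disks property is exactly Prop.~6.1 of \cite{HP11}, which we cite.

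For sufficiency, the key translation is between condition (2) and $AE(1)$. For separable metric spaces, Kuratowski's and Dugundji's extension theorems give that $AE(1)$ is equivalent to being $C^0$ and $LC^0$. These mean, respectively, path-connected, and locally path-connected in the strong sense that small points can be joined by small paths. Connected plus locally path-connected implies path-connected, and local path-connectedness is precisely $LC^0$, so condition (2) yields $AE(1)$. Next I would check that Definition \ref{def:locfinite} agrees with the ``locally finite $1$-cells'' (or discrete approximation) hypothesis in Nag\'orko's theorem, for countable families of maps from $[0,1]$ and arbitrary open covers. For Polish $AE(1)$ spaces the locally finite and discrete versions coincide by the standard argument of Bowers / Toru\'nczyk type. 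With conditions (1) and (3) taken as they are, Nag\'orko's theorem with $n=1$ then gives the homeomorphism.

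The step I expect to be the main obstacle is purely bookkeeping. It is ensuring that the exact form of the disks property in Definition \ref{def:locfinite} matches the hypothesis in the published characterization, since several variants (discrete versus locally finite, approximation controlled by a cover versus by a function $\epsilon$) appear in the literature. I would first try to deduce the discrete $1$-cells property from the locally finite one by a diagonal perturbation argument that uses $AE(1)$ to make small adjustments. If that fails, I would cite \cite{HP11}, where the characterization is stated in precisely this form.
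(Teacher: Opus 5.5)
Your proposal is correct and takes essentially the paper's approach: the paper gives no argument of its own, only citing Nag\'orko's characterization (built on \cite{KLT97}) in the form stated in \cite{HP11}. Your specialization of the $\nu^n_{2n+1}$ characterization to $n=1$ (connected plus locally path-connected giving $C^0\cap LC^0$, i.e.\ $AE(1)$, and Definition~\ref{def:locfinite} matched to the locally finite $1$-discs hypothesis) is just the bookkeeping behind that citation, and your fallback of quoting \cite{HP11} directly is exactly what the paper does. The one slip is your parenthetical alternative for $\dim=1$: not containing an open piece of a plane does not bound dimension, and $(\reals\setminus\mathbb{Q})^3$ is not closed, so Lemma~\ref{lem:countableclosure} alone does not apply. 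The correct version applies Lemma~\ref{lem:countableclosure} to the closed zero-dimensional slices $\{x_i=q\}$ and then the Urysohn--Menger formula with $(\reals\setminus\mathbb{Q})^3$; simply citing N\"obeling also suffices.
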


Theorem \ref{thm:nobeling} will require additional hypotheses, which we now cover. The first hypothesis is a strengthening of Hypothesis \ref{hyp:main}.

\subsection{Hypotheses}

We now introduce our main hypothesis for the N\"obeling curve result. Note that relative hierarchical hyperbolicity is a strictly weaker condition than hierarchical hyperbolicity.

Note that any HHG $G$ comes with an action $G\curvearrowright \mathcal{X}$ on a \emph{hierarchically hyperbolic space} $\mathcal{X}$, a set $\mathcal{D}$ of domains with a unique maximal domain $X$, and an action $G\curvearrowright \mathcal{D}$. For any $U\in \mathcal{D}\setminus\{X\}$, we have a projection map $\rho^X_U:X\to U$ and a coarse point $\rho^U_X\subset X$. There is also an orthogonality relationship $\perp$ on elements of $\mathcal{D}$. Furthermore, for any $U\in\mathcal{D}$, there is a projection map $\pi_U:\mathcal{X}\to U$; these maps satisfy a coarse functoriality property.

\begin{hypothesis}
\label{hyp:nob}

Let $G$ be a finitely generated relatively hierarchically hyperbolic group (RHHG) such that the RHHG structure of $G$ is realized by an action $G\curvearrowright \mathcal{X}$, where $\mathcal{X}$ is a relatively hierarchically hyperbolic space. Let $\mathcal{D}$ be the collection of domains for $\mathcal{X}$, and recall that by definition there is an action $G\curvearrowright\mathcal{D}$. Let $E$ be the maximum across all constants appearing in the RHHS/RHHG axioms for $G$ and $\mathcal{X}$. Let $X$ be the top level domain of $\mathcal{X}$. By definition, $X$ is a Gromov hyperbolic space, and there is an action $G\curvearrowright X$ of $G$ on $X$ by hyperbolic isometries. We let $w_1,...,w_k$ be elements of $G$. We fix an arbitrary basepoint $x_0\in X$, and consider the normal subgroup $N=\langle\langle w_1,...,w_k\rangle\rangle$. We assume the following:
\begin{enumerate}
\item The space $\mathcal{X}$ is the Cayley graph of $G$ with respect to some finite generating set, and is equipped with the simplicial metric.
\item The space $X$ is a simplicial graph, and its Gromov hyperbolicity constant $\delta$ is at least $1000$.
\item The action of $G$ on $X$ is vertex-transitive and non-elementary.
\item The maximal finite normal subgroup of $G$ is trivial.
\item For each non-maximal $U\in\mathcal{D}$, $\rho^U_X$ is a point in $X$. Furthermore, for $g\in G$, if $U'=gU$, then $\rho^{U'}_X=g\rho^U_X$.
\item There exists $M\geq 10^{20}$ such that
$$(w_{1},...,w_{k})\in\mathcal{E}_{10^{20}M,M}$$
where $\mathcal{E}_{10^{20}M,M}$ is defined as in Definition \ref{def:walkpermissible}.

\end{enumerate}

Given these hypotheses, we set $L=\max_iL(w_i)$ and $\mathcal{M}:=50\min_i L(w_i)/M$.

\begin{remark}

It is straightforward to extend the reductions applied to Hypothesis \ref{hyp:main} to this expanded setup; see Appendix $A$ of \cite{ABMNR26} for the needed facts about quotients of $\mathcal{X}$ by finite normal subgroups of $G$. As before, the other claims are standard.

\end{remark}

\end{hypothesis}




We note that the action of $G$ on $X$ is acylindrical, and hence WPD, by Theorem K of \cite{BHS17b} (the proof is only for HHGs, but the argument works for RHHGs as well), and the action is likewise action path-connected by Lemma \ref{lem:hierarchypaths}. Vertex-transitivity and conditions four and five may likewise be assumed without loss of generality. Possibly replacing $E$ with a larger constant, we can assume that $E$ also serves as the action path-connectedness constant.

We will use `subdomain' to refer to non-maximal domains of $\mathcal{X}$; note that this is not standard terminology. 

We make the following definition, which is the main requirement for showing that $\partial (X/N)$ has the locally finite $1$-disks property.

\begin{definition}
\label{def:largedense}

Let $G$ and $\mathcal{X}$ be as in Hypothesis \ref{hyp:nob}. Recall We say that $\mathcal{X}$ has the \emph{large dense domains} property if there exists a subset $\mathcal{D}_0\subset \mathcal{D}\setminus\{X\}$ such that the following conditions are satisfied for some constant $H$:
\begin{enumerate}
\item The subset $\mathcal{D}_0$ is closed under the $G$-action on $\mathcal{D}$.
\item Every $U\in\mathcal{D}_0$ contains an quasi-isometrically embedded copy of $\reals$.
\item Every point $x\in X$ lies within distance $H$ of some $U\in\mathcal{D}_0$.
\item For every geodesic $[x,y]$ in $X$, there is a sequence $U_1,...,U_k$ of domains in $\mathcal{D}_0$ so that $\rho^{U_1}_X$ lies within $H$ of $x$, $\rho^{U_k}_X$ lies within $H$ of $y$, every $\rho^{U_i}_X$ lies within $H$ of $[x,y]$, and $U_i\perp U_{i+1}$ and $d_X(\rho^{U_i}_X,\rho^{U_{i+1}}_X)\leq 10H$ for all $U_i$ with $1\leq i\leq k-1$.

\end{enumerate}

\end{definition}

We make the following hypothesis.

\begin{hypothesis}
\label{hyp:nobplus}

Let $G$, $\mathcal{X}$, and $X$ be as in Hypothesis \ref{hyp:nob}. We additionally assume that $G$, $\mathcal{X}$, and $X$ satisfy the large dense domains property.

\end{hypothesis}

If needed, we may replace $E$ with a larger constant so that $E\geq H$.

Lastly, we make the following hypothesis on $X$. Note that quasi-trees necessarily have boundaries with dimension $0$.

\begin{hypothesis}
\label{hyp:tree}

Let $G$, $\mathcal{X}$, and $X$ be as in Hypothesis \ref{hyp:nob}. We additionally assume that $X$ is a quasi-tree.
    
\end{hypothesis}

\section{Locally Finite $1$-Disks}

In this section, we prove that under Hypotheses \ref{hyp:nob} and \ref{hyp:nobplus}, $\partial (X/N)$ satisfies the locally finite $1$-disks property. Before we begin, recall the triangle inequality for Gromov products: for any $\xi_1,\xi_2,\xi_3$ that are individually either points in $X/N$ or points in $\partial (X/N)$ and any $\bar{x}\in X/N$, we  by Lemma \ref{lem:quotienthyp} that
$$(\xi_1\cdot\xi_3)_{\bar{x}_0}\geq\min((\xi_1\cdot\xi_2)_{\bar{x}_0},(\xi_2\cdot\xi_3)_{\bar{x}_0})-20L$$

\subsection{Subdomain Projections in the Quotient}

Following Abbott-Berlyne-Mangioni-Ng-Rasmussen in \cite{ABMNR26}, we could define a full RHHS structure on $\mathcal{X}/N$, and obtain subdomain projections accordingly. However, this is more work than is necessary in our case, and we will instead define and prove only what is needed to show the locally finite $1$-disks property. 

Recall that for reach

\begin{definition}
\label{def:quotientdomains}

Define $\bar{\mathcal{D}}$ as the set-theoretic quotient $\mathcal{D}/N$ under the action $G\curvearrowright \mathcal{D}$, except that $X$ is replaced by $X/N$. Define $\bar{\mathcal{D}_0}:=\mathcal{D}_0/N\subset \mathcal{D}/N$ accordingly. For each $\bar{U}\in\bar{\mathcal{D}}\setminus\{ X/N\}$, we define $\bar{U}$ as an isometric copy of any $U\in\mathcal{D}$ lifting $\bar{U}$. Given $\bar{U}\in\bar{\mathcal{D}}$, define $\rho^{\bar{U}}_{\bar{X}}$ as the projection of  $\rho^U_X$ to $\bar{X}$ for any lift $U$ of $\bar{U}$. Note that this point is well-defined by Hypothesis \ref{hyp:nob}.

\end{definition}

Recall that by definition, if $U'=gU$ for distinct $U,U'\in\mathcal{D}$, then the map from $U$ to $U'$ induced by $g$ is an isometry. Furthermore, $\rho^{gU}_X=g\rho^U_X$, and as all $g\in N$ have minimal translation length at least $0.98\min_iL(v_i)$ by Lemma \ref{lem:CM2.1}, we may conclude $N$ acts freely on $\mathcal{D}\setminus\{X\}$. Therefore, if $U'=gU$ for $g\in N$, the induced isometry between $U'$ and $U$ is unique. As a result, we may define a projection map
$$\pi_N:\bigsqcup_{U \text{ lifts }\bar{U}} U\to \bar{U}$$
by identifying $\bar{U}$ with a fixed lift $U_0$ of $\bar{U}$, mapping all other lifts $U$ to $U_0$ via the unique maps described above, and taking a union of images.

Recall that for each $U\in\mathcal{D}\setminus\{X\}$, there exists a `projection' map $\rho^X_U:X\to U$.

\begin{definition}
\label{def:subdomquot}

Given $\bar{x}\in X/N$ and $\bar{U}\in \bar{\mathcal{D}}\setminus\{X/N\}$, define
$$\rho^{\bar{X}}_{\bar{U}}(\bar{x})=\left(\bigsqcup_{\substack{x,U:\\ x\text{ lifts }\bar{x}\\U\text{ lifts }\bar{U}\\x,U\text{ are $0.85$-near-minimal}}}\rho^X_U(x)\right)/N$$
where the quotient by $N$ is defined via the projection map $\pi_N$ described in Definition \ref{def:quotientdomains}.

\end{definition}

Before we go on, recall the Bounded Geodesic Image Axiom for RHHSes.

\begin{lemma}
\label{lem:subdomBGI}

Let $G$, $\mathcal{X}$, and $X$ be as in Hypothesis \ref{hyp:nob}. Let $x$ and $y$ be points of $X$ such that $[x,y]$ does not intersect $\N_E(\rho^U_X)$ for some subdomain $U\in\mathcal{D}\setminus\{X\}$. Then $d_U(\rho^X_U(x),\rho^X_U(y))\leq E$.

\end{lemma}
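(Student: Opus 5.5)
This statement is essentially the Bounded Geodesic Image axiom of the (relative) HHS structure on $\mathcal{X}$, specialized to the top-level domain. I would therefore prove it by unpacking that axiom in the present notation rather than by any new geometric argument.

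The first step is to recall the axiom in its standard form. Let $V\sqsubsetneq W$ be domains with $W$ not relatively hyperbolic-exceptional; for the top domain $X$ this holds automatically, since $X$ is hyperbolic and every subdomain $U\in\mathcal{D}\setminus\{X\}$ is properly nested in $X$. Then there is a map $\rho^W_V\colon CW\to 2^{CV}$, and for every geodesic $\gamma$ in $CW$, either
$$\diam_{CV}\bigl(\rho^W_V(\gamma)\bigr)\le E \quad\text{or}\quad \gamma\cap\N_E(\rho^V_W)\neq\emptyset.$$

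The second step is to specialize this to $W=X$ and $V=U$. Here $CW=X$ is the top-level space, and $\rho^U_X$ is the coarse point which, by item 5 of Hypothesis \ref{hyp:nob}, is an honest point of $X$. Take $\gamma=[x,y]$ with $x,y\in X$. The assumption that $[x,y]$ misses $\N_E(\rho^U_X)$ rules out the second alternative, so $\diam\bigl(\rho^X_U([x,y])\bigr)\le E$.

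The third step is to pass from the diameter of the whole image to the two endpoints. Since $x,y\in[x,y]$, both $\rho^X_U(x)$ and $\rho^X_U(y)$ are contained in $\rho^X_U([x,y])$. Hence $d_U(\rho^X_U(x),\rho^X_U(y))\le E$, where $d_U$ is read as the diameter of the union, following the convention for projection distances used earlier in the paper.

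The only point that needs care, and the step I expect to be the main obstacle, is bookkeeping of constants and conventions. In some formulations of the axioms the BGI constant is stated for $\pi_U$ applied to points of $\mathcal{X}$, or for geodesics in $\mathcal{X}$ projected to $X$, rather than for $\rho^X_U$ applied to points of $X$. If the axiom is taken in that form, I would lift $x,y$ to points $g_x,g_y\in\mathcal{X}$ with $\pi_X(g_x)=x$ and $\pi_X(g_y)=y$; this is possible since the action is vertex-transitive, using the identification $a\mapsto ax_0$ from Lemma \ref{lem:hierarchypaths}. I would then use consistency between $\rho^X_U\circ\pi_X$ and $\pi_U$, which holds up to a uniform error. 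That changes the bound only by a multiple of the axiom constants, and this multiple is absorbed because $E$ was defined as the maximum over all constants appearing in the RHHS axioms, enlarged if necessary.
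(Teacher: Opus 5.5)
Your proposal is correct and matches the paper, which gives no separate argument and simply records this statement as the Bounded Geodesic Image axiom of the (relative) HHS structure, specialized to the top-level domain $X$. Your extra remark about the formulation via points of $\mathcal{X}$ is bookkeeping the paper leaves implicit. There you use consistency between $\rho^X_U\circ\pi_X$ and $\pi_U$ and absorb the additional error into an enlarged $E$.
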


We now show projections have bounded diameter. A helper lemma is required; this lemma is essential Remark 4.17 in \cite{ABMNR26}.

\begin{lemma}
\label{lem:axisprojection}

There exists a constant $E'$ depending only on $E$ and the action $G\curvearrowright X$  with the following property. Let $h\Ax(v)$ be an axis, let $g=hvh^{-1}$, and let $x$ be a point lying on $h\Ax(v)$. Then for any $U\in \mathcal{D}$, $d_U(\rho^X_U(x),\rho^X_U(gx))\leq E'$.

\end{lemma}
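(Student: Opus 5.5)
The plan is to split into two cases according to where $\rho^U_X$ sits relative to the axis $h\Ax(v)$. Throughout, $x\in h\Ax(v)$ and $g=hvh^{-1}$, so that $gx\in h\Ax(v)$ and $d_X(x,gx)\le L(v)+4\delta$. We may also replace the segment $[x,gx]$ by the subsegment of $h\Ax(v)$ joining $x$ to $gx$, since these lie within $8\delta$ of each other. Since $E'$ must not depend on $L(v)$, the Bounded Geodesic Image axiom (Lemma \ref{lem:subdomBGI}) alone is not enough when $\rho^U_X$ is close to the axis. That is the case needing real work.

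\textbf{Case 1: $[x,gx]$ avoids $\N_E(\rho^U_X)$.} Lemma \ref{lem:subdomBGI} gives $d_U(\rho^X_U(x),\rho^X_U(gx))\le E$ directly. This also covers $U=X$ only if we interpret the statement for subdomains; for $U=X$ itself the bound is false, so I read $U$ as ranging over $\mathcal{D}\setminus\{X\}$.

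\textbf{Case 2: $\rho^U_X$ is $E$-close to $[x,gx]$.} Here I would exploit $g$-equivariance. The translates $g^m\rho^U_X=\rho^{g^mU}_X$ are spaced about $L(v)$ apart along the axis, so I choose $m$ with $g^m\rho^U_X$ lying near the middle third of the segment $[g^{-1}x,x]$ or $[gx,g^2x]$. The idea is to replace the pair $(x,gx)$ by a pair of points on the axis that are far from $\rho^U_X$. Concretely, pick $y\in h\Ax(v)$ with $d(y,\rho^U_X)\approx L(v)/2$ on the side away from $x$ and $gx$. Then both $[x,y]$ and $[gx,gy]$ are subsegments of the axis. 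If the translates of $\rho^U_X$ hit these at all, the hits are images $g^{\pm1}$ of the original. The tool here is consistency/coarse functoriality: $\rho^X_U$ restricted to $h\Ax(v)$ is coarsely the composite $\pi_U$ applied to a hierarchy path lifting the axis. Concretely, since $x_0$-orbit points $h v^i h^{-1}\cdot hx_0$ lie on the axis, the $\mathcal{X}$-path $hv^i$ has $\pi_U$-image along which $g$ acts.

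The key quantity is then $d_U(\pi_U(h),\pi_U(hv))=d_{h^{-1}U}(\pi(e),\pi(v))$, i.e.\ the $U$-projection of a single period. This should be bounded because of item 4 of Definition \ref{def:walkpermissible} together with the random-walk genericity of $v$ (Maher--Tiozzo / the ABMNR26 analogue). A generic random element has uniformly bounded projections to all proper subdomains, since a subdomain with large projection would force a long fellow-travel of the axis with the coarse point $\rho^U_X$ and its orthogonal product region. This conflicts with the non-overlap condition, item 3. I would cite this as the content of Remark 4.17 of \cite{ABMNR26}. The combination then yields a constant $E'$ depending only on $E$ and the action.

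\textbf{Main obstacle.} Case 2 is the hard step. If the ABMNR26 input is not directly quotable, the fallback is to show directly that a large $d_U$ over one period forces, by the distance formula in the Cayley graph $\mathcal{X}$, a long segment of the axis orbit staying near the product region $P_U$. That segment would be stabilized coarsely by elements of $\text{Stab}(U)$, which contradicts the WPD/no-self-overlap condition for large $L(v)$.
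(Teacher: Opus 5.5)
Your Case 1 is fine, and you are right that $U=X$ must be excluded. The gap is in Case 2, which carries the whole content of the lemma.

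The manoeuvre with the translates $g^m\rho^U_X$ and the auxiliary point $y$ cannot help. The jump in the $U$-coordinate happens where the axis passes $\rho^U_X$, and moving the endpoints only relocates that jump into $[x,y]$ or $[gy,gx]$. You then reduce to bounding $d_{h^{-1}U}(\pi(e),\pi(v))$, which is the lemma again for one period, and justify it by genericity. That justification fails.

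By the Bounded Geodesic Image axiom, a large $U$-projection is accumulated only while the $X$-shadow of a hierarchy path stays within $E$ of the single point $\rho^U_X$. So it produces no long fellow-travelling of $X$-segments, and it cannot contradict item 3 of Definition~\ref{def:walkpermissible}, which is a condition in $X$. The WPD fallback fails for the same reason: nothing coarsely fixes two far-apart points of $X$. Item 4 concerns the stabilizer of the axis, not subdomains.

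Worse, your claim is false uniformly in $n$. The largest proper projection of $w_n$ typically grows like $\log n$. Sisto--Taylor prove this for mapping class groups. In a RAAG, a simple random walk contains a run $a^m$ with $m\asymp\log n$, which gives a projection of that size to the domain indexed by the corresponding coset of $\langle a\rangle$. So one period of an axis can have $U$-projection of order $\log n$, and no genericity argument can give a bound independent of $n$.

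The paper's proof uses no genericity. Since $\langle g\rangle$ acts cocompactly on the quasi-convex set $h\Ax(v)$, the orbit map $\mathcal{X}\to X$ restricted to $\langle g\rangle$ is a $(\lambda,c)$-quasi-isometry onto the axis. The Bounded Geodesic Image axiom then bounds $\diam(\rho^X_U(h\Ax(v)))$ by some $D(\lambda,c)$ for every proper $U$, and this covers the whole axis, not just one period. Since $(\lambda,c)$ is constant along $G$-orbits and $\A$ consists of $k$ orbits, the bound is uniform over $\A$. This stability step is the idea your proposal is missing.

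Note what this actually produces: a constant uniform over axes and domains, but depending on $v_1,\dots,v_k$ through $(\lambda,c)$, not on $E$ and the action alone. Given the $\log n$ phenomenon above, that is the most one can expect.
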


\textbf{Proof.} The existence of $\hat{x}$ follows from transitivity of the action of $G$ on $X$. We note that $\langle hvh^{-1}\rangle$ acts geometrically on the quasi-convex subset $h\Ax(v)$, which implies the existence of constants $(\lambda,c)$ such that the restriction of the map $\pi_X:\mathcal{X}\to X$ (which is the orbit map for $G$) to $\langle hvh^{-1}\rangle$ is a $(\lambda,c)$-quasi-isometry. It now follows from the Bounded Geodesic Image Axiom that the image $\rho^X_U(h\Ax(v))$ has diameter bounded by some function $D(\lambda,c)$ of $\lambda$ and $c$. As $G$ acts on the set $\A$ of axes with finitely many orbits, $\lambda$ and $c$ are uniformly bounded across all axes. We therefore obtain a bound on the diameter of $\rho^X_U(h\Ax(v))$ independent of $h\Ax(v)\in \A$. $\square$

\

We will assume without loss of generality that $E'\leq E$, which may require replacing $E$ with a larger constant.

\begin{lemma}
\label{lem:subdomboundeddiam}

With the notation in Definition \ref{def:subdomquot}, the set $\rho^{\bar{X}}_{\bar{U}}(\bar{x})$ has diameter at most $O(E)$.

\end{lemma}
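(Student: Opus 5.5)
The plan is to reduce to a single fixed pair of lifts and then show that any other admissible pair differs from it by an element of $N$ composed with a rotation along at most boundedly many axes. Each such rotation moves $\rho^X_U$ by at most $E'\le E$, by Lemma \ref{lem:axisprojection}.

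First I normalize. Fix one $0.85$-near-minimal pair $(x_1,U_1)$ lifting $(\bar{x},\bar{U})$. Let $(x_2,U_2)$ be any other such pair. Translating by the unique element of $N$ carrying $U_2$ to $U_1$, I may assume $U_1=U_2=:U$. Since $N$ acts freely on subdomains, this matches the identification $\pi_N$, so it does not change the image in $\bar{U}$. Now $x_2=gx_1$ for some $g\in N$. It suffices to bound $d_U(\rho^X_U(x_1),\rho^X_U(x_2))$ by $O(E)$. The diameter of each $\rho^X_U(x_i)$ is already at most $E$ by the RHHS axioms.

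Next I treat $U$ as a point of $X\cup\A$ via $\rho^U_X$, which is a genuine point by Hypothesis \ref{hyp:nob}. Both $x_1,\rho^U_X$ and $x_2,\rho^U_X$ are $0.85$-near-minimal. If $x_1\neq x_2$, Lemma \ref{lem:CM2.1} gives an axis $h\Ax(v)$ with $d_{h\Ax(v)}(x_1,x_2)\ge 0.99L(v)$, while each $d_{h\Ax(v)}(x_i,\rho^U_X)\le 0.86L(v)$. I then run the rotation induction from the proof of Lemma \ref{lem:nearmintriple} and Lemma \ref{lem:strutlifts}. Apply $g'=(hvh^{-1})^{\pm1}$ to whichever $x_i$ reduces $d_X(x_1,x_2)$. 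The hypothesis $\theta+\tau<0.98$ there fails at $\tau=0.85$, so I would work with a smaller $\theta\approx 0.12$ and check that the rotated point stays $0.85$-near-minimal with $\rho^U_X$ via Lemma \ref{lem:paste}.

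The key point is what happens to $\rho^X_U$ under such a rotation. Rather than rotating $x_i$, I would compare $x_i$ with $g'x_i$ through the axis. Take $p\in h\Ax(v)$ near $\pi_{h\Ax(v)}(x_i)$. The geodesics $[x_i,p]$ and $[g'x_i,g'p]$ avoid $\N_E(\rho^U_X)$ whenever $\rho^U_X$ is routed through $h\Ax(v)$ far from these pieces, and that routing is forced because $h\Ax(v)$ is a strut between $x_i$ and $\rho^U_X$. Bounded geodesic image (Lemma \ref{lem:subdomBGI}) then gives $d_U(x_i,p)\le E$ and $d_U(g'x_i,g'p)\le E$. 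Lemma \ref{lem:axisprojection} gives $d_U(p,g'p)\le E'$. Altogether $d_U(x_i,g'x_i)\le 3E$.

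The main obstacle is that the induction may use many rotations, so the errors could accumulate. To avoid this I would show that all rotations in the decomposition of $g$ happen along a single strut of $\S(x_1,\rho^U_X)$, or else cancel in the quotient. By Lemma \ref{lem:strutlifts}, the strut sequences $\S(x_i,\rho^U_X)$ project to the same quotient struts, and consecutive projections correspond under one element of $N$. Starting from the $U$ end, the last strut before $\rho^U_X$ must be literally the same axis for both $x_1$ and $x_2$, because $\rho^U_X$ is common to both. Propagating backwards along the sequence, the two lifts agree up to a power of the axis stabilizer at the first strut where they diverge, and beyond that strut the geodesics to $\rho^U_X$ coincide. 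One application of the estimate above, with $\rho^U_X$ far from the rotated part, then gives total discrepancy $O(E)$. The degenerate case where $\rho^U_X$ lies within $E$ of that axis is handled directly by Lemma \ref{lem:axisprojection}.
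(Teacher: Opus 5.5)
Your proposal has a genuine gap in the step meant to stop the errors from accumulating. You claim that all rotations relating $x_1$ and $x_2$ ``happen along a single strut of $\S(x_1,\rho^U_X)$, or else cancel in the quotient,'' so that the two lifts differ by a power of one axis stabilizer. This is false. Suppose $A<_\S A'$ are struts of $\S(x_1,\rho^U_X)$ with $D_A(x_1,\rho^U_X)$ and $D_{A'}(x_1,\rho^U_X)$ both about $0.5L$. Let $r,r'\in N$ be the one-period rotations about $A$ and $A'$, pointing towards $\rho^U_X$. Then forcing and Lemma \ref{lem:paste}, used exactly as in the proof of Lemma \ref{lem:nearmintriple}, show that $r'rx_1$ is still $0.85$-near-minimal with $U$. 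Yet $r'rx_1$ is related to $x_1$ by rotations at two distinct struts that do not cancel, and using more struts gives arbitrarily many. What does hold is weaker: the two strut sequences agree only from the divergence strut nearest $\rho^U_X$ onwards. So your per-rotation bound $d_U(x_i,g'x_i)\le 3E$, summed over the rotation induction, gives a bound proportional to the number of rotations, which is unbounded.

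The missing idea is that no induction is needed. Whatever separates $x_1$ from $x_2$ before an axis that both must cross to reach $\rho^U_X$ is invisible to $U$ by bounded geodesic image. Your ``key point'' paragraph already contains the proof if you rotate the projection $y_1$ and compare it with $y_2$, rather than rotating $x_i$; this is the paper's argument.

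Take the axis $h\Ax(v)$ from Lemma \ref{lem:CM2.1} with $d_{h\Ax(v)}(x_1,x_2)\ge 0.99L(v)$, and let $y_i\in\pi_{h\Ax(v)}(x_i)$. Near-minimality gives $d_{h\Ax(v)}(x_i,\rho^U_X)\le 0.86L(v)$, hence $d_{h\Ax(v)}(x_i,\rho^U_X)\ge 0.13L(v)$ for both $i$. So $\pi_{h\Ax(v)}(\rho^U_X)$ lies between $y_1$ and $y_2$ with that margin, and each $[x_i,\rho^U_X]$ passes near $y_i$ before following the axis. Lemma \ref{lem:subdomBGI} then gives $d_U(x_i,y_i)\le E$.

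For the appropriate sign, $g=(hvh^{-1})^{\pm1}$ carries $y_1$ past $\pi_{h\Ax(v)}(\rho^U_X)$ by at least $0.14L(v)$. The axis segment $[gy_1,y_2]$ therefore avoids $\N_E(\rho^U_X)$, giving $d_U(gy_1,y_2)\le E$, and Lemma \ref{lem:axisprojection} gives $d_U(y_1,gy_1)\le E$. The chain $x_1\to y_1\to gy_1\to y_2\to x_2$ bounds $d_U(\rho^X_U(x_1),\rho^X_U(x_2))$ by $O(E)$ in one step, however many rotations relate $x_1$ and $x_2$. Your Step 4 could be repaired by running this chain at the divergence strut nearest $\rho^U_X$, but then both the strut propagation and the rotation induction become unnecessary.
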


\textbf{Proof.} Consider pairs $x_1,U_1$ and $x_2,U_2$, both lifting the pair $\bar{x},\bar{U}$, and such that $x_1$ and $U_1$ are $0.85$-near-minimal and $x_2$ and $U_2$ are $0.85$-near-minimal. By definition of $\rho^{\bar{X}}_{\bar{U}}(\bar{x})$, we may translate by $N$ and thus assume without loss of generality that $U_1=U_2$. Call this subdomain $U$. If $x_1=x_2$, we are done. Otherwise, we may apply Lemma \ref{lem:CM2.1} and conclude that there is some axis $h\Ax(v)$ such that $d_{h\Ax(v)}(x_1,x_2)\geq 0.99L(v)$. 

Now, if $d_{h\Ax(v)}(x_i,\rho^U_X)\leq 0.13L(v)$ for $i\in\{1,2\}$, we may conclude by the triangle inequality that either $x_1$ or $x_2$ is not $0.85$-near-minimal with $U$. As a consequence, defining $y_1$ as an arbitrary point in $\pi_{h\Ax(v)}(x_1)$, any geodesic from $x_1$ to $\rho^U_X$ must first pass within $O(\delta)$ of $y_1$ and then travel along $h\Ax(v)$ for distance at least $0.1L(v)$. We may define $y_2$ analogously. By Lemma \ref{lem:subdomBGI}, as long as $L(v)$ is sufficiently large relative to $E$ (which we may assume), $d_U(\rho^X_U(x_i),\rho^X_U(y_i))\leq E$ for $i\in\{1,2\}$.

It is now straightforward to verify that there exists an element $g=hv^mh^{-1}$ with $m\in\{-1,0,1\}$ such that $[gy_1,y_2]$ does not pass within $E$ of $\rho^U_X$, and so $d_U(\rho^X_U(gy_1),\rho^X_U(y_2))\leq E$. Furthermore, by Lemma \ref{lem:axisprojection}, $d_U(\rho^X_U(y_1),\rho^X_U(gy_1))\leq E$. We may now apply the Bounded Geodesic Image axiom a final time and conclude by the triangle inequality that
$$d_U(\rho^X_U(x_1),\rho^X_U(x_2))\leq O(E). \square$$

\

\begin{lemma}
\label{lem:subdomquotBGI}

Let $\bar{x},\bar{y}\in X/N$, and consider $\bar{U}\in\bar{\mathcal{D}}$. Assume that $[\bar{x},\bar{y}]$ does not pass within $E$ of $\rho^{\bar{U}}_{\bar{X}}$. Then $d_{\bar{U}}(\rho^{\bar{X}}_{\bar{U}}(\bar{x}),\rho^{\bar{X}}_{\bar{U}}(\bar{y}))\leq O(E)$.

\end{lemma}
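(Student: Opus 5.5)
Lift the geodesic $[\bar{x},\bar{y}]$ to $X$ and reduce to the ordinary Bounded Geodesic Image axiom (Lemma \ref{lem:subdomBGI}), with Lemma \ref{lem:subdomboundeddiam} absorbing the ambiguity in the choice of lifts. The first step is to fix lifts. Choose a lift $U$ of $\bar{U}$. Apply Lemma \ref{lem:nearmintriple} to the triple $\bar{\rho}:=\rho^{\bar{U}}_{\bar{X}}$, $\bar{x}$, $\bar{y}$, with the roles arranged so that $x$ and $y$ are minimal, and $x$, $y$ are each $0.85$-near-minimal with a lift $\rho$ of $\bar{\rho}$. We may translate by $N$ so that $\rho=\rho^U_X$; this uses Hypothesis \ref{hyp:nob}(5) and the freeness of the $N$-action on subdomains. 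Then $x,U$ and $y,U$ are $0.85$-near-minimal pairs, so $\rho^X_U(x)$ and $\rho^X_U(y)$ are legitimate representatives of $\rho^{\bar{X}}_{\bar{U}}(\bar{x})$ and $\rho^{\bar{X}}_{\bar{U}}(\bar{y})$. By Lemma \ref{lem:subdomboundeddiam}, it then suffices to show $d_U(\rho^X_U(x),\rho^X_U(y))\le O(E)$.

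The second step uses minimality. Since $x$ and $y$ are minimal, a geodesic $[x,y]$ projects isometrically onto a geodesic in $X/N$ joining $\bar{x}$ and $\bar{y}$. The hypothesis is stated for the given geodesic $[\bar{x},\bar{y}]$, though, and not for this particular one. To bridge the two I would use Lemma \ref{lem:nearmindistbound}: any two near-minimal lifted geodesics project to paths at Hausdorff distance at most $L+1000\M$. More precisely, they $10\M$-fellow-travel away from the quotient struts and differ only by rerouting around quotient axes. So either $[x,y]$ avoids $\N_E(\rho^U_X)$ after a harmless change of constants, or $\rho^U_X$ lies near some strut axis of $\S(x,y)$, where the two routings differ.

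The third step is the main obstacle: the case where $\rho^U_X$ is close to a strut $h\Ax(v)\in\S(x,y)$ that $[x,y]$ follows, while $[\bar{x},\bar{y}]$ goes around $\overline{h\Ax(v)}$ the other way and stays $E$-far from $\bar{\rho}$. Here I would mimic the proof of Lemma \ref{lem:subdomboundeddiam}. Let $a,b$ be the entrance and exit points of $[x,y]$ on $h\Ax(v)$. Replace the segment of $[x,y]$ along the axis by its rerouting through a translate $g=hv^{\pm1}h^{-1}$, which is the path realizing $[\bar{x},\bar{y}]$ near this strut. That is, work with the concatenation $[x,a]\cup g'[a',b]\cup[b,y]$, where $g'$ is chosen so that the lifted rerouted segment avoids $\N_E(\rho^U_X)$. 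Lemma \ref{lem:axisprojection} bounds the $U$-projection jump caused by the translation $g$ by $E'\le E$. The remaining pieces avoid $\N_E(\rho^U_X)$, so Lemma \ref{lem:subdomBGI} applies to each, and the triangle inequality in $U$ gives an $O(E)$ bound.

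The delicate point is that several struts could in principle be near $\rho^U_X$, which would break the uniform $O(E)$ bound. To rule this out, I would use the no-overlap condition together with $\M\gg E$: distinct struts of $\S(x,y)$ are far apart except for overlaps of length $\le 10\M$, so only boundedly many (in fact at most two adjacent) struts come within $E$ of $\rho^U_X$. This keeps the number of applications of Lemma \ref{lem:axisprojection} and Lemma \ref{lem:subdomBGI} bounded, and hence the total estimate is $O(E)$.
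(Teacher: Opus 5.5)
Your first step and final reduction match the paper's: choose lifts with $x,y$ minimal and both $0.85$-near-minimal to a lift of $\bar U$, reduce via Lemma \ref{lem:subdomboundeddiam}, and apply the Bounded Geodesic Image axiom in $X$. You are also right that this only works if the geodesic fed into Lemma \ref{lem:subdomBGI} is tied to the \emph{given} $[\bar x,\bar y]$. Steps 2--3, however, do not establish that.

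\textbf{Where Steps 2--3 fail.} Lemma \ref{lem:nearmindistbound} compares the two projected geodesics only at scale $10\M$, and at Hausdorff scale $L+1000\M$. As you note, $\M\gg E$; in fact $\M$ grows with $n$ while $E$ is fixed. So knowing that $[\bar x,\bar y]$ misses $\N_E(\bar\rho)$ tells you nothing about whether $[x,y]$ misses $\N_E(\rho^U_X)$, even away from the struts.
\begin{itemize}
\item Your dichotomy (``either $[x,y]$ avoids $\N_E(\rho^U_X)$ after a harmless change of constants, or $\rho^U_X$ lies near a strut'') is unproved. The point $\rho^U_X$ can be $E$-close to $[x,y]$ at a non-strut point while the given geodesic stays just outside $\N_E(\bar\rho)$.
\item Step 3 has the same problem. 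The pieces $[x,a]$ and $[b,y]$ and the rerouted arc are never shown to be lifts of pieces of $[\bar x,\bar y]$, so the avoidance hypothesis cannot be applied to them.
\item Choosing $g'$ ``so that the lifted rerouted segment avoids $\N_E(\rho^U_X)$'' assumes what needs proving.
\item The concatenation $[x,a]\cup g'[a',b]\cup[b,y]$ is not a path as written, and the ``at most two struts'' count is only asserted.
\end{itemize}

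\textbf{The missing idea.} Fix the lifts of $\bar x,\bar y$ \emph{from the given geodesic}, and only afterwards choose the lift of $\bar U$.
\begin{enumerate}
\item Lift $[\bar x,\bar y]$ edge by edge to a path $\gamma$ in $X$ from $x$ to $y$. It has length $d_{X/N}(\bar x,\bar y)\le d_X(x,y)$, so $\gamma$ is a geodesic and $x,y$ are minimal (as in the proof of Lemma \ref{lem:quotienthyp}).
\item Since $\pi_{X/N}$ is $1$-Lipschitz, $\gamma$ misses $\N_E$ of \emph{every} lift of $\bar\rho$, in particular of $\rho^{U'}_X$ for every lift $U'$ of $\bar U$.
\item Run the proof of Lemma \ref{lem:nearmintriple} with this minimal pair held fixed. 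That proof never moves the minimal pair; it only rotates the third point by elements $hvh^{-1}$. This yields a lift $U'$ of $\bar U$ with $\rho^{U'}_X$ $0.85$-near-minimal to both $x$ and $y$.
\item Lemma \ref{lem:subdomBGI} applied to $\gamma$ gives $d_{U'}(\rho^X_{U'}(x),\rho^X_{U'}(y))\le E$, and Lemma \ref{lem:subdomboundeddiam} finishes.
\end{enumerate}
This is what the paper's ``minor modification of Lemma \ref{lem:nearmintriple}'' amounts to, and it makes Lemma \ref{lem:nearmindistbound} and the strut-rerouting analysis unnecessary.
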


\textbf{Proof.} Let $x,U_1$ be near-minimal lifts of $\bar{x},\bar{U}$, and let $y,U_2$ be near-minimal lifts of $\bar{y},\bar{U}$. By equivariance and a minor modification of Lemma \ref{lem:nearmintriple}, we may assume that $U_1=U_2$ (call this subdomain $U$) and that $x$ and $y$ are minimal. By Lemma \ref{lem:subdomboundeddiam}, it suffices to prove that $d_U(\rho^X_U(x),\rho^X_U(y))\leq E$. But this bound now holds from our construction and the Bounded Geodesic Image axiom for $X$. $\square$

\

\begin{definition}
\label{def:raytruncation}

Given a quasi-geodesic ray $\bar{r}$ in $X/N$ and a point $\bar{y}$ on $\bar{r}$, define $\bar{r}_{\bar{y}}$ to be the `tail' of $\bar{r}$ obtained by removing all points of $\bar{r}$ before $\bar{y}$.

\end{definition}

\begin{definition}
\label{def:boundarysubdomproj}

Fix $\xi\in \partial(X/N)$ and $\bar{U}\in\bar{\mathcal{D}}\setminus \{X/N\}$. Let $R$ be the set of all $(1,20L)$-quasi-geodesic rays $\bar{r}$ in $X/N$ originating from $\bar{x}_0$ and representing $\xi$. Then
$$\rho^{\partial (X/N)}_{\bar{U}}(\xi)=\bigcup_{\bar{r}\in R}\bigcup_{\substack{\bar{y}\in\bar{r}\\ d_{X/N}(\bar{r}_{\bar{y}},\rho^{\bar{U}}_{\bar{X}})\geq 10^{10}L}}\rho^{\bar{X}}_{\bar{U}}(\bar{y})$$

\end{definition}

The following lemma follows directly from Lemmas \ref{lem:subdomboundeddiam} and \ref{lem:subdomquotBGI}.

\begin{lemma}
\label{lem:boundarysubdomprojfinite}

For any $\xi\in\partial (X/N)$ and any $\bar{U}\in\bar{\mathcal{D}}/\{X/N\}$, the set $\rho^{\partial (X/N)}_{\bar{U}}(\xi)$ has diameter at most $O(E)$.

\end{lemma}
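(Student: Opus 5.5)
Fix $\xi$ and $\bar{U}$, and take two arbitrary contributions to $\rho^{\partial (X/N)}_{\bar{U}}(\xi)$: rays $\bar{r}_1,\bar{r}_2\in R$ and points $\bar{y}_1\in\bar{r}_1$, $\bar{y}_2\in\bar{r}_2$ with $d_{X/N}((\bar{r}_i)_{\bar{y}_i},\rho^{\bar{U}}_{\bar{X}})\geq 10^{10}L$. By Lemma \ref{lem:subdomboundeddiam}, each $\rho^{\bar{X}}_{\bar{U}}(\bar{y}_i)$ has diameter $O(E)$. So it suffices to show that $d_{\bar{U}}(\rho^{\bar{X}}_{\bar{U}}(\bar{y}_1),\rho^{\bar{X}}_{\bar{U}}(\bar{y}_2))\leq O(E)$, with a constant independent of the choices. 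The plan is to join $\bar{y}_1$ to $\bar{y}_2$ by a bounded number of geodesic segments, each staying at distance more than $E$ from $\rho^{\bar{U}}_{\bar{X}}$. Lemma \ref{lem:subdomquotBGI} then applies to each segment, and the triangle inequality in $\bar{U}$ finishes the argument.

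\textbf{Step 1: a common far point.} Since $\bar{r}_1$ and $\bar{r}_2$ are $(1,20L)$-quasi-geodesic rays from $\bar{x}_0$ representing the same boundary point, and $X/N$ is hyperbolic with constant about $L$ (Lemma \ref{lem:quotienthyp}), the Morse lemma shows that their tails eventually lie within $O(L)$ of each other. Choose points $\bar{y}_1'\in(\bar{r}_1)_{\bar{y}_1}$ and $\bar{y}_2'\in(\bar{r}_2)_{\bar{y}_2}$ far out, with $d_{X/N}(\bar{y}_1',\bar{y}_2')\leq 100L$. Both points lie at distance at least $10^{10}L$ from $\rho^{\bar{U}}_{\bar{X}}$, by the hypothesis on the tails. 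Every point of a geodesic $[\bar{y}_1',\bar{y}_2']$ is within $100L$ of $\bar{y}_1'$, so this geodesic stays at distance at least $10^{10}L-100L>E$ from $\rho^{\bar{U}}_{\bar{X}}$.

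\textbf{Step 2: segments along the tails.} Consider a geodesic $[\bar{y}_i,\bar{y}_i']$. Its endpoints lie on the tail $(\bar{r}_i)_{\bar{y}_i}$, which is a $(1,20L)$-quasi-geodesic. By the Morse lemma in $X/N$, with constant $O(L)$, this geodesic lies in an $O(L)$-neighborhood of that tail. Since $10^{10}L$ dominates both the Morse constant and $E$, the geodesic avoids the $E$-neighborhood of $\rho^{\bar{U}}_{\bar{X}}$. We may take $E\ll L$, since $L\to\infty$ with $n$.

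\textbf{Step 3: conclusion.} Apply Lemma \ref{lem:subdomquotBGI} to the three geodesics $[\bar{y}_1,\bar{y}_1']$, $[\bar{y}_1',\bar{y}_2']$ and $[\bar{y}_2',\bar{y}_2]$. Each step moves the projection to $\bar{U}$ by $O(E)$, so the total is $O(E)$.

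The only genuinely delicate point is Step 2. There one needs a Morse constant in $X/N$ of order $L$ rather than something growing with the ray, which is exactly what Lemma \ref{lem:quotienthyp} provides. One also needs the margin $10^{10}L$ to beat it.
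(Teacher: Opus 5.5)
Your proof is correct and follows the paper's route: the paper only asserts that the lemma follows directly from Lemmas \ref{lem:subdomboundeddiam} and \ref{lem:subdomquotBGI}, and you supply the missing detail. That detail is to use the hyperbolicity of $X/N$ (Lemma \ref{lem:quotienthyp}) and the resulting $O(L)$ Morse constant for $(1,20L)$-quasi-geodesics, so that any two contributing points are joined by three geodesics each staying more than $E$ from $\rho^{\bar{U}}_{\bar{X}}$. One small point: the explicit $100L$ in Step 1 should really be some $O(L)$ constant, which the $10^{10}L$ margin absorbs anyway.
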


\begin{definition}
\label{def:quotientorthog}

Given $\bar{U},\bar{V}\in\bar{\mathcal{D}}\setminus\{X/N\}$, we write $\bar{U}\perp\bar{V}$ if there exist near-minimal respective lifts $U$ and $V$ of $\bar{U}$ and $\bar{V}$ with $U\perp V$.

\end{definition}

We record a useful consequence of partial realization.

\begin{lemma}
\label{lem:boundaryrealization}

Fix $\bar{U}\in\bar{\mathcal{D}}\setminus\{X/N\}$. Then the map $\rho^{\partial (X/N)}_{\bar{U}}\to \bar{U}$ is $O(E)$-coarsely surjective.

\end{lemma}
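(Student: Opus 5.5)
Fix $\bar{U}\in\bar{\mathcal{D}}\setminus\{X/N\}$ and a lift $U$ of it. The plan is to pull a target point back to $U$, realize it by a point of the hierarchically hyperbolic space whose image in $X$ is close to $\rho^U_X$, and then run a ray from that region out to infinity along a direction that never comes back near $\rho^U_X$.

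Let $p\in\bar{U}$ be the target point, identified with a point of $U$.

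\textbf{Step 1 (partial realization).} By the partial realization axiom, applied to the single domain $U$, there is a point $a\in\mathcal{X}$ with $d_U(\pi_U(a),p)\leq E$ and $d_X(\pi_X(a),\rho^U_X)\leq E$. Set $x=\pi_X(a)=ax_0$. Translating by $N$, we may assume $x$ and $U$ are minimal, hence $0.85$-near-minimal. Then $\rho^X_U(x)$ is coarsely $\pi_U(a)$, and so lies within $O(E)$ of $p$.

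\textbf{Step 2 (a ray escaping $\rho^{\bar{U}}_{\bar{X}}$).} The aim is a $(1,20L)$-quasi-geodesic ray $\bar{r}$ from $\bar{x}_0$ with two properties: it passes within $O(E)$ of $\bar{x}$, and its tail after that point stays at distance at least $10^{10}L$ from $\rho^{\bar{U}}_{\bar{X}}$, apart from an initial piece that already lies outside the $E$-neighbourhood.

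\begin{enumerate}
\item Construct the ray from the Lemma~\ref{lem:mainconstruction} construction, built from $x$. First pick a loxodromic direction at $x$: by Lemma~\ref{lem:schottkyset}, choose a Schottky element $s$ such that $(sx_0\cdot x_0)\leq E_G$ (taken at the base $a$), and such that $sx_0$ makes small Gromov product at $x$ with $\rho^U_X$ and with the enemy prongs toward $\bar{x}_0$.
\item Then attach an axis sequence $z_q$ as in Lemmas~\ref{sublem:axisextender} and~\ref{sublem:sequencelimits}, giving a boundary point $\xi$.
\item By Lemma~\ref{sublem:forcedpassthrough}, geodesics from $\bar{x}_0$ to $\bar{z}_q$ pass within $1000B$ of $\bar{x}$. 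In $X$, the path from $x$ to $z_q$ is a $(1,1000\mathcal{M})$-quasi-geodesic moving away from $x$, and hence away from $\rho^U_X$.
\item Minimality of $x$ and $z_q$ transfers this distance bound to $X/N$.
\end{enumerate}

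\textbf{Step 3 (applying the quotient BGI).} For any $\bar{y}$ on the tail of $\bar{r}$ far from $\rho^{\bar{U}}_{\bar{X}}$, the segment $[\bar{x}',\bar{y}]$ avoids the $E$-neighbourhood of $\rho^{\bar{U}}_{\bar{X}}$, where $\bar{x}'$ is the point of $\bar{r}$ just past $\bar{x}$. Lemma~\ref{lem:subdomquotBGI} then gives that $\rho^{\bar{X}}_{\bar{U}}(\bar{y})$ is within $O(E)$ of $\rho^{\bar{X}}_{\bar{U}}(\bar{x}')$.

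Next compare $\rho^{\bar{X}}_{\bar{U}}(\bar{x}')$ with $\rho^{\bar{X}}_{\bar{U}}(\bar{x})$. Since $d(\bar{x},\bar{x}')=O(E_G)$, the Lipschitz property of $\rho^X_U$ together with Lemma~\ref{lem:subdomboundeddiam} shows these differ by $O(E)$, once constants are absorbed. Hence $\rho^{\partial(X/N)}_{\bar{U}}(\xi)$ lies within $O(E)$ of $p$.

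\textbf{Main obstacle.} The hardest part is Step 2: guaranteeing that the ray leaves $\rho^U_X$ quickly and \emph{never returns within $E$ in the quotient}. The difficulty is that $x$ itself is $E$-close to $\rho^U_X$, so the segment $[\bar{x},\bar{y}]$ cannot avoid that point entirely. This is why I would start the BGI comparison at $\bar{x}'$, a point roughly $E_D\gg E$ along the ray.

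What remains is to rule out a later approach of the ray to some other lift $gU$ with $g\in N$. For this I would argue as in Lemma~\ref{lem:quotientrouting}: the ray's strut sequence from $x$ consists of the axes $A_i$ and their translates. Any return of the ray near a lift of $\rho^{\bar{U}}_{\bar{X}}$ would force two lifts of $U$ to be near-minimal with $x$, and Lemma~\ref{lem:CM2.1} forbids that. I expect this to require choosing $s$ to also repel $\rho^U_X$, which is one additional condition in the union bound, and that is harmless.
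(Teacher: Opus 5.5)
Your outline has the same skeleton as the paper's proof (partial realization, an axis-following ray as in Lemma~\ref{sublem:sequencelimits}, quotient BGI), but two steps fail as written.

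\textbf{First gap: linking $p$ to the far points of the ray.} The chain breaks at its first link. In Step~1 you assert that $\rho^X_U(x)$ is coarsely $\pi_U(a)$. In Step~3 you pass from $\bar{x}'$ back to $\bar{x}$ using a Lipschitz property of $\rho^X_U$. But $x=\pi_X(a)$ lies within $E$ of $\rho^U_X$, and there the axioms give nothing:
consistency relates $\pi_U(a)$ and $\rho^X_U(\pi_X(a))$ only when $d_X(\pi_X(a),\rho^U_X)$ exceeds the consistency constant, and $\rho^X_U$ has no Lipschitz or BGI control inside $\N_E(\rho^U_X)$.

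This is not a technicality. $\mathrm{Stab}(U)$ fixes the point $\rho^U_X$ by Hypothesis~\ref{hyp:nob}(5). So if $\pi_X(a)=\rho^U_X$, every $ga$ with $g\in\mathrm{Stab}(U)$ has the same $X$-projection, while $\pi_U(ga)=g\pi_U(a)$ can run over an unbounded orbit. The $U$-coordinate of a realization point is invisible from $X$.

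The link has to go through $\mathcal{X}$, which is what the paper's appeal to the word-length bound and coarse Lipschitzness of projections does. Since $\pi_U\colon\mathcal{X}\to U$ is coarsely Lipschitz and $d_G(e,s)\leq E_M$, you get $d_U(\pi_U(a),\pi_U(as))\leq E(E_M+1)$. Only then, at $x'=asx_0$, which is at distance at least $E_D-E$ from $\rho^U_X$, does consistency give $\rho^X_U(x')\approx\pi_U(as)$. You should never evaluate $\rho^X_U$ at $x$.

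\textbf{Second gap: the BGI step has no direction control.} Your Lemma~\ref{lem:CM2.1} argument correctly rules out the lifts $g\rho^U_X$ with $g\in N\setminus\{e\}$. But nothing you set up keeps the lifted geodesics out of $\N_E(\rho^U_X)$ itself:
the $(1,1000\M)$-quasi-geodesicity is far too coarse, since $\M\gg E_D$ for large $n$;
the Schottky condition you impose (small Gromov product at $x$ with $\rho^U_X$) is vacuous, because $d_X(x,\rho^U_X)\leq E$;
and a point of the $(1,20L)$-ray $\bar{r}$ ``just past $\bar{x}$'' is uncontrolled at scale $E_D$.

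What is needed is that the continuation of the ray beyond $x'$ turns away from $x$. For instance:
fix the axis-following element $c$ first;
choose $s$ by the second Schottky condition so that $(s^{-1}x_0\cdot cx_0)_{x_0}\leq E_G$;
set $z_j=asc^jx_0$.
Then $(x\cdot z_j)_{x'}\leq E_G+10\delta$ for all $j$, so every $[x',z_j]$ stays at distance at least $E_D-E_G-E-10\delta>E$ from $\rho^U_X$. Since the $z_j$ are pairwise minimal (Lemma~\ref{sublem:sequencelimits}), Lemma~\ref{lem:subdomquotBGI} applies to $[\bar{x}',\bar{z}_j]$ with $\bar{x}'=\pi_{X/N}(asx_0)$. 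A far point of any $(1,20L)$-ray from $\bar{x}_0$ to $\xi$ is then $O(L)$-close to some $\bar{z}_j$ far from $\rho^{\bar{U}}_{\bar{X}}$ (Morse lemma, as in the paper). So Lemma~\ref{sublem:forcedpassthrough} and the enemy prongs are not needed.

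Once repaired, your Schottky exit step genuinely sharpens the paper's argument. The paper starts the axis-following ray at the realization point itself and bounds the movement along its first few segments via $d_G(e,c)=O(n)$. Your detour costs only $O(E\,E_M)$, which is independent of $n$.
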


\textbf{Proof.} Fix $\bar{u}\in\bar{U}$, and let $U$ be an arbitrary lift of $\bar{U}$. By partial realization, there exists a point $\hat{u}\in\mathcal{X}$ and a point $u'\in U$ projecting to $\bar{u}'\in\bar{U}$ such that $u'$ is contained in the projection of $\hat{u}$ to $U$ and $d_{\bar{U}}(\bar{u}',\bar{u})\leq E$. Let $x_u$ be a point in the projection of $\hat{u}$ to $X$, and let $\bar{x}_u$ be its image in $X/N$. Possibly after adjusting $\hat{u}$ and $u'$ in a way that moves $u'$ by distance at most $E$, we may assume without loss of generality that $d_X(x_u,\rho^U_X)\leq E$, and that as a consequence $x_u$ and $U$ are near-minimal.

Say $x_u=hx_0$. As in the proof of Lemma \ref{sublem:axisextender}, we will choose a group element $c\in G$ with the following properties:
\begin{enumerate}
\item There exists a $c'\in G$ and a $v_i$ so that $[x_0,cx_0]\subset\N_{20\delta}((c')^{-1}\Ax(v_i))$.
\item We have $d_X(x_0,cx_0)=_{100\delta} \frac{1}{3}L_i$.
\item We have $d_G(e,c)=O(n)$.

\end{enumerate}

We then take the sequence $z_j=hc^jx_0$, so $z_0=x_u$. By essentially the same proof as in Lemma \ref{sublem:sequencelimits}, the sequence $\bar{z}_j$ defines a boundary point $\xi\in\partial (X/N)$, and all $z_j$ are near-minimal with respect to $U$. Furthermore, we may concatenate geodesic segments $[\bar{z}_j,\bar{z}_{j+1}]$ to obtain a $(1,1000\M)$-quasi-geodesic ray $\bar{s}$. We may subdivide this ray into at most three sections $\bar{s}_1$, $\bar{s}_2$, and $\bar{s}_3$, each section made of some number $[z_j,z_{j+1}]$ segments, and satisfying the following properties. We require that $\bar{s}_1$ and $\bar{s}_3$ do not pass within $10L$ of $\rho^{\bar{U}}_{\bar{X}}$, and that $\bar{s}_2$ consists of at most $40$ segments. Let $z_{j_1}$ be the terminal point of $\bar{s}_1$, let $z_{j_2}$ be the terminal point of $\bar{s}_2$, and let $\bar{z}_{j_3}$ be some arbitrary point on $\bar{s}_3$ with distance at least $10^{20}L$ from $\rho^{\bar{U}}_{\bar{X}}$. By the word-length bound above, the coarse Lipschitz projections property, and Lemma \ref{lem:subdomquotBGI}, we have
$$d_{\bar{U}}(\rho^{\bar{U}}_{\bar{X}}(\bar{z}_0),\rho^{\bar{U}}_{\bar{X}}(\bar{z}_{j_1}))\leq O(E),\ d_{\bar{U}}(\rho^{\bar{U}}_{\bar{X}}(\bar{z}_{j_1}),\rho^{\bar{U}}_{\bar{X}}(\bar{z}_{j_2}))\leq O(E),$$
$$d_{\bar{U}}(\rho^{\bar{U}}_{\bar{X}}(\bar{z}_{j_2}),\rho^{\bar{U}}_{\bar{X}}(\bar{z}_{j_3}))\leq O(E)$$

Let $\bar{r}$ be a $(1,20L)$-quasi-geodesic ray in $X/N$ originating from $\bar{x}_0$ and representing $\xi$. By the Morse Lemma, $\bar{r}$ fellow-travels the sequence $\bar{z}_j$ for sufficiently large $j$. We may now choose a sufficiently large $j_3$ and conclude by the triangle inequality. $\square$

\

We now recall a specific case of the Partial Realization Axiom for HHSes.

\begin{lemma}

Let $U,V\in\mathcal{D}\setminus\{X\}$ with $U\perp V$. Then the map
$$\pi_U\times\pi_V:\mathcal{X}\to U\times V$$
is $E$-coarsely surjective.

\end{lemma}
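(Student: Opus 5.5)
The plan is to read the claim off the Partial Realization Axiom of the (R)HHS structure on $\mathcal{X}$. The only subtlety is that in some formulations of the axioms, partial realization is stated only for points lying in the images $\pi_U(\mathcal{X})$, not for arbitrary points of $U$. So the argument has two steps: first move an arbitrary target point into the image of $\pi_U$, then realize.

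First, fix $U,V\in\mathcal{D}\setminus\{X\}$ with $U\perp V$ and an arbitrary pair $(u,v)\in U\times V$. The projection axiom requires each $\pi_W:\mathcal{X}\to W$ to be $E$-coarsely surjective, or at least to have $E$-quasi-convex image; in the latter case we pass to the standard normalization in which $\pi_W(\mathcal{X})$ is coarsely all of $W$. Enlarging $E$ if necessary, as the paper has done repeatedly, we obtain $u'\in\pi_U(\mathcal{X})$ and $v'\in\pi_V(\mathcal{X})$ with $d_U(u,u')\le E$ and $d_V(v,v')\le E$.

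Second, $\{U,V\}$ is a pairwise-orthogonal family, so the Partial Realization Axiom applies to the points $u'\in\pi_U(\mathcal{X})$ and $v'\in\pi_V(\mathcal{X})$. It produces $\hat{x}\in\mathcal{X}$ with $d_U(\pi_U(\hat x),u')\le E$ and $d_V(\pi_V(\hat x),v')\le E$. The axiom's further conclusions, about domains nested into or transverse to $U$ or $V$, are not needed here. In the relative setting, $U$ or $V$ may fail to be hyperbolic, but partial realization is still one of the RHHS axioms, so nothing changes.

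Combining the two steps by the triangle inequality gives $d_U(\pi_U(\hat x),u)\le 2E$ and $d_V(\pi_V(\hat x),v)\le 2E$. Since $\pi_U(\hat x)$ and $\pi_V(\hat x)$ have diameter at most $E$, this shows $\pi_U\times\pi_V$ is $O(E)$-coarsely surjective. Replacing $E$ by a fixed multiple of itself, in the paper's convention of taking $E$ as the maximum of all relevant constants, yields the stated $E$-coarse surjectivity.

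The only real obstacle is bookkeeping: making sure that whichever version of the axioms underlies the paper's constant $E$ contains both coarse surjectivity of the individual $\pi_W$ and partial realization for orthogonal pairs. If coarse surjectivity of the $\pi_W$ is not included, I would first invoke the standard reduction that replaces each $W$ by the hyperbolic, quasi-convex subspace $\pi_W(\mathcal{X})$. This replacement does not affect the maximal domain $X$, the $\rho$-maps, or any property used elsewhere in the paper.
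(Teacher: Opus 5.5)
Your proposal is correct and follows the same route as the paper, which gives no argument beyond recording this as a special case of the Partial Realization Axiom. Your preliminary step does two things: it first moves $(u,v)$ into $\pi_U(\mathcal{X})\times\pi_V(\mathcal{X})$ using coarse surjectivity of the individual projections, and it then absorbs the resulting $O(E)$ error into $E$. This only makes explicit a normalization the paper uses tacitly. That normalization is in fact forced by the statement itself, since the statement implies that each $\pi_U$ is coarsely surjective.
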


The following lemma now follows immediately from the proof of Lemma \ref{lem:boundaryrealization} and the Partial Realization Axiom.

\begin{lemma}
\label{def:pairboundaryrealization}

Fix $\bar{U},\bar{V}\in\bar{\mathcal{D}}\setminus\{X/N\}$, and assume that $\bar{U}\perp\bar{V}$. Then the map
$$\rho^{\partial (X/N)}_{\bar{U}}\times\rho^{\partial (X/N)}_{\bar{V}}:\partial (X/N)\to \bar{U}\times\bar{V}$$
is $O(n)$-coarsely surjective.

\end{lemma}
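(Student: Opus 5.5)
The plan is to mirror the proof of Lemma \ref{lem:boundaryrealization}, now using the Partial Realization Axiom for the orthogonal pair in place of single-domain realization. Fix $(\bar{u},\bar{v})\in\bar{U}\times\bar{V}$. By Definition \ref{def:quotientorthog}, choose near-minimal lifts $U,V$ of $\bar{U},\bar{V}$ with $U\perp V$, and choose lifts $u\in U$, $v\in V$ of $\bar{u},\bar{v}$ via the unique identifications $\pi_N$ of Definition \ref{def:quotientdomains}. The preceding lemma, applied to $\pi_U\times\pi_V$, gives a point $\hat{w}\in\mathcal{X}$ with $d_U(\pi_U(\hat{w}),u)\le E$ and $d_V(\pi_V(\hat{w}),v)\le E$.

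Let $x_w=\pi_X(\hat{w})=hx_0$. The first step is to arrange that $x_w$ is near-minimal with both $U$ and $V$ and lies within $O(E)$ of both $\rho^U_X$ and $\rho^V_X$. Near-minimality of $U$ and $V$, together with $d_X(\rho^U_X,\rho^V_X)=O(E)$ for orthogonal domains, makes the projections of $\rho^U_X$ and $\rho^V_X$ to every axis coarsely agree. If $x_w$ is far from these points, I would move $\hat{w}$ along a hierarchy path (Theorem \ref{thm:hierarchypathsexist}) toward a point realizing the same $U$ and $V$ coordinates near $\rho^U_X$. By the consistency and bounded geodesic image axioms, this changes the $U$ and $V$ coordinates by only $O(E)$.

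Next, as in Lemma \ref{lem:boundaryrealization}, choose $c\in G$ with $[x_0,cx_0]$ tracking a translate of an axis for about $\frac13 L_i$ and $d_G(e,c)=O(n)$. Set $z_j=hc^jx_0$; these points determine a boundary point $\xi$ by Lemma \ref{sublem:sequencelimits}. Split the resulting $(1,1000\M)$-quasi-geodesic ray into three pieces $\bar{s}_1,\bar{s}_2,\bar{s}_3$ separately with respect to $\rho^{\bar{U}}_{\bar{X}}$ and $\rho^{\bar{V}}_{\bar{X}}$. Since these two points are $O(E)$-close, a single decomposition serves both, with the middle piece consisting of boundedly many segments.

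Lemma \ref{lem:subdomquotBGI} controls the first and last pieces in each of $\bar{U}$ and $\bar{V}$. On the middle piece, each segment has word length $O(n)$, so the coarse Lipschitz property of the maps $\pi_U$ and $\pi_V$ bounds its displacement by $O(n)$. Combining these with Lemma \ref{lem:boundarysubdomprojfinite} and the Morse lemma comparison with any $(1,20L)$ ray representing $\xi$ gives $d_{\bar{U}}(\rho^{\partial(X/N)}_{\bar{U}}(\xi),\bar{u})=O(n)$, and likewise for $\bar{V}$.

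The main obstacle is the first step, simultaneous realization with $x_w$ near both $\rho$-points and near-minimal with both lifts. I would first try to handle the axis projections using Lemma \ref{lem:CM2.1} and the triangle inequality, exactly as in Lemma \ref{lem:subdomboundeddiam}.
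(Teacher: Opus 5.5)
Your architecture matches the paper's. The paper obtains this lemma in one line by rerunning the proof of Lemma \ref{lem:boundaryrealization} with the Partial Realization Axiom applied to the orthogonal pair. Your treatment of the ray through $z_j=hc^jx_0$, the single three-piece decomposition serving both $\bar{U}$ and $\bar{V}$, and the $O(n)$ coarse-Lipschitz bound on the middle piece is exactly that rerun. The problem is the step you flag as the main obstacle: your proposed fix for it does not work.

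\textbf{Why the hierarchy-path fix fails.} Moving $\hat{w}$ along a hierarchy path ``toward a point realizing the same $U$ and $V$ coordinates near $\rho^U_X$'' is circular. That target point is precisely what you need, and if you had it the hierarchy path would be superfluous. Moving instead toward an arbitrary point preserves nothing. The $U$-coordinate at the end of a hierarchy path is just $\pi_U$ of its endpoint, and bounded geodesic image gives no control exactly where you want to go, namely near $\rho^U_X$.

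\textbf{What closes the step.} You need the second clause of the Partial Realization Axiom. The lemma stated just before this one omits it, but it is what justifies the step ``we may assume without loss of generality that $d_X(x_u,\rho^U_X)\leq E$'' in Lemma \ref{lem:boundaryrealization}. The realization point $\hat{w}$ for the orthogonal family $\{U,V\}$ also satisfies $d_W(\pi_W(\hat{w}),\rho^U_W)\leq E$ and $d_W(\pi_W(\hat{w}),\rho^V_W)\leq E$ for every domain $W$ into which $U$, respectively $V$, is properly nested. Taking $W=X$ gives $d_X(x_w,\rho^U_X)\leq E$ and $d_X(x_w,\rho^V_X)\leq E$ at once. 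The same clause is also what yields the bound $d_X(\rho^U_X,\rho^V_X)\leq 2E$ that you invoke.

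Near-minimality of $x_w$ with both lifts is then immediate. Closest-point projection to an axis is coarsely Lipschitz, and $E$ is negligible compared to $0.8L(v)$. No appeal to Lemma \ref{lem:CM2.1} in the style of Lemma \ref{lem:subdomboundeddiam} is needed. With this substitution, the rest of your argument goes through as you describe.
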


We now state an extension of the Bounded Geodesic Image Axiom that applies to boundary points in the quotient. This lemma follows directly from unraveling definitions and applying Lemma \ref{lem:subdomquotBGI}.

\begin{lemma}
\label{lem:boundaryBGI}

There exists a constant $J=O(L)$ with the following property. Let $\bar{U}\in\bar{\mathcal{D}}\setminus\{X/N\}$ and $\xi_1,\xi_2\in\partial (X/N)$. If $d_{\bar{U}}(\rho^{\partial (X/N)}_{\bar{U}}(\xi_1),\rho^{\bar{X}}_{\bar{U}}(\bar{x}_0))$ is at least $J$, then any $(1,20L)$-quasi-geodesic ray originating at $\bar{x}_0$ representing $\xi$ passes within $J$ of $\rho^{\bar{U}}_{\bar{X}}$. Furthermore, if $d_{\bar{U}}(\rho^{\partial (X/N)}_{\bar{U}}(\xi_1),\rho^{\partial (X/N)}_{\bar{U}}(\xi_2))$ is at least $J$, then $(\xi_1\cdot\xi_2)_{\rho^{\bar{U}}_{\bar{X}}}\leq E$.

\end{lemma}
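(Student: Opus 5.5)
We argue by unraveling Definition \ref{def:boundarysubdomproj} and reducing both claims to Lemma \ref{lem:subdomquotBGI}, using Lemma \ref{lem:subdomboundeddiam} and Lemma \ref{lem:boundarysubdomprojfinite} to control the diameters of all projection sets involved. Throughout, $J$ is taken to be a large multiple of $L$, chosen large compared to the $O(E)$ constants of those lemmas; this is possible since $L$ grows with $n$ while $E$ does not.

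For the first claim, let $\bar{r}$ be a $(1,20L)$-quasi-geodesic ray from $\bar{x}_0$ representing $\xi_1$, and suppose for contradiction that $\bar{r}$ stays outside $\N_J(\rho^{\bar{U}}_{\bar{X}})$. Take any point $\bar{y}$ on $\bar{r}$. Since $J\geq 10^{10}L$, the tail $\bar{r}_{\bar{y}}$ satisfies the condition in Definition \ref{def:boundarysubdomproj}, so $\rho^{\bar{X}}_{\bar{U}}(\bar{y})\subset\rho^{\partial(X/N)}_{\bar{U}}(\xi_1)$. By the Morse Lemma in the hyperbolic space $X/N$ (Lemma \ref{lem:quotienthyp}), any geodesic $[\bar{x}_0,\bar{y}]$ lies in the $O(L)$-neighborhood of $\bar{r}$. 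Choosing $J$ larger than this Morse constant plus $E$, the geodesic $[\bar{x}_0,\bar{y}]$ avoids $\N_E(\rho^{\bar{U}}_{\bar{X}})$. Lemma \ref{lem:subdomquotBGI} then gives $d_{\bar{U}}(\rho^{\bar{X}}_{\bar{U}}(\bar{x}_0),\rho^{\bar{X}}_{\bar{U}}(\bar{y}))=O(E)$. Combined with Lemma \ref{lem:boundarysubdomprojfinite}, this bounds $d_{\bar{U}}(\rho^{\partial (X/N)}_{\bar{U}}(\xi_1),\rho^{\bar{X}}_{\bar{U}}(\bar{x}_0))$ by $O(E)<J$, a contradiction.

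For the second claim, pick rays $\bar{r}_1,\bar{r}_2$ from $\bar{x}_0$ representing $\xi_1,\xi_2$, and far-out points $\bar{y}_i\in\bar{r}_i$ whose tails stay $10^{10}L$ from $\rho^{\bar{U}}_{\bar{X}}$. Such points exist because a quasi-geodesic ray eventually leaves every bounded set. By the definition of $\rho^{\partial(X/N)}_{\bar{U}}$ and the diameter bounds, $d_{\bar{U}}(\rho^{\bar{X}}_{\bar{U}}(\bar{y}_1),\rho^{\bar{X}}_{\bar{U}}(\bar{y}_2))\geq J-O(E)>O(E)$. The contrapositive of Lemma \ref{lem:subdomquotBGI} then shows that every geodesic $[\bar{y}_1,\bar{y}_2]$ passes within $E$ of $\rho^{\bar{U}}_{\bar{X}}$. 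By Lemma \ref{lem:gromovprops}(1), this gives $(\bar{y}_1\cdot\bar{y}_2)_{\rho^{\bar{U}}_{\bar{X}}}\leq E$. This holds for all sufficiently far $\bar{y}_i$, so we pass to the liminf defining the boundary Gromov product.

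The main obstacle is the constant in this final step. Lemma \ref{lem:consistentgromov}, applied in $X/N$ whose hyperbolicity constant is of order $L$, only yields the bound up to an additive $O(L)$, and the liminf over representative sequences loses a similar amount. So this route naturally gives $(\xi_1\cdot\xi_2)_{\rho^{\bar{U}}_{\bar{X}}}\leq E+O(L)$, i.e. a bound of order $J$, rather than the literal $E$ in the statement. I would first try to keep the bound at $E$ by working with the specific sequences $\bar{y}_i$ rather than an infimum over all representatives. If that fails, I would state the conclusion as $\leq J$, which is what later applications should require.
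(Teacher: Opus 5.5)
Your proposal is correct and follows the paper's route: the paper's proof is just to unravel Definition \ref{def:boundarysubdomproj} and apply Lemma \ref{lem:subdomquotBGI}, and you do exactly that, using the $O(E)$ diameter bounds. The obstacle you raise at the end is not real: Equation \ref{eq:boundarygromov} defines the boundary Gromov product as an \emph{infimum} over representative sequences, so your specific sequences $\bar{r}_i(t)$ already give $(\xi_1\cdot\xi_2)_{\rho^{\bar{U}}_{\bar{X}}}\leq E$ exactly, because $(\bar{r}_1(s)\cdot\bar{r}_2(t))_{\rho^{\bar{U}}_{\bar{X}}}\leq E$ for all large $s,t$, and no appeal to Lemma \ref{lem:consistentgromov} is needed.
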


We now reach the most technical claim, which will require that we rerun the construction from Lemma \ref{lem:mainconstruction} with a few modifications.

\begin{lemma}
\label{lem:narrowpaths}

For any constant $\Psi_1>0$, there exists a constant $\Psi_2>0$ with the following property. Let $\xi_1,\xi_2\in\partial (X/N)$ and $\bar{U}\in\bar{\mathcal{D}}\setminus\{X/N\}$, and assume that $d_{\bar{U}}(\rho^{\partial (X/N)}_{\bar{U}}(\xi_1),\rho^{\partial (X/N)}_{\bar{U}}(\xi_2))\leq \Psi_1$. Then there exists a path $\gamma:[0,1]\to \partial (X/N)$ from $\xi_1$ to $\xi_2$ such that for all $t\in[0,1]$, $d_{\bar{U}}(\rho^{\partial (X/N)}_{\bar{U}}(\gamma(t)),\rho^{\partial X/N}_{\bar{U}}(\xi_1))\leq \Psi_2$.

\end{lemma}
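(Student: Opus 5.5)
We will rerun the iterative scheme of Theorem \ref{thm:maintheorem}, replacing Lemma \ref{lem:mainconstruction} by a ``$\bar{U}$-controlled'' version. Namely, we first prove: for every pair $\chi,\chi'$ with $d_{\bar{U}}(\rho^{\partial (X/N)}_{\bar{U}}(\chi),\rho^{\partial (X/N)}_{\bar{U}}(\chi'))\leq \Psi_1'$ (for a suitable $\Psi_1'\geq\Psi_1$), the chain $\chi=\chi_1,\dots,\chi_Q=\chi'$ produced by Lemma \ref{lem:mainconstruction} can be chosen so that each $\rho^{\partial (X/N)}_{\bar{U}}(\chi_i)$ lies within a uniform constant $\Psi_3$ of $\rho^{\partial (X/N)}_{\bar{U}}(\chi)$. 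With this, we build $\gamma_r$ on the sets $I_r$ as before. Consecutive points of $I_r$ have Gromov product tending to infinity, and linear connectivity gives control of the visual metric. For the $\bar{U}$-coordinate, the key observation is this. Once $(\chi_i\cdot\chi_{i+1})_{\bar{x}_0}$ exceeds $d_{X/N}(\bar{x}_0,\rho^{\bar{U}}_{\bar{X}})+J+E$, Lemma \ref{lem:boundaryBGI} forces $d_{\bar{U}}(\rho_{\bar{U}}(\chi_i),\rho_{\bar{U}}(\chi_{i+1}))< J$. All subsequent refinements then also stay within $O(J)$ of their endpoints, again by Lemma \ref{lem:boundaryBGI} combined with hypothesis 3 of the induction (the $-100L$ slippage). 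So only the finitely many stages $r\leq r_0$ with $A+\frac12 Fr\leq d_{X/N}(\bar{x}_0,\rho^{\bar{U}}_{\bar{X}})+J$ can move the $\bar{U}$-coordinate substantially.

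This count $r_0$ depends on the position of $\bar{U}$, which is why we need the controlled version at those early stages. To avoid such dependence we change basepoint. By equivariance we may assume $\rho^{\bar{U}}_{\bar{X}}$ is within $E$ of $\bar{x}_0$. This is legitimate because the visual metric changes only bi-Lipschitz under basepoint change, and the path construction of Theorem \ref{thm:maintheorem} works from any basepoint. Then every pair at stage $r\geq 1$ (taking $F$ large compared to $J$ and $E$) already has Gromov product beyond the threshold. So only stage $0$, a single application of the construction between $\xi_1$ and $\xi_2$, requires the controlled version. After that, the $\bar{U}$-coordinates of the limit path lie within $\Psi_3+O(J)\sum$ of a geometrically controlled number of jumps. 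More precisely, the usual nested-interval bound shows that any $\gamma(t)$ has Gromov product with some stage-$1$ point at least $A+\frac12F-100L$, and Lemma \ref{lem:boundaryBGI} converts this into a $\bar{U}$-distance of at most $J$. Continuity of $\gamma$ is inherited unchanged.

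The main obstacle is the controlled stage-$0$ construction. In that construction the new boundary points $\chi_{i,j}$ are built from rays through $\bar{w}_i$ (along an action path-connected quasi-geodesic from $\bar{x}_1$ to $\bar{x}_2$), then through $\bar{u}_i$, and along translates of the axes $A_i$. We must bound $\rho^{\bar{U}}_{\bar{X}}$ of these. With basepoint near $\rho^{\bar{U}}_{\bar{X}}$, the rays $[\bar{x}_0,\bar{z}_{i,j,q}]$ pass within $1000B$ of $\bar{w}_i$ by Lemma \ref{sublem:forcedpassthrough}, and after that they stay far from $\rho^{\bar{U}}_{\bar{X}}$ (distance $\gtrsim D$, unless $D$ is small). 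So Lemma \ref{lem:subdomquotBGI} reduces everything to controlling $\rho_{\bar{U}}(\bar{w}_i)$. Here we would first try to use the hierarchy-path choice in Lemma \ref{lem:hierarchypaths}: the sequence $a_i$ projects to an unparameterized quasi-geodesic in every domain, including a lift $U$. Hence $\rho_U(w_i)$ stays near the geodesic between $\rho_U(x_1)$ and $\rho_U(x_2)$, which has length roughly $\Psi_1$ by hypothesis. Transferring this to $\bar{U}$ needs near-minimality of $w_i$ and $U$ (via Lemma \ref{lem:subdomboundeddiam}), and the $O(n)$ word length of the $s_i$ must be absorbed by BGI. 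This is the delicate point.

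When $D$ is small, so that all points are near $\rho^{\bar{U}}_{\bar{X}}$, we would instead first replace $\xi_1,\xi_2$ by auxiliary points whose rays leave $\rho^{\bar{U}}_{\bar{X}}$ quickly. These can be produced via Lemma \ref{lem:boundaryrealization} with prescribed $\bar{U}$-coordinate. We would then connect through them.
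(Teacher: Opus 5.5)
\textbf{The gap.} Your overall architecture matches the paper's. A $\bar U$-controlled version of Lemma \ref{lem:mainconstruction} (the paper's Lemma \ref{lem:subdommainconstruction}, built with hierarchy paths and extra enemy prongs) is used at the early refinement stages. After that, the construction of Theorem \ref{thm:maintheorem} is run, and Lemma \ref{lem:subdomquotBGI} bounds the $\bar U$-diameter of the remaining sub-paths. Moving the basepoint to $\rho^{\bar U}_{\bar X}$ is a legitimate simplification of the paper's bookkeeping of Gromov products at both points.

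The step that fails is your conclusion that, after this normalization, only stage $0$ needs the controlled version, ``taking $F$ large compared to $J$ and $E$''. That cannot be arranged, because $F$ is fixed before $n$. Lemma \ref{lem:mainconstruction} holds only for $n$ large in terms of $F$, and Lemma \ref{sublem:insertaxes} forces $F\le E_D\le E_D'\le 0.01\min_v L(v)$, so $F\ll L$. The threshold you must clear, on the other hand, involves $L$-scale constants:
\begin{itemize}
\item $J$ is only known to be $O(L)$;
\item the slippage in hypothesis 3 of the induction in Theorem \ref{thm:maintheorem} is $100L$;
\item the constant $\mathcal{C}'$ controlling limit points is at least that large.
\end{itemize}
So your key estimate, that every $\gamma(t)$ has Gromov product at least $A+\frac12F-100L$ with a stage-$1$ point, is vacuous when $A$ is small: the right-hand side is negative. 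Lemma \ref{lem:boundaryBGI} then gives you no control over stages $1,2,\dots$.

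\textbf{The repair.} Keep using the controlled construction for the first $r_0\approx 2(\mathcal{C}'+J+E)/F$ stages. After your normalization, $r_0$ is uniform in $\xi_1,\xi_2,\bar U$, although it is of order $L/F\gg 1$. Hence even a crude recursive bound on the drift of the $\bar U$-coordinates over these $r_0$ stages yields a uniform $\Psi_2$; the paper asserts an additive bound $\Psi_1+O(B(\mathcal{C}'+L)/F)$. This is exactly the paper's proof: apply Lemma \ref{lem:subdommainconstruction} about $10(\mathcal{C}'+L)/F$ times to reach a chain with $(\chi_i\cdot\chi_{i+1})_{\rho^{\bar U}_{\bar X}}\ge 10\mathcal{C}'+10L$. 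Then run the path construction on each consecutive pair, and conclude with Lemma \ref{lem:subdomquotBGI}.

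\textbf{Two smaller points.} First, your auxiliary-point workaround for small $D$ is not worked out. As stated, joining $\xi_1$ to an auxiliary point while controlling the $\bar U$-coordinate is the original problem again. It is also unnecessary. The paper adds the prongs $p_j(w_i,\rho^{\bar U}_{\bar X})$ when choosing $u_i$. Then geodesics from $\rho^{\bar U}_{\bar X}$ to far points of the new rays pass within $100\delta$ of $\bar u_i$ or $\bar u_{i+1}$. Since $d(\bar w_i,\bar u_i)\ge E_D$, the outward part stays more than $E$ from $\rho^{\bar U}_{\bar X}$ however small $D$ is.

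Second, the $s_i$ have word length at most $E_M$, independent of $n$, not $O(n)$. So comparing $\rho_{\bar U}(\bar u_i)$ with $\rho_{\bar U}(\bar w_i)$ is done by coarse Lipschitzness of projections from $\mathcal{X}$, not by bounded geodesic image.
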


In order to prove this lemma, we make the following claim:

\begin{lemma}
\label{lem:subdommainconstruction}

Assume Hypothesis \ref{hyp:nob}. Fix $\xi_1,\xi_2\in\partial (X/N)$ with $(\xi_1\cdot \xi_2)_{\bar{x}_0}=D$ and $(\xi_1\cdot\xi_2)_{\rho^{\bar{U}}_{\bar{X}}}=D'$, and a constant $F$. Also fix $\bar{U}\in\bar{\mathcal{D}}\setminus\{X/N\}$, and assume that $d_{\bar{U}}(\rho^{\partial (X/N)}_{\bar{U}}(\xi_1),\rho^{\partial (X/N)}_{\bar{U}}(\xi_2))\leq \Psi_1$. Then for $M$ sufficiently large in terms of $F$, the following property holds: there exists a finite sequence of boundary points $\xi_1=\chi_1,\chi_2,...,\chi_Q=\xi_2$ such that for $1\leq i\leq Q-1$, $(\chi_i\cdot\chi_{i+1})_{\bar{x}_0}\geq D+\frac{1}{2}F$, and for all $2\leq i\leq Q-1$, and $j\in\{1,Q\}$, $(\chi_i\cdot\chi_j)_{\bar{x}_0}\geq D-100L$. Furthermore, for all $\chi_i$ with $1\leq i\leq Q-1$ and all $\chi_j$ with $j\in\{1,Q\}$, we have $(\chi_i\cdot\chi_{i+1})_{\rho^{\bar{U}}_{\bar{X}}}\geq D'+\frac{1}{2}F$ as well as $d_{\bar{U}}(\rho^{\partial (X/N)}_{\bar{U}}(\chi_i),\rho^{\partial (X/N)}_{\bar{U}}(\chi_j))\leq \Psi_1+O(L)$.

\end{lemma}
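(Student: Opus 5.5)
We rerun the construction of Lemma \ref{lem:mainconstruction} verbatim, except that the action path-connected sequence $a_1,\dots,a_k$ joining $x_1$ to $x_2$ is replaced by a sequence adapted to $\bar{U}$. The two Gromov-product conclusions with basepoint $\bar{x}_0$ then follow word for word from Lemmas \ref{sublem:outernoslip} and \ref{sublem:innergain}. Only two new claims need proof: the basepoint-$\rho^{\bar{U}}_{\bar{X}}$ increment $(\chi_i\cdot\chi_{i+1})_{\rho^{\bar{U}}_{\bar{X}}}\geq D'+\frac12F$, and the projection bound $d_{\bar{U}}(\rho^{\partial(X/N)}_{\bar{U}}(\chi_i),\rho^{\partial(X/N)}_{\bar{U}}(\chi_j))\leq\Psi_1+O(L)$.

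\textbf{Setup.} Choose $\bar{x}_1,\bar{x}_2$ on the rays for $\xi_1,\xi_2$ far enough out that three conditions hold. First, $(\bar x_1\cdot\bar x_2)_{\bar x_0}=_{10}D$ and $(\bar x_1\cdot\bar x_2)_{\rho^{\bar U}_{\bar X}}=_{10+O(L)}D'$. Second, the tails beyond $\bar x_j$ stay $10^{10}L$ away from $\rho^{\bar U}_{\bar X}$. Third, $\rho^{\bar X}_{\bar U}(\bar x_j)$ lies within $O(E)$ of $\rho^{\partial(X/N)}_{\bar U}(\xi_j)$; this uses Definition \ref{def:boundarysubdomproj} and Lemma \ref{lem:subdomquotBGI}.

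\textbf{Increment at the second basepoint.} All the estimates in Lemma \ref{sublem:innergain} are Gromov-product statements in which $\bar{x}_0$ plays no special role beyond being the basepoint of near-minimal lifts via Lemma \ref{lem:nearmintriple}. We therefore run that argument a second time with basepoint $\rho^{\bar U}_{\bar X}$. The enemy prongs $y_{i,j}$ must be built relative to both $\bar{x}_0$ and $\rho^{\bar U}_{\bar X}$. That gives four prongs per $w_i$, and the Schottky union bound in Lemma \ref{sublem:insertaxes} absorbs the extra conditions since $\eta=1/1000$. Lemma \ref{lem:quotientrouting} then applies for both basepoints, so geodesics from either basepoint to $\bar z_{i,j,q}$ pass near $\bar w_i$. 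The inequalities $d(\bar w_i,\rho^{\bar U}_{\bar X})\gtrsim D'$ follow from $\bar w_i$ lying $E'$-close to $\bar\gamma$, exactly as the analogous bound did for $\bar{x}_0$.

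\textbf{Projection bound.} First we control the projections of the path $\bar w_2,\dots,\bar w_{k-1}$ to $\bar U$. Consider the lift $\gamma$ of $[\bar x_1,\bar x_2]$ and a lift $U$ chosen near-minimal to it. Two cases arise.
\begin{enumerate}
\item If $\gamma$ avoids $\N_E(\rho^U_X)$, Lemma \ref{lem:subdomBGI} bounds all projections by $O(E)$.
\item Otherwise we use the hierarchy path from Lemma \ref{lem:hierarchypaths}. Its projection to $U$ is an unparameterized quasi-geodesic between $\rho^X_U(x_1)$ and $\rho^X_U(x_2)$, which are $\Psi_1+O(E)$-close. Hence each $\rho^X_U(w_i)$ is within $\Psi_1+O(E)$ of $\rho^X_U(x_1)$.
\end{enumerate}
Next, the points $\bar z_{i,j,q}$ lie at bounded word distance $E_M+O(n)$ steps from $w_i$ along repulsing axes. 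Applying Lemma \ref{lem:axisprojection} together with the argument of Lemma \ref{lem:boundaryrealization} (split the ray into at most three pieces, two avoiding $\rho^{\bar U}_{\bar X}$) gives $d_{\bar U}(\rho^{\partial}_{\bar U}(\chi_{i,j}),\rho^{\bar X}_{\bar U}(\bar w_i))=O(L)$. Combining this with the previous step and Lemma \ref{lem:boundarysubdomprojfinite} gives the stated bound.

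\textbf{Main obstacle.} The main obstacle is the first case split in the projection bound. The action path-connected sequence must be the hierarchy path itself, not an arbitrary quasi-geodesic, so that its $U$-projection is controlled. When passing to the quotient, the near-minimality of the lift $U$ with respect to all $w_i$ simultaneously must be checked via the strut machinery (Lemma \ref{lem:strutlifts}); I would first try to force this by choosing $U$ minimal to a point of $\gamma$ nearest $\rho^U_X$.
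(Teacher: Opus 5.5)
Your outline matches the paper's proof in every step but one. Like the paper, you replace the action-path-connected sequence by a hierarchy path (Theorem \ref{thm:hierarchypathsexist}) to control $d_{\bar U}(\rho^{\bar X}_{\bar U}(\bar x_1),\rho^{\bar X}_{\bar U}(\bar w_i))$. You add the prongs $p_1(w_i,\rho^{\bar U}_{\bar X})$ and $p_2(w_i,\rho^{\bar U}_{\bar X})$ to the Schottky union bound of Lemma \ref{sublem:insertaxes}. You then rerun Lemma \ref{sublem:innergain} at the basepoint $\rho^{\bar U}_{\bar X}$.

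The genuinely different step is the bound on $d_{\bar U}(\rho^{\partial(X/N)}_{\bar U}(\chi_{i,j}),\rho^{\bar X}_{\bar U}(\bar w_i))$. The paper uses the new prongs a second time. Via Lemma \ref{lem:quotientrouting}, every geodesic from $\rho^{\bar U}_{\bar X}$ to a far point $\bar s$ on a ray for $\chi_{i,j}$ passes within $100\delta$ of $\bar u_i$ (or $\bar u_{i+1}$), and then within $1200B$ of $\bar w_i$. Since $d(\bar w_i,\bar u_i)\ge E_D$, the stretch from $\bar u_i$ to $\bar s$ stays away from $\rho^{\bar U}_{\bar X}$. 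One application of Lemma \ref{lem:subdomquotBGI}, plus the hop $s_i$ of word length at most $E_M$, then gives an $O(B)$ bound independent of $n$.

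You instead walk along the explicit ray $\bar w_i,\bar z_{i,j,0},\bar z_{i,j,1},\dots$ as in Lemma \ref{lem:boundaryrealization}. You use BGI on the pieces far from $\rho^{\bar U}_{\bar X}$ and coarse Lipschitzness on the boundedly many segments near it. This spares you the routing argument for this step, but it has two costs:
\begin{itemize}
\item It only yields $O(L)$. That is still what the statement asks for, and the paper itself weakens $O(B)$ to $O(L)$.
\item It needs word-length control that the main construction does not supply. Lemma \ref{sublem:axisextender} gives no bound on $d_G(e,c_{i,j})$. The element $h_{i,j}$ is any element with $h_{i,j}x_0=z_{i,j}$, and vertex stabilizers need not be finite. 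The axis $A_i$ was chosen explicitly without the word-length part of Definition \ref{def:walkpermissible}.
\end{itemize}
So you must build in $d_G(e,c_{i,j})=O(n)$ and $d_G(a_i,h_{i,j})=O(n)$ yourself, e.g.\ via item 2 of Definition \ref{def:walkpermissible}, as is done for $c$ in the proof of Lemma \ref{lem:boundaryrealization}. The paper's route avoids this bookkeeping.

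The near-minimality issue you flag as the main obstacle is glossed over in the paper too. It is resolved by choosing the lift $U$ via Lemma \ref{lem:nearmintriple} to be $0.8$-near-minimal to both of the fixed lifts $x_1,x_2$. Each $w_i$ is $E'$-close to $[x_1,x_2]$, so its projection to any axis lies coarsely between those of $x_1$ and $x_2$. Hence $w_i$ and $U$ stay $0.85$-near-minimal, as Definition \ref{def:subdomquot} requires.
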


\textbf{Proof.} The proof of this lemma will be substantially derived from the proof of Lemma \ref{lem:mainconstruction}, so we will gloss over the aspects that remain unchanged. We will also borrow key notation from this proof. As before, we let $\bar{r}_1,\bar{r}_2:\intgr_{\geq 0}\to X/N$ be $(1,20L)$-quasi-geodesic rays originating at $\bar{x}_0$ and respectively representing $\xi_1$ and $\xi_2$. We choose points $\bar{x}_1$ and $\bar{x}_2$ on these rays such that $d_{X/N}(\bar{x}_0,\bar{x}_1)\geq \mathcal{Z}$ and $(\bar{x}_1,\bar{x}_2)_{\bar{x}_0}=_{10}D$. We now also require that $d_{X/N}(\bar{x}_i,\rho^{\bar{U}}_{\bar{X}})\geq \mathcal{Z}+10L$. Here, $\mathcal{Z}$ is some large multiple of $L$, say $10^{12}L$; it is straightforward that $\bar{x}_i$ exist satisfying this additional bound.

We let $x_0,x_1,x_2$ be pairwise near-minimal respective lifts of $\bar{x}_0,\bar{x}_1,\bar{x}_2$ to $X$, chosen so that $x_1$ and $x_2$ are minimal. Let $\gamma$ be a gedoesic from $x_1$ to $x_2$, and set $\bar{\gamma}=\pi_{X/N}(\gamma)$. We now use Theorem \ref{thm:hierarchypathsexist} to construct a hierarchy path from $x_1$ to $x_2$ (this construction replaces the use of action path-connectedness). We obtain a sequence of group elements $a_1,...,a_k\in G$ such that $a_1x_0=x_1$, $a_kx_0=x_2$, such that $d_G(a_{i-1},a_i)\leq W$ for all $2\leq i\leq k$, and such that $x_1=a_1x_0,a_2x_0,...,a_kx_0=x_2$ is an unparameterized $(E,E)$-quasigeodesic. As before, we define $\bar{w}_i:=a_i\bar{x}_0$. By Theorem $\ref{thm:hierarchypathsexist}$, the Bounded Geodesic Image Axiom, and our choice of $\mathcal{Z}$, the assumption that $$d_{\bar{U}}(\rho^{\partial (X/N)}_{\bar{U}}(\xi_1),\rho^{\partial (X/N)}_{\bar{U}}(\xi_2))\leq \Psi_1$$
implies that
$$d_{\bar{U}}(\rho^{\bar{X}}_{\bar{U}}(\bar{x}_1),\rho^{\bar{X}}_{\bar{U}}(\bar{w}_i))\leq \Psi_1+O(E^3).$$

Recall that in Lemma \ref{sublem:insertaxes} we constructed points $u_i:=a_is_ix_0$ avoiding certain `enemy prongs'; using the notation of Lemma \ref{lem:enemyprongs}, two of these prongs were $p_1(w_i,\bar{x}_0)$ and $p_2(w_i,\bar{x}_0)$. We add $p_1(w_i,\rho^{\bar{U}}_{\bar{X}})$ and $p_2(w_i,\rho^{\bar{U}}_{\bar{X}})$ to the list of enemy prongs we wish to avoid with $u_i$. Recall that $d_G(e,s_i)\leq E_M$ by the Schottky set construction. As a consequence of the coarse Lipschitz property of projections from $\mathcal{X}$ and the functoriality axiom for projections,
$$d_{\bar{U}}(\rho^{\bar{X}}_{\bar{U}}(\bar{w}_i),\rho^{\bar{X}}_{\bar{U}}(\bar{u}_i))\leq O(EE_M)+O(E)=O(B)$$

Here, $B$ is the constant used in Lemma \ref{lem:mainconstruction}.

While the statement was not needed before, it may be derived from the proof of the second case of Lemma \ref{sublem:innergain} that for any $(1,20L)$-quasi-geodesic ray $\rho$ representing some $\chi_{i,j}$ and any point $\bar{s}$ sufficiently far along on $\rho$, any geodesic $[\bar{s},\bar{x}_0]$ passes through some point $\bar{y}$ lying within distance $100\delta$ of either $\bar{u}_i$ or $\bar{u}_{i+1}$, and furthermore the subgeodsic $[\bar{y},\bar{x}_0]$ passes within $1200B$ of $\bar{w}_i$. We apply essentially the same argument in our new setup, using both the enemy prongs constructed from $\rho^{\bar{U}}_{\bar{X}}$ and the enemy prongs constructed from $\bar{x}_0$. We obtain that any geodesic $[\bar{s},\rho^{\bar{U}}_{\bar{X}}]$ passes through some point $\bar{y}$ lying with distance $100\delta$ of either $\bar{u}_i$ or $\bar{u}_{i+1}$, and likewise $[\bar{y},\rho^{\bar{U}}_{\bar{X}}]$ passes within $1200B$ of $\bar{w}_i$. 

Recall that $ d_X(\bar{w}_i,\bar{u}_i)\geq E_D\geq 10000B$, while $d_G(e,s_i)\leq E_M$, where $E_D$ and $E_M$ are independent of $n$. Therefore, the construction of $\bar{y}$ above is sufficient to conclude that $[\bar{y},\bar{s}]$ and $[\bar{u}_i,\bar{y}]$ do not pass within $E$ of $\rho^{\bar{U}}_{\bar{X}}$. From Lemma \ref{lem:subdomquotBGI},
$$d_{\bar{U}}(\rho^{\bar{X}}_{\bar{U}}(\bar{y}),\rho^{\bar{X}}_{\bar{U}}(\bar{u}_i))\leq O(E),\ d_{\bar{U}}(\rho^{\bar{X}}_{\bar{U}}(\bar{u}_i),\rho^{\bar{X}}_{\bar{U}}(\bar{s}))\leq O(E).$$

We may now apply the triangle inequality and conclude that
\begin{equation}\label{eq:domainshift}d_{\bar{U}}(\rho^{\partial(X/N)}_{\bar{U}}(\chi_{i,j}),\rho^{\bar{X}}_{\bar{U}}(\bar{w}_i))\leq O(B).\end{equation}

The rest of the argument follows as in Lemma \ref{lem:mainconstruction}. At its close, we use the new enemy prongs that were used to construct the $\bar{u}_i$ to argue as in Lemma \ref{sublem:innergain} and obtain that, after relabeing to $\chi_1,\chi_2,...,\chi_Q$, we have
$$(\chi_i\cdot\chi_{i+1})_{\rho^{\bar{U}}_{\bar{X}}}\geq D'+\frac{1}{2}F.$$
Lastly, recall Equation \ref{eq:domainshift}. Applying the triangle equality a final time with $\bar{w}_i$ and either $\bar{x}_1$ or $\bar{x}_2$, and recalling the construction of these points, yields for all $\chi_i$ and $\chi_j$ that
$$d_{\bar{U}}(\rho^{\partial (X/N)}_{\bar{U}}(\chi_i),\rho^{\partial (X/N)}_{\bar{U}}(\chi_j))\leq \Psi_1+O(E).$$
We replace $O(B)$ with $O(L)$, which is suboptimal but harmless for later arguments. The proof is now complete. $\square$

\

\textbf{Proof of Lemma \ref{lem:narrowpaths}.} Let $J$ be the constant in Lemma \ref{lem:boundaryBGI}, and let $\mathcal{C}'$ be the constant in the proof of Theorem \ref{thm:maintheorem}. By repeatedly applying Lemma \ref{lem:subdommainconstruction} approximately $\frac{10(\mathcal{C}'+L)}{F}$ times, we arrive at a sequence $\xi_1=\chi_1,...,\chi_Q=\xi_2$ of points in $\partial (X/N)$ with the following properties:
\begin{enumerate}

\item For $1\leq i\leq Q-1$, $(\chi_i\cdot\chi_{i+1})_{\rho^{\bar{U}}_{\bar{X}}}\geq 10\mathcal{C'}+10L$.
\item For all $2\leq i\leq Q$ and $j\in\{1,2\}$, we have
$$d_{\bar{U}}(\rho^{\partial (X/N)}_{\bar{U}}(\chi_i),\rho^{\partial (X/N)}_{\bar{U}}(\xi_j))\leq \Psi_1+O\left(\frac{10B(\mathcal{C'}+L)}{F}\right).$$

\end{enumerate}

We now repeat the argument of Theorem \ref{thm:maintheorem} for each pair $\chi_i,\chi_{i+1}$, substituting Lemma \ref{lem:subdommainconstruction} for Lemma \ref{lem:mainconstruction}. The result is a sequence of paths $\gamma_1,\gamma_2,...,\gamma_{Q-1}$ in $\partial (X/N)$ with domain $[0,1]$, each satisfying the property that for each $\gamma_i$ and each $t\in[0,1]$, 
$$(\gamma(t)\cdot\chi_i)_{\rho^{\bar{X}}_{\bar{U}}}\geq (\chi_i\cdot\chi_{i+1})_{\rho^{\bar{U}}_{\bar{X}}}-\mathcal{C}'\geq10L$$
Applying Lemma \ref{lem:subdomquotBGI}, we conclude that
$$\diam(\rho^{\partial (X/N)}_{\bar{U}}(\gamma_i))\leq O(E)$$
for each $\gamma_i$
Concatenating these paths together and using the second property above, we may take $\Psi_2=O\left(\frac{100B(\mathcal{C'}+L)}{F}\right)$ and obtain the desired result. $\square$

\subsection{Height, Slack, and Finiteness}

From now on, assume we have a fixed open cover $\mathcal{U}$ of $\partial (X/N)$. Recall that the topology on $\partial (X/N)$ is induced by the visual metric.

\begin{definition}
\label{def:height}

Given a point $\xi\in \partial (X/N)$, we define the \emph{height} $H(\xi)$ of $\xi$ to be the smallest nonnegative integer so that the set
$$\{\chi\in\partial (X/N):(\xi\cdot\chi)_{\bar{x}_0}\geq H(\xi)\}$$
is contained in an element of $\mathcal{U}$.

Given $\bar{U}\in\mathcal{\bar{D}}\setminus\{X/N\}$, we say $\bar{U}$ is \emph{sufficiently high} if there exists a minimal integer $H(\bar{U})$ satisfying $0\leq H(\bar{U})\leq d_{X/N}(\bar{x}_0,\rho^{\bar{U}}_{\bar{X}})-100L$ so that the set
$$\{\chi\in\partial (X/N):(\chi\cdot \rho^{\bar{U}}_{\bar{X}})_{\bar{x}_0}\geq H(\bar{U})\}$$
is contained in an element of $\mathcal{U}$. If $\bar{U}$ is sufficiently high, we define its \emph{height} to be this quantity $H(\bar{U})$, and its \emph{slack} $A(\bar{U})$ to be the difference $A(\bar{U}):=d_{X/N}(\bar{x}_0,\rho^{\bar{U}}_{\bar{X}})-H(\bar{U})$. 

\end{definition}

\begin{remark}

Given any finite-diameter set $K\subset X$, one can define the \emph{shadow} of $K$ from $\bar{x}_0$ as the set of all boundary points $\xi$ with at least one $(1,20L)$-quasi-geodesic representative ray $\bar{r}$ originating from $\bar{x}_0$ and passing through $K$. (This definition is roughly equivalent to the usual definition.) Then the slack $A(\bar{U})$ of $\bar{U}$ is coarsely the radius of the largest ball we can draw around $\rho^{\bar{U}}_{\bar{X}}$ such that the shadow of this ball is contained inside some element of $\bar{U}$.

\end{remark}

Note that by definition of the visual metric topology, all points in $\partial (X/N)$ have well-defined heights.

It will be easier to work with `balls' defined in terms of Gromov products, rather than balls defined in terms of the visual metric.

\begin{definition}

Given $\xi\in \partial (X/N)$ and $D\in\reals_{\geq 0}$, define
$$b(\xi,D)=\{\chi\in\partial X:(\xi\cdot\chi)_{\bar{x}_0}\geq D\}$$

\end{definition}

Why is the height/slack terminology needed? Given $\xi\in \partial (X/N)$, the maximum 
$$\max_{\bar{U}\in\bar{\mathcal{D}}\setminus\{X/N\}}d_{\bar{U}}(\rho^{\bar{X}}_{\bar{U}}(\bar{x}_0),\rho^{\partial (X/N)}_{\bar{U}}(\xi))$$
may be unbounded, which would block a key `large rotations' step in a later argument. Slack lets us get around this problem.

\begin{lemma}
\label{lem:slack}

Fix a quantity $A_M$, and let $\bar{\mathcal{D}}_0(A_M)$ be the set of all $\bar{U}\in\bar{\mathcal{D}}_0$ such that the slack $A(\bar{U})$ of $\bar{U}$ satisfies $A(\bar{U})<A_M$. Then for any $\xi\in\partial X$, 
$$\max_{\bar{U}\in\bar{\mathcal{D}}_0(A_M)}d_{\bar{U}}(\bar{x}_0,\xi)<\infty.$$

\end{lemma}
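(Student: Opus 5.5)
We split the domains in $\bar{\mathcal{D}}_0(A_M)$ into those whose projection points are far from a fixed ray to $\xi$ and those that are close to it. For the first kind, Lemma \ref{lem:boundaryBGI} bounds the projection distance uniformly. For the second kind, the slack bound forces the projection point to lie in a region of bounded diameter, and we show that only finitely many domains can have their projection point there.

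Fix a $(1,20L)$-quasi-geodesic ray $\bar{r}$ from $\bar{x}_0$ representing $\xi$, and let $J$ be the constant of Lemma \ref{lem:boundaryBGI}. Suppose $\bar{U}\in\bar{\mathcal{D}}_0(A_M)$ satisfies $d_{\bar{U}}(\rho^{\bar{X}}_{\bar{U}}(\bar{x}_0),\rho^{\partial(X/N)}_{\bar{U}}(\xi))\geq J$. Then by Lemma \ref{lem:boundaryBGI}, $\bar{r}$ passes within $J$ of $\rho^{\bar{U}}_{\bar{X}}$. So every $\bar{U}$ with large projection distance has $\rho^{\bar{U}}_{\bar{X}}$ in the $J$-neighborhood of $\bar{r}$. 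For all other $\bar{U}$ the distance is at most $J$. Here $d_{\bar{U}}(\bar{x}_0,\xi)$ is read as $d_{\bar{U}}(\rho^{\bar{X}}_{\bar{U}}(\bar{x}_0),\rho^{\partial(X/N)}_{\bar{U}}(\xi))$, which is coarsely well defined by Lemma \ref{lem:boundarysubdomprojfinite}.

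Next we use the slack to bound how far out along $\bar{r}$ such a $\rho^{\bar{U}}_{\bar{X}}$ can lie.
\begin{enumerate}
\item Let $H(\xi)$ be the height of $\xi$; it is finite since $\mathcal{U}$ is an open cover.
\item If $\rho^{\bar{U}}_{\bar{X}}$ lies within $J$ of $\bar{r}$ at distance $R$ from $\bar{x}_0$, then every $\chi$ with $(\chi\cdot\rho^{\bar{U}}_{\bar{X}})_{\bar{x}_0}\geq R-2J-100L$ satisfies $(\chi\cdot\xi)_{\bar{x}_0}\geq R-O(J+L)$. This uses the Gromov-product triangle inequality in $X/N$.
\item Hence, once $R\geq H(\xi)+O(J+L)$, the ball $\{\chi:(\chi\cdot\rho^{\bar{U}}_{\bar{X}})_{\bar{x}_0}\geq R-O(J+L)\}$ lies inside an element of $\mathcal{U}$. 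So $\bar{U}$ is sufficiently high with $H(\bar{U})\leq H(\xi)+O(J+L)$.
\item This gives $A(\bar{U})=R-H(\bar{U})\geq R-H(\xi)-O(J+L)$.
\item Since $A(\bar{U})<A_M$, we conclude $R\leq A_M+H(\xi)+O(J+L)$.
\end{enumerate}
Therefore every $\bar{U}\in\bar{\mathcal{D}}_0(A_M)$ with projection distance at least $J$ has $\rho^{\bar{U}}_{\bar{X}}$ in a fixed bounded set $K\subset X/N$. $K$ is the $J$-neighborhood of an initial segment of $\bar{r}$ of length $A_M+H(\xi)+O(J+L)$.

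The remaining step, which I expect to be the main obstacle, is to bound $d_{\bar{U}}$ uniformly over all $\bar{U}$ with $\rho^{\bar{U}}_{\bar{X}}\in K$. There may be infinitely many such domains, since $X$ need not be locally finite. I would instead bound the projection distance for each such $\bar{U}$ directly, taking the following steps in order.
\begin{enumerate}
\item Pick a point $\bar{y}$ far along $\bar{r}$, at distance more than $10^{10}L$ beyond $K$. By Definition \ref{def:boundarysubdomproj} and Lemma \ref{lem:subdomquotBGI}, $\rho^{\partial(X/N)}_{\bar{U}}(\xi)$ is within $O(E)$ of $\rho^{\bar{X}}_{\bar{U}}(\bar{y})$ for every such $\bar{U}$ at once.
\item It then suffices to bound $d_{\bar{U}}(\rho^{\bar{X}}_{\bar{U}}(\bar{x}_0),\rho^{\bar{X}}_{\bar{U}}(\bar{y}))$ uniformly over the domains with $\rho^{\bar{U}}_{\bar{X}}\in K$.
\item Lift $\bar{x}_0,\bar{y}$ to near-minimal points $x_0,y$ and write $y=gx_0$. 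The RHHS coarse Lipschitz property of $\pi_U$ on the Cayley graph $\mathcal{X}$ then gives $d_U(\pi_U(e),\pi_U(g))\leq E\,d_G(e,g)+E$. This bound is uniform in $U$, and $d_G(e,g)$ is a fixed finite quantity depending only on $\bar{y}$.
\end{enumerate}
The delicate point in step 3 is relating $\rho^X_U$ on $X$ to $\pi_U$ on $\mathcal{X}$, and choosing the lifts compatibly with $0.85$-near-minimality to $U$. For this I would mimic the argument of Lemma \ref{lem:subdomboundeddiam}, which costs only $O(E)$. If this fails, the fallback is to show that a bounded region of $X/N$ contains $\rho$-points of only finitely many $N$-orbits of relevant domains. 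That would follow from a finiteness statement in the RHHS structure together with the cofiniteness of the $G$-action. Combining the two cases gives a finite maximum.
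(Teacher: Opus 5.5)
Your proposal is correct and follows the paper's proof essentially step for step. A BGI argument restricts attention to domains whose $\rho^{\bar U}_{\bar X}$ lies near the ray; the comparison $H(\bar U)\le H(\xi)+O(L)$ together with $A(\bar U)<A_M$ bounds $d_{X/N}(\bar x_0,\rho^{\bar U}_{\bar X})$; a single far point $\bar y$ on $\bar r$, chosen independently of $\bar U$, stands in for $\xi$; and coarse Lipschitzness of projections from $\mathcal X$ gives the uniform bound. The paper passes over the lifting subtlety you flag just as briefly, so your fallback is not needed.

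One small repair in the slack step. To obtain $H(\bar U)\le H(\xi)+O(J+L)$, you must test the ball $\{\chi:(\chi\cdot\rho^{\bar U}_{\bar X})_{\bar x_0}\ge H(\xi)+O(J+L)\}$. The same Gromov-product triangle inequality places it inside the element of $\mathcal U$ containing $b(\xi,H(\xi))$ once $R\ge H(\xi)+O(J+L)$. The ball at threshold $R-O(J+L)$ that you actually test only yields $H(\bar U)\le R-O(J+L)$, which gives no upper bound on $R$.
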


\textbf{Proof.} By Lemma \ref{lem:subdomquotBGI} and the Morse Lemma, the domains $\mathcal{U}$ such that some $(1,20L)$-quasi-geodesic ray $\bar{r}$ does not pass within distance, say, $1000L$ of $\rho^{\bar{U}}_{\bar{X}}$ may be excluded. Fixing a specific $\bar{r}$ defining a boundary point $\xi$, and assume $\rho^{\bar{U}}_{\bar{X}}$ does lie within $1000L$ of $\bar{r}$. Let $V\in\mathcal{U}$ be such that
$$\{\chi\in\partial (X/N):(\xi\cdot\chi)_{\bar{x}_0}\geq H(\xi)\}\subset V.$$
For $\chi\in\partial (X/N)$, we have by analysis above that
$$(\chi\cdot\xi)_{\bar{x}_0}\geq\min((\chi\cdot\rho^{\bar{U}}_{\bar{X}})_{\bar{x}_0},(\xi\cdot\rho^{\bar{U}}_{\bar{X}})_{\bar{x}_0})-20L$$
$$\geq \min((\chi\cdot\rho^{\bar{U}}_{\bar{X}})_{\bar{x}_0},d_{X/N}(\bar{x}_0,\rho^{\bar{U}}_{\bar{X}}))-2000L$$
Therefore, as long as $d_{X/N}(\rho^{\bar{U}}_{\bar{X}},\bar{x}_0)\geq H(\xi)+2000L$, we have the implication
$$(\chi\cdot\rho^{\bar{U}}_{\bar{X}})_{\bar{x}_0}\geq H(\xi)+2000L\leadsto (\chi\cdot\xi)_{\bar{x}_0}\geq H(\xi).$$
By definition of $H(\xi)$, we therefore obtain 
$$H(\bar{U})\leq H(\xi)+2000L.$$ 
With the additional assumption that $\bar{U}\in\bar{\mathcal{D}}_0(A_M)$, we have by definition that
$$d_{X/N}(\bar{x}_0,\rho^{\bar{U}}_{\bar{X}})\leq A_M+H(\bar{U})\leq A_M+H(\xi)+2000L.$$
Furthermore, this bound holds uniformly for all $\bar{U}\in \bar{\mathcal{D}}_0(A_M)$. 

Now, let $\bar{y}$ be a point on $\bar{r}$ such that $d_{X/N}(\bar{x}_0,\bar{y})\geq A_M+H(\xi)+10^{13}L$, so that by definition
$$d_{\bar{U}}(\rho^{\bar{X}}_{\bar{U}}(\bar{x}_0),\rho^{\partial (X/N)}_{\bar{U}}(\xi))\leq d_{\bar{U}}(\rho^{\bar{U}}_{\bar{X}}(\bar{x}_0),\rho^{\bar{U}}_{\bar{X}}(\bar{y}))+O(n).$$
Note that we may choose $\bar{y}$ indepenently of $\bar{U}$. Now, by our hypotheses, there exist points $\hat{x}_0$ and $\hat{y}$ in $\mathcal{X}$ projecting respectively to $\bar{x}_0$ and $\bar{y}$. From functoriality and coarse Lipschitzness of projections, $d_{\mathcal{X}}(\hat{x}_0,\hat{y})$ provides a coarse upper bound for $d_{\bar{U}}(\rho^{\bar{X}}_{\bar{U}}(\bar{x}_0),\rho^{\bar{X}}_{\bar{U}}(\bar{y}))$. 

To conclude, we note that $d_{\mathcal{X}}(\hat{x}_0,\hat{y})$ is fully independent of the $\bar{U}\in \bar{\mathcal{D}}_0(A_M)$ chosen, and so coarsely upper bounds $d_{\bar{U}}(\rho^{\bar{X}}_{\bar{U}}(\bar{x}_0),\rho^{\bar{X}}_{\bar{U}}(\bar{y}))$ for all such $\bar{U}$. This bound completes the proof. $\square$

\

\subsection{Constructing Suitable Paths}

We now move on to verifying the locally finite $1$-disks property. Throughout the proof, we will use a constant $\mathcal{B}$. This constant will not depend on $f_n$ or $\mathcal{U}$; $\mathcal{B}:=1000L$ should suffice.

The following lemma is a straightforward consequence of the triangle inequality for Gromov products.

\begin{lemma}
\label{lem:heightcontinuity}

Fix $\xi\in\partial (X/N)$, and let $\chi$ be either another point in $\partial (X/N)$ or a point in $X/N$. Assume $\chi \in b(\xi,H(\xi)+\mathcal{B})$. Then
$$H(\chi)=_{100L}H(\xi)$$

\end{lemma}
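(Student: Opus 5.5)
The plan is to prove the two inequalities $H(\chi)\le H(\xi)+100L$ and $H(\xi)\le H(\chi)+100L$ separately. Each follows from nesting of Gromov-product balls, using the coarse triangle inequality for Gromov products in $X/N$ recorded at the start of this section:
$$(\xi_1\cdot\xi_3)_{\bar{x}_0}\geq\min((\xi_1\cdot\xi_2)_{\bar{x}_0},(\xi_2\cdot\xi_3)_{\bar{x}_0})-20L .$$
When $\chi$ is a point of $X/N$, I take $H(\chi)$ to be defined by the same formula as for boundary points: the least integer $h$ such that $\{\eta\in\partial(X/N):(\chi\cdot\eta)_{\bar{x}_0}\geq h\}$ lies in an element of $\mathcal{U}$. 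The triangle inequality applies verbatim to mixed triples of points and boundary points. The assumption is $(\xi\cdot\chi)_{\bar{x}_0}\geq H(\xi)+\mathcal{B}$ with $\mathcal{B}=1000L$.

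\emph{Upper bound on $H(\chi)$.} Set $h=H(\xi)+40L$ and take $\eta$ with $(\chi\cdot\eta)_{\bar{x}_0}\geq h$. The triangle inequality gives
$$(\xi\cdot\eta)_{\bar{x}_0}\geq\min(H(\xi)+\mathcal{B},\,H(\xi)+40L)-20L\geq H(\xi).$$
Thus $b(\chi,h)\subset b(\xi,H(\xi))$, and the latter is contained in an element of $\mathcal{U}$. Hence $H(\chi)\le h\le H(\xi)+100L$.

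\emph{Lower bound on $H(\chi)$.} Suppose, for contradiction, that $H(\chi)<H(\xi)-100L$. Two sub-cases arise.

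If $H(\chi)+40L\le H(\xi)+\mathcal{B}$, take $\eta$ with $(\xi\cdot\eta)_{\bar{x}_0}\geq H(\chi)+40L$. The triangle inequality gives
$$(\chi\cdot\eta)_{\bar{x}_0}\geq\min(H(\xi)+\mathcal{B},\,H(\chi)+40L)-20L\geq H(\chi).$$
So $b(\xi,H(\chi)+40L)\subset b(\chi,H(\chi))$, which lies in an element of $\mathcal{U}$. This forces $H(\xi)\le H(\chi)+40L$, a contradiction. The sub-case condition holds automatically, because $H(\chi)<H(\xi)-100L$ already gives $H(\chi)+40L<H(\xi)+\mathcal{B}$.

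So in fact only one case occurs, and the argument closes with $|H(\chi)-H(\xi)|\le 40L\le 100L$. The only points needing care are integrality, since heights are integers (this costs at most $1$, absorbed since $L\ge\delta>1000$), and monotonicity: a ball $b(\cdot,D)$ shrinks as $D$ grows, so any larger threshold also works.

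The main (mild) obstacle is that $\chi$ may be an interior point. I need the $20L$-coarse triangle inequality to hold for mixed triples of boundary and interior points with the same constant. This comes from Lemma \ref{lem:quotienthyp} together with Lemma \ref{lem:consistentgromov}, which changes the constant by at most $O(L)$ and still fits within the $100L$ budget. I also need $H(\chi)$ to be well-defined, i.e. finite, for interior $\chi$. This holds because the inclusion $b(\chi,h)\subset b(\xi,H(\xi))$ from the upper bound exhibits a valid threshold.
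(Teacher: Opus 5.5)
Your proof is correct and is the argument the paper intends. The paper only calls the lemma a straightforward consequence of the $20L$ triangle inequality for Gromov products, and your two nestings $b(\chi,H(\xi)+40L)\subset b(\xi,H(\xi))$ and $b(\xi,H(\chi)+40L)\subset b(\chi,H(\chi))$ are exactly that consequence.

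One small point is worth recording. For interior $\chi=\rho^{\bar{U}}_{\bar{X}}$, the paper's height carries the extra constraint $H(\bar{U})\le d_{X/N}(\bar{x}_0,\rho^{\bar{U}}_{\bar{X}})-100L$, whereas your height does not. Since $d_{X/N}(\bar{x}_0,\chi)\ge(\xi\cdot\chi)_{\bar{x}_0}\ge H(\xi)+\mathcal{B}$, your upper bound already places your unconstrained height inside that range. So the two definitions agree, and sufficient height comes for free, which is how the lemma is used in Lemma~\ref{lem:slackness}.
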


We now prove the following lemma, which will be broken down into several subclaims. The lemma will rely on a running construction; this construction is depicted in Figures \ref{fig:onedisk} and \ref{fig:onediskclose}; note that Figure \ref{fig:onediskclose} is a close-up of part of \ref{fig:onedisk}.

\begin{figure}[htbp]
    \centering 
    \includegraphics[width=1\textwidth]{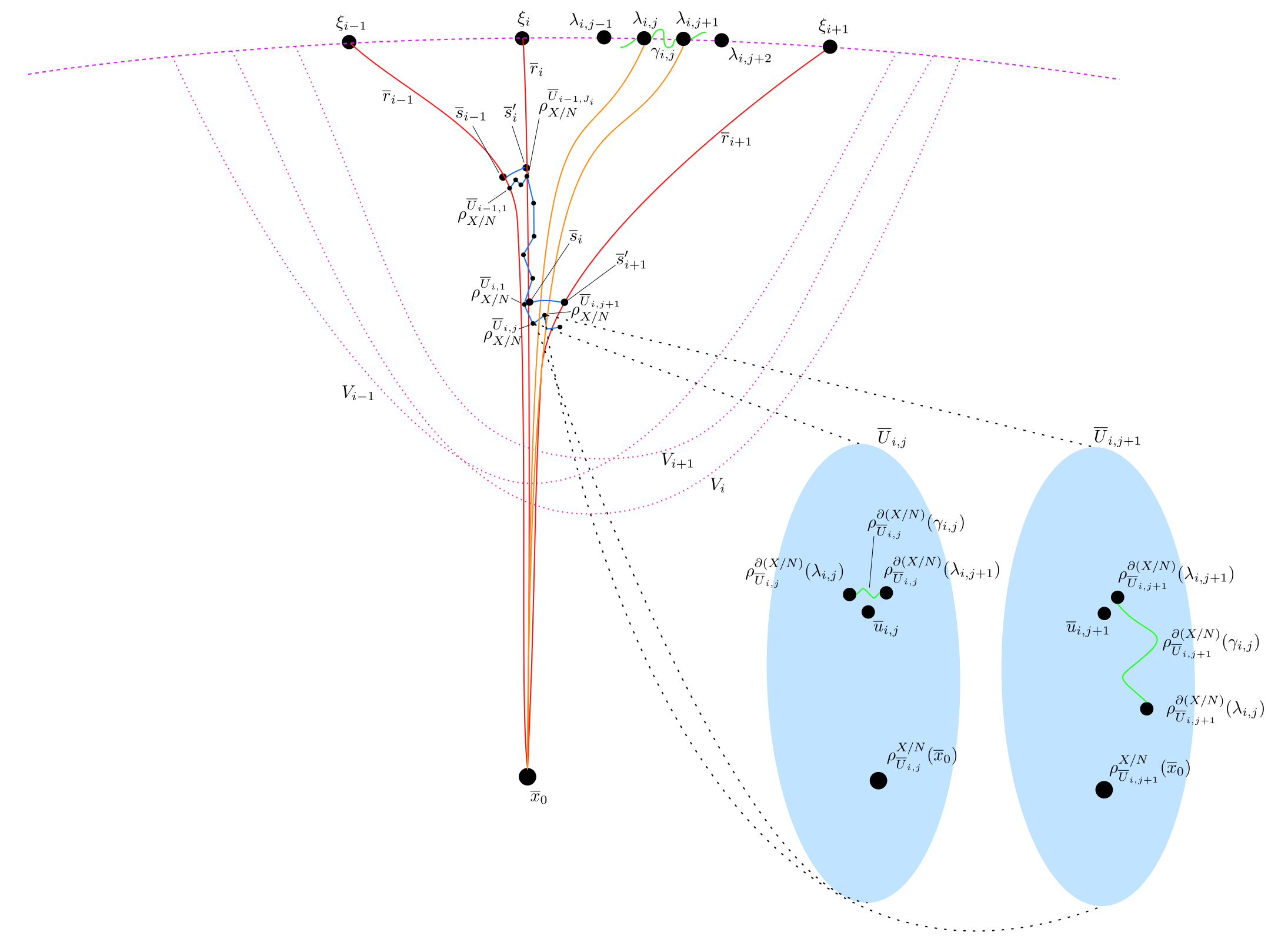} 
    \caption{A schematic of the construction used to prove Lemma \ref{lem:onedisks}, showing the relevant objects under projection to $X/N$, along with windows showing relevant objects in the domains $\bar{U}_{i,j}$ and $\bar{U}_{i,j+1}$. The chains of projection points of domains are such that sequential domains are pairwise orthogonal.}
    \label{fig:onedisk}
\end{figure}

\begin{figure}[htbp]
    \centering 
    \includegraphics[width=1\textwidth]{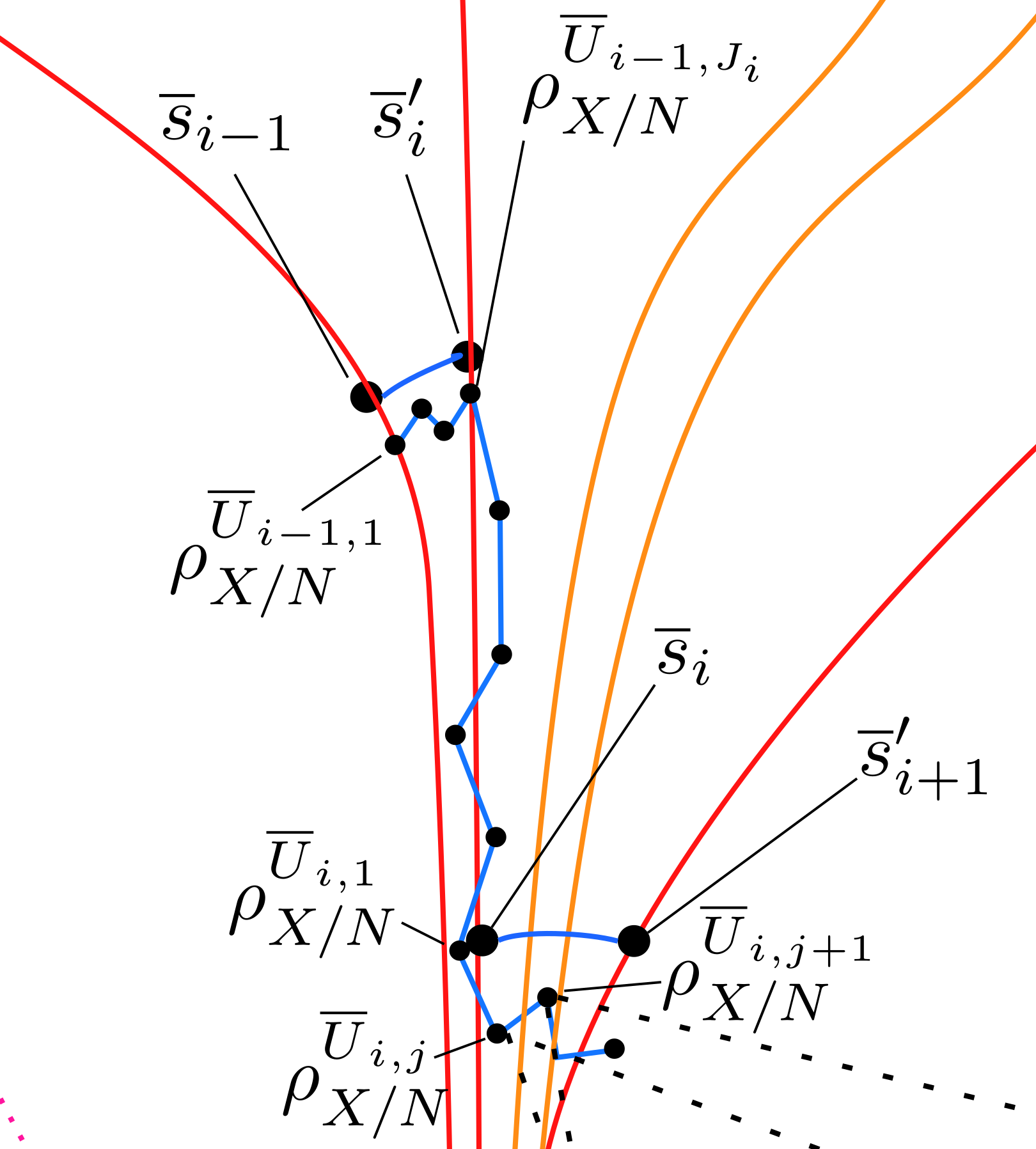} 
    \caption{A close-up of part of Figure \ref{lem:onedisks}.}
    \label{fig:onediskclose}
\end{figure}

\begin{lemma}
\label{lem:onedisks}

Assume $G$, $X$, $\mathcal{X}$, and $X/N$ satisfying Hypotheses \ref{hyp:nob} and \ref{hyp:nobplus}. Then $\partial (X/N)$ satisfies the locally finite $1$-disks property.

\end{lemma}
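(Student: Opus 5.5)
Given a family of paths $f_m:[0,1]\to\partial(X/N)$ and the open cover $\mathcal{U}$, we will build replacement paths $g_m$ by pushing each $f_m$ into a region controlled by a far-away subdomain. The local finiteness condition is then forced by making the relevant domains escape every compact piece of the boundary. First, for each $m$ we use uniform continuity of $f_m$ and Lemma \ref{lem:heightcontinuity} to pick a fine partition $0=t_0<t_1<\dots<t_{p}=1$. It should be fine enough that consecutive points $f_m(t_i),f_m(t_{i+1})$ have Gromov product at least $\max(H(f_m(t_i)),H(f_m(t_{i+1})))+\mathcal{B}$. Then any path staying in $b(f_m(t_i),H(f_m(t_i))+\mathcal{B}/2)$ on $[t_i,t_{i+1}]$ satisfies condition 2 of Definition \ref{def:locfinite}, since that ball lies in a single element of $\mathcal{U}$ by the definition of height.

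Second, for each partition point $\xi=f_m(t_i)$ we choose a quasi-geodesic ray $\bar{r}$ from $\bar{x}_0$ to $\xi$. Using the large dense domains property (Definition \ref{def:largedense}), together with the fact that $\bar{\mathcal{D}}_0$ is locally finite near any point of $X/N$, we pick domains $\bar{U}_{i,j}\in\bar{\mathcal{D}}_0$. Their points $\rho^{\bar{U}_{i,j}}_{\bar{X}}$ should lie within $H$ of $\bar{r}$ at distance roughly $H(\xi)+\mathcal{B}/4$ from $\bar{x}_0$, so that they are sufficiently high with slack bounded by a constant $A_M$ independent of $m$. We also chain consecutive domains $\bar{U}_{i,j}\perp\bar{U}_{i,j+1}$ along a geodesic joining the rays for $f_m(t_i)$ and $f_m(t_{i+1})$ near the branching depth, using item 4 of Definition \ref{def:largedense} lifted via near-minimal lifts and Definition \ref{def:quotientorthog}.

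Third, we use Lemma \ref{def:pairboundaryrealization} on each orthogonal pair to choose boundary points $\eta_{i,j}$. Their projections to $\bar{U}_{i,j}$ should be prescribed at distance at least $R_m$ from $\rho^{\bar{X}}_{\bar{U}_{i,j}}(\bar{x}_0)$, for a sequence $R_m\to\infty$, and simultaneously arbitrary in $\bar{U}_{i,j+1}$. We then join consecutive $\eta$'s by Lemma \ref{lem:narrowpaths}, which keeps the joining path's projection to the relevant domain within $\Psi_2$ of a point far from $\rho(\bar{x}_0)$. By Lemma \ref{lem:boundaryBGI}, each such path stays in the shadow of $\rho^{\bar{U}}_{\bar{X}}$, hence inside the required $b(\cdot)$ ball. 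Concatenating and reparametrizing gives $g_m$ satisfying condition 2.

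Finally, we verify condition 1. Fix $\xi\in\partial(X/N)$. By Lemma \ref{lem:slack}, $\sup_{\bar{U}\in\bar{\mathcal{D}}_0(A_M)}d_{\bar{U}}(\bar{x}_0,\xi)<\infty$, and the same holds uniformly on a small neighborhood $b(\xi,D)$ of $\xi$, using the lower-semicontinuity argument of that lemma. Every point of $g_m$ has, for some domain of slack $<A_M$, a projection at distance at least $R_m-\Psi_2$ from $\rho(\bar{x}_0)$. Hence for $m$ large, $g_m$ misses that neighborhood.

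I expect the main obstacle to be this uniformity: making the slack bound and Lemma \ref{lem:slack} hold on a neighborhood of $\xi$ rather than at $\xi$ alone. A secondary obstacle is ensuring that the chained orthogonal domains, whose $\rho$-points sit near the branch point, are simultaneously sufficiently high for the cover. I would handle this by placing them at depth $H+\mathcal{B}/4$ and invoking Lemma \ref{lem:heightcontinuity}.
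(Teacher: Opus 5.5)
Your outline follows the paper's proof. Both use a height-controlled subdivision of $f_m$ and chains of pairwise orthogonal domains of $\bar{\mathcal{D}}_0$ at depth $H(\xi_i)+\Theta(\mathcal{B})$, so that their slack is bounded independently of $m$. Both then take boundary points from Lemma~\ref{def:pairboundaryrealization} with coordinates pushed out in $m$, join them by Lemma~\ref{lem:narrowpaths}, use Lemma~\ref{lem:boundaryBGI} for condition 2, and use Lemma~\ref{lem:slack} for condition 1.

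Several details of your setup need repair.
\begin{itemize}
\item[(i)] $\bar{\mathcal{D}}_0$ need not be locally finite. Already for RAAGs, a vertex of $\mathcal{C}(\Gamma)$ lies within distance $1$ of the $\rho$-sets of infinitely many domains. Nothing needs this, since item 4 of Definition~\ref{def:largedense} produces the chain directly.
\item[(ii)] The chain must sit at depth $H(\xi_i)+\Theta(\mathcal{B})$, where the rays of $\xi_i$ and $\xi_{i+1}$ are still $O(L)$-close, not near their branch point. Your partition only bounds consecutive Gromov products from below, so the branch point can be arbitrarily deep, and domains there have slack not bounded independently of $m$. The paper uses depth $H(\xi_i)+100\mathcal{B}$ and also prunes so that $(\xi_i\cdot\xi_{i+1})_{\bar x_0}\le H(\xi_i)+10000\mathcal{B}$.
\item[(iii)] The $\bar U_{i,j+1}$-coordinate of $\eta_{i,j}$ is not arbitrary. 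It must coarsely be the far point of $\bar U_{i,j+1}$ used for $\eta_{i,j+1}$, as in the paper's $\lambda_{i,j}\mapsto(\bar u_{i,j},\bar u_{i,j+1})$; otherwise Lemma~\ref{lem:narrowpaths} does not apply. Also, $R_m$ must exceed $\Psi_1+\Psi_2+J$ for every $m$, since condition 2 is needed for all $m$.
\item[(iv)] The partition must put the whole arc $f_m([t_i,t_{i+1}])$ in $b(f_m(t_i),H(f_m(t_i)))$, not just the next partition point.
\end{itemize}

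The obstacle you single out in condition 1 does not require a neighborhood-uniform version of Lemma~\ref{lem:slack}, and no semicontinuity enters. The paper applies Lemma~\ref{lem:slack} only at $\xi$, giving $\mu:=\sup_{\bar U\in\bar{\mathcal{D}}_0(A_M)}d_{\bar U}(\rho^{\bar X}_{\bar U}(\bar x_0),\rho^{\partial(X/N)}_{\bar U}(\xi))<\infty$, and then argues by contradiction.
\begin{itemize}
\item For $m$ large, each $\chi=g_m(t)$ has a witnessing domain $\bar U$ of slack less than $A_M$ in which its projection is farther than $\mu+J$ from $\rho^{\bar X}_{\bar U}(\bar x_0)$.
\item So the projections of $\chi$ and $\xi$ to $\bar U$ are at least $J$ apart. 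Lemma~\ref{lem:boundaryBGI} then gives $(\chi\cdot\xi)_{\rho^{\bar U}_{\bar X}}\le E$, so $\rho^{\bar U}_{\bar X}$ lies near a geodesic from $\chi$ to $\xi$.
\item Suppose also $\chi\in b(\xi,R_0)$ with $R_0=H(\xi)+A_M+C\mathcal{B}$ and $C$ large. Then that geodesic stays far from $\bar x_0$, so $d_{X/N}(\bar x_0,\rho^{\bar U}_{\bar X})$ and $(\rho^{\bar U}_{\bar X}\cdot\xi)_{\bar x_0}$ are both roughly at least $R_0$.
\item The argument of Lemma~\ref{lem:heightcontinuity} gives $H(\bar U)\le H(\xi)+\mathcal{B}$, hence $A(\bar U)>A_M$, a contradiction.
\end{itemize}

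So the uniformity comes from the pointwise bound at $\xi$, the boundary BGI, and the slack bound on the witnessing domain. Your direct route can be completed with the same dichotomy, but as written the step is only asserted. Either $\rho^{\bar U}_{\bar X}$ is far from the ray of $\chi$, and Lemma~\ref{lem:boundaryBGI} makes the projection distance to $\bar x_0$ small. Or it is near that ray at depth below $A_M+H(\xi)+O(L)$, where the rays of $\chi$ and $\xi$ still fellow-travel, so their projections to $\bar U$ agree up to $O(E)$.
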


\textbf{Proof.} We will describe the construction of the paths $g_m$, performing verification as we go. We begin with a path $f_m:[0,1]\to\partial (X/N)$ and our fixed open cover $\mathcal{U}$.

The first step is a variation of the standard Lebesgue number construction. Specifically, we choose a subdivision $0=t_0,t_1,...,t_k=1$ of $[0,1]$ and define $\xi_i=f_n(t_i)$ and $\mathcal{I}_i=f_n([t_i,t_{i+1}])$ so that for all $\mathcal{I}_i$, we have
$$\mathcal{I}_i\subset b(\xi_i,H(\xi_i)+1000\mathcal{B})$$
The existence of such a subdivision follows from compactness of $f_n([0,1])$ and the definition of the $H(\cdot)$, plus an elementary topological argument. Notice the consequence that for all $1\leq i\leq k-1$, $H(\xi_{i+1})=_{100L} H(\xi_i)$. Since each $b(\xi_i,H(\xi_i))$ is contained in some element of $\mathcal{U}$ by construction, let $V_i$ be this element. We also choose for each $\xi_i$ a $(1,20L)$-quasi-geodesic ray $\bar{r}_i$ originating from $\bar{x}_0$ representing $\xi_i$. By removing unnecessary $t_i$, we may also assume without loss of generality that 
$$(\xi_i\cdot\xi_{i+1})_{\bar{x}_0}\leq H(\xi_i)+10000\mathcal{B}$$
We now focus on a specific $\xi_i$, $\mathcal{I}_i$, and $V_i$. 

\begin{lemma}
\label{sublem:orthogonalstring}

Given the sequences $(\xi_i)$, $(\mathcal{I}_i)$, and $(V_i)$ as above, there exists for each $1\leq i\leq k$ a sequence $\bar{U}_{i,1},...,\bar{U}_{i,K_i}$ of domains in $\bar{\mathcal{D}}_0\setminus\{X/N\}$ satisfying the following properties:
\begin{enumerate}
\item The point $\rho^{\bar{U}_{i,1}}_{\bar{X}}$ lies within $2E$ of $\bar{r}_i$, and the point $\rho^{\bar{U}_{i,K_i}}_{\bar{X}}$ lies within $2E$ of $\bar{r}_{i+1}$.
\item For each $\bar{U}_{i,j}$, $\bar{U}_{i,j}\perp \bar{U}_{i,j+1}$.
\item Every $\bar{U}_{i,j}$ lies within $\mathcal{B}$ of $\bar{r}_i$.
\item For each $\bar{U}_{i,j}$, we have
$$H(\xi_i)+100\mathcal{B}=_{10\mathcal{B}} d_{X/N}(\rho^{\bar{U}_{i,j}}_{\bar{X}},\bar{x}_0)$$
and
$$H(\xi_i)+100\mathcal{B}=_{10\mathcal{B}}(\xi_i\cdot \rho^{\bar{U}_{i,j}}_{\bar{X}})_{\bar{x}_0}$$
\item We have that $\bar{U}_{i,K_i}=\bar{U}_{i+1,1}$.

\end{enumerate}

\end{lemma}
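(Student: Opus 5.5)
The construction works at a single "height level" in $X/N$ and builds one chain of domains for each consecutive pair $(\xi_i,\xi_{i+1})$.

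\textbf{Choosing the points.} Fix $i$ and set $D_i := H(\xi_i)+100\mathcal{B}$. Since $(\xi_i\cdot\xi_{i+1})_{\bar{x}_0}\geq H(\xi_i)+1000\mathcal{B}$ (the interval $\mathcal{I}_i$ lies in $b(\xi_i,H(\xi_i)+1000\mathcal{B})$ and contains $\xi_{i+1}$), the rays $\bar{r}_i$ and $\bar{r}_{i+1}$ fellow-travel, up to $O(L)$, beyond distance $D_i$ from $\bar{x}_0$. Choose $\bar{p}_i\in\bar{r}_i$ with $d_{X/N}(\bar{x}_0,\bar{p}_i)=D_i$. Choose $\bar{q}_i\in\bar{r}_{i+1}$ with $d_{X/N}(\bar{x}_0,\bar{q}_i)=D_i$, so that $d_{X/N}(\bar{p}_i,\bar{q}_i)=O(L)$. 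To make property 5 consistent, use the same point on $\bar{r}_{i+1}$ as the starting point for the next chain. That is, take $\bar{p}_{i+1}:=\bar{q}_i$, adjusted by at most $O(L)\ll\mathcal{B}$. This is allowed because $H(\xi_{i+1})=_{100L}H(\xi_i)$, so $D_{i+1}=_{100L}D_i$, and the $10\mathcal{B}$ tolerance in item 4 absorbs the discrepancy.

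\textbf{Building the chain.} Lift a geodesic $[\bar{p}_i,\bar{q}_i]$ to a geodesic $[p_i,q_i]$ in $X$ with minimal endpoints. Apply item 4 of Definition \ref{def:largedense} to obtain domains $U_1,\dots,U_K\in\mathcal{D}_0$ with the following properties: $\rho^{U_1}_X$ is $H$-close to $p_i$, $\rho^{U_K}_X$ is $H$-close to $q_i$, each $\rho^{U_j}_X$ is $H$-close to $[p_i,q_i]$, and consecutive domains satisfy $U_j\perp U_{j+1}$ with projection points $10H$-close.

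Set $\bar{U}_{i,j}:=\pi_N(U_j)$; these lie in $\bar{\mathcal{D}}_0$ by $G$-invariance of $\mathcal{D}_0$. For property 5, we want the last domain of chain $i$ to equal the first domain of chain $i+1$. To arrange this, first fix for every $i$ one domain $\bar{V}_i$ whose projection point is near $\bar{p}_i$. Then apply the large dense domains property to a geodesic between lifts of $\rho^{V_i}_X$ and $\rho^{V_{i+1}}_X$, and prepend/append $V_i,V_{i+1}$ to the resulting chain. The endpoint condition of item 4 of Definition \ref{def:largedense} gives $H$-closeness at the ends, but orthogonality at the junction must still be checked. If the produced $U_1$ is not orthogonal to $V_i$, simply apply the property to a geodesic starting at $\rho^{V_i}_X$ and take $U_1=V_i$ itself, which I expect the proof of the density property to allow, at worst after an inessential modification.

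\textbf{Verifying the estimates.} Properties 1, 3 and 4 follow from $\delta$-thinness in $X/N$ (Lemma \ref{lem:quotienthyp}). All projection points lie within $H+O(L)$ of $[\bar{p}_i,\bar{q}_i]$, which has length $O(L)$ and sits at distance $D_i$ from $\bar{x}_0$. This distance bound also gives the Gromov product estimate $(\xi_i\cdot\rho^{\bar{U}_{i,j}}_{\bar{X}})_{\bar{x}_0}=_{O(L)}D_i$, since $\bar{p}_i\in\bar{r}_i$ and $\mathcal{B}=1000L$ dominates every $O(L)+H$ error.

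Property 2 is the main obstacle. Definition \ref{def:quotientorthog} requires that $U_j$ and $U_{j+1}$ be \emph{near-minimal} lifts, that is, $D_{h\Ax(v)}(\rho^{U_j}_X,\rho^{U_{j+1}}_X)\leq 0.8L(v)$ for every axis. I would verify this as follows. The points $\rho^{U_j}_X$ and $\rho^{U_{j+1}}_X$ are within $10H$ of each other. Because $H$ is independent of $n$ while every $L(v)\geq 10^6\mathcal{M}\gg H$ for large $n$, the projection of any pair of points at distance $10H$ to any axis has diameter at most $10H+O(\delta)\leq 0.01L(v)$. This gives near-minimality directly, and in fact minimality by Lemma \ref{lem:verifyminimal}.
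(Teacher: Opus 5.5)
Your construction is the same as the paper's. You pick points at height roughly $H(\xi_i)+100\mathcal{B}$ on $\bar{r}_i$ and $\bar{r}_{i+1}$, join them by a geodesic of length $O(L)$, lift it to $X$, apply item 4 of Definition \ref{def:largedense}, and project. Two of your details go slightly beyond what the paper writes:
\begin{itemize}
\item Reusing $\bar{q}_i$ as $\bar{p}_{i+1}$ replaces the paper's second geodesic $[\bar{s}_i',\bar{s}_{i+1}]$.
\item Your check of property 2 is correct. The projection points are $10H$ apart, so every $D_{h\Ax(v)}$ is at most $0.01L(v)$; the lifts are therefore minimal by Lemma \ref{lem:verifyminimal}, and the pair $U_j\perp U_{j+1}$ witnesses $\bar{U}_{i,j}\perp\bar{U}_{i,j+1}$. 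The paper only calls this ``straightforward''.
\end{itemize}

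The step that does not go through as written is the junction, i.e.\ property 5. Item 4 of Definition \ref{def:largedense} only asserts that \emph{some} chain exists; it gives no control over which domain comes first or last. So ``take $U_1=V_i$ itself'' is not licensed by the hypothesis. Appending $V_{i+1}$ would also need $U_K\perp V_{i+1}$, which you never address. Fixing all $\bar{V}_i$ in advance in effect requires prescribing both ends of every chain. The paper's proof has the same gap: it builds $\bar{U}_{i,J_i},\dots,\bar{U}_{i,K_i}$ ``as in the previous paragraph'', tacitly starting from a prescribed domain.

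The clean repair has two parts.
\begin{itemize}
\item Strengthen item 4 to: for every $U\in\mathcal{D}_0$ with $\rho^{U}_X$ within $H$ of $x$, there is such a chain with $U_1=U$. This is exactly what the paper verifies for graph products, where its chain starts at an arbitrary $[g_1\Lambda_{j_1}]$ containing $g$. It also holds for the Bass--Serre tree in the $3$-manifold case.
\item Build the chains sequentially rather than fixing the $\bar{V}_i$ in advance. Let chain $i+1$ start at the last domain $W$ of chain $i$. Lift $[\bar{p}_{i+1},\bar{q}_{i+1}]$ starting from the lift $q_i$ of $\bar{p}_{i+1}=\bar{q}_i$, which is $H$-close to $\rho^{W}_X$.
\end{itemize}
With this, only the start of each chain is ever prescribed, and property 5 holds by construction. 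No cross-chain orthogonality is needed, since every consecutive pair lies inside a single chain built in $X$.
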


\textbf{Proof.} By construction, there exists a point $\bar{s}_i$ on $\bar{r}_i$ such that $d_{X/N}(\bar{s}_i,\bar{x}_0)=_{0.1\mathcal{B}} H(\xi_i)+100\mathcal{B}$. We choose $\bar{s}_{i}'$ analogously on $\bar{r}_{i+1}$. Let $\bar{\eta}=[\bar{s}_i,\bar{s}_{i}']$ be a geodesic in $X/N$ between these two points, and let $\eta=[s_i,s_{i}']$ be an arbitrary lift of $\bar{\eta}$ to $X$.

Since $(\xi_i\cdot\xi_{i+1})_{\bar{x}_0}\geq H(\xi_i)+999\mathcal{B}$, $\bar{r}_i$ and $\bar{r}_{i+1}$ must fellow-travel within the range where $\bar{s}_i$ and $\bar{s}_{i}'$ are chosen, and we may conclude that $\bar{\eta}$, and hence $\eta$, have length at most $0.2\mathcal{B}$. In particular, every point of $\eta$ lies within $\mathcal{B}$ of $\bar{r}_i$ and $\bar{r}_{i+1}$, and has distance $H(\xi_i)+100\mathcal{B}\pm \mathcal{B}$ from $\bar{x}_0$. Applying Hypothesis \ref{hyp:nobplus}, there exists a sequence $U_{i,1},...,U_{i,J_i}$ of domains in $\mathcal{D}_0$ such that $\rho^{U_{i,1}}_X$ lies within $E$ of $s_i$, $\rho^{U_{i,J_i}}_X$ lies within $E$ of $s_{i+1}$, every $\rho^{U_{i+1}}_X$ lies within $E$ of $\eta$, and for all $1\leq i\leq J_i-1$, we have $U_i\perp U_{i+1}$ and $d_X(\rho^{U_i}_X,\rho^{U_{i+1}}_X)\leq 10E$. It is now straightforward to verify that the projected sequence $\bar{U}_{i,1},...,\bar{U}_{i,J_i}$ satisfies the desired properties. 

Given $\bar{s}_{i}'$ and $\bar{s}_{i+1}$ for $1\leq i\leq k-1$, we now  build a new geodesic $\bar{\eta}$ between them. As $H(\xi_{i+1})=_{0.1\mathcal{B}}H(\xi_i)$, this geodesic has length at most $\mathcal{B}$. We now build a sequence $\bar{U}_{i,J_i},\bar{U}_{i,J_i+1},...,\bar{U}_{i,K_i}$ as in the previous paragraph from $\bar{\eta}$; verification of the desired coarse equalities is likewise straightforward. $\square$

\

\begin{lemma}
\label{lem:slackness}

Each $\bar{U}_{i,j}$ constructed in Lemma \ref{sublem:orthogonalstring} has sufficient height, and its slack $A(\bar{U}_{i,j})$ is at most $200\mathcal{B}$.

\end{lemma}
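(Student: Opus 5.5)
The plan is to exhibit an explicit height $H\le d_{X/N}(\bar{x}_0,\rho^{\bar{U}_{i,j}}_{\bar{X}})-100L$ that works for $\bar{U}_{i,j}$. Write $\bar{p}=\rho^{\bar{U}_{i,j}}_{\bar{X}}$ and take $H:=H(\xi_i)+1000L$. The minimal height is then at most $H$, and we are done once we show two things: the set $\{\chi:(\chi\cdot\bar{p})_{\bar{x}_0}\geq H\}$ lies in $V_i$, and $H\le d(\bar{x}_0,\bar{p})-100L$. The slack bound should then follow from item 4 of Lemma \ref{sublem:orthogonalstring}.

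First, the inclusion. Suppose $(\chi\cdot\bar{p})_{\bar{x}_0}\geq H$. Item 4 of Lemma \ref{sublem:orthogonalstring} gives $(\xi_i\cdot\bar{p})_{\bar{x}_0}\geq H(\xi_i)+90\mathcal{B}$. The triangle inequality for Gromov products (valid in $X/N$ with error $20L$, as recalled at the start of this section) then gives
$$(\chi\cdot\xi_i)_{\bar{x}_0}\geq\min(H,H(\xi_i)+90\mathcal{B})-20L\geq H(\xi_i),$$
using $\mathcal{B}=1000L$. So $\chi\in b(\xi_i,H(\xi_i))\subset V_i$, and the shadow lies in an element of $\mathcal{U}$.

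Second, the admissibility range. Item 4 gives $d_{X/N}(\bar{x}_0,\bar{p})\geq H(\xi_i)+90\mathcal{B}$, which exceeds $H(\xi_i)+1000L+100L$ because $\mathcal{B}=1000L$. This gives $0\le H\le d(\bar{x}_0,\bar{p})-100L$, so $\bar{U}_{i,j}$ is sufficiently high and $H(\bar{U}_{i,j})\le H$.

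For the slack, the lower bound on the height is the step I expect to be the main obstacle. We have
$$A(\bar{U}_{i,j})=d(\bar{x}_0,\bar{p})-H(\bar{U}_{i,j})\le H(\xi_i)+110\mathcal{B}-H(\bar{U}_{i,j}),$$
so it suffices to show $H(\bar{U}_{i,j})\geq H(\xi_i)-90\mathcal{B}$. I would argue by contradiction using the minimality in the definition of $H(\xi_i)$. Suppose the shadow $\{\chi:(\chi\cdot\bar{p})_{\bar{x}_0}\geq H'\}$ lies in some $U\in\mathcal{U}$ with $H'<H(\xi_i)-90\mathcal{B}$. Any $\chi\in b(\xi_i,H(\xi_i)-1)$ satisfies
$$(\chi\cdot\bar{p})_{\bar{x}_0}\geq\min(H(\xi_i)-1,H(\xi_i)+90\mathcal{B})-20L\geq H',$$
so $b(\xi_i,H(\xi_i)-1)\subset U$, contradicting minimality of $H(\xi_i)$. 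This only uses the $20L$-coarse triangle inequality together with the $10\mathcal{B}$ window in item 4, so the constants close with room to spare. If the integer and coarse bookkeeping (for instance the $=_{100L}$ in Lemma \ref{lem:heightcontinuity}) causes trouble, it can be absorbed into the $\mathcal{B}$ margins. The result is $A(\bar{U}_{i,j})\le 200\mathcal{B}$.
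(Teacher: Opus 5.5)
Your proposal is correct and follows essentially the same route as the paper. The paper cites Lemma~\ref{lem:heightcontinuity} to get $H(\bar{U}_{i,j})=_{0.1\mathcal{B}}H(\xi_i)$ and then reads off both sufficient height and the slack bound from item 4 of Lemma~\ref{sublem:orthogonalstring}; you instead reprove that two-sided comparison of $H(\bar{U}_{i,j})$ with $H(\xi_i)$ inline, using the $20L$-coarse triangle inequality and the minimality of $H(\xi_i)$, and you explicitly check the constraint $H(\bar{U}_{i,j})\le d_{X/N}(\bar{x}_0,\rho^{\bar{U}_{i,j}}_{\bar{X}})-100L$, which the paper only asserts.
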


\textbf{Proof.} By construction, we have for each $\bar{U}_{i,j}$ that
$$H(\xi_i)+100\mathcal{B}=_{10\mathcal{B}}d_{X/N}(\rho^{\bar{U}_{i,j}}_{\bar{X}},\bar{x}_0)$$
and 
$$H(\xi_i)+100\mathcal{B}=_{10\mathcal{B}}(\xi_i\cdot\rho^{\bar{U}_{i,j}}_{\bar{X}})_{\bar{x}_0}$$
From Lemma \ref{lem:heightcontinuity}, $H(\bar{U}_{i,j})=_{0.1\mathcal{B}}H(\xi_i)$. From the control on $d_{X/N}(\rho^{\bar{U}_{i,j}}_{\bar{X}},\bar{x}_0)$ given above, certainly $\bar{U}_{i,j}$ has sufficient height. On the other hand, 
$$d_{X/N}(\bar{x}_0,\rho^{\bar{U}_{i,j}}_{\bar{X}})-H(\bar{U}_{i,j})\leq (H(\xi_i)+110\mathcal{B})-(H(\xi_i)-\mathcal{B})\le 200\mathcal{B}.\square$$

\

We now build the paths we will concatenate to construct $g_n$.  Choose a constant $\mathcal{T}$; the exact value will be chosen in a moment. Given $\bar{U}_{i,j}$, recall that $\bar{U}_{i,j}$ contains a quasi-embedded copy of $\reals$. As a result, there exists some point $\bar{u}_{i,j}\in \bar{U}_{i,j}$ such that $d_{\bar{U}_{i,j}}(\bar{u}_{i,j},\rho^{\bar{X}}_{\bar{U}_{i,j}}(\bar{x}_0))\geq 100\mathcal{T}m$. Say that the maps in Lemma \ref{lem:boundaryrealization} are $\Psi_1$-coarsely surjective for $\Psi_1=O(E)$. Also let $J$ be the constant in Lemma \ref{lem:subdomquotBGI}. Using this lemma, we may construct $\lambda_{i,j}\in \partial (X/N)$ so that $\lambda_{i,j}$ $\mathcal{T}$-coarsely projects to $(\bar{u}_{i,j},\bar{u}_{i,j+1})\in \bar{U}_{i,j}\times \bar{U}_{i,j+1}$. We may then apply Lemma \ref{lem:narrowpaths} with $\Psi_1$, letting $\Psi_2$ be the resulting constant. This lemma yields a path $\gamma_{i,j}:[0,1]\to \partial (X/N)$ from $\lambda_{i,j}$ to $\lambda_{i,j+1}$ such that for all $t\in[0,1]$, 
$$d_{\bar{U}_{i,j}}(\rho^{\partial (X/N)}_{\bar{U}_{i,j}}(\gamma_{i,j}(t)),\rho^{\partial (X/N)}_{\bar{U}_{i,j}}(\lambda_{i,j}))\leq \Psi_2$$
We obtain finitely many such $\gamma_{i,j}$, and so may concatenate them together into a single path $g_n:[0,1]\to\partial (X/N)$ so that $g_n|_{[t_i,t_{i+1}]}$ is equal to the concatenation of $\gamma_{i,1}$ through $\gamma_{i,K_i-1}$.

We now require that $\mathcal{T}\geq 100(\Psi_1+\Psi_2+J)$.

It remains to verify that $g_m$ satisfies the two properties of Definition \ref{def:locfinite}. Recall that for $t\in[t_i,t_{i+1}]$, by definition $f_m(t)\in \mathcal{I}_i\subset V_i$.

\begin{lemma}
\label{sublem:onedisksfellowtravel}

For each $\gamma_{i,j}$, $\gamma_{i,j}([0,1])\subset V_i$.

\end{lemma}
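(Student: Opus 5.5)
The plan is to show that every point of $\gamma_{i,j}([0,1])$ has Gromov product with $\xi_i$ (based at $\bar{x}_0$) at least $H(\xi_i)$. Then $\gamma_{i,j}([0,1])\subset b(\xi_i,H(\xi_i))\subset V_i$ by the choice of $V_i$. The mechanism will be the boundary Bounded Geodesic Image statement, Lemma \ref{lem:boundaryBGI}. Every $\rho^{\bar{U}_{i,j}}_{\bar{X}}$ sits at distance about $H(\xi_i)+100\mathcal{B}$ from $\bar{x}_0$ and, by Lemma \ref{sublem:orthogonalstring}, within $\mathcal{B}$ of $\bar{r}_i$. So if a boundary point $\chi$ has $\bar{U}_{i,j}$-projection far from $\rho^{\bar{X}}_{\bar{U}_{i,j}}(\bar{x}_0)$, then every $(1,20L)$-quasi-geodesic ray from $\bar{x}_0$ to $\chi$ passes within $J$ of $\rho^{\bar{U}_{i,j}}_{\bar{X}}$. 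Hence
$$(\chi\cdot\xi_i)_{\bar{x}_0}\geq (\chi\cdot \rho^{\bar{U}_{i,j}}_{\bar{X}})_{\bar{x}_0}-O(L)\geq H(\xi_i)+90\mathcal{B}-O(L),$$
where the second inequality uses the triangle inequality for Gromov products (with its $20L$ error) together with the coarse equality $(\xi_i\cdot\rho^{\bar{U}_{i,j}}_{\bar{X}})_{\bar{x}_0}=_{10\mathcal{B}}H(\xi_i)+100\mathcal{B}$ from Lemma \ref{sublem:orthogonalstring}. Since $\mathcal{B}=1000L$ dominates the $O(L)$ errors (including $J=O(L)$), this exceeds $H(\xi_i)$.

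The key steps, in order, are as follows.

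First, fix $t$ and set $\chi=\gamma_{i,j}(t)$. By construction of $\gamma_{i,j}$ via Lemma \ref{lem:narrowpaths}, $\rho^{\partial(X/N)}_{\bar{U}_{i,j}}(\chi)$ lies within $\Psi_2$ of $\rho^{\partial(X/N)}_{\bar{U}_{i,j}}(\lambda_{i,j})$. In turn, that point is within $\mathcal{T}$ of $\bar{u}_{i,j}$, which lies at distance at least $100\mathcal{T}m\geq 100\mathcal{T}$ from $\rho^{\bar{X}}_{\bar{U}_{i,j}}(\bar{x}_0)$. Since $\mathcal{T}\geq 100(\Psi_1+\Psi_2+J)$, we get $d_{\bar{U}_{i,j}}(\rho^{\partial(X/N)}_{\bar{U}_{i,j}}(\chi),\rho^{\bar{X}}_{\bar{U}_{i,j}}(\bar{x}_0))\geq 98\mathcal{T}>J$.

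Second, apply the first part of Lemma \ref{lem:boundaryBGI}: some (indeed every) $(1,20L)$-ray $\bar{r}$ from $\bar{x}_0$ representing $\chi$ passes within $J$ of $\rho^{\bar{U}_{i,j}}_{\bar{X}}$.

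Third, convert this into the Gromov product bound. Pick a point $\bar{p}\in\bar{r}$ within $J$ of $\rho^{\bar{U}_{i,j}}_{\bar{X}}$. Because $\bar{r}$ is a $(1,20L)$-quasi-geodesic through $\bar{p}$, we have $(\chi\cdot\bar{p})_{\bar{x}_0}\geq d(\bar{x}_0,\bar{p})-O(L)$, and therefore $(\chi\cdot\rho^{\bar{U}_{i,j}}_{\bar{X}})_{\bar{x}_0}\geq H(\xi_i)+90\mathcal{B}-O(L)$. Combining this with the bound on $(\xi_i\cdot\rho^{\bar{U}_{i,j}}_{\bar{X}})_{\bar{x}_0}$ via the triangle inequality gives $(\chi\cdot\xi_i)_{\bar{x}_0}\geq H(\xi_i)+80\mathcal{B}-O(L)\geq H(\xi_i)$.

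Finally, conclude $\chi\in V_i$. I would also check the endpoint paths that touch $\bar{U}_{i,K_i}=\bar{U}_{i+1,1}$ in the same way. The argument there is identical, using $H(\xi_{i+1})=_{100L}H(\xi_i)$ and $(\xi_i\cdot\xi_{i+1})_{\bar{x}_0}\geq H(\xi_i)+999\mathcal{B}$.

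The main obstacle is the second step. Lemma \ref{lem:boundaryBGI} as stated is phrased for $\rho^{\partial(X/N)}_{\bar{U}}$ measured against $\rho^{\bar{X}}_{\bar{U}}(\bar{x}_0)$, and its proof is only sketched. I need to be sure that the projection set $\rho^{\partial(X/N)}_{\bar{U}}(\chi)$, which has diameter $O(E)$ by Lemma \ref{lem:boundarysubdomprojfinite}, really forces every representative ray near $\rho^{\bar{U}}_{\bar{X}}$. If the lemma proves delicate, I would argue directly instead, by contradiction. Suppose a ray $\bar{r}$ stays farther than $J$ from $\rho^{\bar{U}_{i,j}}_{\bar{X}}$. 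Then apply Lemma \ref{lem:subdomquotBGI} to the geodesics $[\bar{x}_0,\bar{r}(s)]$, which fellow-travel $\bar{r}$ by the Morse lemma in the $O(L)$-hyperbolic space $X/N$. This shows $\rho^{\bar{X}}_{\bar{U}_{i,j}}(\bar{r}(s))$ stays $O(E)$-close to $\rho^{\bar{X}}_{\bar{U}_{i,j}}(\bar{x}_0)$, contradicting the $98\mathcal{T}$ lower bound.
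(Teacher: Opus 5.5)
Your proposal is correct and follows the paper's proof step for step. The triangle inequality in $\bar{U}_{i,j}$ pushes $\rho^{\partial(X/N)}_{\bar{U}_{i,j}}(\gamma_{i,j}(t))$ far from $\rho^{\bar{X}}_{\bar{U}_{i,j}}(\bar{x}_0)$; bounded geodesic image then forces every $(1,20L)$-ray from $\bar{x}_0$ to $\gamma_{i,j}(t)$ within $J$ of $\rho^{\bar{U}_{i,j}}_{\bar{X}}$; and the Gromov-product triangle inequality with $(\xi_i\cdot\rho^{\bar{U}_{i,j}}_{\bar{X}})_{\bar{x}_0}=_{10\mathcal{B}}H(\xi_i)+100\mathcal{B}$ places $\gamma_{i,j}(t)$ in $b(\xi_i,H(\xi_i))\subset V_i$. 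Citing Lemma~\ref{lem:boundaryBGI}, rather than the paper's Lemma~\ref{lem:subdomBGI}, is the more accurate reference, and, as in the paper, you need $\mathcal{B}$ large relative to $J$ (the paper asks for $\mathcal{B}\geq 10J+1000L$).
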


\textbf{Proof.} For each $t\in[0,1]$, we have
$$d_{\bar{U}_{i,j}}(\rho^{\partial (X/N)}_{\bar{U}_{i,j}}(\gamma_{i,j}(t)),\rho^{\partial (X/N)}_{\bar{U}_{i,j}}(\lambda_{i,j}))\leq \Psi_2$$
as well as 
$$d_{\bar{U}_{i,j}}(\rho^{\partial (X/N)}_{\bar{U}_{i,j}}(\lambda_{i,j}),\rho^{\partial (X/N)}_{\bar{U}_{i,j}}(\bar{u}_{i,j}))\leq \Psi_1.$$
But also
$$d_{\bar{U}_{i,j}}(\bar{u}_{i,j},\rho^{\bar{X}}_{\bar{U}_{i,j}}(\bar{x}_0))\geq 100\mathcal{T}m.$$
By choice of $\mathcal{T}$ and the triangle inequality, we obtain 
$$d_{\bar{U}_{i,j}}(\rho^{\partial (X/N)}_{\bar{U}_{i,j}}(\gamma_{i,j}(t)),\rho^{\bar{X}}_{\bar{U}_{i,j}}(\bar{x}_0))\geq 10J.$$
Therefore, Lemma \ref{lem:subdomBGI} implies that any $(1,20L)$-quasi-geodesic ray originating at $\bar{x}_0$ representing $\gamma_{i,j}(t)$ passes within $J$ of $\rho^{\bar{U}_{i,j}}_{\bar{X}}$. The fact that $(\xi_i\cdot\rho^{\bar{U}_{i,j}}_{\bar{X}})_{\bar{x}_0}=_{10\mathcal{B}} H(\xi_i)+100\mathcal{B}$ implies that, for $\mathcal{B}\geq 10J+1000L$, we have
$$(\xi_i\cdot\gamma_{i,j}(t))_{\bar{x}_0}\geq H(\xi_i)+50\mathcal{B}$$
By definition of $H(\xi_i)$, this inequality is sufficient to conclude that $\gamma_{i,j}(t)\in V_i$. $\square$

\

Note for the following lemma that any $b(\chi,R)$ with $\chi\in \partial (X/N)$ and $R\in\reals_{>0}$ contains some open neighborhood of $\xi$, essentially by definition of the visual metric topology.

\begin{lemma}
\label{sublem:onedisksavoidance}

For each point $\xi\in\partial (X/N)$, there is some $R_0$ and $M_0$ so that for $m\geq M_0$, $g_m$ does not intersect $b(\xi,R_0)$, where
$$b(\xi,R_0)=\{\chi\in\partial (X/N):(\xi\cdot\chi)_{\bar{x}_0}\geq R_0\}$$

\end{lemma}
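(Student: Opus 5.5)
The plan is to exploit the large projections built into the construction of $g_m$. Every point of $g_m([0,1])$ lies on some $\gamma_{i,j}$ (built from the $m$-th input path $f_m$), and so projects to a point of some $\bar{U}_{i,j}$ at distance at least roughly $100\mathcal{T}m-\Psi_1-\Psi_2$ from $\rho^{\bar X}_{\bar U_{i,j}}(\bar x_0)$. Each such $\bar U_{i,j}$ has slack at most $200\mathcal{B}$ by Lemma \ref{lem:slackness}, so it lies in $\bar{\mathcal{D}}_0(A_M)$ with $A_M:=200\mathcal{B}+1$. The key point is that this slack bound is independent of $m$ and of the input paths $f_m$.

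Fix $\xi\in\partial(X/N)$. By Lemma \ref{lem:slack}, the quantity
$$C_\xi:=\max_{\bar U\in\bar{\mathcal{D}}_0(A_M)}d_{\bar U}\big(\rho^{\bar X}_{\bar U}(\bar x_0),\rho^{\partial(X/N)}_{\bar U}(\xi)\big)$$
is finite. Choose $M_0$ with $100\mathcal{T}M_0\geq C_\xi+10(\Psi_1+\Psi_2+J)+R'$ for a margin $R'$ fixed below. Then for $m\geq M_0$ and any $\chi=\gamma_{i,j}(t)$ on $g_m$, the triangle inequality in $\bar U:=\bar U_{i,j}$ gives
$$d_{\bar U}\big(\rho^{\partial(X/N)}_{\bar U}(\chi),\rho^{\partial(X/N)}_{\bar U}(\xi)\big)\geq J.$$
Now apply the second clause of Lemma \ref{lem:boundaryBGI}, which yields $(\xi\cdot\chi)_{\rho^{\bar U}_{\bar X}}\leq E$.

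It remains to convert this into an upper bound on $(\xi\cdot\chi)_{\bar x_0}$, so that $\chi\notin b(\xi,R_0)$ for a suitable $R_0$. Two facts are available. First, $(\chi\cdot\rho^{\bar U}_{\bar X})_{\bar x_0}$ is large, since the ray to $\chi$ passes within $J$ of $\rho^{\bar U}_{\bar X}$ (first clause of Lemma \ref{lem:boundaryBGI}). Second, $d_{X/N}(\bar x_0,\rho^{\bar U}_{\bar X})$ is bounded in terms of $\xi$ alone: the proof of Lemma \ref{lem:slack} gives $d_{X/N}(\bar x_0,\rho^{\bar U}_{\bar X})\leq A_M+H(\xi)+2000L$ whenever $\rho^{\bar U}_{\bar X}$ lies near a ray to $\xi$. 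If $\rho^{\bar U}_{\bar X}$ is far from every ray to $\xi$, then Lemma \ref{lem:subdomquotBGI} makes the projection of $\xi$ coarsely equal to that of $\bar x_0$, so both projections are within $O(E)$ of each other. In that case the ray to $\chi$ passes near $\rho^{\bar U}_{\bar X}$ while the ray to $\xi$ does not, and the Gromov product $(\xi\cdot\chi)_{\bar x_0}$ is at most $d_{X/N}(\bar x_0,\rho^{\bar U}_{\bar X})+O(L)$. In the near case, combine $(\xi\cdot\chi)_{\rho^{\bar U}_{\bar X}}\leq E$ with the standard estimate $(\xi\cdot\chi)_{\bar x_0}\leq d(\bar x_0,\rho^{\bar U}_{\bar X})+(\xi\cdot\chi)_{\rho^{\bar U}_{\bar X}}+O(L)$. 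In both cases we obtain
$$(\xi\cdot\chi)_{\bar x_0}\leq A_M+H(\xi)+O(L)+E.$$
Set $R_0$ equal to this bound plus $1$.

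The main obstacle is that the far case does not directly use a bound on $d(\bar x_0,\rho^{\bar U}_{\bar X})$ in terms of $\xi$, and such a bound is genuinely needed. I would handle it by showing that the projection of $\chi$ is far from that of $\bar x_0$ (it is at least $100\mathcal{T}m$ away) while the projection of $\xi$ is close to that of $\bar x_0$. Applying the first clause of Lemma \ref{lem:boundaryBGI} to $\chi$, and the contrapositive of it for $\xi$, the geodesic rays separate before $\rho^{\bar U}_{\bar X}$. The remaining difficulty is then whether $d(\bar x_0,\rho^{\bar U}_{\bar X})$ can be unbounded. This is controlled by height: $d(\bar x_0,\rho^{\bar U}_{\bar X})\leq H(\bar U)+A_M$, and $H(\bar U_{i,j})$ is coarsely $H(\xi_i)$. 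If $H(\xi_i)$ is large compared with $R_0$, the rays to $\xi$ and $\chi$ diverge before $\rho^{\bar U}_{\bar X}$. In either case the fixed threshold $R_0$, chosen using $C_\xi$ from Lemma \ref{lem:slack}, works uniformly in $m\geq M_0$.
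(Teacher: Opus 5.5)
Your setup matches the paper's. The slack bound $A(\bar U_{i,j})\le 200\mathcal{B}$ is uniform in $m$, and Lemma \ref{lem:slack} controls the projections of $\xi$ to all such domains. So for $m\ge M_0$ you correctly get $(\xi\cdot\chi)_{\rho^{\bar U}_{\bar X}}\le E$ for every $\chi=g_m(t)$, and your near case is fine.

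The gap is the far case, which you flag but do not close. There you only have $(\xi\cdot\chi)_{\bar x_0}\lesssim d_{X/N}(\bar x_0,\rho^{\bar U}_{\bar X})$, and this bound is not uniform. Indeed $d_{X/N}(\bar x_0,\rho^{\bar U_{i,j}}_{\bar X})\approx H(\xi_i)+100\mathcal{B}$ depends on the input path $f_m$ and can be arbitrarily large. Your proposed fix, that if $H(\xi_i)$ is large the rays to $\xi$ and $\chi$ diverge before $\rho^{\bar U}_{\bar X}$, only restates that the branch point lies at depth at most $d_{X/N}(\bar x_0,\rho^{\bar U}_{\bar X})$. That gives no bound depending on $\xi$ alone. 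So your claim that in both cases $(\xi\cdot\chi)_{\bar x_0}\le A_M+H(\xi)+O(L)+E$ is unjustified in the far case.

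The missing idea is to measure the branch point by $(\xi\cdot\rho^{\bar U}_{\bar X})_{\bar x_0}$ rather than by $d_{X/N}(\bar x_0,\rho^{\bar U}_{\bar X})$. Write $p=\rho^{\bar U}_{\bar X}$. The identity $(\xi\cdot\chi)_{\bar x_0}=(\xi\cdot p)_{\bar x_0}+(\xi\cdot\chi)_p-(\bar x_0\cdot\chi)_p$ holds for points in any metric space, and up to $O(L)$ for boundary points. It gives $(\xi\cdot\chi)_{\bar x_0}\le(\xi\cdot p)_{\bar x_0}+E+O(L)$.

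Moreover $(\xi\cdot p)_{\bar x_0}$ is bounded in terms of $\xi$ alone. If $(\xi\cdot p)_{\bar x_0}\ge H(\xi)+\mathcal{B}$, then Lemma \ref{lem:heightcontinuity} gives $H(\bar U)\le H(\xi)+\mathcal{B}$. Hence $(\xi\cdot p)_{\bar x_0}\le d_{X/N}(\bar x_0,p)+O(L)=H(\bar U)+A(\bar U)+O(L)\le H(\xi)+201\mathcal{B}+O(L)$. This works whether or not $p$ is near a ray to $\xi$, so the near/far split is unnecessary.

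This is the paper's argument run by contradiction. It assumes $g_m(t)\in b(\xi,R_0)$ with $R_0=H(\xi)+10^{10}\mathcal{B}$. From $(g_m(t)\cdot\xi)_{\rho^{\bar U}_{\bar X}}\le E$ it deduces that both $d_{X/N}(\bar x_0,\rho^{\bar U}_{\bar X})$ and $(\rho^{\bar U}_{\bar X}\cdot\xi)_{\bar x_0}$ exceed $H(\xi)+10^9\mathcal{B}$. Height continuity then forces slack at least $10^8\mathcal{B}$, contradicting $A(\bar U)\le 200\mathcal{B}$.
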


\textbf{Proof.} Define
$$\mu:=\max_{\bar{U}\in \bar{\mathcal{D}}_0(10^5\mathcal{B})}d_{\bar{U}}(\rho^{\partial (X/N)}_{\bar{X}}(\xi),\rho^{\bar{X}}_{\bar{U}}(\bar{x}_0))$$
By Lemma \ref{lem:slack}, $\mu$ is finite. Choose $M_0\geq \mu+10E$, and $R_0=H(\xi)+10^{10}\mathcal{B}$.

For every $t\in[0,1]$ and $m\geq M_0$, by construction there exists some $\bar{U}\in\bar{\mathcal{D}_0}(200\mathcal{B})$ (one of the $\bar{U}_{i,j}$ for that $g_m$, with slack at most $200\mathcal{B}$) such that 
$$d_{\bar{U}}(\rho^{\bar{X}}_{\bar{U}}(\bar{x}_0),\rho^{\partial (X/N)}_{\bar{X}}(g_n(t)))\geq 100\mathcal{T}M_0.$$
On the other hand, by definition of $M_0$,
$$d_{\bar{U}}(\rho^{\bar{X}}_{\bar{U}}(\bar{x}_0), \rho^{\partial (X/N)}_{\bar{U}}(\xi))\leq M_0.$$
From the triangle inequality, we may conclude that
$$d_{\bar{U}}(\rho^{\partial (X/N)}_{\bar{U}}(\xi),\rho^{\partial (X/N)}_{\bar{X}}(g_m(t)))\geq \mathcal{T}$$.
As $\mathcal{T}$ was chosen sufficiently large, we may apply Lemma \ref{lem:subdomquotBGI} and obtain
$$(g_m(t)\cdot\xi)_{\rho^{\bar{U}}_{X/N}}\leq E.$$

If $g_m(t)\in b(\xi,R_0)$, we would have $$(g_m(t)\cdot\xi)_{\bar{x}_0}\geq H(\xi)+10^{10}\mathcal{B}$$
by definition. By standard properties of the Gromov product, we must therefore have that
$$d_{X/N}(\bar{x}_0,\rho^{\bar{U}}_{\bar{X}})\geq H(\xi)+10^9\mathcal{B}$$
and that
$$(\rho^{\bar{U}}_{\bar{X}}\cdot\xi)_{\bar{x}_0}\geq H(\xi)+10^9\mathcal{B}.$$
From Lemma \ref{lem:heightcontinuity}, we have that $H(\bar{U})\leq H(\xi)+\mathcal{B}$, and so by definition the slack $A(\bar{U})$ of $\bar{U}$ is at least $10^8\mathcal{B}$. This slack bound contradicts the assumption that $\bar{U}\in\bar{\mathcal{D}}_0(200\mathcal{B})$, so in fact $g_m(t)$ cannot intersect $b(\xi,R_0)$. $\square$

\

We conclude that the $g_m$ we have constructed satisfy the desired conditions, and so $\partial (X/N)$ has the locally finite $1$-disks property, thus proving Lemma \ref{lem:onedisks}. $\square$

\section{The N\"obeling Curve and Applications}

Assembling the results of the last few sections, we have the following Theorem.

\begin{theorem}
\label{thm:nobelingbody}

Let $G$, $\mathcal{X}$, $X$, and $N$ be as in Hypothesis \ref{hyp:nobplus}. Additionally assume that $\dim(\partial X)=0$. Then $\partial (X/N)$ is homeomorphic to the N\"obeling curve.

\end{theorem}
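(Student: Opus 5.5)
The plan is to verify the four conditions of the characterization in Theorem \ref{thm:nobchar} for $U=\partial (X/N)$, taking as input the results already established. We first confirm that Hypothesis \ref{hyp:nobplus} implies Hypothesis \ref{hyp:main}, so that the earlier sections apply. The action $G\curvearrowright X$ is acylindrical, hence WPD, by Theorem K of \cite{BHS17b}; it is action path-connected by Lemma \ref{lem:hierarchypaths}; and vertex-transitivity, non-elementarity, triviality of the maximal finite normal subgroup and admissibility of $(w_1,\ldots,w_k)$ are built into Hypothesis \ref{hyp:nob}.

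\emph{Polishness.} The space $X/N$ is Gromov hyperbolic by Lemma \ref{lem:quotienthyp}. The boundary of a hyperbolic space, with a visual metric, is complete metrizable; this is standard, via Cauchy sequences of boundary points and quasi-geodesic rays. For separability, $G$ is finitely generated and acts vertex-transitively on $X/N$, so $X/N$ has countably many vertices. Boundary points obtained as limits of quasi-geodesic rays through finite sequences of vertices, for instance the points $\chi$ built from periodic sequences $hc^qx_0$ as in Lemma \ref{sublem:sequencelimits}, then form a countable set. Its density follows from the Lemma \ref{lem:mainconstruction}-type construction, or more simply because every shadow contains such a point.

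\emph{Connectedness, local path-connectedness, and dimension.} Connectedness follows from Theorem \ref{thm:maintheorem}, since any two points are joined by a path. Local path-connectedness also follows from Theorem \ref{thm:maintheorem}. The path joining $\xi_0$ and $\xi_1$ satisfies $(\xi_0\cdot\gamma(t))_{\bar{x}_0}\geq(\xi_0\cdot\xi_1)_{\bar{x}_0}-\mathcal{C}'$. Hence every point of $b(\xi,R)$ can be joined to $\xi$ inside $b(\xi,R-\mathcal{C}'-20L)$. The union of these paths is a path-connected set containing $\xi$ and contained in any prescribed visual ball, and it contains $b(\xi,R)$, which in turn contains an open neighborhood of $\xi$. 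For dimension, Lemma \ref{lem:onedim} gives $1\leq\dim\partial(X/N)\leq\dim\partial X+1=1$, using $\dim\partial X=0$; under Hypothesis \ref{hyp:tree} this holds because quasi-tree boundaries are zero-dimensional.

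\emph{Locally finite $1$-disks.} This property is exactly Lemma \ref{lem:onedisks}, which applies under Hypotheses \ref{hyp:nob} and \ref{hyp:nobplus}. Theorem \ref{thm:nobchar} then gives the homeomorphism with the Nöbeling curve.

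The only step that is not a direct citation is local path-connectedness, which needs care in converting the Gromov-product control of Theorem \ref{thm:maintheorem} into genuine open neighborhoods, given that the sets $U(\xi,r)$ are not necessarily open. We handle this by working with the visual metric, using the fact that each visual ball contains some $b(\xi,R)$ and is itself contained in some $b(\xi,R')$.
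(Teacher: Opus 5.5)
Your proposal is correct and follows the paper's route: the paper proves the theorem by feeding Theorem \ref{thm:maintheorem} (connectedness and local path-connectedness via linear connectivity), Lemma \ref{lem:onedim} with $\dim\partial X=0$, and Lemma \ref{lem:onedisks} into the characterization Theorem \ref{thm:nobchar}. You also spell out the Polish and local path-connectedness checks, which the paper leaves implicit. For separability, the cleanest way to get a countable dense set is to choose one boundary point in the shadow of each vertex of $X/N$; this avoids having to justify that the periodic-type points are dense.
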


Note that all quasi-trees have zero-dimensional boundaries.

We now identify two examples of classes of groups and spaces satisfying these conditions.

\subsection{Graphs of Groups}

The first class comes from \cite{BR22}, although the reader should note that this work builds on ideas developed around graphs of groups and around cube complexes by many researchers (see, e.g., \cite{BHS17b}).

\begin{definition}

Let $\Gamma$ be a finite simplicial graph with vertex set $V_\Gamma$ and edge set $E_\Gamma$. For each $v\in V_\Gamma$, let $G_v$ be a nontrivial finitely generated group. We define the \emph{graph product} $G_\Gamma$ as the group
$$G_\Gamma:=\left(\bigast_{v\in V_\Gamma} G_v\right)/\langle\langle[g_v,g_w]: g_v\in G_v, g_w\in G_w, \{v,w\}\in E_\Gamma\rangle\rangle.$$

Given an induced subgraph $\Lambda\subset \Gamma$, we have that $G_\Lambda$ is a subgroup of $G_\Gamma$; we call such groups \emph{graph subgroups}.

\end{definition}

With this notation, right-angled Artin groups (RAAGs) are exactly the graph products $G_\Gamma$ where $G_v=\intgr$ for all vertices $v$.

Berlyne and Russell prove that graph products are relative HHGs, with explicitly constructed HHS spaces realizing the HHG actions.

\begin{theorem}[Berlyne-Russell \cite{BR22}, Theorem A]
\label{thm:BR}

Let $G_\Gamma$ be a graph product of groups. Then $G_\Gamma$ is relatively hierarchically hyperbolic relative to its vertex groups, with explicit maximal domain $X_G$. In particular, if all vertex groups of $G_\Gamma$ are hyperbolic or hierarchically hyperbolic, then $G_\Gamma$ is hierarchically hyperbolic.

\end{theorem}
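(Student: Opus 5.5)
This statement is quoted from \cite{BR22}, so we will not reprove it in full. The plan below indicates how we would verify it, modelled on the cubical HHS structure for right-angled Artin groups in \cite{BHS17b}, using normal forms in graph products in place of cubical geometry.

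\textbf{The index set and the spaces.} Take $\mathcal{X}$ to be the Cayley graph of $G_\Gamma$ with respect to the union of finite generating sets of the vertex groups. Let $\mathcal{D}$ be the set of parallelism classes $[gG_\Lambda]$ of cosets of graph subgroups, for induced subgraphs $\Lambda\subset\Gamma$. Two cosets are parallel if they differ by right multiplication by an element of $G_{\mathrm{lk}(\Lambda)}$.
\begin{enumerate}
\item For a non-singleton $\Lambda$, attach to $[gG_\Lambda]$ the \emph{syllable--contact graph}: the Cayley graph of $G_\Lambda$ in which every coset of every join subgroup $G_{\Lambda_1*\Lambda_2}\le G_\Lambda$ is coned off. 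Equivalently, one may use the graph whose vertices are cosets of vertex groups, with edges for commutation.
\item For a single vertex $v$, take the space to be $G_v$ itself. This is exactly why the result is only \emph{relative}: these spaces need not be hyperbolic.
\item Nesting is $[gG_\Lambda]\sqsubseteq[hG_{\Lambda'}]$ when $\Lambda\subset\Lambda'$ and the cosets can be chosen nested.
\item Orthogonality is $\Lambda\subset\mathrm{lk}(\Lambda')$ with representatives that can be chosen to intersect.
\item Transversality covers all remaining pairs.
\item The projections $\pi_{[gG_\Lambda]}$ come from the gate (closest-point) retraction of $G_\Gamma$ onto the coset $gG_\Lambda$, which is well defined via Green's normal form. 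The relative projections $\rho$ are the gates of one coset onto another.
\end{enumerate}

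\textbf{Verifying the axioms.} The steps are as follows.
\begin{enumerate}
\item Hyperbolicity of the non-vertex coned-off graphs. We would prove this by a thin-bigon argument on syllable normal forms, as in Kim--Koberda's extension graphs: cancellation between normal forms happens only inside coned-off joins.
\item Finite complexity, which holds because $\Gamma$ is finite.
\item The containers axiom, using links.
\item Equivariance of the projections under $G_\Gamma$, which makes the structure an HHG.
\item Consistency and bounded geodesic image for transverse and nested pairs. These reduce to a statement about gates: if the gates of $x$ and $y$ onto $gG_\Lambda$ are far apart in the coned-off metric, then every geodesic from $x$ to $y$ has a syllable meeting the coset of $G_\Lambda$, so it passes near $\rho$ of any domain nested in it.
\item Partial realization, by concatenating normal forms from pairwise orthogonal graph subgroups, which commute.
\item Large links, which follows from bounded geodesic image and the fact that maximal joins are coned off.
\end{enumerate}

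\textbf{Main obstacle.} The hardest step is the uniqueness axiom (together with the distance formula): if two group elements have uniformly bounded projections to every domain, they must be close. We would prove this by induction on the complexity of $\Lambda$. First, a large word distance forces a long syllable normal form for $x^{-1}y$. Next, either many of its syllables lie in pairwise non-commuting positions, which gives a large distance in the top coned-off graph. Otherwise the normal form concentrates in a coset of a join $G_{\Lambda_1*\Lambda_2}$ or of a vertex group, and we pass to a lower-complexity domain. The delicate point is checking that gates onto parallel cosets are coarsely the same, so that the projections depend only on the parallelism class. Finally, if all vertex groups are hyperbolic or HHG, we would substitute their own HHG structures for the singleton domains, yielding a genuine HHG structure.
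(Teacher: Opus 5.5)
The paper does not prove this theorem itself. It imports it from \cite{BR22} and records the resulting structure in Definition \ref{def:graphdomains}, so the real question is whether the structure you describe can satisfy the axioms. It cannot, because of the spaces you attach to non-singleton domains.

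\textbf{The gap.} You index domains by \emph{all} induced subgraphs and nest $[gG_{\Lambda'}]\sqsubsetneq[gG_\Lambda]$ for every $\Lambda'\subsetneq\Lambda$. But in $\mathcal{C}(\Lambda)$ you cone off only cosets of join subgroups. The axioms require $\rho^{[gG_{\Lambda'}]}_{[gG_\Lambda]}$ to be a bounded subset of $\mathcal{C}(\Lambda)$. For an HHG it must also be coarsely invariant under the stabilizer of $[gG_{\Lambda'}]$, which contains $gG_{\Lambda'}g^{-1}$. So every coset of every proper graph subgroup must be bounded in $\mathcal{C}(\Lambda)$, and with your spaces this fails.

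A concrete counterexample is the right-angled Artin group on the path $a$--$b$--$c$--$d$.
\begin{enumerate}
\item The maximal joins are $\{a,b,c\}$ and $\{b,c,d\}$. Hence your top-level graph is quasi-isometric to the Bass--Serre tree of $A_{\{a,b,c\}}*_{A_{\{b,c\}}}A_{\{b,c,d\}}$.
\item The element $ad$ acts loxodromically on that tree, even though $ad\in G_{\{a,d\}}$ and $[G_{\{a,d\}}]\sqsubsetneq[G_\Gamma]$. So no bounded, $\langle ad\rangle$-invariant $\rho$ exists.
\item Bounded geodesic image fails directly. The pairs $(e,(ad)^m)$ and $((ad)^k,(ad)^{k+m})$ both project to distance about $2m$ in $\mathcal{C}(\{a,d\})$, which is a Cayley graph of $F_2$. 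For $k\gg m$, however, their top-level geodesics are far apart, so they cannot both pass near one bounded set.
\end{enumerate}
Your verification step for consistency and bounded geodesic image (``passes near $\rho$ of any domain nested in it'') silently assumes exactly the boundedness your spaces lack.

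\textbf{The repair.} Do what \cite{BR22} does, as recorded in Definition \ref{def:graphdomains}: in $\mathcal{C}(\Lambda)$, join $g$ and $h$ whenever $g^{-1}h\in G_{\Lambda'}$ for \emph{any} proper induced subgraph $\Lambda'\subsetneq\Lambda$. This has several consequences for the rest of your plan.
\begin{enumerate}
\item The new graph is coarser than the extension/contact-type graphs you describe, so your Kim--Koberda hyperbolicity step no longer applies. You need a separate argument that, for non-singleton non-join $\Lambda$, this graph is an infinite-diameter quasi-tree. This is the input from \cite{BR22} that the paper later cites in the proof of Lemma \ref{lem:graphproduct}.
\item The uniqueness step must be reorganized. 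The dichotomy becomes: either $x,y$ are far apart in $\mathcal{C}(\Gamma)$, or $x^{-1}y$ is a product of boundedly many elements of proper graph subgroups, and you induct over those.
\item The claim that large word distance forces a long syllable normal form is false: $s^n\in G_v$ is a single syllable. That case is precisely what the vertex domains $\mathrm{Cay}(G_v,S_v)$ exist to detect.
\end{enumerate}

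The other ingredients you list agree with \cite{BR22}. These are parallelism via $G_{\mathrm{lk}(\Lambda)}$, nesting and orthogonality, gate projections, partial realization through commuting graph subgroups, and substituting HHG structures on the vertex groups.
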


We will now identify the class of graph products to which the N\"obeling curve result will apply.

\begin{definition}
\label{def:collapsegraph}

Let $\Gamma$ be a finite simplicial graph, and let $\Lambda_1,..,\Lambda_k$ be nonempty induced subgraphs satisfying $\Lambda_i\neq\Gamma$. Set $\hat{\Lambda}=\{\Lambda_1,...,\Lambda_k\}$  Recall that the link $\text{lk}(\Lambda)$ of an induced subgraph $\Lambda\subset\Gamma$ is the set of all vertices of $\Gamma$ that are adjacent or equal to every vertex of $\Lambda$. The \emph{collapse graph} $\Gamma_{\hat{\Lambda}}$ of $\Gamma$ relative to $\hat{\Lambda}$ is the graph with vertex set $\Lambda$ and $\Lambda_i$ and $\Lambda_j$ connected by an edge if $\Lambda_i\subset\text{lk}(\Lambda_j)$.
 
\end{definition}

\begin{definition}
\label{def:admissiblegraphs}

Let $G_\Gamma$ be a graph product of groups. We say that $G_\Gamma$ is \emph{admissible} if there exists a collection $\hat{\Lambda}=\{\Lambda_1,...,\Lambda_k\}$ of induced proper subgraphs such that the following conditions are satisfied:
\begin{enumerate}
\item $\Gamma$ is not a nontrivial join.
\item Each $\Lambda_i\in\hat{\Lambda}$ is either a single vertex $v$ such that $G_v$ possesses some finitely generated Cayley graph containing a quasi-isometrically embedded copy of $\reals$, or is a subgraph with at least two vertices that is not a join.
\item Every vertex of $\Gamma$ lies in some $\Lambda_i$.
\item The collapse graph $\Gamma_{\hat{\Lambda}}$ is connected.
\end{enumerate}

\end{definition}

\begin{remark}
Note that in Definition \ref{def:admissiblegraphs}, if all $G_v$ satisfy the embedded-$\reals$ condition, then it suffices for $G_\Gamma$ to be connected and to not be a join, as $\hat{\Lambda}$ can be set to be the vertex set of $\Gamma$.

\end{remark}

\begin{lemma}
\label{lem:graphproduct}

Let $G_\Gamma$ be a graph product of groups with vertex set $V_\Gamma$. Let $N$ be as in hypothesis $\ref{hyp:main}$, and let $X_G$ be the top-level domain obtained in Theorem \ref{thm:BR}. Assume that $G_\Gamma$ is admissible.

Then $\partial (X_G/N)$ is homeomorphic to the N\"obeling curve.

\end{lemma}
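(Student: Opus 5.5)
The plan is to reduce the statement to Theorem \ref{thm:nobelingbody}. That means verifying, for the Berlyne--Russell structure on an admissible $G_\Gamma$, the three inputs: Hypothesis \ref{hyp:nob}, the large dense domains property of Definition \ref{def:largedense} (hence Hypothesis \ref{hyp:nobplus}), and $\dim(\partial X_G)=0$. The latter we obtain by showing $X_G$ is a quasi-tree.

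\textbf{Step 1: Hypothesis \ref{hyp:nob}.} By Theorem \ref{thm:BR}, $G_\Gamma$ is a relative HHG with maximal domain $X_G$. In the Berlyne--Russell construction, $X_G$ is the graph obtained from the Cayley graph by coning off (equivalently, adding edges across) cosets of graph subgroups $G_\Lambda$ for $\Lambda$ contained in a join or a link. We take $\mathcal{X}$ to be the Cayley graph; simpliciality and vertex-transitivity are then immediate.

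Non-elementarity uses that $\Gamma$ is not a join. Taking $g_v,g_w$ in vertex groups of non-adjacent, non-link-related vertices produces loxodromics such as $g_vg_w$, and conjugates of these give independent loxodromics.

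For condition 5, the subdomains are indexed by cosets $gG_\Lambda$. We can choose $\rho^{gG_\Lambda}_X$ to be the single cone point (or a chosen vertex of the coned-off coset); this is $G$-equivariant by construction.

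The trivial maximal finite normal subgroup can be arranged as in the remark after Hypothesis \ref{hyp:nob}. Condition 6 holds a.a.s. by Theorem \ref{thm:randomwalkproperties}, since the action on $X_G$ is acylindrical.

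\textbf{Step 2: $X_G$ is a quasi-tree.} I would use Manning's bottleneck criterion. Geodesics in $X_G$ are shadowed by normal-form words, and every coset of a proper link/join subgroup has been collapsed to bounded diameter. Any path between points far apart in $X_G$ must therefore pass near the syllable-level breakpoints of the normal form. Consequently the midpoint of a geodesic is a uniform bottleneck, via the standard disk-diagram/normal-form argument for graph products. Alternatively, I could cite that $X_G$ is quasi-isometric to a Bass--Serre-type tree or to a contact-graph analogue, which is known to be a quasi-tree for RAAGs/RACGs (see \cite{BHS17b}).

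\textbf{Step 3: large dense domains.} Take $\mathcal{D}_0$ to be the $G$-orbit of the domains indexed by $\Lambda_i\in\hat\Lambda$. This set is $G$-closed by construction.

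Each such domain contains a quasi-line. If $\Lambda_i$ is a vertex $v$, the domain is a Cayley graph of $G_v$, which contains a quasi-embedded $\reals$ by admissibility. If $\Lambda_i$ has at least two vertices and is not a join, the domain is the (quasi-tree) top domain of $G_{\Lambda_i}$, which is non-elementary and therefore contains an axis.

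Density (item 3) follows because every vertex lies in some $\Lambda_i$. Hence every vertex $gx_0$ is within bounded distance of $\rho^{gG_{\Lambda_i}}_X$.

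Item 4 is the main obstacle. Given a geodesic $[x,y]$, I would take a normal form $g=s_1\cdots s_m$ from $x$ to $y$, each syllable $s_\ell$ lying in some $G_{v_\ell}$. Then choose $\Lambda_{i_\ell}\ni v_\ell$, giving domains $U_\ell=s_1\cdots s_{\ell-1}G_{\Lambda_{i_\ell}}$.

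Consecutive domains need not be orthogonal. Between $U_\ell$ and $U_{\ell+1}$ we insert a path in the connected collapse graph $\Gamma_{\hat\Lambda}$ from $\Lambda_{i_\ell}$ to $\Lambda_{i_{\ell+1}}$, all translated by the same prefix $s_1\cdots s_\ell$. Edges of $\Gamma_{\hat\Lambda}$ mean $\Lambda_a\subset\mathrm{lk}(\Lambda_b)$, which gives orthogonality $gG_{\Lambda_a}\perp gG_{\Lambda_b}$ in the Berlyne--Russell structure.

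All inserted domains share the same coset basepoint. Their $\rho$-points therefore lie within a uniform distance $H$ (depending only on $\Gamma$) of the vertex $s_1\cdots s_\ell x_0$, which is near $[x,y]$. The delicate part is checking two things:
\begin{itemize}
\item that the orthogonality relation of \cite{BR22} is exactly the link condition for cosets with a common representative;
\item that the normal-form vertices stay uniformly close to the $X_G$-geodesic, which needs a quasi-geodesic/hierarchy-path statement for normal forms in $X_G$.
\end{itemize}

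With Steps 1--3 in hand, Theorem \ref{thm:nobelingbody} applies and yields the Nöbeling curve.
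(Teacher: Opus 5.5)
Your overall plan matches the paper's: reduce to Theorem \ref{thm:nobelingbody}, take $\mathcal{D}_0$ to be the translates of the $[g\Lambda_i]$, and link domains by orthogonal chains read off from paths in $\Gamma_{\hat{\Lambda}}$ with a common coset representative. However, item 4 of Definition \ref{def:largedense} is not actually established.

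\textbf{Item 4.} Your chain is built along a syllable normal form of $x^{-1}y$, so its $\rho$-points sit near the vertices $xs_1\cdots s_\ell$. You then need those vertices to be uniformly close to the given $X_G$-geodesic $[x,y]$. You flag this yourself and leave it open. It is a hierarchy-path-type statement about syllable normal forms in $X_G$, and it is not immediate: normal forms are far from unique, and $X_G$ collapses every coset of every proper graph subgroup.

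It is also unnecessary. The vertices of $X_G$ are the elements of $G_\Gamma$, so $[x,y]$ is itself a sequence of adjacent group elements. It therefore suffices to treat one edge $g\sim h$ at a time, starting from any domain $[g\Lambda_j]$ containing $g$.
\begin{itemize}
\item If $g^{-1}h\in S_v$, follow a path in $\Gamma_{\hat{\Lambda}}$ from $\Lambda_j$ to some $\Lambda_{j'}\ni v$, keeping the representative $g$. All these domains contain $g$, and the last one contains $h$. Consecutive domains are orthogonal directly by the definition of $\perp$ in \cite{BR22}, so your worry (a) is a non-issue.
\item If $g^{-1}h\in G_{\Lambda'}$ with $\Lambda'$ proper, write $g^{-1}h=s_1\cdots s_p$ in generators. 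Each partial product $gs_1\cdots s_{p'}$ is adjacent in $X_G$ to both $g$ and $h$, so iterating the first case keeps every $\rho$-point within distance $1$ of $g$.
\end{itemize}
This is your normal-form idea applied inside a single edge of the $X_G$-geodesic rather than to the whole word, and it is how the paper argues.

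\textbf{Step 1.} Your claim that $g_vg_w$ is loxodromic is false here. The Berlyne--Russell top domain cones off every coset of every proper graph subgroup, not just join or link subgroups; this is what makes each $\rho^{[g\Lambda]}_\Gamma$ bounded. Admissibility forces $|V_\Gamma|\ge 3$, since for two non-adjacent vertices the collapse graph is disconnected. Hence $g_vg_w\in G_{\{v,w\}}$ always acts elliptically on $X_G$. Non-elementarity should instead be deduced from $X_G$ being an unbounded quasi-tree together with the (relative) HHG structure; the paper treats this as standard.

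\textbf{Step 2.} Your bottleneck sketch assumes what needs proving, namely that paths between far-apart points must pass near syllable breakpoints. Your fallback citation \cite{BHS17b} covers cubulated cases such as RAAGs and RACGs, not graph products with arbitrary finitely generated vertex groups. The paper instead cites Theorems 4.4 and 4.10 of \cite{BR22}: $\mathcal{C}(\Lambda)$ is an infinite-diameter quasi-tree whenever $\Lambda$ is not a join. This gives Hypothesis \ref{hyp:tree} for $X_G=\mathcal{C}(\Gamma)$, and also the quasi-lines in the domains $[g\Lambda_i]$ with $|\Lambda_i|\ge 2$.
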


To prove this lemma, we need to prove that the setup satisfies Hypotheses \ref{hyp:nobplus} and \ref{hyp:tree}, which require digging into how the relative HHG structure for $G_\Gamma$ is constructed. For each $v\in \Gamma$, let $S_v$ be a finite generating set of $G_v$, and define $S=\bigcup_{v\in \Gamma}S_v$.

\begin{definition}[\cite{BR22}, Defs. 3.1 and 3.3.]
\label{def:graphdomains}

Let $G_\Gamma$ be a graph product. The domains of $G_\Gamma$ are indexed by cosets of the form $gG_\Lambda$, where $g\in G_\Gamma$ and $G_\Lambda$ is a graph subgroup, up to the equivalence relation that $[gG_\Lambda]=[g'G_{\Lambda'}]$ if $\Lambda=\Lambda'$ and $g^{-1}g'\in \langle G_v:v\in \Lambda\cup \text{lk}(\Lambda)\rangle$. Inclusion is defined such that $\Gamma$ is the unique maximal domain.

For each induced subgraph $\Lambda\subset\Gamma$, define its domain $\mathcal{C}(\Lambda)$ to be the graph with vertex set $G_\Lambda$ and edges between vertices $g,h\in G_\Lambda$ exactly when either $g^{-1}h\in \bigcup_{v\in \Lambda} S_v$ or there exists a proper induced subgraph $\Lambda'\subsetneq\Lambda$ such that $g^{-1}h\in G_{\Lambda'}$. Then define $\mathcal{C}([g\Lambda])=g\mathcal{C}(\Lambda)$.

\end{definition}

\begin{definition}[\cite{BR22}, Def. 3.17]

Given a non-maximal domain $g\Lambda$, we define
$$\rho^{g\Lambda}_{\Gamma}=\bigcup_{[h\Lambda]=[g\Lambda]}h G_\Lambda$$
We will often somewhat cheat on notation and use $[g\Lambda]$ as a standin for $\rho^{[g\Lambda]}_\Gamma$ when it is clear from context that we are discussing the image $\rho^{[g\Lambda]}_{\Gamma}$.
\end{definition}

\begin{definition}[\cite{BR22}, Def. 3.19]

Given two domains $[g_1\Lambda_1],[g_2\Lambda_2]$, we define $[g_1\Lambda_1]\perp [g_2\Lambda_2]$ if there exists $h\in G_\Gamma$ with $[h\Lambda_1]=[g_1\Lambda_1]$, $[h\Lambda_2]=[g_2\Lambda_2]$, and $\Lambda_2\in\text{lk}(\Lambda_1)$. 

\end{definition}

We now prove Lemma \ref{lem:graphproduct}.

\

\textbf{Proof of Lemma \ref{lem:graphproduct}.} We need to establish Hypotheses \ref{hyp:nobplus} and \ref{hyp:tree}. Recall that the top-level domain for $G_\Gamma$ is $\mathcal{C}(\Gamma)$. From Theorems 4.4 and 4.10 in \cite{BR22}, $\mathcal{C}(\Gamma)$ is an infinite-diameter quasi-tree, establishing Hypothesis \ref{hyp:tree}.

We now turn to \ref{hyp:nobplus}. From \ref{thm:BR} and standard facts about relative HHGs, it suffices to verify the large dense domains property. Let $\hat{\Lambda}=\{\Lambda_1,...,\Lambda_k\}$ be a set verifying the assumption that $G_\Gamma$ is admissible. We define
$$\mathcal{D}_0=\{[g\Lambda_i]:g\in G_\Gamma, \Lambda_i\in\hat{\Lambda}\}.$$
As the $G_\Lambda$-action on domains satisfies $g[h\Lambda]=[gh\Lambda]$, certainly $\mathcal{D}_0$ is closed under this action. Furthermore, for every $\mathcal{C}([g\Lambda_i])$, either $\Lambda_i=\{v\}$ and $\mathcal{C}([g\Lambda_i])$ is isometric to the Cayley graph $\text{Cay}(G_v,S_v)$, or $\Lambda_i$ is not a join. In the first case, $\text{Cay}(G_v,S_v)$ contains a quasi-isometrically embedded copy of $\reals$ by assumption, and in the second case, we may apply Theorems 4.4 and 4.10 in \cite{BR22} to conclude that $\mathcal{C}([g\Lambda_i])$ is an infinite-diameter quasi-tree, which necessarily contains a quasi-isometrically embedded copy of $\reals$.

Next, for any $g\in \mathcal{C}(\Gamma)$, $g$ lies within $gG_v$ for any $v\in\Gamma$, and hence $g$ lies within $[g\Lambda_i]$ for any $[g\Lambda_i]$.

Lastly, it suffices to show that there is some constant $\mathcal{R}$ with the following property: for any adjacent vertices $g,h\in \mathcal{C}(\Gamma)$ and any $[g_1\Lambda_{j_1}]$ containing $g$, there exists a sequence $[g_1\Lambda_{j_1}],...,[g_q\Lambda_{j_q}]$ so that $[g_i\Lambda_{j_i}]\perp[g_{i+1}\Lambda_{j_{i+1}}]$ for all $[g_i\Lambda_{j_i}]$, all $[g_i\Lambda_{j_i}]$ have distance at most $\mathcal{R}$ from $g$, and $[g_q\Lambda_{j_q}]$ contains $h$. 

One possibility is that $g^{-1}h=s_v$ for some $s_v\in S_v$, where $S_v$ is the generating set of some vertex group $G_v$. Assume without loss of generality that $v\in\Lambda_1$. Then $h\in [g\Lambda_1]$, as in fact $h\in gG_{\Lambda_1}$. On the other hand, $g\in [g_1\Lambda_{j_1}]$ by assumption, so by definition there exists $g_1'$ such that $[g_1'\Lambda_{j_1}]=[g_1\Lambda_{j_1}]$ and $g\in g_1'G_{\Lambda_{j_1}}$. So in fact $[g_1\Lambda_{j_1}]=[g\Lambda_{j_1}]$. 

Now let $\gamma$ be a path in the collapse graph $\Gamma_{\hat{\Lambda}}$ from $\Lambda_{j_1}$ to $\Lambda_1$; here we need the assumption that the collapse graph $\Gamma_{\hat{\Lambda}}$ is connected. Enumerate $\gamma$ as $\gamma=\{\Lambda_{j_1},\Lambda_{j_2},...,\Lambda_{j_q}\}$ with $\Lambda_{j_q}=\Lambda_1$. Considering the sequence $[g\Lambda_{j_1},g\Lambda_{j_2},...,g\Lambda_{j_q}]$, by definition of the collapse graph, we have $[g\Lambda_{j_i}]\perp[g\Lambda_{j_{i+1}}]$ for all $[g\Lambda_{j_i}]$. Furthermore, all these domains clearly have distance $0$ from $g$, while the last domain has distance $0$ from $h$ by choice of $\Lambda_{j_q}=\Lambda_1$.

The other case we must consider is when $g^{-1}h\in G_{\Lambda'}$ for some $\Lambda'\subsetneq \Gamma$. Say $g^{-1}h=s$, and write $s$ as a product of generators $s_1s_2...s_p$, where each $s_i\in G_{\Lambda'}$. Then $gs_1s_2...s_{p'}$ is adjacent to $h$ for all $1\leq p'\leq p$, and we may rerun the previous argument for each $s_i$ and then concatenate the resulting sequences of domains. $\square$



\subsection{Non-Geometric Three-Manifold Groups}

The second class comes from \cite{HRSS25}, which concerns $3$-manifold groups. Our starting point is the following theorem:

\begin{theorem}[Hagen-Russell-Sisto-Spriano, Corollary 1.3]
\label{thm:3man}

If $M$ is a $3$-dimensional non-geometric graph manifold, then $\pi_1(M)$ is a hierarchically hyperbolic group, with explicit top-level domain $X_M$.

\end{theorem}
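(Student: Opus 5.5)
This statement is quoted from Hagen--Russell--Sisto--Spriano \cite{HRSS25}, so the plan is to deduce it from their main structural theorem rather than to rebuild the hierarchy from scratch. Their main result concerns \emph{admissible} graphs of groups in the sense of Croke--Kleiner. The first step is therefore to recall that definition. It is a finite graph of groups in which:
\begin{itemize}
\item each vertex group $G_v$ is a central extension $1\to Z_v\to G_v\to H_v\to 1$ with $Z_v\cong\intgr$ and $H_v$ non-elementary hyperbolic;
\item each edge group is $\intgr^2$, generated by the two adjacent fibres;
\item the fibres of adjacent vertex groups generate a finite-index subgroup of the edge group, and in particular are not commensurable.
\end{itemize}

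The second step is to verify that $\pi_1(M)$ is admissible when $M$ is a non-geometric graph manifold. I would take the JSJ decomposition of $M$. Since $M$ is a non-geometric graph manifold, the decomposition is nonempty and every JSJ piece is Seifert fibred. After passing, if necessary, to a double cover, or using the standard reduction that replaces twisted $I$-bundles by their orientation covers, each piece has hyperbolic base orbifold. Its fundamental group is then a central $\intgr$-extension of a virtually free (hence hyperbolic, non-elementary) orbifold group. The JSJ tori give $\intgr^2$ edge groups. The defining property of the JSJ decomposition, namely that fibres of adjacent pieces do not match up to isotopy, gives exactly the non-commensurability condition. Finite-index issues are harmless, because a group commensurable to an HHG with the same top-level domain up to equivariant quasi-isometry still satisfies what we need; more simply, HRSS state the manifold case directly.

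The third step is to invoke their theorem: admissible groups are HHGs. The top-level domain $X_M$ is obtained from the Bass--Serre tree of spaces by coning off (or collapsing) the fibre directions in each vertex space, which produces a quasi-tree. The subdomains are the vertex-space base orbifold curve graphs together with the fibre lines, with the lines of adjacent vertices orthogonal. This explicit description is what later lets us check Hypotheses~\ref{hyp:nobplus} and~\ref{hyp:tree}, in the same way as for graph products.

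The only nontrivial input in this plan is the HRSS verification of the HHS axioms, especially the bounded geodesic image and consistency axioms for the coned-off tree of spaces. I would not reprove this. The main point I would actually check here is the reduction to hyperbolic base orbifolds and the exact form of the top-level domain, since those are what the subsequent application needs.
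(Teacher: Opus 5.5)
The paper gives no argument for this statement beyond citing \cite[Corollary 1.3]{HRSS25}. So your closing remark that HRSS ``state the manifold case directly'' is in fact the whole proof, and it is the only part of your plan that stands.

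The derivation you put in front of it breaks at the admissibility step. Suppose a JSJ piece of (orientable) $M$ has non-orientable base orbifold. Then loops reversing the orientation of the base invert the fibre. Since the base orbifold group is a centreless non-elementary Fuchsian group, that vertex group has trivial centre, so the JSJ splitting of $\pi_1(M)$ is not admissible in the Croke--Kleiner sense. You therefore do have to pass to a finite cover, and that only shows that a finite-index subgroup of $\pi_1(M)$ is an HHG.

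Your assertion that ``finite-index issues are harmless'' is unjustified. There is no general result transferring an HHG structure from a finite-index subgroup to the ambient group, because the structure need not be invariant under the larger group. This equivariance problem is exactly what \cite{HRSS25} was written to resolve. Nor can you settle for ``what we need'': Hypothesis \ref{hyp:nob} requires $G=\pi_1(M)$ itself, acting on $X_M$, to carry the (R)HHG structure.

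Two further inaccuracies concern exactly the data the paper uses afterwards.

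First, your version of admissibility says the edge group is generated by the two fibres, but they need only generate a finite-index subgroup, as you yourself say a line later. It also omits the Croke--Kleiner condition that at each vertex, conjugates of distinct incident edge groups $K_1,K_2$ are non-commensurable, and $gK_ig^{-1}$ is not commensurable with $K_i$ for $g\notin K_i$.

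Second, coning off only the fibre directions does not give the top-level domain. Each vertex space $\widetilde{F}_v\times\reals$ would become the base $\widetilde{F}_v$ with its boundary lines collapsed, which is still unbounded. The resulting space is therefore not quasi-isometric to the Bass--Serre tree, which is what Lemma \ref{lem:3manprops}(1) asserts and what the later verification relies on. Indeed, if the coned-off bases and the fibre lines are proper subdomains, as you say, bounded geodesic image forces each vertex space to have coarsely bounded image in the top level. So $X_M$ must collapse entire vertex spaces, not just their fibres.
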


We have the following result.

\begin{lemma}
\label{lem:3mannob}

Let $M$ be a $3$-dimensional non-geometric graph manifold, and let $X_M$ be the top-level domain obtained in Theorem \ref{thm:3man}. Let $N$ be as in Hypothesis \ref{hyp:main}. 

Then $\partial (X_M/N)$ is homeomorphic to the N\"obeling Curve.
    
\end{lemma}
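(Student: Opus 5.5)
The plan is to reduce Lemma \ref{lem:3mannob} to Theorem \ref{thm:nobelingbody}. This means checking that $\pi_1(M)$, with the HHG structure of Theorem \ref{thm:3man}, satisfies Hypotheses \ref{hyp:nob}, \ref{hyp:nobplus} and \ref{hyp:tree}, and that $\dim(\partial X_M)=0$.

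\textbf{Step 1: the routine items of Hypothesis \ref{hyp:nob}.} Vertex-transitivity, simpliciality of $X_M$, and triviality of the maximal finite normal subgroup are obtained by the standard reductions in the remark after Hypothesis \ref{hyp:nob}. For non-elementarity we use that $M$ is non-geometric. Then the Bass--Serre tree of the JSJ decomposition has at least one edge, and $\pi_1(M)$ is neither virtually abelian nor virtually a Seifert group. Its action on the top-level space is therefore non-elementary.

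\textbf{Step 2: the quasi-tree hypothesis.} Recall that in \cite{HRSS25} the top-level domain $X_M$ is built by coning off, or collapsing, the Seifert fibered pieces. Concretely, it is quasi-isometric to a Bass--Serre tree type object, or a coned-off Cayley graph, that is quasi-isometric to a tree. We will check that $X_M$ is a quasi-tree directly from that construction, for instance via Manning's bottleneck criterion. This gives Hypothesis \ref{hyp:tree}, and hence $\dim(\partial X_M)=0$.

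\textbf{Step 3: the domain hypotheses.} Hypothesis \ref{hyp:nob}(5) requires each subdomain to have a point projection $\rho^U_{X_M}$, equivariantly. The subdomains are indexed by vertex pieces, i.e.\ Seifert fibered blocks, together with the fiber and base directions. Each such block is coned to a bounded set in $X_M$, so we choose a single point equivariantly, for example the cone point or a chosen vertex of the coset.

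\textbf{Step 4: the large dense domains property (the main obstacle).} For this I take $\mathcal{D}_0$ to be the set of base-orbifold hyperbolic domains $B_v$ and fiber line domains $F_v$ over all lifts of JSJ vertex pieces $v$.
\begin{enumerate}
\item Each of these domains contains a quasi-geodesic line: the fibers are copies of $\reals$, and the bases are hyperbolic orbifold groups, which contain such lines.
\item Since the blocks cover $\widetilde{M}$, every point of $X_M$ is within bounded distance of some $\rho^{U}_{X_M}$ with $U\in\mathcal{D}_0$.
\item Orthogonal chains: within one block $F_v\perp B_v$. For adjacent blocks $v,w$ sharing a JSJ torus, non-geometricity means the fiber of $v$ is not the fiber of $w$. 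I expect, as in the structure of \cite{HRSS25}, that $F_v$ is identified with a direction in $B_w$, giving orthogonality relations linking $v$ to $w$. If this identification is not literally orthogonality, I would instead insert intermediate domains associated to the edge torus, which is a product of two lines, to form the chain $F_v\perp B_v$, then the torus domains, then $B_w\perp F_w$.
\item Along a geodesic of $X_M$, successive blocks have projection points at uniformly bounded distance, as in the graph-product argument of Lemma \ref{lem:graphproduct}. Concatenating the local chains then yields the required sequence, with $H$ bounded by the finitely many orbit types of edges.
\end{enumerate}
Verifying the orthogonality relations precisely in the HRSS structure is the step where I expect the real work. Theorem \ref{thm:nobelingbody} then concludes.
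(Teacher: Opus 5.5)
Your reduction is the same as the paper's: verify Hypotheses \ref{hyp:nob}, \ref{hyp:nobplus} and \ref{hyp:tree} for the structure of \cite{HRSS25}, then apply Theorem \ref{thm:nobelingbody}. The gap is that the one non-routine input, the orthogonality pattern of the domains, is left as an expectation with a hedged fallback. The way you set it up also rests on unchecked guesses about the HRSS domains.

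\begin{itemize}
\item ``$F_v$ is identified with a direction in $B_w$'' is a nesting statement, not an orthogonality statement. It gives what you need only after combining it with $F_w\perp B_w$ and the axiom that orthogonality passes to nested domains, which yields $F_v\perp F_w$. Whether the equivariant, quasimorphism-based structure of \cite{HRSS25} actually has base domains of that shape is exactly what you have not verified.
\item Your ``torus domains'' fallback collapses to the same statement $F_v\perp F_w$, since the edge torus is coarsely the product of the two non-parallel fibre directions.
\item ``The bases are hyperbolic orbifold groups, which contain lines'' does not show that a base \emph{domain} contains a quasi-line. That domain is a coned-off space, not the orbifold group's Cayley graph.
\end{itemize}

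The paper closes this with a precise fact from \cite{HRSS25}: Lemma 6.8 there, together with Lemma 6.2 and the definition of orthogonality in \cite{BHMS24}. It says the following.
\begin{itemize}
\item There is a natural quasi-isometry $\sigma\colon T\to X_M$ from the Bass--Serre tree.
\item Each vertex $v$ of $T$ has a domain whose hyperbolic space is a quasi-line $L_v$ and whose projection to $X_M$ contains $\sigma(v)$.
\item $L_v\perp L_s$ if and only if $v$ and $s$ are adjacent in $T$.
\end{itemize}

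With this, take $\mathcal{D}_0=\{L_v\}$ alone, with no base or torus domains. Given a geodesic $[x,y]$ in $X_M$, pick vertices $v,s$ with $\sigma(v),\sigma(s)$ near $x,y$. The $\sigma$-image of the tree geodesic $v=v_1,\dots,v_k=s$ is a quasi-geodesic, so by the Morse Lemma (Lemma \ref{lem:morse}) it stays uniformly near $[x,y]$. Consecutive points $\sigma(v_i)$ are uniformly close, and $L_{v_i}\perp L_{v_{i+1}}$. The same $\sigma$ gives Hypothesis \ref{hyp:tree} directly, so the separate bottleneck argument is unnecessary.
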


As before, we need to check Hypotheses \ref{hyp:nobplus} and \ref{hyp:tree}. We note the following facts.

\begin{lemma}
\label{lem:3manprops}
Let $M$ be a $3$-dimensional non-geometric graph manifold, and let $X_M$ be the top-level domain obtained in Theorem \ref{thm:3man}. Then the following properties hold.

\begin{enumerate}

\item There exists a natural quasi-isometry $\sigma:T\to X_M$, where $T$ is an infinite-diameter tree (the Bass-Serre tree for $\pi_1(M)$).
\item There exists a subcollection of non-maximal domains corresponding to vertices $v$ of $T$, and for each such vertex $v$, the corresponding hyperbolic space is a quasi-line $L_v$.
\item Furthermore, the projection of the domain corresponding to $v\in T$ to $X_M$ contains the point $\sigma(v)$.
\item For vertices $v,s\in T$, the domains corresponding to $v$ and $s$ are orthogonal if and only if $v$ and $s$ are adjacent in $T$.

\end{enumerate}

\end{lemma}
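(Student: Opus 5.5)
We need to prove Lemma \ref{lem:3manprops}, which records structural facts about the Hagen--Russell--Sisto--Spriano hierarchically hyperbolic structure on $\pi_1(M)$ for a non-geometric graph manifold $M$. The plan is to unwind the construction in \cite{HRSS25} rather than prove anything new. Recall that $M$ decomposes along JSJ tori into Seifert fibered pieces. The universal cover $\tilde M$ is then a tree of spaces over the Bass--Serre tree $T$. Each vertex space is a lift of a Seifert piece, quasi-isometric to $\mathbb{H}^2\times\reals$ or (after thickening) to a tree times $\reals$. Each edge space is a lifted JSJ plane.

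The first step is to identify $X_M$. In \cite{HRSS25} the top-level space is obtained from the Cayley graph of $\pi_1(M)$ by coning off, equivalently collapsing, each vertex space $\tilde M_v$. The resulting space is quasi-isometric to $T$: send $v\in T$ to the coned point, or any point of $\tilde M_v$, and call this map $\sigma$. Coarse surjectivity holds because every point of $\tilde M$ lies in some vertex space. The quasi-isometry bound comes from the fact that a path in the coned-off space crossing from $\tilde M_v$ to $\tilde M_{v'}$ must pass through every intermediate edge space. Each edge contributes coarsely one unit of length, exactly as in $T$. Since $M$ is non-geometric, there are at least two pieces with infinitely many lifts, so $T$ is infinite-diameter. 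This gives item 1.

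For items 2 and 3, we take the subcollection of domains indexed by vertices $v\in T$ corresponding to the fiber directions. For each vertex space $\tilde M_v\simeq B_v\times\reals$, the domain associated to the $\reals$-factor (the fiber of the Seifert piece) is a quasi-line $L_v$. Its projection $\rho^{L_v}_{X_M}$ to the top level is the image of $\tilde M_v$, which is coarsely the single cone point $\sigma(v)$. This is item 3, possibly after enlarging constants so that the coarse point is literally a point, as required in Hypothesis \ref{hyp:nob}.

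For item 4, we use the defining geometry of graph manifolds: at an edge of $T$ joining $v$ and $s$, the fiber of one piece is transverse to the fiber of the other. Inside the edge plane, the two fiber directions span a product region quasi-isometric to $\reals\times\reals$. Hence the domains $L_v$ and $L_s$ are nested in or realized by this product and are orthogonal in the HHS structure of \cite{HRSS25}. Conversely, if $v$ and $s$ are not adjacent, there is no common flat containing both fiber lines. Their projections to $X_M$ are then at distance at least $2$ in $T$, and the orthogonality axiom fails because fibers at distance $\geq 2$ are transverse. The main obstacle is matching these statements with the precise index set and orthogonality relation in \cite{HRSS25}, which is phrased in terms of the "fiber" and "base" domains of the vertex spaces. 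I would first check in their construction that orthogonality among fiber-type domains is declared exactly along edges of $T$; the remaining items are then bookkeeping.
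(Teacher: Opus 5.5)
Your plan is essentially the paper's: the paper proves nothing new, reading items 1--3 off Lemma 6.8 of \cite{HRSS25} and item 4 off the combinatorial orthogonality relation of \cite{BHMS24} together with Lemma 6.2 of \cite{HRSS25}, so your geometric pictures (coned-off vertex spaces, edge planes as products of the two fibers) are only motivation and the real content is exactly the definitional check you flag, since the presence or absence of a common flat does not by itself decide orthogonality in a given structure. One small correction: $T$ is unbounded because the JSJ splitting is nontrivial with infinite-index edge groups, so the finitely generated group $\pi_1(M)$ acts on $T$ without a global fixed point and hence has a hyperbolic element; the reason is not that there are ``two pieces with infinitely many lifts'' ($M$ may have a single Seifert piece glued to itself, and infinitely many vertices alone do not force infinite diameter).
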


\textbf{Proof.} The first three results follow from Lemma 6.8 in \cite{HRSS25}, although the reader should note that the authors use a combinatorial definition of hierarchical hyperbolicity derived from the combinatorial HHG machinery developed in \cite{BHMS24}, and some care must be taken to unravel the relevant definitions. The fourth result follows from the definition of orthogonality in \cite{BHMS24} and Lemma 6.2 in \cite{HRSS25}. $\square$

\

Lemma \ref{lem:3mannob} now follows directly from Lemma \ref{lem:3manprops}.

\printbibliography

\end{document}